\newtheorem{thm}{Theorem}[section]
\newtheorem{prop}[thm]{Proposition}
\newtheorem{lem}[thm]{Lemma}
\crefname{thm}{Theorem}{Theorems}
\crefname{prop}{Proposition}{Propositions}
\crefname{lem}{Lemma}{Lemmas}
\crefname{cor}{Corollary}{Corollaries}
\crefname{claim}{Claim}{Claims}
\crefname{step}{Step}{Steps}
\theoremstyle{definition}
\newtheorem{defn}[thm]{Definition}
\newtheorem{ex}[thm]{Example}
\crefname{defn}{Definition}{Definitions}
\crefname{ex}{Example}{Examples}
\theoremstyle{remark}
\newtheorem{rmk}[thm]{Remark}
\crefname{rmk}{Remark}{Remarks}
\numberwithin{equation}{section}
\newcommand{\norm}[1]{\ensuremath\lVert#1\rVert}
\newcommand{\eqdef}{=\vcentcolon}
\newcommand{\defeq}{\vcentcolon=}
\renewcommand{\leq}{\leqslant}
\renewcommand{\geq}{\geqslant}
\renewcommand{\le}{\leqslant}
\renewcommand{\ge}{\geqslant}
\let\temp\phi
\let\phi\varphi
\let\varphi\temp
\let\temp\varepsilon
\let\varepsilon\epsilon
\let\epsilon\temp
\newcommand{\tildee}[1]{\widetilde{#1}}
\DeclareMathOperator{\Lip}{Lip}
\DeclareMathOperator{\tr}{tr}
\DeclareMathOperator{\spt}{spt}
\newcommand{\ssubset}{%
\Subset}
\newcommand{\N}{\mathbb{N}}
\newcommand{\Pc}{\mathcal{P}}
\newcommand{\de}{\partial}
\newcommand{\di}{\mathrm{d}}
\newcommand{\R}{{\mathbb{R}}}
\newcommand{\call}[1]{\ensuremath\mathcal{#1}}
\newcommand{\frk}[1]{\ensuremath\mathfrak{#1}}
\newcommand{\scr}[1]{\ensuremath\mathscr{#1}}
\newcommand{\bb}[1]{\ensuremath\mathbb{#1}}
\renewcommand{\bar}[1]{\ensuremath\overline{#1}}
\newcommand{\trn}{\mathsf{T}}
\newcommand\Sym{\scr S}
\newcommand{\mpush}{{}_\sharp}
\newcommand{\Law}{\call L}
\newcommand{\var}{\mspace{1.8mu}\cdot\mspace{1.8mu}}
\newcommand{\Ds}{$\mathscr{D}\hspace{-0.5pt}$\Hyphdash*}
\newcommand{\LLs}{$\mathscr{L}$\hspace{-0.5pt}\Hyphdash*}
\newcommand{\Dop}{\call D}
\newcommand{\LLop}{\call L}
\newcommand{\QA}{Q}
\newcommand\mres\llcorner
\newcommand\bs\boldsymbol
\newcommand\tbs[1]{\bs{\tilde{#1}}}
\newcommand\msf[1]{\mathsf{#1}}
\newcommand\msfb[1]{\bs{\msf{#1}}}
\newcommand\bbs[1]{\bar{\bs{#1}}}
\newcommand\hbs[1]{\hat{\bs{#1}}}
    \def\@makefnmark{\@textsuperscript{\normalfont\texttt{\@thefnmark}}} 
\begin{document}

\date{\today}

\title[Nonsymmetric semimonotone NS: a priori estimates and large populatation limits]{A priori estimates and large population limits for some nonsymmetric Nash systems with semimonotonicity}
\author[M.\ Cirant]{Marco Cirant\textsuperscript{(1)}}
\address{\textsuperscript{(1)}Dipartimento di Matematica ``T.\ Levi-Civita'' \\ Università degli Studi di Padova \\ Via Trieste 63 \\ 35121 Padova, Italy.}
\author[D.\ F.\ Redaelli]{Davide Francesco Redaelli\textsuperscript{(2)}}
\address{\textsuperscript{(2)}Dipartimento di Matematica \\ Università di Roma Tor Vergata \\ Via della Ricerca Scientifica 1\\ 00133 Roma, Italy.}
\email{cirant@math.unipd.it, redaelli@mat.uniroma2.it}

\keywords{}

\subjclass[2010]{}

\begin{abstract} We address the problem of regularity of solutions $u^i(t, x^1, \ldots, x^N)$ to a family of semilinear parabolic systems of $N$ equations, which describe closed-loop equilibria of some $N$-player differential games with Lagrangian having quadratic behaviour in the velocity variable, running costs $f^i(x)$ and final costs $g^i(x)$. By global (semi)monotonicity assumptions on the data $f=(f^i)_{1 \leq i \leq N}$ and $g=(g^i)_{1 \leq i \leq N}$, and assuming that derivatives of $f^i, g^i$ in directions $x^j$ are of order $1/N$ for $j \neq i$, we prove that derivatives of $u^i$ enjoy the same property. The estimates are uniform in the number of players $N$. Such a behaviour of the derivatives of $f^i, g^i$ arise in the theory of Mean Field  Games, though here we do not make any symmetry assumption on the data. 

Then, by the estimates obtained we address the convergence problem $N \to \infty$ in a ``heterogeneous'' Mean Field  framework, where players all observe the empirical measure of the whole population, but may react differently from one another. We also discuss some results on the joint $N \to \infty$ and vanishing viscosity limit.
\end{abstract}

\maketitle

\setcounter{tocdepth}{1}
\tableofcontents

\section{Introduction}

Consider the following semilinear (backward) parabolic system for the unknowns $u^i = u_N^i  : [0,T] \times (\R^d)^N \to \R$, $i = 1, \ldots, N$,
\begin{equation} \label{ENS_NS}
\begin{cases}
-\de_t u^i - \tr((\sigma\msfb I + \beta \msfb J)D^2 u^i) + H^i(x^i, D_i u^i) + \displaystyle\sum_{\substack{1 \leq j \leq N \\ j \neq i}} D_pH^j(x^j, D_j u^j) \cdot D_j u^i = f^i \vspace{-0.3cm} \\
 u^i|_{t=T} = g^i
\end{cases}
\end{equation}
stated in $[0,T] \times (\R^d)^N \ni (t,\bs x) = (t,x^1,\dots,x^N)$. The main goal of this paper is to derive estimates on $u^i$ and their space/time derivatives that are \textit{stable} with respect to the number of equations; that is, uniformly in $N$.

The data are the Hamiltonians $H^i \colon \R^d \times \R^d \to \R$, the running and final costs $f^i, g^i \colon (\R^d)^N \to \R$, the diffusion parameters $\sigma > 0$ and $\beta \geq 0$ and the time horizon $T \geq 0$. The symbol $\msfb I = \msf I_{Nd}$ denotes the $Nd$-dimensional identity matrix and $\msfb J = \msf J_N \otimes \msf I_d$, where $\msf J_N = 1_N \otimes 1_N$ is the $N$-dimensional matrix of ones.\footnote{We use $\otimes$ to denote both the Kronecker product between matrices and the outer product between vectors.} $D^2 = D^2_{\bs x}$ denotes the Hessian with respect to $\bs x \in (\R^d)^N$ and $D_j = D_{x^j}$ denotes the gradient with respect to $x^j \in \R^d$. Finally, the symbol $\cdot$ denotes the Euclidean inner product, and $|\cdot|$ denotes the Euclidean norm. Hence, \eqref{ENS_NS} is a compact formulation of
\[
\begin{dcases}
\begin{multlined}
-\de_t u^i(t,\bs x) - \sigma \sum_{1 \leq j \leq N} \Delta_{x^j} u^i(t,\bs x) - \beta \sum_{1 \leq j, k \leq N} {\rm tr} \, D^2_{x^j x^k} u^i(t,\bs x) \\
\quad +H^i\big(x^i, D_{x^i} u^i (t,\bs x)\big) + \sum_{\substack{1 \leq j \leq N \\ j \neq i}} D_pH^j\big(x^j, D_{x^j} u^j (t,\bs x)\big) \cdot D_{x^j} u^i(t,\bs x) = f^i(\bs x) \\
\end{multlined} \\[-20pt]
u^i(T,\bs x) = g^i(\bs x).
\end{dcases}
\]
System~\eqref{ENS_NS} describes Markovian Nash equilibria in $N$-player differential games, in particular it characterises the value function $u^i$ of the $i$-th agent for each $i = 1, \ldots, N$. In our setting, agents control via feedbacks $\alpha^i = \alpha^i(t, \bs x)$ their own states, which are driven by the following $\R^d$-valued SDEs on $[0,T]$:
\begin{equation} \label{ENS_NcSDEs} 
\di X_t^i = \alpha^i(t, \bs X_t) \,\di t + \sqrt{2\sigma}\,\di B_t^i + \sqrt{2\beta}\, \di W_t, \quad i \in \{1,\dots,N\},
\end{equation}
where 
the $B^i_t$'s and $W_t$ are $d$-dimensional independent Brownian motions. The Brownian motions $B^i_t$ correspond to the individual noises, while $W_t$ is the so-called common noise, as it is the same for all the SDEs. The $i$-th agent aims at mimimising the following cost functional
\[
\alpha^i \mapsto \mathbb E \biggl[\, \int_0^T \Bigl( \,L^i\big(X^i_s, \alpha^i(s, \bs X_s)\big) + f^i(\bs X_s) \Bigr)\, \di s + g^i(\bs X_T) \biggr],
\]
where $L^i$ is the Lagrangian, related to $H^i$ by the identity
\[
H^i(x,p) = \sup_{a \in \R^d} \bigl( - a\cdot p - L^i(x,a) \bigr).
\]
It is well known that the choice $\alpha^{*,i}(t, \bs x) = -D_pH^i\big(x^i, D_i u^i(t, \bs x))$ characterises Nash equilibria, see for instance \cite{cardelar, Friedman72}. Moreover, since one expects uniqueness of solutions to \eqref{ENS_NS} by its (uniformly) parabolic nature, such equilibria are unique. As a model family of Hamiltonians, we have in mind the quadratic one
\[
H^i(x^i, p) = \frac12|p|^2, \qquad \text{for all $1 \le i \le N$}.
\]

For \textit{fixed} $N$, existence and uniqueness results for semilinear systems of the form (or close to) \eqref{ENS_NS} are now classical, see for example \cite{BF90, BF02, Friedman72, LSU}. Nevertheless, the methods that are involved seem to be strongly sensitive to the dimension $N$. The main reason is that one typically wants to employ parabolicity to get existence, uniqueness and regularity, but its smoothing effect may deteriorate as $N$ increases, while the strongly coupling transport term $\sum D_pH^j(D_j u^j) \cdot D_j u^i$ may take over.  A concrete example of this phenomenon can be observed in Linear-Quadratic problems (see for example \cite{DFT}); within this special framework, the study of the Nash system as $N \to \infty$ can be recast into a vanishing viscosity limit for a nonlinear transport PDE (on a fixed Euclidean space). On top of that, the quadratic behavior in the gradients is critical: for some quadratic systems, one may even have the failure of Lipschitz estimates \cite[p.~588]{LSU}. In fact, there are very few results for problems on unbounded domains, like the whole Euclidean space \cite{cardelar2, Jackson}. A byproduct of our a priori estimate is an existence theorem for \eqref{ENS_NS} with bounded data, that will be presented in the Appendix.

As we previously announced, the main goal of this work is to study the stability of some estimates independently on the number of equations $N$. A remarkable feature of the approach presented here is that there is no need to assume the a priori existence of an asymptotic structure, in the $N \to \infty$ limit; that is, our estimates are \textit{non-asymptotic} in nature. This is in the same spirit of \cite{JackLack}, where large population control problems are discussed. 

We developed our strategy in the setting of purely quadratic Hamiltonians $H^i(x^i,p) = \frac12|p|^2$. To make the methodology as clear and light as possible, we will proceed in the introduction under the assumption that we are in this special case. We will come back to more general Hamiltonians at the end of the introduction, focusing on the additional (technical) step that is needed to handle them. The structure of the presentation will be the same in the rest of the paper: in most of the sections the quadratic case will be developed, and one final section will be devoted to the more general situation.


\medskip

We begin our discussion by focusing on the fully-symmetric context of Mean Field Games (MFGs), that was a main motivation for this investigation.

\subsection{Symmetric players, MFGs and monotonicity} If interactions are assumed to be \textit{symmetric} (that is, $f^i(\bs x) = f(x^i, \bs x^{-i})$, $g^i(\bs x) = g(x^i, \bs x^{-i})$ and $f,g$ are symmetric in the variable $\bs x^{-i}$, which is just the vector $\bs x$ with the $i$-th component omitted), one expects, borrowing intuitions from Statistical Mechanics, that a simplified limit model can be derived in the limit $N \to \infty$. In this direction, the theory of Mean Field  Games, originated by the works \cite{hcm, LL07}, has seen during the last fifteen years extensive development in many fields of mathematics and applied sciences. Within this framework, the problem can be embedded into the space of probability measures $\Pc(\R^d )$: one may write
\[
f^i(\bs x) = F(x^i, m_{\bs x^{-i}}),
\]
where $m_{\bs x^{-i}} \defeq \frac1{N-1} \sum_{i \neq j} \delta_{x^j}$ is the empirical measure and $F : \R^d \times \Pc(\R^d ) \to \R$, and the same for $g^i$ (everything that will be said on $f^i$ in this introduction will hold for $g^i$ also). Note that not only is one assuming symmetry, but also that interactions between players are ``small'', since a variation in $x^j$ produces a variation in $F$ (and then in $f^i$) of order $1/N$ if $j \neq i$. Nevertheless, this smallness is compensated by the number $N$ of interactions. Ideally, since each $u^i$ is also symmetric in $\bs x^{-i}$ one may hope in the convergence of $u^i(t, \bs x)$ to $U(t, x^i, m_{\bs x^{-i}})$ as $N \to \infty$, where $U$ is a function over $[0,T] \times \R^d \times \Pc(\R^d )$ that should be characterised by being the solution of the formal \textit{limit} of \eqref{ENS_NS}. This limit object is called the Master Equation, and it is a nonlinear transport equation over the set of probability measures. The function $U(t,x,m)$ can be regarded as the value function of a player with state $x$ at time $t$, observing a population distribution $m$. This approach to the limit of $N$-player symmetric games has been discussed for the first time by P.-L. Lions during his lectures at the Coll\`ege de France \cite{LionsSeminar} (see in particular the lecture given on December 5th, 2008). Since then, the derivation of the estimates needed to complete the program, in particular that the derivatives of $u^i$ behave in the same way as those of $f^i, g^i$, remained an open problem. Note that the related question of large population limit of many agents \textit{optimal control} problems can be addressed as well by deriving analogous estimates on solutions to certain Hamilton-Jacobi equations. Within this context, different results are now available in the literature, see for example \cite{CarSou, DDJ24, GM22, GS15, LMMZ25} and references therein. Nevertheless, we stress that the analysis in our case, that is on \textit{differential games}, is way more involved, as we need to handle a system of strongly coupled Hamilton-Jacobi equations increasing in number, rather than a single equation.

Such a convergence result is of primary importance. Not only does it justify the use of the limit object (broadly speaking, the MFG theory) to approximate Nash equilibria of the $N$-players problem, but it carries fundamental implications. For instance, the optimal control in the limit for each agent reads as $-D_x U(t, x^i, m)$, so it has \textit{decentralised} structure, or, in other words, it is \textit{open-loop} in nature (or, more precisely, ``distributed''). This fact reflects into the possibility of characterising Nash equilibria in the limit by a simple (compared to the Nash system \eqref{ENS_NS}) backward-forward PDE system that goes under the name of MFG system.

The convergence problem has been matter of extensive research, and the fundamental contribution from the PDE perspective has been given by \cite{CDLL}. Interestingly, since the estimates for the Nash system \eqref{ENS_NS} seemed to be out of reach, the authors proposed a different approach: from the stability properties of the MFG system, one may produce smooth solutions $U(t,x,m)$ to the Master Equation. These can be in turn projected over empirical measures; the $v^i(t,\bs x) = U(t, x^i, m_{\bs x^{-i}})$ are then shown to be ``almost'' solutions to the Nash system. Comparing the $v^i$ with the solutions $u^i$, one observes a discrepancy that vanishes as $N\to\infty$.

This program has been carried out under the so-called \textit{Lasry--Lions} monotonicity assumption
\begin{equation}\label{FLLm}
\int_{\R^d} \big(F(x,m_1) - F(x,m_2)\big)(m_1-m_2)(\di x) \ge 0 \qquad \forall m_1,m_2 \in \Pc(\R^d),
\end{equation}
which is crucial to get uniqueness and stability at the level of the MFG system. Later on, other conditions guaranteeing uniqueness of solutions have been formulated \cite{Ahuja16, BM24, CiPo, gangbo, GraMe, MouZhang}.  Among them, we focus our attention now on 
the \textit{displacement} monotonicity assumption
\begin{equation}\label{FDm}
\int_{\R^d \times \R^d} \big(D_x F(x,m_1) - D_x F(y,m_2)\big)\cdot (x-y)\, \mu (\di x, \di y) \ge 0
\end{equation}
for all $m_1, m_2$ and $\mu$ having $m_1$ and $m_2$ as first and second marginal, respectively. This has been used successfully to obtain the well-posedness of the Master Equation in \cite{gangbo}, and later in \cite{MesMou} to deduce uniqueness of solutions to the MFG system. 
An important observation is that the monotonicity assumptions \eqref{FLLm} and \eqref{FDm} both \textit{propagate} at the level of the (limit) value function $U(t,x,m)$; that is, \eqref{FLLm} or \eqref{FDm} hold for $U(t, \cdot, \cdot)$ in place of $F(\cdot, \cdot)$ for any $t \in [0,T]$.

\subsection{The estimates} To attack the convergence problem \textit{directly} from the Nash system (that is, without employing smooth solutions of the Master Equation), one has to look for estimates of the following form:
\begin{equation}\label{desired_est}
|D_{x_i} u^i |,\, |D_{x_i} (D_{x_i} u^i) |\, \lesssim\, 1, \qquad |D_{x_j} u^i |,\, |D_{x_j} (D_{x_i} u^i) |\, \lesssim\, \frac1N \qquad\quad \forall \, i, j \neq i.
\end{equation}
Roughly speaking, if such bounds are uniform in $N$, a compactness argument allows to define (up to subsequences) a limit value function $U$ and a limit drift $D_x U$, which is the optimal feedback of a typical player. These will be Lipschitz continuous in the $(x, m)$ variables. Our strategy to derive these estimates originated within the displacement monotone setting, and it has been developed first in some simple linear-quadratic case \cite[Section 3]{CR24}. It can be described as follows.

As we already mentioned, under the monotonicity condition \eqref{FDm} one knows that $U(t, \cdot, \cdot)$ also satisfies \eqref{FDm} for any $t \in [0,T]$, see for example \cite[eq.~(4.7)]{MesMou}. Hence, by evaluating such an inequality on empirical measures, and assuming that $u^i(t, \bs x)$ is close to $U(t, x^i, m_{\bs x^{-i}})$, one observes that for all $t \in [0,T]$
\begin{equation}\label{osintro}
\sum_{1 \le i \le N} \big(D_i u^i (t, \bs x) - D_i u^i (t, \bs y)\big) \cdot (x^i - y^i) \ge - c_N |\bs x- \bs y|^2,
\end{equation}
where $c_N$ vanishes as $N \to \infty$ (see Remark \ref{rmk_relationship}). Recall that $\alpha^{*,i}(t, \bs x) = -D_i u^i (t, \bs x)$ is the optimal drift for the $i$-th player. Therefore, the previous inequality reads as a \textit{one-sided Lipschitz condition} (or dissipativity) on the \textit{global} drift vector $\bs \alpha^*$. This property turns out to be crucial in deriving estimates on the derivatives of $u^i$. For example, one can deduce gradient bounds that are independent of the dimension $N$ for $u^i$, which solves in fact
\[
-\de_t u^i - \tr((\sigma\msfb I + \beta \msfb J)D^2 u^i) + \displaystyle\sum_{1 \leq j \leq N} (-\alpha^{*,j}) \cdot D_j u^i = f^i + \dfrac12 |D_i u^i|^2,
\]
by using doubling of variables methods (see Lemma {\ref{ENS_estd1} below). This observation on the structure of $\bs \alpha^*$ suggests to pursue the following program:
\begin{enumerate}
\item assume first that the vector $(D_i u^i)$ satisfies the one-sided Lipschitz condition \eqref{osintro} with some $c_N = M > 0$ (so, uniformly in $N$), and, with this condition in force, prove the desired estimates on the derivatives of $u^i$;
\item employ the estimates (that will depend on the value of $M$, but not on $N$), along with the full structure of the Nash system to show that in fact the vector $(D_i u^i)$ satisfies \eqref{osintro} with $c_N = M/2$, at least for $N$ large enough.
\end{enumerate}
This shows that the Nash system enforces one-sided Lipschitz estimates on the optimal drift, or, in other words, \eqref{osintro} defines (with suitable $c_N = M$) an invariant set for the equilibrium controls. Practically, it allows to show that solutions $u^i$ satisfy the one-sided Lipschitz condition for some constant that does not depend on $N$, implying the desired estimates on $u^i$.

We will first show that this strategy works in a framework which resembles the one of displacement monotonicity in the symmetric case. In particular, we will prove that such an $M$ exists, and it has to be smaller and smaller as $T$ increases. We stress that we are \textit{not} going to assume any symmetry on the data, but only the fact that derivatives of $f^i, g^i$ with respect to $x^j$ behave as in the MF (Mean Field) case. This is why we will use the terminology \textit{Mean-Field-like} costs, see \hyperlink{MF}{{\bfseries{(MF)}}} below. Similarly, we will not assume \eqref{FDm}, but rather formulate the condition 
\[
\sum_{1\leq i \leq N} \,(D_i f^i(\bs x) - D_i f^i(\bs y))\cdot(x^i-y^i) \geq - C |\bs x - \bs y|^2 \quad \forall \, \bs x, \bs y \in (\R^d)^N
\]
which reads like \eqref{FDm} once it is specialized to the MF setting and $C=0$. Since we are going to allow for a negative right-hand side, we will refer to this case as the \textit{\Ds semimonotone} one, see \hyperlink{DS}{{\bfseries{(DS)}}} below.

Let us now come back to the Lasry--Lions monotonicity assumption \eqref{FLLm}, and discuss how the previous program can be adapted to this case. It is convenient to look at the one-sided Lipschitz condition \eqref{osintro} via the pointwise formulation (cf.~Remark \ref{ENS_rmkLLtoDM})
\begin{equation}\label{osintrod}
\sum_{1 \le i, j \le N} D^2_{ji} u^i (t, \bs x) \xi^i \cdot \xi^j \ge -c_N |\bs\xi|^2 \qquad \forall\, \bs\xi \in (\R^d)^N.
\end{equation}
It turns out that, under Lasry--Lions monotonicity, the counterpart of \eqref{osintrod} is (as before by projecting onto empirical measures the Lasry--Lions monotonicity of $U$)
\[
\sum_{\substack{1 \le i, j \le N \\ i \neq j}} D^2_{ji} u^i (t, \bs x) \xi^i \cdot \xi^j = \sum_{1 \le i, j \le N} D^2_{ji} u^i (t, \bs x) \xi^i \cdot \xi^j - \sum_{1 \le i \le N} D^2_{ii} u^i (t, \bs x) \xi^i \cdot \xi^i  \ge -c_N |\bs\xi|^2.
\]
Therefore, we have now an \textit{off-diagonal} one-sided Lipschitz information on the drift. To get the full one-sided Lipschitz control, it is necessary to bound from below the term $\sum_i D^2_{ii} u^i (t, \bs x) \xi^i \cdot \xi^i$, which encodes a sort of ``global'' semiconvexity of the set of value functions $u^i$. This introduces an additional step in our analysis, that is handled as before. Under Lasry--Lions monotonicity, we show that  the Nash system enforces not only the one-sided Lipschitz estimates on the optimal drift $\bs \alpha^*$, but also this kind of global semiconvexity. In other words, in step (1) we assume that both $\sum_{i,j} D_{ij}u^i (t, \bs x) \xi^i \cdot \xi^j$ and $\sum_i D^2_{ii} u^i (t, \bs x) \xi^i \cdot \xi^i$  are bounded below, and in step (2) we verify that these bounds are improved. To do so, as before we do not require symmetry, but the Mean-Field-like assumption \hyperlink{MF}{{\bfseries{(MF)}}} and the global (with respect to $i$) condition
\[
\sum_{1\leq i \leq N} \big( f^i(\bs x) - f^i(\bs x^{-i}, y^i) - f^i(\bs y^{-i}, x^i) + f^i(\bs y) \big) \geq -\kappa | \bs x - \bs y |^2 \quad \forall \, \bs x, \bs y \in (\R^d)^N,
\]
$(\bs x^{-i},y) = (x^1,\dots,x^{i-1},y,x^{i+1},\dots,x^N)$, as the monotonicity condition. This inquality will be called the \textit{\LLs semimonotone} assumption (see \hyperlink{LS}{{\bfseries{(LS)}}} below), and becomes \eqref{FLLm} once it is specialized to the MF setting and $\kappa=0$.

Our main result on the estimates for $u^i$ is stated in Theorem \ref{ENS_mainthm}: assuming Mean-Field-like costs and either \Ds semimonotonicity or \LLs semimonotonicity, the desired (a priori) estimates follow for any fixed $T>0$, sufficiently small semimonotonicity constants and sufficiently large $N \in \N$. Note that we are not exactly obtaining \eqref{desired_est}; regarding second-order derivatives, we get a bound of the form $\sup_i \sum_{j\neq i} |D_j D_i u^i|^2 \lesssim 1/N$. This is still enough, in the symmetric case described below, to produce in the limit $N\to \infty$ a Lipschitz function (with respect to the Wasserstein $W_2$ distance).

\smallskip

A few comments on the two steps (1) and (2) are now in order. Step (2) is achieved, in the \Ds semimonotone case, in Section \ref{sec_Ds}. We evaluate the quantity $\sum_i \big(D_i u^i (t, \bs x) - D_i u^i (t, \bs y)\big) \cdot (x^i - y^i)$ along optimal trajectories, see \Cref{ENS_dscprop}. One observes some nice properties (signs\,\dots) that appear also when operating at the limit, with an additional term, which is proven to be of order $1/\sqrt{N}$ provided that derivatives of $u^i$ are suitably controlled up to the third order. A similar strategy is developed in Section \ref{sec_LLs} for the \LLs monotone case, see in particular \Cref{ENS_LLmprop}.

The estimates under the one-sided Lipschitz condition are the core of step (1) presented above, which are developed in Section \ref{sec_esti}. Mainly two techniques are employed there: the method of doubling variables and the Bernstein method. Both are well suited to exploit the a priori one-sided assumption on $D_i u^i$ (the former uses \eqref{osintro}, while the latter works well in the form \eqref{osintrod}). To reach the third-order derivatives, several steps are necessary: sometimes we address each equation separately (\Cref{ENS_estd1}, \Cref{ENS_uijestp}), sometimes we need to proceed by estimating derivatives of $u^i$ for all $i$ at the same time (\Cref{ENS_d2b}, \Cref{ENS_d2N}, \Cref{ENS_d2N'}, \Cref{ENS_d3b}).

\subsection{The large population limit} With the estimates obtained in the first part of the work, we address the convergence problem $N \to \infty$. First, in the full symmetric case, where all the players react in the \textit{same way} with respect to the empirical measure of the population, we are able to obtain the classical MFG limit (see \Cref{ENS_convfacile}). Note that we require here some slightly weaker assumptions than the ones that are employed in \cite{CDLL} (and more in the direction of \cite{MZ24}, see in particular \Cref{rmkonmfg}). Beyond this setting, the convergence problem in full generality appears to be at this stage a challenging open question, that is an active area of research: sequences of games with non-symmetric interactions may be sometimes described by limit models, and there are now many studies taking advantage of the theory of \textit{Graphons}, see for example \cite{Carmona2, CainesHuang, LackerSoretLabel}. In this setting, cost functionals are player-independent, but they may depend on \textit{weighted} empirical measures; if weights converge in a suitable sense as $N \to \infty$, then an asymptotic description is possible.

 We put ourselves here in a sort of different setting: each player observes the empirical measure as in the standard MFG theory, but players can react differently. More precisely, we assume that the costs $f^i$ are taken from a pool of possible costs; that is,
\[
f^i(\bs x) = F(\lambda^i, x^i, m_{\bs x^{-i}})
\]
where the parameter $\lambda^i \in [0,1]$. If $\lambda^i$ varies in a finite subset of $[0,1]$ for all $N$, one may think of a multi-population MFG as in \cite{BHL18, Cir15, tesisegu}, but $\lambda^i$ can actually vary in the continuum $[0,1]$. This setting, despite the heterogeneity, perfectly fits the assumption \hyperlink{MF}{{\bfseries{(MF)}}} (under suitable regularity of $F$), that merely requires derivatives of $f^i$ to behave as in the MFG case. To obtain a convergence result, we assume Lipschitz regularity of $F$ in the $\lambda$ variable. In Graphon MFG, cost functions are player-independent but they depend on weighted empirical measures that are player-dependent, that is $f^i(\bs x) = F(x^i, N^{-1}\sum_{j} \xi^N_{ij} \delta_{x^j})$, where $(\xi^N)_{N \in \N}$ is a sequence of (weighted) adjacency matrices. Here costs are player-dependent, but they ``see'' the standard empirical measure of the others. Note that we could replace the empirical measure $m_{\bs x^{-i}}$ by $N^{-1}\sum_{j} \xi^N_{ij} \delta_{x^j}$ in our analysis, provided that $|\xi_{ij}^N| \lesssim 1$ (in such case \hyperlink{MF}{{\bfseries{(MF)}}} would still hold), but we prefer to avoid this additional layer of complexity here.

Our convergence argument is in the spirit of \cite{LackerSoretLabel}. If one thinks of any player $i$ in terms of its label-state couple $(\lambda^i, X^i_t)$, it is natural to work in the limit with probabilities $\mu$ over $[0,1] \times \R^d$. More precisely, we are able to define a value function $U(\lambda, t, x,\mu)$ from limits of the Nash system. Furthermore, $U$ generates a solution of a generalised MFG system, that is a continuum of classical MFG systems that are parametrised by $\lambda$. This is the second main result of this work, and it is stated in \Cref{ENS_thmMFsumm}.

For simplicity, we proceed in this part without common noise; that is, $\beta = 0$. A compactness argument allows to define $U$ from limits of $u^i$  (\Cref{ENS_rmkconvdiuil}), and the convergence is strong enough to pass to the limit the notion of Nash equilibrium (via a propagation of chaos argument), see \Cref{ENS_convfacile} and \Cref{ENS_convfinita}; this in turn yields the MFG system. We first show how to get convergence in the usual MFG setting; that is, for $\lambda^i = \lambda^{N,i} = \lambda$ for all $N,i$. In the multi-population (but finite) case, the argument is similar, even though it is technically more involved. Finally, the general case is obtained by approximating via a finite (and increasing) number of populations.

Note that, since compactness arguments are involved, convergence is obtained \textit{up to subsequences}. Clearly, under displacement or Lasry--Lions monotonicity and in the classical homogeneous MFG setting one has that limit objects (MFG equilibria, or solutions to the MFG system, or solutions $U(t,x,m)$ of the Master Equation, \dots) are unique, hence convergence is a posteriori along the full sequence. Nevertheless, we prefer not to stress this point here, as in fact we are requiring the milder assumptions of \textit{semimonotonicity}, and we allow for a certain degree of heterogeneity among agents. While we are not aware of uniqueness theorems in such generality, we believe that they should be true, as consequence of the fact that $U(\lambda, t,x,\mu)$ and $D_xU(\lambda,t,x,\mu)$ are Lipschitz continuous. 

\subsection{The joint large population and vanishing viscosity limit} As a further application of our estimates, we discuss some results on the joint limit $\sigma = \sigma_N \to 0$ and $N \to \infty$, in the fully symmetric Mean Field case. In other words, we show that $u^i$ converges to some limit function $U_0$, defined in the space of measures as in the previous section, and $U_0$ characterises an equilibrium of a \emph{deterministic} MFG. A key assumption here is that $\sigma_N$ is allowed to vanish \textit{slowly enough} as $N \to \infty$, in particular slower than $(\log\log N)^{-1}$.

The main observation that unlocks the proof of such joint limit is that, while showing that the Nash system enforces a propagation of monotonicity (step (2) of the program discussed above), there is some room for letting $\sigma \to 0$, since such a propagation happens if some ``error'' term that is bounded by $C(M, \sigma)/\sqrt{N}$ vanishes as $N \to \infty$. The constant $C(M, \sigma)$ comes exactly from the estimates based on the one-sided Lipschitz assumption (step (1) above); by tracking carefully its dependence on $\sigma$, we can quantify how fast $\sigma$ is allowed to vanish.

We present this application in the \Ds semimonotone case only, being the \LLs semimonotone one way more delicate. This has to do with the fact that the Master Equation is known to have classical solutions in the displacement monotone setting even when $\sigma = \beta = 0$ (see \cite{BMM}), while in the Lasry--Lions monotone case one has to rely on weak solutions \cite{CS22}, since the limit object $U$ is not expected to be even Lipschitz continuous in the measure variable (unless $T$ is small enough, as in \cite{GS15, Mayo}). This delicate issue shows up clearly in our approach as the impossibility of controlling \textit{from below} the quantity $\sum_i D^2_{ii} u^i (t, \bs x) \xi^i \cdot \xi^i$ uniformly as $\sigma \to 0$; this is in fact the further step in the analysis of the Lasry--Lions monotone setting discussed above. We leave the analysis of this more complicated framework for future research.

\subsection{More general Hamiltonians}\label{subsgenH}

Coming back to the more general class of Hamiltonians of the form $H^i(x^i,p)$, a crucial point is that optimal drift of the $i$-th player becomes $\alpha^{*,i}(t, \bs x) = -D_p H^i(x^i, D_i u^i (t, \bs x))$, instead of simply being $\alpha^{*,i}(t, \bs x) = -D_i u^i (t, \bs x)$. Therefore, the estimates on solutions to the Nash system (that is step (1) of the previous strategy) are expected to continue to hold provided that we have a uniform in $N$ one-sided Lipschitz bound on
\[
\bigl(-D_p H^i(x^i,D_iu^i(t,\bs x))\bigr)_{1 \leq i \leq N};
\]
that is,
\begin{equation} \label{generalDmon}
\sum_{1 \leq i \leq N} \bigl( D_p H^i(x^i,D_iu^i(t,\bs x)) - D_pH^i(y^i,D_iu^i(t,\bs y)\bigr) \cdot (x^i-y^i) \geq -\tilde M |\bs x - \bs y|^2,
\end{equation}
for some $\tilde M \geq 0$ independent of $N$, for all $t \in (\tau,T]$ and $\bs x, \bs y \in (\R^d)^N$. On the other hand, the bound we are able to propagate by following step (2) of the previous strategy remains the semimonotonicity of $u$. Note that, for $H$ being $C^2$, \eqref{generalDmon} is equivalent to
\begin{equation} \label{genoslequiv}
\call H_1 + \call H_0 (D^2_{ji} u^i)_{1 \leq i,j \leq N} \geq - \tilde M \msfb I \qquad \text{on} \ (\tau,T] \times (\R^d)^N,
\end{equation}
with
\begin{equation} \label{defnH01}
\begin{gathered}
\call H_0(t,\bs x) \defeq \mathrm{diag}\bigl(D^2_{pp} H^i(x^i,D_iu^i(t,\bs x))\bigr)_{1 \leq i \leq N},
\\
 \call H_1(t,\bs x) \defeq \mathrm{diag}\bigl(D^2_{px} H^i(x^i,D_iu^i(t,\bs x))\bigr)_{1 \leq i \leq N},
 \end{gathered}
\end{equation}
so in general a semimonotonicity bound for $u$ (that is, a lower bound on $(D^2_{ji} u^i)_{1 \leq i,j \leq N}$) does not imply one like \eqref{generalDmon}; not even when, for instance, if $p \mapsto H^i(x^i,p)$ is strongly convex and $D^2_{px} H^i \equiv 0$. As the sole propagation of semimonotonicity  seems to be no longer sufficient to close the estimates, we prove a further control from  \emph{above} of the same matrix: the operator norm of $(D^2_{ji} u^i)_{1 \leq i,j \leq N}$ is in fact strong enough to propagate the one-sided Lipschitz control \eqref{generalDmon} whenever the second derivatives of $H^i$ are bounded uniformly in $N$.

A road map for proving a more general version of the estimates can be then summarized as follows:
\begin{enumerate}
\item prove that \eqref{generalDmon} is sufficient in order to obtain the estimates of \Cref{sec_esti};
\item use the estimates to propagate a \emph{two-sided} Lipschitz bound for $(D_iu^i)_{1 \leq i \leq N}$.
\end{enumerate}
This program will be detailed in the last \Cref{sec_genH}.

\subsection{Further remarks} In this work, we propose a new PDE approach to the quantitative study of (Markovian) Nash equilibria in large population games. The analysis on the regularity of solutions to the Nash system developed here is, as we said, \textit{non-asymptotic}: it does not exploit the presence of a limit model, but it just requires some suitable scaling of the derivatives of the data with respect to $N$. Under the further assumption that certain asymptotic models can be formulated, this analysis leads to a convergence result that embraces in a unified way two scenarios of monotonicity. In addition, by employing the notion of semimonotonicity, we can consider at the same time the large time horizon $T$ case, where one needs almost monotonicity, and the short time $T$ case, where (almost) no monotonicity is needed, as in \cite{CCPjems}.

Besides the aforementioned approach via the Master Equation, let us mention that different probabilistic methods have been developed to tackle the convergence problem in MFG, within the framework of closed-loop equilibria. In \cite{PossamaiTangpi}, propagation of chaos for BSDEs arguments are employed. In \cite{Djete}, by making use of the notion of measure-valued MFG equilibria, convergence is studied for a general class of MFG of controls; remarkably, no uniqueness of MF equilibria is assumed here, as in the important works \cite{Lacker, Lacker2}, where semi-Markov equilibria are obtained by means of compactness arguments. 

By developing further the analytic approach proposed here, we plan to extend our convergence arguments in the following direction, for which, to the best of our knowledge, no results are yet available: the \Ds monotone setting appears to be robust enough to guarantee estimates that are not only independent of $N$, but also on $T$, thus allowing to study the infinite-horizon (or ergodic) problem. 

Finally, what is developed here is restricted to Hamiltonians of the form $H^i(x^i, D^i u^i)$. In our opinion, it would be interesting to explore this method in the framework of more general Hamiltonians $H^i(t,\bs x,D_i u^i)$, possibly with general growth in the $\bs x$ variable, or of problems involving major players. Moreover, in the very recent preprint \cite{CJR}, non-asymptotic estimates on the gap between closed-loop and open-loop equilibria are investigated for rather general differential games, borrowing and extending ideas from the present paper.

\subsection{Acknowledgements} The authors are partially supported by the Gruppo Nazionale per l'Analisi Matematica, la Probabilit\`a e le loro Applicazioni (GNAMPA) of the Istituto Nazionale di Alta Matematica (INdAM), Italy, and by the project funded by the EuropeanUnion--\penalty0\hskip0pt\relax NextGenerationEU under the National Recovery and Resilience Plan (NRRP), Mission 4 Component 2 Investment 1.1 -- Call PRIN 2022 No.\ 104 of February 2, 2022 of Italian Ministry of University and Research; Project 2022W58BJ5 (subject area: PE -- Physical Sciences and Engineering) ``PDEs and optimal control methods in mean field games, population dynamics and multi-agent models''. Cirant partially supported by the King Abdullah University of Science and Technology (KAUST) project CRG2021--4674 ``Mean Field Games: models, theory, and computational aspects''. Redaelli partially supported by Fondazione Cassa di Risparmio di Padova e Rovigo, Italy.

The authors would like to thank the anonymous reviewers whose comments and suggestions helped improve and clarify this manuscript.

\section{Recurring general notation}\label{sec_nota}

Before proceeding, we believe it is useful to fix some notation, a part of which has already been explained in the Introduction.

\subsection{Vectors and matrices}
 Given $m \in \N$, the \emph{all-ones vector} is denoted $1_m$.
The (Euclidean) \emph{inner product} is denoted by $\cdot$\,.
If $\bs x \in (\R^n)^N$ and $\bs y \in (\R^m)^N$, we define
\[
(\bs x : \bs y) \defeq ((x^j,y^j))_{1 \leq j \leq N} \in (\R^n \times \R^m)^N.
\]

 The transpose of a matrix $\msf M$ is denoted by $\msf M^\trn$.
 If $\bs \xi \in \R^m$, ${\rm diag}\,\bs \xi$ is the diagonal $m \times m$ matrix having $\xi^i$ as the $(i,i)$ entry.
Given a real $m \times m$ matrix $\msf M$, we write $\msf M \ge 0$ (even if $\msf M$ is not symmetric) whenever $\sum_{1 \le i,j \le m} \msf M_{ij} \xi^i \xi^j \ge 0$ for all $\bs \xi \in \R^m$.
 The space of all (real) \emph{symmetric $m\times m$ matrices} is $\Sym(m)$. The \emph{identity} matrix therein is $\msf I_m$, while the \emph{matrix of ones} is $\msf J_m$.
 
 The operator $\otimes$ can be both the \emph{Kronecker product} (between matrices) and the \emph{outer product} (between vectors). Then, for instance, $\msf J_m = 1_m \otimes 1_m$ and $\msf I_{Nd} = \msf I_N \otimes \msf I_d$.
An \emph{ad hoc} notation is used for the two matrices appearing in the Nash system: we write $\msfb I \defeq \msf I_{Nd}$ and $\msfb J \defeq \msf J_N \otimes \msf I_d$.

\subsection{Vectors with suppressed or replaced coordinates}
 Given a set $\call S$, $\bs x \in \call S^N$ and $i \in \{1,\dots,N\}$, we denote by $\bs x^{-i} \in \call S^{N-1}$ the vector $\bs x^{-i} = (x^1,\dots,x^{i-1},x^{i+1},\dots, x^N)$.
 The symbol $(\bs x^{-i},y)$, with $y \in \call S$, will be understood as the vector $\bs z \in \call S^N$ with $z^j = x^j$ if $j \neq i$ and $z^i = y$; that is, the vector $\bs x$ with the $i$-th coordinate replaced by $y$.
The order will be important in the sense that we will specify first the vector $\bs x$ and then the replacement $y$, so we point out that both the following \emph{not equivalent} notations will occur: given a function $h$ defined on $\call S^N$,
\[
h(\bs x^{-i},y) = h(x^1,\dots,x^{i-1},y,x^{i+1},\dots,x^N),
\]
but
\[
h(y,\bs x^{-i}) = h(y,x^1,\dots,x^{i-1},x^{i+1},\dots,x^N).
\]

\subsection{Measures and spaces of measures}
 Given two measurable spaces $\call X$ and $\call Y$, a measure $\mu$ on $\call X$ and a measurable map $\psi \colon \call X \to \call Y$, the \emph{push-forward} (or \emph{image}) \emph{measure} of $\mu$ via $\psi$ is the measure $\nu$ on $\call Y$ given by $\psi\mpush\mu \defeq \mu \circ \psi^{-1}$.
 
 Given separable Banach space $(\call X,|\var|)$ and $p \in [1,\infty)$, we will denote by $\Pc_p(\call X)$ the space of all probability measures $\nu$ on $\call X$ such that $\int_{\call X} |\var|^p \,\di\nu$ is finite, endowed with the $p$-Wasserstein distance $W_p$. Recall that $W_p$ is given by
\[
W_p(\mu,\nu)^p = \inf_{\varpi \in \Pi(\mu,\nu)} \int_{\call X \times \call X} |x-y|^p \,\varpi(\di x,\di y) \quad \forall\, \mu,\nu \in \Pc_p(\call X),
\]
where $\Pi(\mu,\nu)$ is the set of all couplings of $(\mu,\nu)$; that is, of all measures in $\Pc_p(\call X \times \call X)$ with $\mu$ and $\nu$ as their marginals (that is, if $\pi_j \colon \call X \times \call X \to \call X$ is the projection onto the $j$-th copy of $\call X$, $\pi_1\mpush\varpi = \mu$ and $\pi_2\mpush\varpi = \nu$).

 Given $N \in \N$ and $\bs x = (x^1,\dots,x^N) \in \call X^N$, we will use the notation $m_{\bs x} \defeq \frac1N \sum_{1 \leq i \leq N} \delta_{x^j}$; note that $m_{\bs x} \in \Pc_p(\call X)$ for all $p$ as above.

\subsection{Derivatives} Most functions we deal with are defined on $[0,T] \times (\R^d)^N$, with typical element $(t,\bs x)$, where $\bs x = (x^1,\dots,x^N)$, $x^i \in \R^d$ for all $i \in \{1,\dots,N\}$. The coordinates of $x^i$ will be in turn written as $x^{ij}$ with $j \in \{1,\dots,d\}$. We will refer to $t$ as \emph{time} (temporal coordinate) and on $\bs x$, $x^i$, $x^{ij}$ as \emph{space} (spatial coordinates).
 
 The derivative with respect to time is denoted by $\de_t$. 
 The gradient (or derivative) with respect to a spatial coordinate $y$ is $D_{y}$; in particular, we will use the shorter notations $D \defeq D_{\bs x}$ and $D_j \defeq D_{x^j}$. The same applies to higher order derivatives, so that, for instance, $D^2$ is the Hessian with respect to $\bs x$ and $D^2_{ij} = D^2_{x^ix^j} = D_{x^i}D_{x^j}$.
 
Note that, when seeing $D$ as a vector of $N$ vectors of $d$ differential operators, and $D_j$ as a vector of $d$ differential operators, one has that $D_j$ is the $j$-th coordinate of $D$. Analogously, when seeing $D^2$ as an $N \times N$ matrix of $d \times d$ matrices of differential operators, and $D^2_{ij}$ as a $d \times d$ matrix of differential operators, one has $(D^2)_{ij} = D^2_{ij}$; that is, the entry $(i,j)$ of $D^2$ is $D^2_{ij}$.

 The \emph{multi-index} notation is the usual one: for instance, given $\alpha = (\alpha_1,\dots,\alpha_N) \in \N^N$ and $h$ differentiable on $(\R^d)^N$, we have $D^\alpha h = D^{\alpha_1}_1\cdots D^{\alpha_N}_N h$.
 
 For functions depending on measures, the \emph{flat derivative} and the \emph{intrinsic derivative} with respect to a probability measure (according to the definitions given in~\cite[Section~2.2.1]{CDLL}) are $\frac{\delta}{\delta m}$ and $D_m$, respectively.

\subsection{Norms} Unless otherwise specified: 
the norm $|\var|$ is understood to be the \emph{Euclidean norm} when applied to a vector, the \emph{Frobenius norm} for matrices and, more generally, the \emph{full tensor contraction} for tensors (and according to this, inner products between tensors will be understood as tensor contractions) -- for example,
\[
|D^3_{ijk} u^j|^2 = \sum_{1 \leq h_i,h_j,h_k \leq d} \big|D^3_{x^{ih_i}x^{jh_j}x^{kh_k}} u^j \big|^2;
\]
 given $h : (\R^d)^N \to \R$, the \emph{supremum norms} of $h$ and $Dh$ are given by
\[
\|h\|_\infty = \sup_{\bs x \in (\R^d)^N} |h(\bs x)|, \qquad \|D h\|_\infty = \| |Dh| \|_\infty = \bigg\| \bigg( \sum_{j=1}^N |D_j h|^2 \bigg)^{\frac12} \bigg\|_\infty;
\]
 the supremum norm of a function which depends on more variables (for example, also on time) is understood with respect to all of them.

The \emph{$i$-th weighted norm} is defined by
\[
\norm{\bs x}^i \defeq \Bigl( |x^i|^2 + {\frac1{N}} \sum_{\substack{1\leq j\leq N \\ j\neq i}} |x^j|^2 \Bigr)^{\frac12}, \quad \bs x \in (\R^d)^N.
\]

\subsection{Particular symbols} Given a function $h$, we will write $h\bigl|_{\bs y}^{\bs x} \defeq h(\bs x) - h(\bs y)$. The symbols $\Dop$ and $Q$ are defined in \eqref{ENS_defcallD} and below; for $\LLop$, see \eqref{ENS_defcallL}; for $\bs\triangle$, see \eqref{ENS_defDeltah}.

\subsection{Assumptions} The main assumptions used in this paper are the following: \hyperlink{MF}{{\bfseries{(MF)}}} (see page \pageref{MFass}); \hyperlink{DS}{{\bfseries{(DS)}}} and \hyperlink{LS}{{\bfseries{(LS)}}}  (see page \pageref{DSass}); \hyperlink{H0}{{\bfseries{(H0)}}}, \hyperlink{H1}{{\bfseries{(H1)}}}, and \hyperlink{H1'}{{\bfseries{(H1$\bs'$)}}} (see page \pageref{Hass}); \hyperlink{S}{{\bfseries{(S)}}} and \hyperlink{LP}{{\bfseries{(LP)}}} (see page \pageref{LPass}).

\section{Summary of main results} \label{ENS_SMR}

\subsection{Two notions of (semi)monotonicity}

Along with the hypothesis of \emph{Mean-Field-like} interactions (which will be presented later in \Cref{ENS_sec:EOD}), our main structural assumption on the system will be of \emph{semimonotonicity} of the data, according to either one of the following definitions.

\begin{defn} Let $h \colon (\R^d)^N \to \R^N$, and assume that $D_i h^i$ exists for all $i \in \{1,\dots,N\}$. For $M \geq 0$, we say that $h$ is \emph{$M$-\Ds semimonotone} if
\begin{equation} \label{ENS_Dm}
\sum_{1\leq i \leq N} \,(D_i h^i(\bs x) - D_i h^i(\bs y))\cdot(x^i-y^i) \geq - M |\bs x - \bs y|^2 \quad \forall \, \bs x, \bs y \in (\R^d)^N
\end{equation}
and $M$ is called the \emph{\Ds semimonotonicity constant}. We simply say that $h$ is \textit{\Ds semimonotone} if \eqref{ENS_Dm} holds for some $M\geq0$. If \eqref{ENS_Dm} holds with $M = 0$, we say that $h$ is \textit{\Ds monotone}.
\end{defn}

In order to abridge the notation in what follows we shall write $h|_{\bs y}^{\bs x} = h(\bs x) - h(\bs y)$ and
\begin{equation} \label{ENS_defcallD}
\Dop[h](\bs x, \bs y) \defeq \sum_{1\leq i \leq N} D_i h^i\big|_{\bs y}^{\bs x} \cdot (x^i-y^i),
\end{equation}
so that, inequality~\eqref{ENS_Dm} will appear as $\Dop[h] \geq -M \QA$ on $(\R^d)^{2N} \simeq (\R^d)^N\times (\R^d)^N$, where $\QA = Q_{\msfb A}$ is the quadratic form induced on $(\R^d)^{2N}$ by the matrix $\msfb A \defeq 
\begin{psmallmatrix}1 & -1\\-1 & 1\end{psmallmatrix} \otimes \msfb I \in \Sym(2Nd)$.

\begin{defn} Let $h \colon (\R^d)^N \to \R^N$. For $\kappa \geq 0$, we say that $h$ is $\kappa$-\textit{\LLs semimonotone} if
\begin{equation} \label{ENS_Lm}
\sum_{1\leq i \leq N} \big( h^i(\bs x) - h^i(\bs x^{-i}, y^i) - h^i(\bs y^{-i}, x^i) + h^i(\bs y) \big) \geq -\kappa | \bs x - \bs y |^2 \quad \forall \, \bs x, \bs y \in (\R^d)^N
\end{equation}
and $\kappa$ is called the \emph{\LLs semimonotonicity constant}. 
We simply say that $h$ is \textit{\LLs semimonotone} if \eqref{ENS_Lm} holds for some $\kappa\geq0$.
If \eqref{ENS_Lm} holds with $\kappa = 0$, we also say that $f$ is \textit{\LLs monotone}.
\end{defn}

For the sake of brevity,  we define
\begin{equation} \label{ENS_defcallL}
\LLop[h](\bs x, \bs y) \defeq \sum_{1\leq i \leq N} \big( h^i(\bs x) - h^i(\bs x^{-i}, y^i) - h^i(\bs y^{-i}, x^i) + h^i(\bs y) \big),
\end{equation}
so that inequality~\eqref{ENS_Lm} will appear as $\LLop[h] \geq -\kappa \QA$ on $(\R^d)^{2N}$.

\begin{rmk}[Pointwise characterisation of semimonotonicity and comparison] \label{ENS_rmkDMeq}\label{ENS_rmkLLeq} \label{ENS_rmkLLtoDM}

If $h$ is of class $C^2$, then exploiting the fundamental theorem of calculus one easily shows that
\begin{equation*} 
\text{$h$ is $M$-\Ds semimonotone (that is, $\Dop[h] \geq -M \QA$ on $(\R^d)^{2N}$)} \quad \iff \quad (D^2_{ij} h^i)_{1\leq i,j \leq N} \geq -M\msfb I.
\end{equation*}

Analogously, applying the fundamental theorem of calculus twice to \eqref{ENS_Lm},
\[
\LLop[h](\bs x, \bs y) = \int_0^1 \int_0^1 \sum_{\substack{1 \leq i,j \leq N \\ j \neq i}} D^2_{ij} h^i(\bs z^{-i}_{s'}, z^i_{s}) (x^j - y^j) \cdot (x^i - y^i) \,\di s\,\di s',
\]
where we have set $\bs z_s \defeq s \bs x + (1-s) \bs y$; thus, one easily shows that
\[
\begin{multlined}[.95\displaywidth] 
\text{$h$ is $\kappa$-\LLs semimonotone (that is, $\LLop[h] \geq -\kappa \QA$ on $(\R^d)^{2N}$)} \\
\iff \quad (D^2_{ij} h^i)_{1 \leq i,j \leq N} - \mathrm{diag}\,(D^2_{ii} h^i)_{1 \leq i \leq N} \geq -\kappa \msfb I.
\end{multlined}
\]

At the level of second-order derivatives, one then observes that the two notions of semimonotonicity differ by the diagonal term
\begin{equation} \label{ENS_defDeltah}
\bs\triangle_h \defeq \mathrm{diag}\,(D_{ii}^2 h^i)_ {1\leq i \leq N}.
\end{equation}
Therefore, under some unilateral control of such a diagonal, they can be somehow compared: 
if $D_{ii}^2 h^i \geq -\gamma \msf I_d$ on $\R^d$, for some constant $\gamma > 0$ and all $i$,  \LLs semimonotonicity implies \Ds semimonotonicity
\[
\begin{cases}
\LLop[h] \geq -\kappa \QA & \text{on} \ (\R^d)^{2N} \\
\bs\triangle_h \geq -\gamma \msfb I & \text{on} \ (\R^d)^N
\end{cases}
\quad \implies \quad
\Dop[h] \geq -(\kappa + \gamma ) \QA \ \ \text{on} \ (\R^d)^{2N}.
\]
On the other hand, 
if $D_{ii}^2 h^i \leq \gamma \msf I_d$ on $(\R^d)^N$,  \Ds semimonotonicity implies \LLs semimonotonicity
\[
\begin{cases}
\Dop[h] \geq -M \QA & \text{on} \ (\R^d)^{2N} \\
\bs\triangle_h \leq \gamma \msfb I & \text{on} \ (\R^d)^N
\end{cases}
\quad \implies \quad
\LLop[h] \geq -(M + \gamma ) \QA \ \ \text{on} \ (\R^d)^{2N}.
\]
This kind of comparison is coherent with the one that is made in the fully symmetric MFG case, see for example \cite{gangbo}.
\end{rmk}

\begin{rmk}
Having a look at the second-order characterisation above, one notes that the notion of \LLs monotonicity is rather strict, in the sense that only functions with a very particular structure can be \LLs monotone. In fact, for $h$ to be \LLs monotone one needs the \emph{zero-diagonal} matrix $(D^2_{ij} h^i)_{1 \leq i,j \leq N} - \triangle_h$ to be positive semidefinite (in the generalised sense that its symmetrisation is positive semidefinite), which happens only if such a matrix is the null one. Hence this forces each $h^i$ to have the form $h^i(x) = h_0^i(x^i) + h_1^i(x^{-i})$, for some $h_0^i \colon \R^d \to \R$ and $h_1^i \colon (\R^d)^{N-1} \to \R$.
\end{rmk}

Nevertheless, for a given $h$, it is clear that the two constants involved in the definitions may differ substantially, hence one should not regard the two notions as comparable at all, especially when $\kappa$ or $M$ are small, that will be likely in our analysis. In the limit case that $f$ be \Ds \textit{monotone}, it will be false in general that $f$ is also \LLs \textit{monotone}, and vice versa.

\begin{rmk}[Relationship with monotonicity in the \textit{Mean Field } setting]\label{rmk_relationship}
Consider the case $f^i(\bs x) = F(x^i, m_{\bs x})$ for all $i \in \{1, \dots, N\}$, where $F : \R^d \times \Pc(\R^d) \to \R$ is 
such that $D_m F$ is globally $L$-Lipschitz with respect to the ($|\cdot|, W_2, |\cdot|)$ distance.

\noindent $\bullet$ \ Suppose that $F$ is \textit{displacement monotone}; that is,
\begin{equation}\label{FDMr}
\int_{\R^d \times \R^d} \big( D_x F(x, m_1) - D_x F(y, m_2) \big)\cdot(x-y) \,\mu(\di x, \di y) \geq  0
\end{equation}
for every $m_1, m_2 \in  \Pc(\R^d)$ and $\mu \in \Pc(\R^d \times \R^d)$ having $m_1$ and $m_2$ as first and second marginal, respectively. We show that $f$ is ``almost'' \Ds monotone. Indeed, for every $i$,
\[
D_i f^i (\bs x) = D_x F(x^i, m_{\bs x}) + \frac1N \, D_m F(x^i, m_{\bs x}, x^i),
\]
hence
\[
\begin{multlined}[.95\displaywidth]
\sum_{1\leq i \leq N} \,(D_i f^i(\bs x) - D_i f^i(\bs y))\cdot(x^i-y^i) \\
\begin{multlined}[.9\displaywidth]
=
N \int_{(\R^d)^2} (D_x F(x, m_{\bs x}) - D_x F(y, m_{\bs y}) )\cdot(x-y) \, \frac1N\!  \sum_{1\leq i \leq N} \delta_{(x^i, y^i)} (\di x, \di y) \\
+
\frac1N \sum_{1\leq i \leq N} (D_m F(x^i, m_{\bs x}, x^i) - D_m F(y^i, m_{\bs y}, y^i))\cdot(x^i-y^i).
\end{multlined}
\end{multlined}
\]
The former term on the right-hand side is nonnegative by the displacement monotonicity of $F$. The latter one can be estimated in absolute value, assuming that $D_m F$ is globally $L$-Lipschitz with respect to the ($|\cdot|, W_2, |\cdot|)$ distance, by
\[
\frac L N\, W_2(m_{\bs x}, m_{\bs y}) \sum_{1\leq i \leq N}  |x^i-y^i| + \frac{2L}N \sum_{1\leq i \leq N} |x^i-y^i|^2 \leq \frac {3L} N \sum_{1\leq i \leq N}  |x^i-y^i|^2.
\]
Therefore, $f$ is $\frac{3L}{N}$-\Ds semimonotone.
More generally, if the right-hand side of \eqref{FDMr} is not zero, but $-M \int_{\R^d \times \R^d} |x-y|^2 \,\mu(\di x, \di y)$, then $F$ is just displacement semimonotone, as in \cite[Definition 2.7]{gangbo}. In such a case, $f$ becomes $\big(M + \frac{3L}{N} \big)$-\Ds semimonotone.
\smallskip

\noindent $\bullet$ \ Suppose now that $F$ is \textit{monotone} in the Lasry--Lions sense; that is, for all $m_1, m_2 \in  \Pc(\R^d)$,
\begin{equation}\label{FLLmr}
\int_{\R^d} \big( F(z, m_1) - F(z, m_2) \big) (m_1 - m_2)(\di z) \geq 0.
\end{equation}
Then,
\[
\begin{multlined}[.95\displaywidth]
\sum_{1\leq i \leq N} \bigl( f^i(\bs x) - f^i(\bs x^{-i}, y^i) - f^i(\bs y^{-i}, x^i) + f^i(\bs y) \bigr) \\
\begin{multlined}[.9\displaywidth]
 =
N \int_{\R^d }  \big(F(z, m_{\bs x}) - F(z, m_{\bs y})\big) (m_{\bs x} - m_{\bs y})(\di z) \\
+
\sum_{1\leq i \leq N} \biggl( F(y^i, m_{\bs x}) - F\Bigl(y^i, m_{\bs x} + \frac{\delta_{y^i}- \delta_{x^i} }N\Bigr) - F\Bigl(x^i, m_{\bs y} + \frac{\delta_{x^i}- \delta_{y^i} }N\Bigr)+ F(x^i, m_{\bs y}) \biggr).
\end{multlined}
\end{multlined}
\]
While the former term on the right-hand side is nonnegative by the monotonicity of $F$, the latter one can be written as
\[
\begin{multlined}[.95\displaywidth]
\frac1N\sum_{1\leq i \leq N} \int_0^1 \Bigl( \frac{\delta F}{\delta m}(y^i, m_s^i, x^i ) - \frac{\delta F}{\delta m}(y^i, m_s^i, y^i )  - \frac{\delta F}{\delta m}(x^i, m_s^i, x^i )  + \frac{\delta F}{\delta m}(x^i, m_s^i, y^i ) \Bigr)  \di s \\
+ \frac1N\sum_{1\leq i \leq N} \int_0^1 \Bigl( \frac{\delta F}{\delta m}(x^i, m_s^i, x^i ) - \frac{\delta F}{\delta m}(x^i, m_s^i, y^i )   - \frac{\delta F}{\delta m}(x^i, \hat m_s^i, x^i ) + \frac{\delta F}{\delta m}(x^i, \hat m_s^i, y^i ) \Bigr) \di s 
\end{multlined}
\]
where $\frac\delta{\delta m}$ denotes the flat derivative (see, e.g., \cite[Definition~2.2.1]{CDLL}) and we set $m_s^i = m_{\bs x} + \frac{1-s}N (\delta_{y^i}- \delta_{x^i})$ and $\hat m_s^i = m_{\bs y} + \frac{1-s}N (\delta_{x^i}- \delta_{y^i})$. Assuming again that $D_m F$ is globally $L$-Lipschitz with respect to the ($|\cdot|, W_2, |\cdot|)$ distance, using the fundamental theorem of calculus these two sums can be bounded, respectively, by $\frac{L}N |\bs x - \bs y|^2$
and
\[
\begin{split}
&\Biggl\lvert \frac1N \sum_{1\leq i \leq N} \biggl(\, \int_0^1 \int_0^1 D_m F(x^i,\var, s'x^i + (1-s')y^i)\Bigl|^{m_s^i}_{\hat m_s^i} \,\di s' \di s \biggr) \cdot (x^i - y ^i) \Biggr\rvert \\
&\ \leq \frac LN \sum_{1\leq i \leq N} \int_0^1 W_2(m_s^i,\hat m_s^i) \,\di s \, |x^i - y ^i|  
\leq \frac LN\, \Bigl( \frac1N \sum_{1 \leq i \leq N} |x^i-y^i|^2 \Bigr)^{\frac12} \! \sum_{1\leq i \leq N} |x^i - y ^i|  
\leq \frac{L}{N}\, |\bs x - \bs y|^2 \,, 
\end{split}
\]
where the penultimate estimate comes from the fact that $m_{(\bs x:\bs y)} + \frac{1-s}N(\delta_{(y^i,x^i)} - \delta_{(x^i,y^i)})$ is a coupling of $(m^i_s,\hat m^i_s)$.
Therefore, $f$ is $\frac{2L}N$-\LLs semimonotone. More generally, if the right-hand side of \eqref{FLLmr} is not zero, but $-M W_2(m_1, m_2)^2$, then $f$ becomes $\big(M + \frac{2L}{N}\big)$-\LLs semimonotone.
\end{rmk}

\subsection{Estimates on the Nash system} \label{ENS_sec:EOD}

We suppose that all data $f^i$ and $g^i$ are of class $C^2$ and $C^4$, respectively (namely, they are continuous and have continuous derivatives up to the second and fourth order, respectively), with bounded derivatives. The Hamiltonians $H^i$ are of class $C^3$.
Solutions to the Nash system are assumed to be classical,  $C^1$ in time and $C^4$ in space, with bounded derivatives. Moreover, we require their second order derivatives in space to be uniformly continuous.
Note that we are not assuming that $f^i$ and $g^i$ are globally bounded here: they may have linear growth.

As prefigured, we work in a regime of \emph{Mean-Field-like interactions}; that is, we assume the following: 

\begin{itemize}[align=left,topsep=7pt]
\item[\hypertarget{MF}{{\bfseries{(MF)}}}]\label{MFass} for $h \in \{f,g\}$, there exists $L_h > 0$, independent of $N$, such that
\begin{gather*}
\sup_{1 \leq  i \leq N} \big(\|D_i h^i\|_\infty + \|D^2_{ii} h^i\|_\infty\big) \leq L_h, \\
\sup_{1 \leq  i \leq N} \Big(\sup_{\substack{1\leq k \leq N \\ k \neq i}} \|D_k h^i\|_\infty +  \sum_{\substack{1\leq k \leq N \\ k \neq i}} \|D^2_{ik} h^k\|^2_\infty\Big) \leq \frac{L_h^2}{N},
\end{gather*}
and in addition
\begin{gather*}
\sup_{1\leq k \leq N}  \sum_{1\leq i\leq N} \norm{D(D_k g^i)}^2_\infty +  \sum_{\substack{1\leq k,i \leq N \\ i\neq k}} \norm{D(D^2_{ki}g^i)}_\infty^2 \leq L^2_g, \\
\sup_{1\leq i \leq N} \sum_{\substack{1\leq k\leq N \\ k \neq i}} \norm{D(D_k g^i)}_\infty^2 \leq \frac{L^2_g}N.
\end{gather*}
\end{itemize}

We stress the fact that these bounds are of Mean Field  type in that they provide the weight with respect to $N$ which one expects in the Mean Field  setting, as described in the remark below. Yet, we are not asking for any symmetry with respect to the variables of $h$; that is, we are essentially dropping the classic indistinguishability (or exchangeability) hypothesis of Mean Field  Games. 


\begin{rmk} \label{ENS_rmkMFMFC}
Assume that $f^i(\bs x) = F^i(x^i, m_{\bs x^{-i}})$; recall that $m_{\bs x^{-i}} = \frac1{N-1} \sum_{j \neq i} \delta_{x^j}$. Suppose that $F^i \colon \R^d \times \Pc_1(\R^d) \to \R$ is Lipschitz continuous with Lipschitz constant $L > 0$. Since for any $v \in \R^d$ and $k \neq i$
\[
\begin{multlined}[.95\displaywidth]
|f^i( x^1, \ldots,  x^{k-1},  x^k + v,  x^{k+1}, \ldots )- f^i(\bs x)| \\
\begin{split}
&\leq L \, W_1\Big(\frac1{N-1} \sum_{j \neq \{i,k\}} \delta_{ x^j} + \frac1{N-1}\delta_{  x^k + v},  \frac1{N-1} \sum_{j \neq i} \delta_{ x^j} \Big) \leq \frac{L}{N+1}\,|v|,
\end{split}
\end{multlined}
\]
one easily checks that the assumptions on $f^i$ in \hyperlink{MF}{{\bfseries{(MF)}}} hold, independently of $N$, provided that
\begin{equation}\label{FDxFLip}
\text{$F^i$ and $D_x F^i$ are Lipschitz continuous with respect to the $(|\cdot|, W_1)$ distance},
\end{equation}
since in such a case $\|D_k F^i\|_\infty$ and $\|D^2_{ki} F^i\|_\infty$ remain bounded proportionally to $1/N$ uniformly with respect to $k \neq i$ and $N$.

On the other hand, the assumptions on $g^i$ in \hyperlink{MF}{{\bfseries{(MF)}}} are a bit more restrictive. This is because several estimates below exploit the regularisation effect of the diffusion, hence bounds on $D^\alpha u^i$ will depend on $D^{\alpha-1} f^i$ and on $D^{\alpha} g^i$. If $g^i(\bs x) = G^i( x^i, \frac1{N-1} \sum_{j \neq i} \delta_{ x^j})$, where $G^i \in C^1(\R^d \times \Pc_1(\R^d))$, then, whenever $j \neq i$,
\[
D_j g^i(\bs x) = \frac1{N-1} D_m G^i\Big( x^i, \frac1{N-1} \sum_{j \neq i} \delta_{ x^j},  x^j \Big),
\]
see \cite[Remark 6.1.2]{CDLL}.  Therefore, for \hyperlink{MF}{{\bfseries{(MF)}}} to hold one should require \eqref{FDxFLip} with $G$ in lieu of $F$ and that
\begin{equation*}
\text{
$D_m G^i$ and $D_x D_m G^i$ are Lipschitz continuous with respect to the $(|\cdot|, W_1, |\cdot|)$ distance. }
\end{equation*}
\end{rmk}

A priori estimates, useful in studying the limit $N \to \infty$, are derived on the $N$-dimensional Nash system under one of the following two additional structural assumptions on $h \in \{f,g\}$ of \emph{semimonotonicity}: 
\begin{itemize}[align=left, topsep=8pt, itemsep=3pt]
\item[\hypertarget{DS}{{\bfseries{(DS)}}}]\label{DSass} there exists a constant $M_h > 0$, independent of $N$, such that $h$ is $M_h$-\Ds semimonotone;
\item[\hypertarget{LS}{{\bfseries{(LS)}}}] there exists a constant $\kappa_h \geq 0$, independent of $N$, such that $h$ is $\kappa_h$-\LLs semimonotone.
\end{itemize}

Finally, the assumptions on the Hamiltonians $H^i$ are as follows:
\begin{itemize}[align=left, topsep=8pt, itemsep=3pt]\label{Hass}
\item[\hypertarget{H0}{{\bfseries{(H0)}}}]\label{H0ass} there exists a constant $C_H$ independent of $N$ such that the second and third derivatives of $H^i$ are bounded by $C_H$, as well as its first derivative with respect to $x$, while
\[
|D_p H^i(x,p)| \leq C_H(1+|p|);
\]
\item[\hypertarget{H1}{{\bfseries{(H1)}}}]\label{H1ass} there exists $\lambda_H>0$ independent of $N$ such that
\[
-D_x H^i\bigr|^{(x,p)}_{(\bar x,\bar p)} \cdot (x - \bar x) + D_p H^i\bigr|^{(x,p)}_{(\bar x, \bar p)} \cdot (p - \bar p) \geq \lambda_H \Bigl| D_p H^i\bigr|^{(x,p)}_{(\bar x, \bar p)} \Bigr|^2 \qquad \forall\, x,p \in \R^d
\]
(in terms of the Lagrangians $L^i=L^i(x,a)$, this means that they are $C^2$ and convex, strongly with respect to $a$, uniformly in $N$);
\item[\hypertarget{H1'}{{\bfseries{(H1$\bs{'}$)}}}]\label{H1'ass} each Hamiltonian $H^i(x,p)$ is strongly convex with respect to $p$, uniformly in $x$ and $N$; that is, $D^2_{pp} H^i \geq \Lambda_H \msf I_d$, for some $\Lambda_H>0$ independent of $N$.
\end{itemize}

\begin{rmk}
We are going to use the two convexity assumptions {\bfseries(\hyperlink{H1}{H1})} and {\bfseries(\hyperlink{H1'}{H1$\bs{'}$})} in the study of point (2) of the road map presented in \Cref{subsgenH}. Note that they are both verified whenever $|D_a L^i|$ is bounded uniformly in $N$; nevertheless, we stated them separately since we are going to use the former in the \Ds semimonotone case (that requires joint convexity in all the variables), and the latter in the \LLs semimotonone case (that requires convexity only in the velocity variable).
\end{rmk} 

Our first main result is the following. It consists in the merging of \Cref{ENS_mainthmDS,ENS_mainthmLL} when the Hamiltonians are purely quadratic; its proof for general $H^i$ is discussed in \Cref{sec_genH}.

\begin{thm}[Estimates on the Nash system with semimonotone data] \label{ENS_mainthm} Let \hyperlink{H0}{{\bfseries{(H0)}}} be in force.
Assume Mean-Field-like and semimonotone interactions; that is, assume \hyperlink{MF}{{\bfseries{(MF)}}}, and either \hyperlink{DS}{{\bfseries{(DS)}}}--\hyperlink{H1}{{\bfseries{(H1)}}} or \hyperlink{LS}{{\bfseries{(LS)}}}--\hyperlink{H1'}{{\bfseries{(H1$\bs'$)}}}. If, given $T > 0$, the semimonotonicity constants are sufficiently small (or vice versa), and $N \in \N$ is sufficiently large, 
any solution $u$ to the Nash system~\eqref{ENS_NS} on $[0,T] \times (\R^d)^N$ satisfies
\[
\sup_{1 \leq i \leq N} \bigg( \sup_{\substack{1\leq j \leq N \\ j \neq i}} \norm{D_j u^i}_\infty +  \bigg\lVert \sum_{\substack{1\leq j\leq N \\ j \neq i}} \big| D^2_{ij} u^j\big|^2 \bigg\rVert_\infty +  \sum_{\substack{1\leq j \leq N \\ j \neq i}}\norm{D(D_j u^i)}_\infty^2 \bigg) \lesssim \frac1N
\]
and
\[
\sup_{1 \leq i \leq N} \norm{D u^i}_\infty + \sup_{1 \leq i \leq N} \sum_{1\leq j \leq N}\norm{D(D_i u^j)}_\infty^2 + \sum_{\substack{1\leq i,j \leq N \\ j\neq i}} \norm{D(D_{ij} u^j)}_\infty^2 \lesssim 1,
\]
where the implied constants are independent of $N$. In addition, $u$ shares the same type of semimonotonicity of the data.
\end{thm}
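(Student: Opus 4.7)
The plan is to follow the bootstrap scheme outlined in the introduction, in which a priori control of the semimonotonicity of $u$ and a priori estimates on its derivatives reinforce each other. Fix $T > 0$ and introduce the ``invariant set'' by an a priori condition on $u$: in the \hyperlink{DS}{{\bfseries{(DS)}}} case, that the feedback vector $(D_i u^i)_i$ is $M$\Hyphdash*\Ds semimonotone for some $M$ independent of $N$; in the \hyperlink{LS}{{\bfseries{(LS)}}} case, that $(u^i)_i$ is $\kappa$\Hyphdash*\LLs semimonotone and, separately, that $\bs\triangle_u \geq -\Gamma \msfb I$, for $\kappa,\Gamma$ independent of $N$. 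By \Cref{ENS_rmkLLtoDM}, the latter combination implies an effective \Ds semimonotonicity of the drift, so both scenarios feed into the same downstream estimates.

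Step (1) consists in deriving, under these a priori conditions alone, all the bounds in the statement. I would rewrite the Nash system in the linearised form
\[
-\de_t u^i - \tr((\sigma\msfb I + \beta \msfb J)D^2 u^i) + \sum_{1 \leq j \leq N}(-\alpha^{*,j})\cdot D_j u^i = f^i + \tfrac12 |D_i u^i|^2,
\]
with $\alpha^{*,j} = -D_j u^j$ carrying a one-sided Lipschitz control thanks to the a priori assumption. Then two classical PDE engines apply: the method of doubling variables, which uses \eqref{osintro} directly and is well suited to $L^\infty$ estimates on $D_i u^i$ and on the off-diagonal gradients $D_j u^i$ (with the $1/N$ weight produced by \hyperlink{MF}{{\bfseries{(MF)}}} and by the weighted norm $\norm{\cdot}^i$); and the Bernstein method, which uses the pointwise form \eqref{osintrod} and is well suited to controlling auxiliary quantities such as $\sum_{j \neq i} |D^2_{ij} u^j|^2$ and $\sum_j |D(D_j u^i)|^2$. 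First-order and diagonal second-order bounds close equation by equation, while the off-diagonal and third-order bounds have to be proved simultaneously for all $i$ by summing Bernstein inequalities over $i$ and exploiting the semimonotone sign of the aggregate transport term.

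Step (2) consists in verifying that solutions of the Nash system actually lie inside the invariant set. In the \hyperlink{DS}{{\bfseries{(DS)}}} case I would differentiate the Nash system twice and examine the evolution of
\[
\Phi(t,\bs x,\bs y) \defeq \Dop[(D_i u^i)(t,\var)](\bs x,\bs y) + M |\bs x - \bs y|^2
\]
along a pair of independent copies of the optimal SDE \eqref{ENS_NcSDEs}, as suggested by \Cref{ENS_dscprop}. The quadratic Hamiltonian produces algebraic cancellations, leaving $\Dop[(D_i f^i)] \geq -M_f|\bs x - \bs y|^2$ from \hyperlink{DS}{{\bfseries{(DS)}}} plus a residual involving at most third-order derivatives of $u^j$ with $j \neq i$. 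The Mean-Field-type bounds from Step (1) show that this residual is of order $N^{-1}|\bs x - \bs y|^2$; choosing $M_f$ small, the running constant $M$ properly, and $N$ large, a parabolic maximum principle (with terminal condition provided by the $M_g$\Hyphdash*\Ds semimonotonicity of $g$) yields $\Phi \geq 0$ on $[0,T]$ with a strictly improved constant, closing the bootstrap and simultaneously proving that $u$ inherits the semimonotonicity of the data. In the \hyperlink{LS}{{\bfseries{(LS)}}} case the same maximum principle is run on $\LLop[(u^i)]$ for the off-diagonal component (cf.\ \Cref{ENS_LLmprop}) and on a separate Bernstein-type invariant for $\bs\triangle_u$.

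The hardest part, I expect, is closing Step (2) in the \hyperlink{LS}{{\bfseries{(LS)}}} case. The evolution of $\LLop[(u^i)]$ does not autonomously close because the quadratic Hamiltonian couples off-diagonal and diagonal Hessian terms; the diagonal pieces must be kept under independent control by a second invariance argument for $\bs\triangle_u$, and only once both pieces are matched can one invoke \Cref{ENS_rmkLLtoDM} to recover the effective \Ds-type input that feeds into Step (1). A parallel difficulty in Step (1) is choosing the correct weights in the Bernstein test functions so that the summed-over-$i$ inequality closes with the correct $1/N$ scaling on the off-diagonal terms while leaving the diagonal terms of order one; this is the structural reason the estimates in the statement naturally split into the two different scales.
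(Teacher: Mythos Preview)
Your proposal is correct and follows essentially the same two-step bootstrap as the paper: Step~(1) is the content of \Cref{sec_esti} (doubling variables for \Cref{ENS_estv,ENS_estd1}, Bernstein-type arguments for \Cref{ENS_d2b,ENS_d2N,ENS_uijestp,ENS_d2N',ENS_d3b}), and Step~(2) is \Cref{ENS_dscprop} in the \Ds case and \Cref{ENS_LLmprop,ENS_cruboprop} in the \LLs case, with the final continuity-in-$\tau$ argument closing the loop in \Cref{ENS_mainthmDS,ENS_mainthmLL}. One technical correction: in Step~(2) the two copies of the optimal SDE must be coupled \emph{synchronously} (driven by the same Brownian motions $B^i,W$), not taken independent as you wrote; with independent noises the cross-variation in $\di|\bs X_t-\bs Y_t|^2$ does not vanish and produces an $O(N)$ term that destroys the Gr\"onwall estimate \eqref{ENS_knowest} on which the improvement hinges.
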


To clarify the relationship between $T$ and the semimonotonicity constants, and in particular the ``vice versa'' in the previous statement, note that we are requiring in fact the semimonotonicity constants to be less than some threshold, which depends on $T$ in such a way that it vanishes as $T \to \infty$ and explodes as $T \to 0$ (at suitable rates). In other words, \Cref{ENS_mainthm} holds both for fixed time horizon $T$ and small semimonotonicity constants, and for fixed semimonotonicity constants and small time $T$.

As discussed in the introduction, to prove this result we proceed as follows. First, if the solution $u$ to the Nash system is semimonotone, then the desired estimates on the derivatives hold; then, the semimonotonicity of the data provide semimonotonicity $u$ near the final time $T$ in such a way that, as a consequence of the aforementioned estimates, the semimonotonicity of $u$ ``propagates'' up to time $0$, provided that $N$ is large enough. While this strategy works well in the purely quadratic case, we need an additional technical step when $H^i$ is not purely quadratic.

This is done for \Ds semimonotone data first. Then, making use of the relationship highlighted in \Cref{ENS_rmkLLeq} between the two notions of semimonotonicity, with little additional effort the case of \LLs semimonotone data is covered as well.

\subsection{Nonsymmetric Mean Field  Games} \label{ENS_sumNMFGs} \label{ENS_secADS}

In the second part of the work, we show that the estimates contained in  \Cref{ENS_mainthm} allow to attack the convergence problem in the $N \to \infty$ limit. To be sure that, for any fixed $N$, solutions to the Nash system exist, we assume that $f^i$ and $g^i$ belong to $C^{2+\alpha}((\R^d)^N)$ and $C^{4+\alpha}((\R^d)^N)$ respectively (in the usual sense of, e.g., \cite[Section~3.1]{KryHS}), for every $i$ and $N$. By \Cref{ENS_thmexbdd}, solutions $u^i$ indeed exists and are unique, and belong to $C^{2+\alpha/2, 4+\alpha}([0,T] \times (\R^d)^N)$. We have then enough regularity to apply the a priori estimates obtained in first part.

We work, for the sake of simplicity, without common noise; that is, we set $\beta = 0$ in \eqref{ENS_NcSDEs} and thus in \eqref{ENS_NS}. Then we set $\sigma = 1$, and we address for simplicity only the case of quadratic Hamiltonians $H^i(x^i, p) = \frac12|p|^2$.

Besides these reductions, we let $\Lambda \defeq [0,1]$ and we suppose that, for $h \in \{f,g\}$, the following symmetry assumption is fulfilled: 
\begin{itemize}[align=left,topsep=7pt]
\item[\hypertarget{S}{{\bfseries{(S)}}}] \label{Sass}
in the $N$-dimensional Nash system, $h^i(\bs x) = h_N(\lambda_N^i, x^i, \bs x^{-i})$, for some \emph{bounded} sequence of maps $h_N \colon \Lambda \times \R^d \times (\R^d)^{N-1} \to \R$, $N \in \N$, that are \emph{symmetric} on $(\R^d)^{N-1}$, and for some $\lambda_N^i \in \Lambda$;
\end{itemize}
more explicitly, the assumption of symmetry means that for any $\varsigma \in S_{N-1}$ (that is, $\varsigma$ which is a permutation on $\{1,\dots,N-1\}$) 
one has
\[
h_N(\lambda, y, \bs z^{\varsigma}) = h_N(\lambda, y, \bs z), \qquad \bs z^{\varsigma} \defeq (z^{\varsigma(1)},\dots,z^{\varsigma(N-1)}),
\]
for all $\lambda \in \Lambda$, $y \in \R^d$ and $\bs z \in (\R^d)^{N-1}$.

\begin{rmk} \label{whenlconst}
Letting $\lambda_N^i = \lambda \in \Lambda$ for all $N,i$, one obtains standard symmetric MFG interactions, so that $u^i(\bs x) = u^1(x^i,\bs x^{-i})$.
\end{rmk}

\begin{rmk} \label{ENS_rmkfdp1}
A bounded and continuous function $h_N \colon (\R^d)^N \to \R$ that is symmetric on its domain can be seen as a ``finite-dimensional projection'' of the function $\frk h_N \colon \Pc_p(\R^d) \to \R$. Indeed, suppose that $h_N$ has a modulus of continuity $\omega$ with respect to the Euclidean distance (that is, $|h_N(x) - h_N(y)| \leq \omega(|x-y|)$), and define
\[
\frk h_N(m) \defeq \inf_{\bs x \in (\R^d)^N} \{ h_N(\bs x) + \tilde \omega(W_p(m_{\bs x},m)) \}\,,
\]
with $\tilde \omega \defeq \omega(N\var)$. It is known (cf.~\cite[Theorem~2.1]{COTFNT}) that there exists $\varsigma \in S_N$ such that
\[
W_p(m_{\bs x}, m_{\bs y}) = N^{-\frac1p} \bigg( \sum_{1 \leq i \leq N} \big|x^i - y^{\varsigma(i)}\big|^p \bigg)^{\frac1p} = N^{-\frac1p} |\bs x - \bs y^{\varsigma}|_p \quad \forall\, \bs x, \bs y \in (\R^d)^N,
\]
hence $|h_N(x)-h_N(y)| \leq \tilde\omega(W_p(m_x,m_y))$, by the symmetry of $h_N$ and H\"{o}lder's inequality; then one readily checks that $h_N(\bs x) = \frk h_N(m_{\bs x})$.
\end{rmk}

With this additional structure on the data, we require \emph{Lipschitz continuous} dependence on the \emph{parameter} $\lambda$: for $h \in \{f,g\}$, $h = h(\lambda,y,\bs z)$ as above,
\begin{itemize}[align=left,topsep=7pt]
\item[\hypertarget{LP}{{\bfseries{(LP)}}}]\label{LPass} there exist a $L_\Lambda > 0$, independent of $N$, such that
\[
\norm{D_y^k h_N(\lambda,\var) - D_y^k h_N(\lambda',\var)}_\infty \leq L_\Lambda |\lambda-\lambda'|
\qquad \forall\, \lambda,\lambda' \in \Lambda,
\]
for $k=0$ if $h = f$ and $k \in \{0,1\}$ if $h=g$.
\end{itemize}

In order to specify the choice of $\bs\lambda \in \Lambda^N$ with which a Nash system is built, we use the notation $u_{\bs\lambda}$ to denote the solution to~\eqref{ENS_NS} on $[0,T] \times (\R^d)^N$ where $h^i = h_N(\lambda_i, \var)$ for $h \in \{f,g\}$, $i \in \{1,\dots,N\}$. Then, we focus on the map
\[
\frk u_N \colon (\bs\lambda,t,\bs x) \mapsto u_{\bs\lambda}^1(t,\bs x),
\]
which is representative of any solution of an $N$-dimensional Nash system of the form we are considering, as the following remark shows.

\begin{rmk}
A solution $u_{\bs\lambda}$ is recovered by noticing that $u^i_{\bs\lambda}(t,\bs x) = \frk u_N(\bs\lambda^{\varsigma},t,\bs x^{\varsigma})$ for any $\varsigma \in S_N$ such that $\varsigma(1) = i$,
as by construction $\frk u_N$ is symmetric on $(\Lambda \times (\R^d))^{N-1}$, in the sense that it is invariant under permutations of the pairs $(\lambda^j, x^j)$, $j > 1$. Similarly, the optimal controls $D_i u^i$ are recovered from $D_1 \frk u$. Heuristically, this means that switching two players does not affect the value functions, but the two corresponding labels need to be switched correspondingly in the Nash system.
\end{rmk}

As the first coordinate of $\bs x \in (\R^d)^N$ and of $\bs\lambda \in \Lambda^N$ have a distinct role for $\frk u$ with respect to the other coordinates, we use the notations $\bs x = (x,\hbs x) \in \R^d \times (\R^d)^{N-1}$ and $\bs \lambda = (\lambda, \hbs\lambda) \in \Lambda \times \Lambda^{N-1}$. Also, we write $D_x = D_1$ for the derivative with respect to $x \in \R^d$ and $D_j$ for the one with respect to $\hat x^j \in \R^d$.

Our main contribution contained in this part consists in identifying a limit function and showing that it satisfies a certain representation formula.  This in turn implies that a solution to a generalised Mean Field system can be constructed. These results are contained in \Cref{ENS_rmkconvdiuil,ENS_convfinita,ENS_MFGthm} and we summarise them here, omitting the more technical integral characterisation provided by the second theorem.

\begin{thm} \label{ENS_thmMFsumm} Assume that $H^i = \frac12|p|^2$, $\beta = 0$, $\sigma = 1$.
Let assumptions \hyperlink{MF}{{\bfseries{(MF)}}}, \hyperlink{S}{{\bfseries{(S)}}} and \hyperlink{LP}{{\bfseries{(LP)}}} be in force. Assume also that one between \hyperlink{DS}{{\bfseries{(DS)}}} and \hyperlink{LS}{{\bfseries{(LS)}}} holds, with the corresponding semimonotonicity constants being such that, given $T>0$, the thesis of \Cref{ENS_mainthm} holds.

 Then, there exists a Lipschitz continuous map $U \colon \Lambda \times [0,T] \times \R^d \times \call P_2(\Lambda \times \R^d) \to \R$ with bounded derivative $D_x U$ which is Lipschitz continuous on $\Lambda \times \R^d \times \Pc_2(\Lambda \times \R^d)$ and $\frac13$-H\"{o}lder continuous on $[0,T]$, such that, up to a subsequence,
\[
\sup | D_x^k \frk u_N({\bs\lambda},t,\bs x) - D_x^k U(\lambda,t,x,m_{(\hbs\lambda:\hbs x)}) | \,\xrightarrow[N \to \infty]{}\, 0, \quad k \in \{0,1\},
\]
whenever the supremum is taken over a set of the form
\[
\bigl\{ (\bs\lambda,t, \bs x) \in \Lambda^N \times [0,T] \times (\R^d)^N :\ |x| \leq R,\ m_{\hbs x} \in \call K \bigr\}
\]
for some $R > 0$ and some compact set $\call K \subset \Pc_2(\R^d)$.

Furthermore, given $\bs\mu \in \Pc_2(\Lambda \times \R^d)$ with continuous disintegration with respect to the projection onto $\Lambda$, the function $u^\lambda(t,x) \defeq U^\lambda(t,x,\mu_t)$ solves the following (generalised) Mean Field system on $(0,T) \times \Lambda \times \R^d$:
\[
\begin{dcases}
- \de_t u^\lambda - \Delta_x u^\lambda + \frac12 |D_xu^\lambda|^2 = f^\lambda(x,\pi_{\R^d}\mpush\mu_t) \\
\de_t \mu - \Delta_x \mu - \mathrm{div}_x (Du^\lambda \mu) = 0 \\
u^\lambda(T,\cdot) = g^\lambda(\cdot,\pi_{\R^d}\mpush\mu_T), \quad \mu_0 = \bs\mu,
\end{dcases}
\]
where both the Hamilton--Jacobi and the Fokker--Planck equations are satisfied in the classical sense.
\end{thm}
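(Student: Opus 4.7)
The strategy is to apply the a priori estimates of \Cref{ENS_mainthm} uniformly in $\bs\lambda \in \Lambda^N$ to extract $U$ via Arzel\`a--Ascoli, and then realise the generalised Mean-Field system by running the closed-loop particle system \eqref{ENS_NcSDEs} and invoking propagation of chaos.

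First, I apply \Cref{ENS_mainthm} to the Nash system built from each $\bs\lambda \in \Lambda^N$. By (S) and (LP), the constants in (MF) and (DS)/(LS) are uniform in $\bs\lambda$, hence the estimates are uniform in $(N, \bs\lambda)$. Symmetry (S) allows me to regard $\frk u_N(\bs\lambda, t, \bs x)$ as a map $V_N(\lambda, t, x, m)$ with $m = m_{(\hbs\lambda:\hbs x)}$. On sets of the form $\Lambda \times [0,T] \times \bar B_R \times \call K$ with $\call K \ssubset \Pc_2(\Lambda \times \R^d)$, I verify equi-continuity of $V_N$ and $\de_x V_N$: Lipschitz in $x$ from $\norm{D_1 u^1}_\infty,\norm{D(D_1 u^1)}_\infty \lesssim 1$; Lipschitz in $\lambda$ from (LP) after a sensitivity analysis tracking how a $\lambda^j$-perturbation of $f^j, g^j$ propagates to $u^1$ through the coupling (the $O(1/N)$ size of $D_j u^1$ gives a gain of $1/N$ also in the $\lambda^j$-Lipschitz constant for $j \neq 1$); and Lipschitz in $m$ for $W_2$ by picking, for representatives $(\hbs\lambda, \hbs x), (\hbs\lambda', \hbs x')$ of $m, m'$, an optimal matching $\tau$ and estimating
\[
\sum_{j\neq 1} |D_j u^1| \, |\hat x^j - \hat x'^{\tau(j)}| \le \Bigl( \sum_{j\neq 1} \norm{D_j u^1}_\infty^2 \Bigr)^{\!\!1/2} \Bigl( \sum_{j\neq 1} |\hat x^j - \hat x'^{\tau(j)}|^2 \Bigr)^{\!\!1/2} \lesssim W_2(m,m'),
\]
in which the $O(1/\sqrt N)$ $\ell^2$-size of the cross-gradients (from \Cref{ENS_mainthm}) exactly compensates the $\sqrt{N-1}$ intrinsic to the empirical Wasserstein distance (and similarly for the $\lambda$-summands via (LP)). Time regularity of $V_N$ comes from the Nash PDE itself: $\norm{\de_t u^1}_\infty \lesssim 1$ because every term in \eqref{ENS_NS} is controlled by \Cref{ENS_mainthm}, and in particular the coupling $\sum_{j\neq 1} D_j u^j \cdot D_j u^1$ is $O(1)$ by Cauchy--Schwarz and the same $\ell^2$ bounds. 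The $\tfrac13$-H\"older-in-time estimate for $\de_x V_N$ is obtained by differentiating \eqref{ENS_NS} in $x^1$ and interpolating between the uniform $C^{0,1}_x$ bound on $D_1 u^1$ and the differentiated equation via a spatial-mollifier argument (of width $\varepsilon$, trading $\varepsilon^{-2}$ from the Laplacian against $\varepsilon$-Lipschitz to optimise in $\varepsilon$). Arzel\`a--Ascoli then yields $U$ and the claimed convergence $V_N \to U$, $\de_x V_N \to \de_x U$ along a subsequence, with $U$ inheriting the claimed regularity on all of $\Pc_2(\Lambda\times\R^d)$ by density.

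Next, for the Mean-Field system, I fix $\bs\mu$ with continuous disintegration $\{\mu^\lambda\}_\lambda$ and choose sequences $\bs\lambda_N \in \Lambda^N$ and initial data $\bs X_0 \in (\R^d)^N$ with $m_{(\hbs\lambda_N : \bs X_0)} \to \bs\mu$ in $\Pc_2(\Lambda \times \R^d)$. Running \eqref{ENS_NcSDEs} with $\beta = 0$ and optimal feedback $\alpha^{*,i} = -D_i u^i_{\bs\lambda_N}$, the uniform Lipschitz bound on each $D_i u^i$ together with the $O(1/N)$ cross-gradient estimates underpin a synchronous-coupling argument identifying $X^i_t$ asymptotically (in $L^2$) with an independent copy $Y^{\lambda^i}_t$ of the McKean--Vlasov SDE $\di Y^\lambda_t = -D_x U(\lambda,t,Y^\lambda_t,\mu_t)\,\di t + \sqrt{2}\,\di B_t$, $Y^\lambda_0 \sim \mu^\lambda$; Gronwall closes on $E|X^i_t - Y^i_t|^2$ thanks precisely to the $(x,m)$-Lipschitz regularity of $D_x U$ from Step 1. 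Thus $m_{(\hbs\lambda_N:\bs X_t)} \to \mu_t$ in $\Pc_2$, and the continuity identity for the empirical flow passes to the limit to yield the Fokker--Planck equation classically (by Lipschitz regularity of $D_x U$ and bounded second moments). The Hamilton--Jacobi equation for $u^\lambda(t,x) \defeq U(\lambda,t,x,\mu_t)$ follows from a verification argument: combining Step 1 with the propagation of chaos identifies $u^\lambda(t,x)$ with the value function of the single-agent control problem with running cost $\tfrac12 |\alpha|^2 + f^\lambda(\cdot, \pi_{\R^d}\mpush \mu_s)$ and terminal cost $g^\lambda(\cdot, \pi_{\R^d}\mpush \mu_T)$, whereupon standard dynamic programming together with the $C^{1,2}_{t,x}$ regularity of $U$ deliver the classical Hamilton--Jacobi equation.

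\emph{Main difficulties.} The crux is the $W_2$-Lipschitz regularity of $\de_x V_N$ in the measure variable: only the sharp $\ell^2$-estimates on the cross-gradients in \Cref{ENS_mainthm}, paired with an optimal $W_2$ matching, allow the nonlinear Nash coupling $\sum_{j\neq 1} D_j u^j \cdot D_j u^1$ (which has $N-1$ summands of size $O(1/N)$) to be passed to the limit without losing powers of $N$; this same regularity is what drives the Gronwall step in the propagation-of-chaos argument. A secondary difficulty is the $\tfrac13$-H\"older-in-time regularity of $\de_x U$, since direct time-differentiation of the equation for $D_1 u^1$ introduces third-order spatial derivatives that are \emph{not} uniformly bounded in $N$; the interpolation/mollification trade-off between spatial smoothness and temporal regularity is what produces the exponent $\tfrac13$.
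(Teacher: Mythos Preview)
Your Step 1 (extraction of $U$) matches the paper's approach closely: the paper packages the Arzel\`a--Ascoli argument into a general lemma lifting symmetric functions on $E^N$ to functions on $\Pc_p(E)$, and obtains the $\tfrac13$-H\"older time regularity via a Kruzhkov-type barrier argument for the heat equation (your mollification/interpolation idea amounts to the same thing). The $\lambda$-Lipschitz estimate is handled in the paper by a separate maximum-principle lemma on $w^i = u^i_{\bs\lambda} - u^i_{\bs\lambda'}$, which is your ``sensitivity analysis''.

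Your Step 2, however, differs from the paper and has a genuine gap. The paper does \emph{not} run a single propagation-of-chaos argument for an arbitrary $\bs\mu$ with continuous disintegration. Instead it proceeds in three stages: (i) the homogeneous case $\bs\lambda = \lambda 1_N$ via classical propagation of chaos, yielding an \emph{integral representation formula} for $\int U^\lambda(\tau,\cdot,\mu_\tau)\,\di\bar m$; (ii) the case where $\pi_\Lambda{}_\sharp\bs\mu$ is finitely supported with rational weights, via an ``intra-population'' propagation of chaos (each label $\lambda^j$ gets $\sim \theta_j N$ i.i.d.\ particles); (iii) the general case by approximating $\pi_\Lambda{}_\sharp\bs\mu$ by finitely supported measures and using the continuity of the disintegration together with stability of the coupled Fokker--Planck system. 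Your direct coupling ``$X^i \leftrightarrow Y^{\lambda^i}$'' glosses over a real difficulty: with deterministic labels $\bs\lambda_N$ that may all be distinct, there is no per-label law of large numbers, so the convergence $m_{(\bs\lambda_N:\bs X_t)} \to \mu_t$ in $W_2$ --- which you need to close the Gronwall --- is not automatic. The paper's finite-population detour is precisely what manufactures the i.i.d.\ structure needed for this step.

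A secondary issue: you invoke ``$C^{1,2}_{t,x}$ regularity of $U$'' to run dynamic programming, but this regularity is not available a priori (Step 1 only gives $U,\de_x U$ Lipschitz/H\"older). The paper sidesteps this by using the integral representation to show that $u^\lambda$ coincides with the solution $v$ of a \emph{linear} backward parabolic equation (via duality with the Fokker--Planck flow $\bar m^\lambda$); linear theory then gives $v \in W^{2,1}_p$, and substituting $D_x v = D_x u^\lambda$ back yields the semilinear HJ equation classically. Your route (value function $\Rightarrow$ viscosity solution $\Rightarrow$ bootstrap) could be made to work, but as written the order of the regularity and verification steps is inverted.
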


\begin{rmk}
As discussed in more detail in \Cref{ENS_convhN}, the functions $f^\lambda$ and $g^\lambda$ that appear in the above statement are extracted as subsequential limit of the $f_N$ and $g_N$, respectively. Indeed, our standing assumptions guarantee the convergence of $h_N$ to functions defined on probability measures, in the sense that there exists a map $h \in \Lip( \Lambda \times \R^d \times \Pc_1(\R^d) )$ such that, up to a subsequence,
\begin{equation*} 
\sup_{\lambda \in \Lambda,\, |x| \leq R,\, m_{\hbs x} \in \call K} |  D_x^k h_N(\lambda,\bs x) -  D_x^k h(\lambda,x,m_{\hbs x}) | \,\xrightarrow[N \to \infty]{}\, 0, 
\end{equation*}
for any $R > 0$ and $\call K \subset \Pc_1(\R^d)$ compact, and for $k$ as in \hyperlink{LP}{{\bfseries{(LP)}}}.
\end{rmk}

\begin{ex} \label{ENS_exmonL}
Let $F, G: \Lambda \times \R^d \times \call P_2(\R^d) \to \R$ be bounded. Consider $f_N(\lambda,y,\bs z) = F(\lambda,y,m_{\bs z})$ and $g_N(\lambda,y,\bs z) = G(\lambda,y,m_{\bs z})$, so that
\[
f^i(\bs x) = F(\lambda_N^i, x^i ,m_{\bs x^{-i}}), \qquad g^i(\bs x) = G(\lambda_N^i, x^i ,m_{\bs x^{-i}});
\]
that is, players' costs are taken from a pool of admissible symmetric costs, according to some label $\lambda_N^i$.

While \hyperlink{S}{{\bfseries{(S)}}} clearly holds and so does \hyperlink{MF}{{\bfseries{(MF)}}} provided that the properties highlighted in \Cref{ENS_rmkMFMFC} are satisfied for any fixed $\lambda$ (uniformly), \hyperlink{LP}{{\bfseries{(LP)}}} requires Lipschitz continuity with respect to $\lambda$ uniformly in the other variables. Summing up, $F$ needs to satisfy
\[
\text{$F$ and $D_x F$ are Lipschitz continuous with respect to the $(|\cdot|, |\cdot|, W_1)$ distance}
\]
and so does $G$, for which we also require that $D_m G(\lambda, \cdot)$ and $D_x D_m G(\lambda, \cdot)$ are Lipschitz continuous with respect to the $(|\cdot|, W_1, |\cdot|)$ distance, uniformly in $\lambda$.

Finally, one can check the semimonotonicity assumption \hyperlink{DS}{{\bfseries{(DS)}}} or \hyperlink{LS}{{\bfseries{(LS)}}} by revising \Cref{rmk_relationship}, as follows. For any fixed $T > 0$, Theorem \ref{ENS_mainthm} gives some smallness condition on the semimonotonicity constant $M_0$.


\noindent $\bullet$ \ Suppose that, for any $\nu \in \Pc(\Lambda \times (\R^d)^2)$,
\[
\int_\Lambda \int_{\R^d \times \R^d} \big( D_x F(\lambda,x, \pi^{(1)}_{\R^d}\mpush\nu) - D_x F(\lambda,y, \pi^{(2)}_{\R^d}\mpush\nu) + M(x-y) \big)\cdot(x-y) \,\nu(\di\lambda,\di x, \di y) \geq  0,
\]
where $\pi^{(i)}_{\R^d} \colon \Lambda \times (\R^d)^2 \to \R^d$ is the projection onto the $i$-th copy of $\R^d$ ($i = 1,2$).
If $D_m F(\lambda,\cdot)$ is $L$-Lipschitz with respect to the $(|\var|,W_2,|\var|)$ distance, uniformly in $\lambda$, then $f_N$ is $\bigl(M+O(N^{-1})\bigr)$-\Ds semimonotone. Therefore, if $M < M_0$, Theorem \ref{ENS_thmMFsumm} applies (by requiring also the same inequality on $G$ of course).

\smallskip
\noindent $\bullet$ \ Suppose that, for any $\mu_1, \mu_2 \in \Pc_2(\Lambda \times \R^d)$,
\[
\int_{\Lambda \times \R^d} \bigl( F(\lambda,z,\pi_{\R^d}\mpush\mu_1) - F(\lambda,z,\pi_{\R^d}\mpush\mu_2) \bigr) (\mu_1-\mu_2)(\di\lambda,\di z) \geq - MW_2(\pi_{\R^d}\mpush\mu_1,\pi_{\R^d}\mpush\mu_2)^2.
\]
Then, arguing as in \Cref{rmk_relationship}, $f_N$ is $(M+O(N^{-1}))$-\LLs semimonotone. Hence, for $D_m F$ as above and $M < M_0$, Theorem \ref{ENS_thmMFsumm} applies. 
\end{ex}

\begin{rmk}[The fully symmetric case]\label{rmkonmfg} We elaborate on the consequences of our results in the classical MFG setting, where one has
\[
f^i(\bs x) = F(x^i ,m_{\bs x^{-i}}), \qquad g^i(\bs x) = G(x^i ,m_{\bs x^{-i}}),
\]
that is of course a special case of the previous example. Arguing as before as 
in \Cref{ENS_rmkMFMFC}, we first note that \hyperlink{MF}{{\bfseries{(MF)}}} holds provided that
\[
\text{$F$ and $D_x F$ are Lipschitz continuous with respect to the $(|\cdot|, W_1)$ distance}
\]
and so does $G$, for which we also require that $D_m G$ and $D_x D_m G$ are Lipschitz continuous with respect to the $(|\cdot|, W_1, |\cdot|)$ distance. For any $T>0$, there exists $M_0 > 0$ such that if either
\begin{equation}\label{rmkmon1}
\sum_{1\leq i \leq N} \,(D_x F(x^i ,m_{\bs x^{-i}}) - D_x F(x^i ,m_{\bs y^{-i}}))\cdot(x^i-y^i) \geq - M |\bs x - \bs y|^2
\end{equation}
or
\begin{equation}\label{rmkmon2}
\sum_{1\leq i \leq N} \big( F(x^i ,m_{\bs x^{-i}}) - F(y^i ,m_{\bs x^{-i}}) - F(x^i ,m_{\bs y^{-i}}) + F(y^i ,m_{\bs y^{-i}}) \big) \geq -M | \bs x - \bs y |^2
\end{equation}
for all $\bs x, \bs y \in (\R^d)^N$ and some $0 \le M < M_0$, and likewise for $G$,  then the convergence result stated in \Cref{ENS_thmMFsumm} holds (note that $M_0$ vanishes as $T$ goes to infinity). 

We would like to compare the regularity assumptions on $F$ with the ones that are employed in \cite{CDLL}, where smooth solutions of the Master Equation are obtained. In terms of the (crucial) dependence with respect to the $m$ variable, in \cite{CDLL} one has that $F$ (and similarly $G$) is such that $D_m F$ is Lipschitz continuous with respect to the $W_1$ distance, while here we just require $F$ to be Lipschitz continuous with respect to $W_1$. In other words, our conditions on $F$ (and hence on $f^i$) are not strong enough to expect the Master Equation to have a smooth solution. Note also that \cite{CDLL} considers the flat torus as the state space, which is compact, while here we work on the whole Euclidean space.

In subsequent works \cite{MZ24, bertuccio}, different notions of weak solutions of the Master Equation have been proposed, all of them demanding some Lipschitz continuity of $F, G$ (but not of their derivatives in $m$). It was in fact observed that Lipschitz regularity of solutions (and of their derivatives in $x$) is strong enough to deduce large population limit results. Since the approach presented in this paper also builds upon the Lipschitz regularity of $F$, we would expect, as limits of $u^i$ (in the symmetric case), Lipschitz solutions of the Master Equation in the sense of the aforementioned papers.

Note finally that, to verify \eqref{rmkmon1} or \eqref{rmkmon2} from standard displacement and Lasry--Lions monotonicity respectively, one may need to employ Lipschitz regularity of $D_m F$ with respect to the $W_2$ distance (as in \Cref{rmk_relationship}). In any case, since $W_2$-Lipschitz regularity is weaker than $W_1$-Lipschitz regularity, we cannot have access to those information on derivatives of $u^i$ that would yield $W_1$-Lipschitz regularity of the limit derivative $D_m U$. This is, for example, required to obtain the convergence rate of order $1/N$  from the finite agent problem to the Mean Field limit, as in \cite{CDLL}. We believe that, under stronger assumptions on $f^i$ and $g^i$ (involving ``pointwise'' estimates on higher order derivatives), those could be shown to propagate to $u^i$ by further refining the estimates developed in the next section.
\end{rmk}

\begin{rmk}
The above result says, within the setting of the previous example, that the Mean Field system arises from a sequence of $N$-player games built by choosing certain $\bs\lambda^N$, provided that the $\lambda^N_i$ are ``sufficiently well distributed'', in the sense that their empirical measures weakly converge to some $\rho \in \Pc(\Lambda)$ as $N\to \infty$. Limit properties of those games are then captured by the Mean Field system when one chooses $\bs\mu = \int_\Lambda m^\lambda \rho(\di\lambda)$, being $m^\lambda$ the distribution of $\lambda$-labelled players.

On the other hand, by a theorem of Hedrl\'{i}n (see \cite{Hed61}), for each Mean Field system of that kind there exists a suitable choice of $\bs\lambda^N$ in order for it to arise from the respective sequence of $N$-player games.
\end{rmk}

\subsection{Joint vanishing viscosity and large population limit.} For the last application of our estimate, we assume that $H^i(x^i, p) = \frac12|p|^2$, there is no common noise $\beta = 0$, and that the idiosyncratic noise depends on $N$, that is $\sigma = \sigma_N$. The purpose is to address the convergence of $u^{i}$ as $N \to \infty$ in the \Ds semimonotone MFG setting, where
\[
f^i(\bs x) = F(x^i ,m_{\bs x^{-i}}), \qquad g^i(\bs x) = G(x^i ,m_{\bs x^{-i}}),
\]
in the situation where $\sigma_N \to 0$. The standing assumptions on $F$ and $G$ are those of the previous \Cref{rmkonmfg}, and our result reads as follows.

\begin{thm}\label{ENS_rmkconvdiuilVV} Assume that $F$ and $G$ satisfy the regularity assumptions of Remark \ref{rmkonmfg}, the \Ds semimonotonicity assumption \eqref{rmkmon1} therein (on both $F$ and $G$), and that
\[
\lim_{N \to \infty} \sigma_N = 0, \qquad \theta \defeq \liminf_{N \to \infty} \sigma_N \log\log N \in (0,\infty].
\]
There exist $\theta_* >0$ (depending on $T, F, G$) such that, if $\theta \ge \theta_*$ and the semimonotonicity constants are small enough, then
there exists a map $U_0 \colon [0,T] \times \R^d \times \call P_2(\R^d) \to \R$ with bounded derivative $D_x U_0$ such that $U_0$ and $\partial_x U_0$ are Lipschitz continuous on $\R^d \times \Pc_2(\R^d)$ and $\frac13$-H\"{o}lder continuous on $[0,T]$, and, up to a subsequence,
\begin{equation} \label{ENS_convD1VV}
\sup\, \bigl| D_1^k u^1(t,x^1,\bs x^{-1}) - D_x^k U_0(t,x^1,m_{\bs x^{-1}}) \bigr| \,\xrightarrow[N \to \infty]{}\, 0, \qquad k \in \{0,1\},
\end{equation}
whenever the supremum is taken over any set of the form
\[
\bigl\{ (t, \bs x) \in [0,T] \times (\R^d)^N :\ |x^1| \leq R,\ m_{\bs x^{-1}} \in \call K \bigr\}
\]
for some $R > 0$ and some compact set $\call K \subset \Pc_2(\R^d)$.
Moreover, for all $\tau \in [0,T]$ and $\frk m,\bar{\frk m} \in \Pc_2(\R^d)$, one has
\begin{equation} \label{ENS_reprUlVV}
\int_{\R^d} U_0(\tau,\var,\frk m) \,\di \bar{ \frk m } = \int_\tau^T \int_{\R^d} \Big( \,\frac12\big|D_x U_0(s,\var, m_s)\big|^2 + F(\var, m_s) \Big) \di \bar m_s\di s + \int_{\R^d} G(\var,  m_T) \,\di \bar m_T,
\end{equation}
where $m,\bar m$ solve, respectively,
\[
\begin{cases}
\de_t m - \mathrm{div}(D_x U_0(t,x,m)m) = 0 \\
m|_{t=\tau} = \frk m,
\end{cases}
\qquad
\begin{cases}
\de_t \bar m  - \mathrm{div}(D_x U_0(t,x,m)\bar m) = 0  \\
\bar m|_{t=\tau} = \bar{\frk m}
\end{cases}
\]
for $t \in [\tau,T]$ and $x \in \R^d$.
\end{thm}

Note that a further characterisation via a deterministic MFG system, as in the last part of the statement of \Cref{ENS_thmMFsumm} is certainly possible, following the lines of \Cref{ENS_MFGthm}. We leave out the details of this step for the sake of brevity.

\begin{rmk}
We do not expect the threshold identified by $(\log\log N)^{-1}$ to be optimal to perform a joint vanishing viscosity and large population limit, as our a priori estimates have not been developed with the primary purpose of addressing such a problem (even though we have shown that they are in fact versatile enough to do so). We believe that there is room to improve such a threshold on the admissible vanishing viscosity rate, and to address also the Lasry--Lions monotone case.
\end{rmk}


\section{Estimates under the one-sided Lipschitz condition on the drift}\label{sec_esti}

Throughout this section $H^i(x^i, p) = \frac12|p|^2$. For the following computations, we assume that $u$ is of class $C^1$ in time and of class $C^4$ in space. Throughout the section, we will always assume a one-sided Lipschitz condition on the first-order (drift) term, as in \eqref{ENS_b} below. Such a term will be often the one appearing in \eqref{ENS_NS}, that is $b^i = D_i u^i$, in which case the one-sided condition is equivalent to the \Ds semimonotonicity of $u$.

\subsection{A Lipschitz estimate for linear equations}

The first ingredient is a Lipschitz estimate for solutions of linear equations with semimonotone drifts. Here, we implement the method of doubling variables (also known as Ishii--Lions method, or coupling method in the probabilistic community) as in \cite{PoPri}. The crucial features of the estimate are that the Lipschitz constant is universal with respect to $N$, and its dependence on the Lipschitz norm of the final condition, the supremum norm of the right-hand side and $t$ is explicit. We note that the technique is well-known, but we are aware of very few examples of statements that are dimensional independent (e.g. \cite{AC09}).
Having a nondegenerate diffusion is crucial here, while the size of $\beta$ does not play any role. 

\begin{lem} \label{ENS_estv}
Let $v \colon [0,T] \times (\R^d)^N \to \R$ be a bounded classical solution to
\begin{equation} \label{ENS_sysv}
\begin{cases}
-\de_t v- \tr((\sigma \msfb I + \beta \msfb J)D^2 v) + \displaystyle\sum_{1 \leq j \leq N} b^j \cdot D_j v = F \\
 v|_{t=T} = G,
\end{cases}
\end{equation}
where $F$ and $G$ are continuous and bounded, $G$ having bounded derivative, $b \colon [0,T] \times (\R^d)^N \to (\R^d)^N$ is continuous and bounded and for some $\tau \in [0,T]$ and $M \geq 0$, 
\begin{equation} \label{ENS_b}
\sum_{1\leq i \leq N} \,(b^i(t, \bs x) - b^i(t, \bs y))\cdot(x^i-y^i) \geq - M |\bs x - \bs y|^2 \quad \forall t \in (\tau,T],\ \bs x,\bs y \in (\R^d)^N.
\end{equation}
Then
\[
\norm{Dv(t,\var)}_\infty \leq \big(4 \sigma^{-\frac12} \norm{F}_\infty (T-t)^{\frac12}e^{M(T-t)} + \norm{DG}_\infty \big)e^{M(T-t)} \quad \forall\, t \in [\tau,T].
\]
\end{lem}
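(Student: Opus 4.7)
The plan is to use the doubling of variables technique (Ishii--Lions), following the approach in \cite{PoPri}. For a regularising parameter $\eta > 0$ and a smooth strictly concave modulus $\omega : [0, \infty) \to [0, \infty)$ with $\omega(0) = 0$ and $\omega'(0^+) = 1$, introduce the auxiliary function
\[
\Psi(t, \bs x, \bs y) = v(t, \bs x) - v(t, \bs y) - \phi(t)\, \omega(|\bs x - \bs y|) - \eta\bigl(|\bs x|^2 + |\bs y|^2\bigr)
\]
on $[\tau, T] \times (\R^d)^{2N}$, where $\phi \in C^1([\tau, T])$ satisfies $\phi(T) = \|DG\|_\infty$. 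The goal is to prove $\sup \Psi \leq 0$ for some $\phi(t)$ bounded by the right-hand side of the claim; letting $\eta \to 0$, $\omega \uparrow \mathrm{id}$ and $\bs y \to \bs x$ will then yield the desired pointwise gradient bound.

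Argue by contradiction: suppose $\sup \Psi > 0$. Since $v$ is bounded and the quadratic penalty provides coercivity in $(\bs x, \bs y)$, the supremum is attained at some $(\hat t, \hat{\bs x}, \hat{\bs y})$; the choice $\phi(T) \geq \|DG\|_\infty$ rules out $\hat t = T$, and $\Psi = 0$ on the diagonal forces $\hat{\bs x} \neq \hat{\bs y}$. Write $r = |\hat{\bs x} - \hat{\bs y}|$ and $\hat{\bs z} = r^{-1}(\hat{\bs x} - \hat{\bs y})$. The first-order optimality conditions give $Dv(\hat{\bs x}) = \phi(\hat t)\omega'(r)\hat{\bs z} + 2\eta\hat{\bs x}$ and $Dv(\hat{\bs y}) = \phi(\hat t)\omega'(r)\hat{\bs z} - 2\eta\hat{\bs y}$, so that in the doubled PDE for $w(t, \bs x, \bs y) = v(t, \bs x) - v(t, \bs y)$ the drift difference reduces, up to an $O(\eta)$ error, to $(\phi(\hat t)\omega'(r)/r)\sum_j(b^j(\hat t, \hat{\bs x}) - b^j(\hat t, \hat{\bs y}))\cdot(\hat x^j - \hat y^j)$, which by the one-sided condition~\eqref{ENS_b} is bounded below by $-M\phi(\hat t)\omega'(r)\,r$.

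The second-order condition $D^2 \Psi \leq 0$ as a symmetric $2Nd \times 2Nd$ matrix is exploited by testing against PSD matrices built from $\sigma\msfb I + \beta\msfb J$. Testing against the PSD block matrix whose four $Nd \times Nd$ blocks are all equal to $\sigma\msfb I + \beta\msfb J$ yields
\[
\tr\bigl[(\sigma\msfb I + \beta\msfb J)\,\bigl(D^2 v(\hat{\bs x}) - D^2 v(\hat{\bs y})\bigr)\bigr] \leq O(\eta),
\]
providing a favourable sign in the $\eta \to 0$ limit. The quantitative smoothing is extracted instead by testing against the rank-one matrix $\xi \xi^\trn$ with $\xi = (\hat{\bs z}, -\hat{\bs z}) \in \R^{2Nd}$, which by strict concavity of $\omega$ yields the directional estimate
\[
\hat{\bs z}^\trn\,\bigl(D^2 v(\hat{\bs x}) + D^2 v(\hat{\bs y})\bigr)\,\hat{\bs z} \leq 4\phi(\hat t)\,\omega''(r).
\]
Crucially, the dimension-free character of the estimate rests on the elementary scalar bound $\hat{\bs z}^\trn(\sigma\msfb I + \beta\msfb J)\hat{\bs z} = \sigma + \beta\bigl|\sum_{i=1}^N \hat{\bs z}^i\bigr|^2 \geq \sigma$, uniformly in $N$ and $\beta$: the common-noise block $\msfb J$ contributes only non-negatively and can be dropped in the lower bound, so that only the non-degenerate idiosyncratic noise strength $\sigma > 0$ enters the effective smoothing constant.

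Combining the time-derivative condition $-\partial_t w \geq -\phi'(\hat t)\omega(r)$ with the drift bound and the second-order information in the doubled PDE, and letting $\eta \to 0$, one arrives at a differential inequality on $\phi$ of the form
\[
-\phi'(t)\,\omega(r) + c\,\sigma\,\phi(t)\,(-\omega''(r)) \leq M\phi(t)\omega'(r)\,r + 2\|F\|_\infty
\]
for a universal constant $c > 0$. The main obstacle is to calibrate $\omega$ (and any parameter it depends on) so that this inequality is violated for every $r > 0$ once $\phi$ is set equal to the claimed expression: the smoothing contribution $-\sigma\phi\omega''$ is optimised against the source $\|F\|_\infty$ to reproduce the heat-kernel scaling $\sigma^{-1/2}(T-t)^{1/2}$, while the Gronwall structure $-\phi' = M\phi + \ldots$ yields the envelope $e^{M(T-t)}$. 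Once $\phi$ is identified with the stated explicit formula, the contradiction closes, and sending $\eta \to 0$ and then $\omega \to \mathrm{id}$ completes the proof.
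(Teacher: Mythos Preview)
Your overall strategy—doubling of variables, exploiting the one-sided Lipschitz condition on $b$ through the first-order optimality conditions, and extracting parabolic smoothing from the strict concavity of the modulus—is the same as the paper's. The genuine gap lies in the product ansatz $\phi(t)\,\omega(|\bs x-\bs y|)$ with a $t$-independent $\omega$. First, since a strictly concave $\omega$ with $\omega'(0^+)=1$ satisfies $\omega(r)<r$ for $r>0$, the choice $\phi(T)=\norm{DG}_\infty$ does \emph{not} rule out $\hat t=T$: one only has $\phi(T)\omega(r)<\norm{DG}_\infty r$, which $G(\hat{\bs x})-G(\hat{\bs y})$ may exceed. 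Second, evaluating your differential inequality at $r\to 0$ forces $c\,\sigma\,\phi(t)\bigl(-\omega''(0^+)\bigr)\geq 2\norm{F}_\infty$, so $\phi(t)$ is bounded below by a positive constant independently of $t$; this is incompatible with the claimed bound, in which the $F$-contribution vanishes as $t\to T$. For the same reason, ``sending $\omega\to\mathrm{id}$'' at the end is self-defeating: it annihilates $\omega''$ and with it the very smoothing term you rely on. The paper avoids all this by using a \emph{non-product} comparison function $\psi(t,r)+\norm{DG}_\infty e^{M(T-t)}r$: the linear-in-$r$ piece exactly absorbs the terminal datum, while $\psi$ carries a time-dependent length scale $\sqrt{\sigma(T-t)}$ (through the factor $1-e^{-r/\sqrt{\sigma(T-t)}}$), so that $\psi_r(t,0^+)$ is of order $\sqrt{(T-t)/\sigma}$ and vanishes at $t=T$, yet $-4\sigma\psi_{rr}(t,0^+)$ stays above $2\norm{F}_\infty$ for all $t<T$.

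The second-order step is also not correct as written. Testing $D^2\Psi\leq 0$ against $\xi\xi^{\trn}$ with $\xi=(\hat{\bs z},-\hat{\bs z})$ yields $\hat{\bs z}^{\trn}\bigl(D^2 v(\hat{\bs x})-D^2 v(\hat{\bs y})\bigr)\hat{\bs z}\leq 4\phi(\hat t)\omega''(r)+O(\eta)$ (a \emph{difference}, not the sum you wrote), and this directional bound does not combine with your first test to control $\tr\bigl[B\bigl(D^2 v(\hat{\bs x})-D^2 v(\hat{\bs y})\bigr)\bigr]$ with a $4\sigma\phi(\hat t)\omega''(r)$ right-hand side, where $B=\sigma\msfb I+\beta\msfb J$. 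The correct object is a \emph{single} test against the PSD block matrix $\bigl(\begin{smallmatrix}B&\,B-2\sigma P\\ B-2\sigma P&\,B\end{smallmatrix}\bigr)$, $P=\hat{\bs z}\otimes\hat{\bs z}$—this is precisely the diffusion matrix of the doubled operator $L$ used in the paper—and it delivers $\tr\bigl[B\bigl(D^2 v(\hat{\bs x})-D^2 v(\hat{\bs y})\bigr)\bigr]\leq 4\sigma\phi(\hat t)\omega''(r)+O(\eta)$ in one step; its nonnegativity follows from $B-\sigma P=\sigma(\msfb I-P)+\beta\msfb J\geq 0$, which is where $\sigma$ alone (and not $\sigma+\beta$) enters the final constant.
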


Note that the rate $\norm{F}_\infty \sqrt{(T-t)/\sigma}$ is sharp and coincides with the one of the heat equation $-(\partial_t + \sigma \Delta)v = F$, that can be obtained, for instance, using Duhamel's formula.

\begin{proof}
Let $w \colon [0,T] \times (\R^d)^N \times (\R^d)^N \to \R$ be given by $w(t,\bs x, \bs y) \defeq v(t,\cdot)|_{\bs y}^{\bs x} - \psi(t, |\bs x-\bs y|) - \norm{DG}_\infty e^{M(T-t)} |\bs x - \bs y|$,
where
\[
\psi(t, r) \defeq 2{\sigma^{-\frac12}} \norm{F}_\infty e^{2M(T-t)}\bigl( r \sqrt{T-t}  + \sigma^{\frac12} (T-t)(1-e^{-\frac{r}{\sqrt{\sigma(T-t)}}}) \bigr),
\]
Computations show that for $\bs x \neq \bs y$
\[ \begin{split}
D^2_{(\bs x, \bs y)} w(t,\bs x, \bs y) &= \mathrm{diag}\big(D^2 v(t,\bs x), -D^2 v(t,\bs y)\big)
- \psi_{rr}(t,|\bs x- \bs y|) \bigg( \begin{matrix}
\msfb P_{\bs x-\bs y} & - \msfb P_{\bs x-\bs y} \\
-\msfb P_{\bs x-\bs y} & \msfb P_{\bs x-\bs y}
\end{matrix} \bigg)
\\
&\quad\ - \frac1{|\bs x - \bs y|} \big( \psi_r(t,|\bs x - \bs y|) + \norm{DG}_\infty e^{M(T-t)} \big) \bigg( \begin{matrix}
\msfb I - \msfb P_{\bs x-\bs y} & -\msfb I + \msfb P_{\bs x-\bs y} \\
-\msfb I + \msfb P_{\bs x-\bs y} & \msfb I - \msfb P_{\bs x-\bs y}
\end{matrix} \bigg),
\end{split}\]
where we used the notation $\psi_r = \frac\de{\de r}\psi$, and
$\msfb P_{\bs e} \defeq |\bs e|^{-2} \bs e \otimes \bs e$ for $\bs e \in (\R^d)^N \setminus \{\bs 0\}$. We now want to show that $w$ is a subsolution of the parabolic operator $-\partial_t - L$ on $(0,T) \times \big( (\R^d)^N \times (\R^d)^N \setminus \{\bs x = \bs y\} \big)$, where
\[
L =  \tr\!\big((\msf J_2 \otimes (\sigma\msfb I + \beta \msfb J))D^2_{(\bs x, \bs y)}\big) - 
4 \sigma \tr\!\big( \msfb P_{\bs x-\bs y} D^2_{\bs x\bs y} \big) - b(t,\bs x) \cdot D_{\bs x} - b(t,\bs y) \cdot D_{\bs y}.
\]
Note that the operator is indeed (degenerate) parabolic, since its diffusion matrix
\[
\begin{pmatrix}
\sigma \msfb I + \beta \msfb J & \sigma \msfb I + \beta \msfb J - 2 \sigma \msfb P_{\bs x-\bs y}  \\
\sigma \msfb I + \beta \msfb J - 2 \sigma \msfb  P_{\bs x-\bs y} & \sigma \msfb I + \beta \msfb J
\end{pmatrix}
\]
is nonnegative.
We have
\[
\tr\!\big((\msf J_2 \otimes (\sigma \msfb I + \beta \msfb J)) D^2_{(\bs x,\bs y)} w( t, \bs x,  \bs y) \big)  = \tr((\sigma \msfb I + \beta \msfb J)D^2 v( t, \cdot))\big|_{ \bs y}^{ \bs x}
\]
and
\[
-4 \sigma \tr\!\big( \msfb P_{\bs x-\bs y} D^2_{\bs x\bs y}  w( t, \bs x,  \bs y) \big) = - 4 \sigma \psi_{rr}(t,|\bs x- \bs y|)\,.
\]
Also, we compute
\begin{gather*}
D_{\bs x} w(t,\bs x, \bs y) = Dv( t, \bs x) - \frac1{|\bs x - \bs y|} \big( \psi_{r}(t,|\bs x- \bs y|) + \norm{DG}_\infty e^{M(T- t)} \big)(\bs x - \bs y), \\
D_{\bs y} w(t,\bs x, \bs y) = - Dv( t, \bs y) + \frac1{|\bs x - \bs y|} \big( \psi_{r}(t,|\bs x- \bs y|) + \norm{DG}_\infty e^{M(T- t)} \big)(\bs x - \bs y)
\end{gather*}
and, according to \eqref{ENS_NS},
\[ \begin{split}
\de_t w(t,\bs x,\bs y) =\, &- \tr\!\big((\sigma \msfb I + \beta \msfb J)D^2v( t,\cdot)\big|_{\bs y}^{\bs x}\big) + \sum_{1\leq j\leq N } b^j( t,\cdot) D_j v( t,\cdot)\big|_{\bs y}^{\bs x} - F|_{\bs y}^{\bs x} \\
& - \psi_{t}(t,|\bs x- \bs y|) + M\norm{DG}_\infty e^{M(T- t)} |\bs x-\bs y|,
\end{split}\]
where $\psi_t = \frac{\de}{\de t}\psi$.
It follows that
\[ \begin{split}
(\de_t + L)w(t,\bs x, \bs y) &= -4\sigma \psi_{rr}(t,|\bs x- \bs y|) - F|_{\bs y}^{\bs x} - \psi_{t}(t,|\bs x- \bs y|) + M\norm{DG}_\infty e^{M(T- t)} |\bs x-\bs y|\\
&\quad + \frac1{|\bs x - \bs y|} \big( \psi_{r}(t,|\bs x- \bs y|) + \norm{DG}_\infty e^{M(T- t)} \big) \sum_{1\leq j\leq N } b^j( t,\cdot)\big|_{\bs y}^{\bs x} \cdot (  x^j -  y^j );
\end{split}
 \]
hence 
by the hypothesis \eqref{ENS_b} on $b$, 
\[
(\de_t + L)w(t,\bs x, \bs y) \geq -4\sigma \psi_{rr}(t,|\bs x- \bs y|) - M \psi_{r}(t,|\bs x- \bs y|)|\bs x-\bs y| - \psi_{t}(t,|\bs x- \bs y|) - 2 \norm{F}_\infty.
\]
Now, direct computations show that, with $r=|\bs x-\bs y|$, $s = T-t$ and $y=r/ \sqrt{T-t}$,
\[
\begin{multlined}[.95\displaywidth]
-4\sigma \psi_{rr}(t,r) - M \psi_{r}(t,r)r - \psi_{t}(t,r) \\
\begin{split}
&= 2{\sigma^{-\frac12}} \norm{F}_\infty e^{2Ms}\Bigl(\sigma^{\frac12}  + 3\sigma^{\frac12}e^{-y\sigma^{-\frac12}} + 2Ms \sigma^{\frac12}(1-e^{-y\sigma^{-\frac12}}) + \frac y 2 (1+2Ms)(1-e^{-y\sigma^{-\frac12}}) \Bigr) \\
&> 2\norm{F}_\infty.
\end{split}
\end{multlined}
\]
Since $w(t, \cdot)|_{\bs x = \bs y} = 0$ for all $t$, by the maximum principle\footnote{Here one needs to apply the maximum principle for classical subsolutions of parabolic equations on unbounded domains. The proof of this is rather classical, and it is based on the fact that interior maxima of $w$ cannot exist. Since $w$ may not have a maximum on the unbounded set $(\R^d)^N \times (\R^d)^N$, it should be additively perturbed for instance by $-\epsilon (e^{K(T-t)}(1+|\bs x|^2 + |\bs y|^2) + (T-\tau)^{-1})$, with $K$ large enough. Being now $w$ bounded (as well as $b$), then the conclusion follows by letting $\epsilon \to 0$. We do not provide here further details, but mention that identical computations appear in the proof of the next \Cref{ENS_estd1}.}
\[
w \leq \left(\max_{(\R^d)^N \times (\R^d)^N} w(T, \cdot)\right)_{\!+} = 0 \quad \text{on $[\tau, T] \times (\R^d)^N \times (\R^d)^N$.}
\]
Being the choice of $t$, $\bs x$ and $\bs y$ arbitrary, 
the desired conclusion follows by observing that $\psi(t, r) \leq 4 \sigma^{-\frac12} \norm{F}_\infty e^{2M(T-t)} r \sqrt{T-t}$.
\end{proof}

\begin{rmk} The previous estimate can be proven also for viscosity solutions, using standard methods. Unbounded solutions (with controlled growth) can be also considered, but we are not going to address these generalisations here for the sake of simplicity.
\end{rmk}

The following estimates will be all derived under the assumption that $u$ be \Ds semimonotone on $(\R^d)^{2N}$ uniformly with respect to $t \in (\tau,T]$; that is, according to the introduced notation, $\Dop[u] \geq -M\QA$ on $(\tau,T] \times (\R^d)^{2N}$. Most of them will make use of \Cref{ENS_estv} with $b^j = D_j u^j$. 

\subsection{First-order derivatives of the value functions}

The next result is a Lipschitz estimate for the value function, it is obtained by doubling variables. We show that the Hamilton--Jacobi equations preserve some weighted Lipschitz seminorm on $(\R^d)^N$. Note that, contrarily to the previous lemma, possible regularisation effects from the diffusion are not exploited below.

Recall that, for any $\bs x \in (\R^d)^N$, we have the \emph{$i$-th weighted norm} 
\[
\norm{\bs x}^i \defeq \Bigl( |x^i|^2 + {\frac1{N}} \sum_{\substack{1\leq j\leq N \\ j\neq i}} |x^j|^2 \Bigr)^{\frac12}.
\]

\begin{lem}[Weighted Lipschitz continuity of the value functions] \label{ENS_estd1}
Let $\tau \in [0,T)$. Suppose that $\Dop[u] \geq -M\QA$
on $(\tau, T] \times (\R^d)^{2N}$ for some $M > 0$ and that, for $h \in \{f,g\}$, there exist $\widetilde L_h > 0$ such that 
\begin{equation}\label{fgtildeass}
|h^i(\bs x) - h^i(\bs y)| \leq \widetilde L_h\norm{\bs x - \bs y}^i \qquad \forall\, \bs x, \bs y \in (\R^d)^N, \ i \in \{1,\dots,N\}.
\end{equation}
Then
\begin{equation} \label{ENS_d1lip}
|u^i(t,\bs x) - u^i(t,\bs y)| \leq c_1 \norm{\bs x - \bs y}^i \qquad \forall\, t \in [\tau, T], \ \bs x, \bs y \in (\R^d)^N,
\end{equation}
where the constant $c_1$ depends only on $T$, $M$, $\widetilde L_g$ and $\widetilde L_f$.
\end{lem}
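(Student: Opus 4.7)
The strategy is a doubling-of-variables argument tailored to the weighted seminorm $\psi(\bs e) \defeq \norm{\bs e}^i$ (with $i$ henceforth fixed). A direct computation from $\psi^2 = |e^i|^2 + N^{-1}\sum_{j\neq i}|e^j|^2$ gives the two scalings that drive the proof:
\[
(\nabla\psi)_i = \frac{e^i}{\psi}, \qquad (\nabla\psi)_j = \frac{1}{N}\,\frac{e^j}{\psi} \ \ \text{for }\ j\neq i, \qquad |\bs e|^2 \leq N\psi(\bs e)^2.
\]
Introduce the auxiliary function
\[
w(t,\bs x,\bs y) \defeq u^i(t,\bs x) - u^i(t,\bs y) - C(t)\,\psi(\bs x-\bs y), \qquad C(t) \defeq \bigl(\widetilde L_g + \eta + (\widetilde L_f+\eta)(T-t)\bigr)e^{M(T-t)},
\]
with $\eta>0$ auxiliary; note that by construction $C'(t) + MC(t) = -(\widetilde L_f+\eta)e^{M(T-t)} < -\widetilde L_f$. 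The plan is to prove $w\leq 0$ on $[\tau,T]\times(\R^d)^{2N}$ for every $\eta>0$ and then send $\eta\downarrow 0$, yielding $c_1 = (\widetilde L_g+\widetilde L_f(T-\tau))e^{M(T-\tau)}$. The bound $w(T,\cdot)\leq 0$ is the hypothesis on $g^i$, while $w\equiv 0$ on the diagonal $\{\bs x=\bs y\}$; to make the maximum principle rigorous on the non-compact domain I would subtract a perturbation $-\varepsilon e^{K(T-t)}(1+|\bs x|^2+|\bs y|^2)$ with $K$ large (which tends to $-\infty$ at spatial infinity because $u^i$ has at most linear growth by the Lipschitz data), run the argument at a genuine interior maximum $(t_0,\bs x_0,\bs y_0)$ with $\bs x_0\neq \bs y_0$ (where $\psi$ is smooth), and let $\varepsilon\downarrow 0$ at the end.

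At such a maximum the first-order conditions yield $Du^i(t_0,\bs x_0) = Du^i(t_0,\bs y_0) = C(t_0)\nabla\psi(\bs x_0-\bs y_0)$. Two crucial consequences follow. First, the diagonal Hamiltonian term contributes nothing to the difference of the two Nash equations evaluated at $\bs x_0$ and $\bs y_0$, since $|D_iu^i(t_0,\bs x_0)|^2 = |D_iu^i(t_0,\bs y_0)|^2$. Second, using the formula above for $(\nabla\psi)_j$ with $j\neq i$, the off-diagonal transport term rewrites as
\[
\sum_{j\neq i}\bigl(D_ju^j(t_0,\bs x_0) - D_ju^j(t_0,\bs y_0)\bigr)\cdot D_j u^i(t_0,\bs x_0) = \frac{C(t_0)}{N\,\psi(\bs x_0-\bs y_0)} \sum_{j\neq i}\bigl(D_j u^j\big|_{\bs y_0}^{\bs x_0}\bigr)\cdot(x_0^j - y_0^j).
\]
The missing $j=i$ summand of $\Dop[u](t_0,\bs x_0,\bs y_0)$ vanishes (again because $D_iu^i(t_0,\bs x_0) = D_iu^i(t_0,\bs y_0)$), so the \Ds semimonotonicity $\Dop[u]\geq -M\QA$ bounds the right-hand side from below by $-MC(t_0)\psi(\bs x_0-\bs y_0)$ once one uses $|\bs x_0-\bs y_0|^2\leq N\psi^2$. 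The diffusion is handled by testing the second-order maximum condition $D^2_{(\bs x,\bs y)}w\leq 0$ against $(\bs\xi,\bs\xi)$, which gives $D^2u^i(t_0,\bs x_0) \leq D^2u^i(t_0,\bs y_0)$ as symmetric matrices, hence $-\tr\bigl((\sigma\msfb I+\beta\msfb J)[D^2u^i(\bs x_0)-D^2u^i(\bs y_0)]\bigr)\geq 0$ because $\sigma\msfb I+\beta\msfb J\geq 0$. Finally, the weighted Lipschitz hypothesis yields $-(f^i(\bs x_0)-f^i(\bs y_0))\geq -\widetilde L_f\psi$.

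Subtracting the two Nash equations and inserting these four bounds gives $\de_t u^i(t_0,\bs x_0) - \de_t u^i(t_0,\bs y_0) \geq -(MC(t_0)+\widetilde L_f)\psi$; the condition $\de_t w(t_0,\cdot)=0$ turns the left-hand side into $C'(t_0)\psi$, so $C'(t_0)+MC(t_0)\geq -\widetilde L_f$, contradicting the construction of $C$. Thus no positive interior maximum of the perturbed $w$ exists, and one reads off $w\leq 0$ as claimed. I expect the only real obstacle to be the dimensional bookkeeping: it is essential that the $1/N$ factor in $(\nabla\psi)_j$ for $j\neq i$ balance \emph{exactly} with the loss $|\bs x-\bs y|^2\leq N\psi^2$ appearing in the \Ds semimonotonicity bound, so that no residual $N$-dependence survives and a universal constant $c_1$ emerges. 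This is precisely why the weighted seminorm $\norm{\cdot}^i$, rather than the Euclidean norm, is the correct object for an $N$-uniform Lipschitz estimate.
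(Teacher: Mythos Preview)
Your proposal is correct and follows essentially the same doubling-of-variables approach as the paper: the paper uses the time-dependent coefficient $\psi(t) = e^{M(T-t)}\widetilde L_g + M^{-1}(e^{M(T-t)}-1)\widetilde L_f$ solving $\psi' + M\psi + \widetilde L_f = 0$ exactly, whereas you use a strict supersolution $C(t)$ of the same ODE and pass $\eta\to 0$; both lead to $c_1 = \psi(0)$. Your presentation is slightly cleaner in that you exploit $Du^i(t_0,\bs x_0) = Du^i(t_0,\bs y_0)$ directly to kill the $\frac12|D_iu^i|^2$ difference and the $j=i$ summand of $\Dop[u]$, while the paper tracks these through the $\epsilon$-perturbed first-order conditions; the diffusion step (testing $D^2_{(\bs x,\bs y)}w\leq 0$ against $(\bs\xi,\bs\xi)$ versus tracing against $\msf J_2\otimes(\sigma\msfb I+\beta\msfb J)\geq 0$) is equivalent. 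One small point: you write $\de_t w(t_0)=0$, but if $t_0=\tau$ you only have $\de_t w(t_0)\leq 0$ --- this still gives $C'(t_0)\geq -(MC(t_0)+\widetilde L_f)$ and the same contradiction, so no $\epsilon/(t-\tau)$ penalty is actually needed (the paper includes one, but your argument survives without it).
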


\begin{proof}
For $\epsilon> 0$, let $w^i$ be the function $w^i(t,\bs x, \bs y) \defeq u^i(t,\cdot)|_{\bs y}^{\bs x} - \psi(t) \norm{\bs x-\bs y}^i -\epsilon\phi(t,\bs x, \bs y) - \frac\epsilon{t-\tau}$, defined for $t \in (\tau,T]$, $\bs x, \bs y \in (\R^d)^N$,
where $\psi(t) \defeq e^{M(T-t)}\widetilde L_g + \frac{e^{M(T-t)}-1}{M}\,\widetilde L_f$ and $\phi(t,\bs x, \bs y) = e^{K(T-t)}(1+|\bs x|^2 + |\bs y|^2)$
for some constant $K\geq0$ to be determined.
Since $u^i$ is bounded, $w^i$ attains its maximum at some point $(\bar t, \bbs x, \bbs y) \in (\tau, T] \times (\R^d)^N \times (\R^d)^N$. Suppose that $\bar t \neq T$ and $\bbs x \neq \bbs y$.
Computations show that for $\bs x \neq \bs y$ (for simplicity, one may follow the computations below with $\epsilon = 0$, being the corresponding terms merely necessary to guarantee the existence of a maximum for $w$, and therefore of perturbative nature)
\[\begin{split}
D^2_{(\bs x, \bs y)} w^i(t,\bs x, \bs y) &= \biggl( \begin{matrix}
D^2 u^i(t,\bs x) - 2 \epsilon e^{K(T-t)} \msfb I & \msfb 0 \\
\msfb 0 & -D^2 u^i(t,\bs y) - 2 \epsilon e^{K(T-t)} \msfb I
\end{matrix} \,\biggr) \\
&\quad\ - \frac{\psi(t)}{\norm{\bs x-\bs y}^i} \bigg( \begin{matrix}
\msfb I^i - \msfb P^i_{\bs x-\bs y} & -\msfb I^i + \msfb P^i_{\bs x-\bs y} \\
-\msfb I^i + \msfb P^i_{\bs x-\bs y} & \msfb I^i - \msfb P^i_{\bs x-\bs y}
\end{matrix} \bigg),
\end{split}
\]
where $\msfb I^i$ is the block diagonal matrix given by
\[
(\msfb I^i)_{jj} = 
\begin{cases}
\msf I_d & \text{if} \ j = i \\
 N^{-1} \msf I_d & \text{if} \ j \neq i
\end{cases}
\]
and we have set, for $\bs e \in (\R^d)^N \setminus \{\bs 0\}$,
\[
\msfb P^i_{\bs e} \defeq \frac{\msfb I^i\bs e}{\norm{\bs e}^i} \otimes \frac{\msfb I^i\bs e}{\norm{\bs e}^i}
.
\]
Then, as $\msf J_2 \otimes (\sigma \msfb I + \beta \msfb J) \geq 0$ and $\tr(D^2 w^i(\bar t, \bbs x, \bbs y)) \leq 0$, we have
\begin{equation} \label{ENS_D1tr}
\begin{split}
0 &\geq \tr\!\big((\msf J_2 \otimes (\sigma \msfb I + \beta \msfb J))D^2_{(\bs x,\bs y)} w^i(\bar t, \bbs x, \bbs y) \big) \\
&= \tr((\sigma \msfb I + \beta \msfb J)D^2 u^i(\bar t, \var))\big|_{\bbs y}^{\bbs x} - {4}(\sigma+\beta)Nd \epsilon e^{K(T-\bar t)}.
\end{split}
\end{equation}
Also, since $(D_{\bs x}, D_{\bs y}, \de_t) w^i(\bar t, \bbs x, \bbs y) = 0$, we have
\begin{gather} \label{ENS_Dx0}
0 = Du^i(\bar t, \bbs x) -\frac{\psi(\bar t)}{\norm{\bbs x-\bbs y}^i} \, \msfb I^i(\bbs x - \bbs y) - 2\epsilon e^{K(T-\bar t)} \bbs x, \\
\label{ENS_Dy0}
0 = - Du^i(\bar t, \bbs y) + \frac{\psi(\bar t)}{\norm{\bbs x-\bbs y}^i} \, \msfb I^i(\bbs x - \bbs y) - 2 \epsilon e^{K(T-\bar t)} \bbs y
\end{gather}
and, using \eqref{ENS_NS} when computing $\de_t w^i$,
\begin{equation} \label{ENS_dt=01} \begin{split}
0 & = - \tr\!\big((\sigma \msfb I + \beta \msfb J)D^2u^i(\bar t,\var)\big|_{\bbs y}^{\bbs x}\big) + \sum_{\substack{1\leq j\leq N \\ j \neq i}} D_j u^j(\bar t,\var) D_j u^i(\bar t,\var)\big|_{\bbs y}^{\bbs x} \\ 
&\quad\ + \frac12|D_i u^i(\bar t,\var)|^2\big|_{\bbs y}^{\bbs x} - f^i|_{\bbs y}^{\bbs x} - \psi'(\bar t) \norm{\bbs x-\bbs y}^i + \epsilon K e^{K(T-\bar t)}(1 + |\bbs x|^2 + |\bbs y|^2) + \frac{\epsilon}{(\bar t-\tau)^2}.
\end{split}\end{equation}
Exploiting relations~\eqref{ENS_Dx0} and \eqref{ENS_Dy0} one sees that 
\begin{multline}\label{ENS_sumest-}
\sum_{\substack{1\leq j\leq N \\ j \neq i}} D_j u^j(\bar t,\var) D_j u^i(\bar t,\var)\big|_{\bbs y}^{\bbs x}  + \frac12|D_i u^i(\bar t,\var)|^2\big|_{\bbs y}^{\bbs x} \\
= \frac1{N} \, \frac{\psi(\bar t)}{\norm{\bbs x-\bbs y}^i} \sum_{\substack{1\leq j\leq N \\ j \neq i}} D_j u^j(\bar t,\var)\big|_{\bbs y}^{\bbs x} \cdot (\bar x^j - \bar y^j) + 4\epsilon e^{K(T-\bar t)} \frac{\psi(\bar t)}{\norm{\bbs x-\bbs y}^i} (\bar x^i - \bar y^i) \cdot (\bar x^i + \bar y^i) \\
+ 4\epsilon^2 e^{2K(T-\bar t)} (|\bar x^i|^2 - |\bar y^i|^2) + 2\epsilon e^{K(T-\bar t)} \sum_{\substack{1\leq j\leq N \\ j \neq i}} \big( D_j u^j(\bar t, \bbs x)\cdot \bar x^j + D_j u^j(\bar t, \bbs y) \cdot \bar y^i \big),
\end{multline}
where the terms can be estimated as follows: since $\Dop[u](\bar t) \geq -M\QA$ and $\sqrt{N}\, \norm{\bs x - \bs y}^i \geq |\bs x - \bs y|$, using relations~\eqref{ENS_Dx0} and \eqref{ENS_Dy0} along with the Cauchy--Schwarz inequality,
\begin{multline}\label{ENS_s1-}
\frac1{N} \, \frac{\psi(\bar t)}{\norm{\bbs x-\bbs y}^i} \sum_{\substack{1\leq j\leq N \\ j \neq i}} D_j u^j(\bar t,\var)\big|_{\bbs y}^{\bbs x} \cdot (\bar x^j - \bar y^j) \\
\begin{split}
&= \frac1{N} \, \frac{\psi(\bar t)}{\norm{\bbs x-\bbs y}^i} \sum_{1\leq j\leq N} D_j u^j(\bar t,\var)\big|_{\bbs y}^{\bbs x} \cdot (\bar x^j - \bar y^j) - \frac1{N} \, \frac{\psi(\bar t)}{\norm{\bbs x-\bbs y}^i} D_i u^i(\bar t,\var)\big|_{\bbs y}^{\bbs x} \cdot (\bar x^i - \bar y^i) \\
&\geq -M\psi(\bar t) \norm{ \bbs x - \bbs y}^i - \frac1{N} \, \frac{\psi(\bar t)}{\norm{\bbs x-\bbs y}^i}\, 2 \epsilon e^{K(T-\bar t)} (\bar x^i + \bar y^i) \cdot (\bar x^i - \bar y^i) \\
&\geq -M\psi(\bar t) \norm{ \bbs x - \bbs y}^i - \frac{2\epsilon}{\sqrt N} \, \psi(\bar t) \phi(\bar t, \bbs x, \bbs y);
\end{split}
\end{multline}
since $\norm{\bs x - \bs y}^i \leq |\bs x - \bs y|$, by the Cauchy--Schwarz inequality
\begin{equation} \label{ENS_s2-}
\begin{multlined}[t][.9\displaywidth]
4\epsilon e^{K(T-\bar t)} \frac{\psi(\bar t)}{\norm{\bbs x-\bbs y}^i} (\bar x^i - \bar y^i) \cdot (\bar x^i + \bar y^i) + 4\epsilon^2 e^{2K(T-\bar t)} (|\bar x^i|^2 - |\bar y^i|^2) \\
\begin{split}
&\geq - 4\epsilon e^{K(T-\bar t)} \psi(\bar t)(|\bar x^i| + |\bar y^i|) - 4\epsilon^2 e^{2K(T-\bar t)} (|\bbs x|^2 + |\bbs y|^2) \\
&\geq -4\epsilon \Big( \psi(\bar t) + \epsilon e^{K(T-\bar t)}\Big)\phi(\bar t, \bbs x, \bbs y);
\end{split}
\end{multlined}\end{equation}
finally, by the Cauchy--Schwarz and Young's inequalities,
\begin{equation}
\begin{multlined}[t][.9\displaywidth] \label{ENS_s4-} 
2\epsilon e^{K(T-\bar t)} \sum_{\substack{1\leq j\leq N \\ j \neq i}} \big( D_j u^j(\bar t, \bbs x)\cdot \bar x^j + D_j u^j(\bar t, \bbs y) \cdot \bar y^i \big) \\[-15pt]
\geq - \epsilon \Big(2N\sup_{1 \leq j \leq N} \norm{D_j u^j}^2_\infty+1\Big) \phi(\bar t, \bbs x, \bbs y).
\end{multlined}
\end{equation}
Plugging now \eqref{ENS_sumest-} into \eqref{ENS_dt=01} and exploiting inequalities~\eqref{ENS_D1tr}, \eqref{ENS_s1-}, \eqref{ENS_s2-} and \eqref{ENS_s4-} as well as \eqref{fgtildeass}, we get
\begin{equation} \label{ENS_ineqd1f} \begin{split}
0 < \big( &M\psi(\bar t) + \widetilde L_f + \psi'(\bar t) \big) \norm{ \bbs x - \bbs y}^i \\
&+ \epsilon \big(4(\sigma+\beta) N d + 4 \psi(\bar t) + 2\epsilon e^{K(T-\bar t)} + 2NC_N + 1 - K \big) \phi(\bar t, \bbs x, \bbs y),
\end{split} \end{equation}
where $C_N \defeq \sup_{1 \leq j \leq N} \norm{D_j u^j}^2_\infty$. At this point, choose $K = 4(\sigma+\beta) N d + 4 \psi(0) + 2NC_N + 3$;
thus for all $\epsilon < e^{-KT}$ one has $4(\sigma+\beta) N d + 4 \psi(\bar t) + 2\epsilon e^{K(T-\bar t)} + 2NC_N + 1 - K < 0$. On the other hand, by the definition of $\psi$ we have $M\psi + \widetilde L_f + \psi' = 0$, thus the above inequality~\eqref{ENS_ineqd1f} cannot hold. We conclude that one must have $\bar t = T$ or $\bbs x = \bbs y$, for any sufficiently small $\epsilon$. Letting $\epsilon \to 0$, we deduce that $u^i(t,\cdot)|_{\bbs y}^{\bbs x} - \psi(t) \norm{\bbs x-\bbs y}^i \leq (g^i|_{\bbs y}^{\bbs x} - \widetilde L_g \norm{\bbs x - \bbs y}^i )_+ = 0$,
where we have used \eqref{fgtildeass} to obtain the last equality. The desired conclusion follows with $c_1 = \psi(0)$.
\end{proof}

\begin{rmk}
The previous result can be obtained for solutions to Hamilton--Jacobi equations where the transport term $\sum_{1 \leq j \leq N,\, j \neq i} D_j u^j \cdot D_j u^i$ is replaced by $\sum_{1 \leq j \leq N} b^j \cdot D_j u^i$, with $b$ satisfying \eqref{ENS_b}.
\end{rmk}

The previous proposition immediately provides an estimate for the \emph{skew} first-order derivatives; that is, for those derivatives of the $i$-th value functions with respect to $x^j$ with $j \neq i$.

\begin{prop}[Estimate on skew first-order derivatives] \label{ENS_uijest}
Let $\tau \in [0,T)$ and assume that {\bfseries(\hyperlink{MF}{MF})} holds. Suppose that $\Dop[u] \geq -M\QA$ on $(\tau, T]\times (\R^d)^{2N}$. Then
\[
\sup_{1\leq i \leq N} \Big\| \sum_{\substack{1\leq j \leq N \\ j \neq i}} |D_j u^i(t,\var)|^2 \Big\|_\infty \leq \frac{c_1^2}{N} \quad \forall\, t \in [\tau, T],
\]
where $c_1$ is the constant given in \Cref{ENS_estd1}, that applies with $\widetilde L_f = \sqrt 2 L_f$ and $\widetilde L_g = \sqrt 2 L_g$, where $L_f$ and $L_g$ are the constants appearing in  {\bfseries(\hyperlink{MF}{MF})}.
\end{prop}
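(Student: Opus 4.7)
The plan is to obtain the skew-derivative bound as an immediate consequence of the weighted Lipschitz estimate in \Cref{ENS_estd1}, by a standard duality argument: once $u^i$ is Lipschitz with respect to the norm $\norm{\var}^i$, taking directional derivatives in directions supported on the coordinates $j\neq i$ must produce a gradient bounded in the dual weighted norm, which here amounts to a factor $1/\sqrt{N}$ on the squared norm.

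First, I would verify that the Mean-Field-like assumption \hyperlink{MF}{{\bfseries{(MF)}}} implies the weighted Lipschitz condition \eqref{fgtildeass} with $\widetilde L_h=\sqrt{2}\,L_h$ for $h\in\{f,g\}$. Writing
\[
h^i(\bs x) - h^i(\bs y) = \int_0^1 D h^i(\bs z_s) \cdot (\bs x - \bs y) \, \di s, \quad \bs z_s = (1-s)\bs y + s\bs x,
\]
and splitting the inner product into its $i$-th coordinate and the remaining ones, \hyperlink{MF}{{\bfseries{(MF)}}} yields $\|D_i h^i\|_\infty \leq L_h$ and a bound of the form $\sum_{k\neq i}\|D_k h^i\|_\infty^2 \leq L_h^2/N$. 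A first application of Cauchy--Schwarz gives
\[
\Bigl|\sum_{k\neq i} D_k h^i(\bs z_s)\cdot(x^k-y^k)\Bigr| \leq L_h \Bigl(\frac{1}{N}\sum_{k\neq i}|x^k-y^k|^2\Bigr)^{\!1/2},
\]
and a second one in the $2\times 2$ form $a\alpha+b\beta\leq \sqrt{a^2+b^2}\sqrt{\alpha^2+\beta^2}$ gives $|Dh^i(\bs z_s)\cdot(\bs x-\bs y)|\leq \sqrt{2}\,L_h\,\norm{\bs x-\bs y}^i$, so that \eqref{fgtildeass} is in force.

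Second, I would apply \Cref{ENS_estd1} to $u$, which under the standing \Ds semimonotonicity $\Dop[u]\geq -M\QA$ yields
\[
|u^i(t,\bs x)-u^i(t,\bs y)|\leq c_1\,\norm{\bs x-\bs y}^i \qquad \forall\,t\in[\tau,T],\ \bs x,\bs y\in (\R^d)^N,
\]
with $c_1$ depending only on $T$, $M$, $L_f$ and $L_g$.

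Third, I would dualise this weighted Lipschitz bound. Fix $t\in[\tau,T]$, $\bs x\in(\R^d)^N$, and $\bs\xi\in(\R^d)^N$ with $\xi^i=0$. Plugging $\bs y = \bs x + \varepsilon\bs\xi$, dividing by $\varepsilon>0$, and letting $\varepsilon\downarrow 0$ gives
\[
\Bigl|\sum_{\substack{1\leq j\leq N\\ j\neq i}} D_j u^i(t,\bs x)\cdot\xi^j\Bigr| \leq c_1\,\Bigl(\frac{1}{N}\sum_{\substack{1\leq j\leq N\\ j\neq i}}|\xi^j|^2\Bigr)^{\!1/2}.
\]
Taking the supremum over all such $\bs\xi$ with $\sum_{j\neq i}|\xi^j|^2=1$, the left-hand side becomes $\bigl(\sum_{j\neq i}|D_ju^i(t,\bs x)|^2\bigr)^{1/2}$ (the dual norm is attained at $\xi^j=D_j u^i(t,\bs x)$), while the right-hand side equals $c_1/\sqrt{N}$. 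Squaring and passing to the supremum over $(t,\bs x)$ and over $i$ yields the claim.

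There is no substantial obstacle here: the weighted norm $\norm{\cdot}^i$ has been tailored precisely so that the factor $1/N$ pops out when one restricts the duality to vectors supported on the off-$i$ coordinates. The only nontrivial checking is the passage from \hyperlink{MF}{{\bfseries{(MF)}}} to \eqref{fgtildeass}, which is a direct two-step Cauchy--Schwarz computation producing the constant $\sqrt{2}\,L_h$ announced in the statement.
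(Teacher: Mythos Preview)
Your proposal is correct and follows essentially the same approach as the paper: verify \eqref{fgtildeass} from \hyperlink{MF}{{\bfseries{(MF)}}} via Cauchy--Schwarz to justify $\widetilde L_h = \sqrt{2}\,L_h$, invoke \Cref{ENS_estd1}, then differentiate the weighted Lipschitz bound in directions $\bs\xi$ with $\xi^i=0$ and take the supremum. The paper's proof is slightly more compact (it writes the Cauchy--Schwarz step as a single inequality $|Df^i\cdot(\bs x-\bs y)|\le \bigl(|D_if^i|^2+N\sum_{j\neq i}|D_jf^i|^2\bigr)^{1/2}\norm{\bs x-\bs y}^i$), but the content is identical.
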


\begin{proof} Note first that by the fundamental theorem of calculus and  {\bfseries(\hyperlink{MF}{MF})} we have 
\[
|f^i(\bs x) - f^i(\bs y)| \leq \Big\|\Big(|D_i f^i|^2 + N \sum_{j\neq i} |D_j f^i|^2 \Big)^{\frac12}\Big \|_{\infty}\norm{\bs x - \bs y}^i \leq \sqrt 2\, L_f \norm{\bs x - \bs y}^i.
\]
Let now $\bs \xi \in (\R^d)^N$ be such that $|\bs \xi| = 1$ and $\bs \xi^i= 0$. Plug $\bs y = \bs x + h \bs \xi$ into \eqref{ENS_d1lip} and let $h \to 0$ to obtain $|D u^i(t, \bs x) \cdot \bs \xi | \leq c_1 / \sqrt{N}$. Taking now the supremum over $\bs \xi$ yields $\bigl( \sum_{j \neq i} |D_j u^i(t,\bs x)|^2 \bigr)^{\frac12} \leq {c_1}/{\sqrt{N}}$.
\end{proof}

\subsection{Second-order derivatives of the value functions}

Although all constants contained in the following statements will depend also on the dimension $d$, we shall omit to specify it.

We now show that certain sums of second-order derivatives over the indices of the players are controlled uniformly with respect to $N$. This is a preliminary step to achieve more precise control on second-order derivatives.

\begin{prop} \label{ENS_d2b}
Let $\tau \in [0,T)$ and assume that {\bfseries(\hyperlink{MF}{MF})} holds. Suppose that $\Dop[u] \geq -M\QA$ on $(\tau, T]\times (\R^d)^{2N}$. Then
\[
\sup_{1\leq k \leq N} \sum_{1\leq i \leq N}\norm{D(D_k u^i)(t,\var)}_\infty^2 = \sup_{1 \leq k \leq N} \sum_{1\leq i \leq N} \bigg\lVert \sum_{1\leq j\leq N} \big|D^2_{jk} u^i(t,\var)\big|^2 \bigg\rVert_\infty \leq C_2 \quad \forall\, t \in [\tau, T],
\]
where the constant $C_2$ depends only on $\sigma$, $T$, $M$, $L_g$ and $L_f$.
\end{prop}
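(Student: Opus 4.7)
The plan is to regard $v^i \defeq D_k u^i$ (for fixed $k \in \{1,\dots,N\}$) as a solution to the linear parabolic equation obtained by differentiating \eqref{ENS_NS} in $x^k$; componentwise, writing $v^{i, a} = \de_{x^{ka}} u^i$, the scalar function $v^{i, a}$ satisfies
\[
-\de_t v^{i, a} - \tr((\sigma \msfb I + \beta \msfb J) D^2 v^{i, a}) + \sum_{1 \leq j \leq N} D_j u^j \cdot D_j v^{i, a} = \de_{x^{ka}} f^i - \sum_{\substack{1 \leq j \leq N \\ j \neq i}} D_j v^{j, a} \cdot D_j u^i,
\]
with final datum $\de_{x^{ka}} g^i$. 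By Remark \ref{ENS_rmkDMeq}, the hypothesis $\Dop[u] \geq -M \QA$ means that the drift $b^j = D_j u^j$ satisfies the one-sided Lipschitz condition \eqref{ENS_b}, so Lemma \ref{ENS_estv} applies on any subinterval $[t, T]$ and yields
\[
\norm{D v^{i,a}(t, \var)}_\infty \leq \alpha(t) \norm{F^{i,a}}_\infty^{[t,T]} + \gamma(t) \norm{D(\de_{x^{ka}} g^i)}_\infty,
\]
with $\alpha(t) = 4\sigma^{-\frac12}(T-t)^{\frac12}e^{2M(T-t)}$, $\gamma(t) = e^{M(T-t)}$, and $F^{i, a}$ the right-hand side of the equation above.

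The key step is to exploit the smallness of the coupling term. By Proposition \ref{ENS_uijest}, $\bigl( \sum_{j \neq i} |D_j u^i|^2 \bigr)^{\frac12} \leq c_1/\sqrt N$ pointwise on $(\tau, T] \times (\R^d)^N$, so Cauchy--Schwarz gives
\[
\Bigl| \sum_{j \neq i} D_j v^{j, a} \cdot D_j u^i \Bigr| \leq \frac{c_1}{\sqrt N} \Bigl( \sum_{1 \leq j \leq N} |D v^{j, a}|^2 \Bigr)^{\frac12}.
\]
Squaring the previous estimate, summing over $i$ (and $a$), and invoking \hyperlink{MF}{{\bfseries{(MF)}}} to bound $\sum_i \norm{D_k f^i}_\infty^2$ and $\sum_i \norm{D(D_k g^i)}_\infty^2$ uniformly in $N$, the factor $N$ from the outer summation cancels against the $1/N$ from Proposition \ref{ENS_uijest} and one obtains, for $\bar A(t) \defeq \sup_{s \in [t, T]} \sum_i \norm{D v^i(s, \var)}_\infty^2$, an absorption-type inequality
\[
\bar A(t) \leq K_0(t) + C\, \alpha(t)^2 c_1^2\, \bar A(t),
\]
where $K_0(t)$ depends only on $\sigma, M, T-t, L_f, L_g$.

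For $T - t$ small enough that $C \alpha(t)^2 c_1^2 \leq \frac12$, the $\bar A$ term is absorbed into the left-hand side, yielding $\bar A(t) \leq 2 K_0(t)$. The general case of arbitrary $T$ is handled by iterating the argument backward in time on intervals of fixed length $\eta = \eta(\sigma, M, c_1)$ for which the smallness condition holds: on each successive interval $[T - m\eta, T - (m-1)\eta]$, Lemma \ref{ENS_estv} is re-applied with new final time $T - (m-1)\eta$ and final datum $v^{i, a}(T - (m-1)\eta, \var)$, whose sum of squared Lipschitz norms is already controlled by the preceding step. Since only $\lceil(T-\tau)/\eta\rceil$ iterations are needed, the final constant $C_2$ depends only on $\sigma, T, M, L_g, L_f$.

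The main obstacle is precisely the coupling term $\sum_{j \neq i} D_j v^j \cdot D_j u^i$, which ties together the $N$ equations for the $v^i$ and prevents an $i$-by-$i$ treatment; the resolution depends entirely on the $O(1/\sqrt N)$ smallness of the skew derivatives $D_j u^i$ for $j \neq i$ coming from Proposition \ref{ENS_uijest}, which under Cauchy--Schwarz and summation over $i$ produces exactly the $O(1)$ constant needed for absorption on short time intervals.
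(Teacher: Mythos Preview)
Your proposal is correct and follows essentially the same approach as the paper: differentiate the $i$-th equation in $x^{k\ell}$, apply Lemma~\ref{ENS_estv} with $b^j = D_j u^j$, control the coupling term $\sum_{j\neq i} D_j(\de_{x^{k\ell}} u^j)\cdot D_j u^i$ via Cauchy--Schwarz and Proposition~\ref{ENS_uijest}, square and sum over $i$ so that the factor $1/N$ cancels against the $N$ summands, absorb on a short interval, and iterate backward. The paper carries out exactly this computation (see \eqref{ENS_1estd2}--\eqref{ENS_5estd2}), the only cosmetic difference being that it bounds the coupling by $\bigl\|\sum_j |D^2_{jk} u^j|^2\bigr\|_\infty$ rather than your slightly larger $\sum_j \|D v^{j,a}\|_\infty^2$; both are dominated by $\bar A(t)$, so the absorption works identically.
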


\begin{proof}
Let $k \in \{1,\dots,N\}$, $\ell \in \{1,\dots,d\}$ and $s \in [0,T-\tau]$. Note that $v = v^{k\ell} = D_{x^{k\ell}} u^i$ solves~\eqref{ENS_sysv} on $[T-s,T] \times (\R^d)^N$ with $b^j = D_j u^j$, $F = D_{x^{k\ell}} f^i - \sum_{1\leq j\leq N,\, j \neq i} D_j(D_{x^{k\ell}} u^j) D_j u^i$ and $G = D_{x^{k\ell}} g^i$.
By \Cref{ENS_estv},
\begin{equation} \label{ENS_1estd2}
\begin{multlined}[t][.9\displaywidth]
\norm{D_{x^{k\ell}} Du^i}_\infty \\
\leq \frac{C s^{\frac12}e^{2Ms}}{\sigma^{\frac12}}  \norm{D_{x^{k\ell}} f^i}_\infty + \norm{D(D_{x^{k\ell}}g^i)}_\infty e^{Ms} + \frac{C s^{\frac12}e^{2Ms}} {\sigma^{\frac12}}   \bigg\lVert \sum_{\substack{1\leq j\leq N \\ j \neq i}} D_j(D_{x^{k\ell}} u^j) D_j u^i \bigg\rVert_\infty, 
\end{multlined}\end{equation}
where the $L^\infty$-norm are understood to be computed on $[T-s,T] \times (\R^d)^N$ or $(\R^d)^N$. By \Cref{ENS_uijest} and the Cauchy--Schwarz inequality we have
\[
\bigg\lVert \sum_{\substack{1\leq j\leq N \\ j \neq i}} D_j(D_{x^{k\ell}} u^j) D_j u^i \bigg\rVert_\infty^2 \leq \frac{c_1^2}{N} \,\bigg\lVert \sum_{1\leq j\leq N} \big|D^2_{jk} u^j\big|^2 \bigg\rVert_\infty;
\]
therefore from \eqref{ENS_1estd2} and Young's inequality we get
\begin{equation} \label{ENS_2estd2}
\begin{multlined}[t][.9\displaywidth]
\norm{D(D_k u^i)}_\infty^2 \\
\leq \frac{3dC^2  e^{4Ms} s}{\sigma} \norm{D_k f^i}_\infty^2  + 3d \norm{D(D_kg^i)}_\infty^2 e^{2Ms} + \frac{3dC^2 c^2_1 e^{4Ms} s }{\sigma N} \bigg\lVert \sum_{1\leq j\leq N} \big|D^2_{jk} u^j\big|^2 \bigg\rVert_\infty.
\end{multlined}
\end{equation}
Since by {\bfseries(\hyperlink{MF}{MF})} we have $\sup_{1\leq k\leq N} \sum_{1\leq i\leq N} \norm{D_k f^i}_\infty^2 \leq 2L_f^2$,
summing \eqref{ENS_2estd2} over $i$ one deduces that
\[
\sum_{1\leq i \leq N}\norm{D(D_{k}u^i)}_\infty^2 \leq \frac{6dC^2 e^{4Ms} s L_f^2}{\sigma} + 3de^{2Ms} \sum_{1\leq i \leq N} \norm{D_kDg^i}_\infty^2 + \frac{3dC^2 c^2_1 e^{4Ms} s }{\sigma} \sum_{1\leq i \leq N}\norm{D(D_{k}u^i)}_\infty^2.
\]
If $\frac{3d C^2 c^2_1 e^{4Ms} s}{\sigma} \leq \frac12$
this yields
\begin{equation} \label{ENS_5estd2}
\sum_{1\leq i \leq N}\norm{D(D_{k}u^i)}_\infty^2 \leq \frac{2L_f^2}{c_1^2} + 6 d e^{2Ms}\sum_{1\leq i \leq N} \norm{D(D_kg^i)}_\infty^2 \leq \frac{2L_f^2}{c_1^2} + K\sum_{1\leq i \leq N} \norm{D(D_kg^i)}_\infty^2,
\end{equation}
where $K = \max\{6 d e^{2MT},2\}$. Considering that $g^i = u^i(T,\cdot)$, by iterating estimate \eqref{ENS_5estd2} on the intervals $[T-\ell s,T-(\ell-1)s]$ with $u^i(T-(\ell-1)s,\cdot)$ in lieu of $g^i$ for any positive integer $\ell \leq \ell^* \defeq \lfloor (T-\tau)/s \rfloor $ and then on the interval $[\tau, T-\ell^*s]$, one can prove by induction that
\[
\begin{split}
\sum_{1\leq i \leq N}\norm{D(D_{k}u^i)}_\infty^2 &\leq \frac{2L_f^2}{c_1^2}\sum_{0 \leq \ell \leq \ell^*} K^\ell + K^{\ell^*+1}\sum_{1\leq i \leq N}  \norm{D(D_k g^i)}_\infty^2 
\\ 
&\leq K^{\ell^*+1}\biggl(\frac{2L_f^2}{c_1^2} + \sum_{1\leq i \leq N}  \norm{D(D_k g^i)}_\infty^2\biggr),
\end{split}
\]
where the $L^\infty$-norms on the left-hand side are understood to be computed on $[\tau,T] \times (\R^d)^N$. We can estimate $s\ell^* \leq T$ and choose $s = (6dC^2c_1^2 \sigma^{-1 }e^{4MT})^{-1} \wedge (T-\tau)$, hence the desired conclusion follows by noticing that {\bfseries(\hyperlink{MF}{MF})} includes
$\sup_{1\leq k \leq N} \sum_{1\leq i\leq N} \norm{D(D_k g^i)}_\infty^2 \leq L_g^2$, with
\begin{equation}\label{C2form}
C_2 = e^{c_2(1+\sigma^{-1})},
\end{equation}
where $c_2$ depends only on $T$, $M$, $L_g$, and $L_f$.
\end{proof}


Exploiting the estimates deduced so far, Bernstein's method yields the first estimate on the skew second-order derivatives. It provides a control when letting one derivative be ``in the direction'' of the player to which the value function corresponds while the other derivative is skew and then summing the squares of the second-order derivatives over the players. In this sense, as the sum touches all value functions, we will refer to it as a \emph{transversal} estimate.

\begin{prop}[Transversal estimate on skew second-order derivatives] \label{ENS_d2N}
Let $\tau \in [0,T)$ and assume that {\bfseries(\hyperlink{MF}{MF})} holds. Suppose that $\Dop[u] \geq -M\QA$ on $(\tau, T]\times (\R^d)^{2N}$. Then
\[
\sup_{1\leq i \leq N} \bigg\lVert \sum_{\substack{1\leq j\leq N \\ j \neq i}} \big| D^2_{ij} u^j(t,\var)\big|^2 \bigg\rVert_\infty \leq \frac{C_3}{N} \quad \forall\, t \in [\tau, T].
\]
where the constant $C_3$ depends only on $\sigma$, $T$, $M$, $L_g$ and $L_f$.
\end{prop}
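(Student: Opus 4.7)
The proof uses Bernstein's method applied to the scalar function
\[
w(t, \bs x) := \sum_{j \neq i} |D_i u^j(t, \bs x)|^2,
\]
exploiting the positive Bernstein term to control the target quantity $Y^2 := \sum_{j \neq i}|D^2_{ij} u^j|^2$. For each $j \neq i$, the $\R^d$-valued function $v^j = D_i u^j$ satisfies (component-wise) a linear PDE with the semi-monotone drift $\sum_k D_k u^k \cdot D_k$ and source $D_i f^j - \sum_{k \neq j}(D_i D_k u^k)^T D_k u^j$. Squaring, summing over $j \neq i$ and using the standard Bernstein identity $\tr(A D^2 |v|^2) = 2 v \cdot \tr(A D^2 v) + 2 \tr(A (Dv)(Dv)^T)$ yields
\[
-\partial_t w - \tr((\sigma\msfb I + \beta \msfb J) D^2 w) + \sum_k D_k u^k \cdot D_k w + 2\sigma \sum_{j \neq i}|DD_i u^j|^2 + 2\beta\sum_{j \neq i}\Big|\sum_k D_k D_i u^j\Big|^2 = R,
\]
where $R = 2 \sum_{j \neq i} D_i u^j \cdot \bigl(D_i f^j - \sum_{k \neq j}(D_i D_k u^k) D_k u^j \bigr)$, and the Bernstein positive term dominates $2\sigma Y^2$.

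The right-hand side $R$ is estimated using Cauchy--Schwarz: splitting the inner cross-sum by $k=i$ (whose contribution gives $|D^2_{ii} u^i|^2 |D_i u^j|^2 \leq C_2 |D_i u^j|^2$ by \Cref{ENS_d2b}) and $k \neq i,\,k \neq j$ (whose contribution, via \Cref{ENS_uijest} and \Cref{ENS_d2b}, is bounded pointwise by $Y^2 \cdot c_1^2/N$), together with the \hyperlink{MF}{(MF)} estimate $\sum_{j \neq i}\|D_i f^j\|_\infty^2 \leq L_f^4/N$ and Young's inequality, yields
\[
|R| \leq K w + \sigma Y^2 + C/N,
\]
for constants $K, C$ depending on $\sigma, M, L_f, L_g, c_1, C_2$. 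The $\sigma Y^2$ term is absorbed into the Bernstein contribution. Combined with the terminal bound $\|w(T)\|_\infty \leq \sum_{j \neq i}\|D_i g^j\|_\infty^2 \leq L_g^4/N$ (from the sum form of \hyperlink{MF}{(MF)} bullet~2), a Gronwall-type maximum principle with the one-sided Lipschitz drift (as in \Cref{ENS_estv}) gives a first bound $\sup w \leq C_w/N$.

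To pass from the bound on $w$ to the pointwise bound on $Y^2$, I retain the Bernstein term $\sigma Y^2$ on the left-hand side and iterate on time intervals of length $s$, as in the proof of \Cref{ENS_d2b}: for $s$ sufficiently small, the coupling of $Y^2$ to the RHS closes with a contraction, and iteration across $\lceil T/s\rceil$ subintervals (with intermediate values of $u$ playing the role of terminal data) yields the stated estimate uniformly in $t \in [\tau, T]$. The main obstacle is the coupling with the diagonal second derivatives $|D^2_{ii} u^i|^2$, which are merely $O(1)$; this contribution enters $R$ only multiplied by $|D_i u^j|^2$, hence is proportional to $w$ and absorbed into the Gronwall loop via the preliminary estimate $\sup w \leq C_w/N$.
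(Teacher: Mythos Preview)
There is a genuine gap in the final step. Your Bernstein argument on
\[
w=\sum_{j\neq i}|D_i u^j|^2
\]
is set up correctly and does yield the bound $\|w\|_\infty \lesssim 1/N$, but the quantity $Y^2=\sum_{j\neq i}|D^2_{ij}u^j|^2$ appears only as a \emph{positive Bernstein term} on the left of the inequality
\[
-\partial_t w - L w + \sigma Y^2 \le K w + C/N.
\]
Such a term gives an integral bound (e.g.\ $\int_\tau^T\!\!\int Y^2\,\di\rho\,\di t$ after testing against a Fokker--Planck density), or a bound on $Y^2$ only at an interior maximum of $w$; it does \emph{not} give a pointwise bound on $Y^2$ everywhere. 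The ``iteration on short time intervals as in \Cref{ENS_d2b}'' does not close here, because in \Cref{ENS_d2b} the iterated quantity (the $L^\infty$ norm of a second derivative) appears on both sides of a self-referential inequality, whereas in your inequality $Y^2$ sits on the good side and there is no equation for $Y^2$ being iterated. Bounding $w$ pointwise says nothing about $D_j(D_i u^j)$ pointwise.

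The paper avoids this by applying Bernstein one level higher: it takes
\[
w^i=\tfrac12\Bigl(\sum_{\ell\neq i}|D^2_{i\ell}u^\ell|^2+\tfrac1N|D^2_{ii}u^i|^2\Bigr),
\]
so that the target $Y^2$ is (essentially) $w^i$ itself, and the maximum principle bounds it directly. The cost is that the PDE for $w^i$ contains a cubic term $\sum_{j,\ell}D^2_{j\ell}u^\ell\, D^2_{ij}u^j\, D^2_{i\ell}u^\ell$, and controlling its sign is exactly where the one-sided Lipschitz assumption $(D^2_{j\ell}u^\ell)_{j,\ell}\ge -M\msfb I$ is used (this is the main structural input of the proposition, which your argument never invokes beyond citing earlier lemmas). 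The diagonal correction $\frac1N|D^2_{ii}u^i|^2$ is included so that the analogous cubic terms involving $D^2_{ii}u^i$ also close. If you want to repair your approach, you must move to this higher-order auxiliary function; the first-derivative quantity $w$ cannot deliver a pointwise second-order estimate.
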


\begin{proof}
Let $w^i \defeq \frac12 \big( \sum_{1\leq\ell\leq N,\, \ell \neq i} |D^2_{i\ell} u^\ell|^2 + \frac1N\, |D^2_{ii} u^i|^2 \big)$.
Direct computations exploiting \eqref{ENS_NS} show that
\begin{equation} \label{eqforwi}
\begin{split}
&-\de_t w^i - \tr((\sigma\msfb I + \beta \msfb J)D^2 w^i) + \sum_{1\leq j \leq N} D_j u^j \cdot D_j w^i \\
& + \beta \sum_{\substack{1\leq\ell\leq N \\ \ell \neq i}} \bigg| \sum_{1\leq j\leq N} D^3_{ij\ell} u^\ell \bigg|^2 + \frac{\beta}{N} \bigg| \sum_{1\leq j\leq N} D^3_{iij} u^i \bigg|^2 + \sigma \! \sum_{\substack{1\leq j,\ell\leq N \\ \ell \neq i}} \big|D^3_{ij\ell} u^\ell \big|^2 + \frac{\sigma}N \sum_{1\leq j\leq N} \big|D^3_{iij} u^i\big|^2 \\
& + \sum_{\substack{1\leq j,\ell\leq N \\ \ell \neq i}} D^2_{j\ell} u^\ell D^2_{ij} u^j D^2_{i\ell} u^\ell + \sum_{\substack{1\leq j,\ell\leq N \\ j \neq \ell \neq i}} D^2_{ij} u^\ell D^2_{j\ell} u^j D^2_{i\ell} u^\ell + \sum_{\substack{1\leq j,\ell\leq N \\ j \neq \ell \neq i}} D_j u^\ell D^3_{ij\ell} u^j D^2_{i\ell} u^\ell \\
& + \frac1N \sum_{1\leq j\leq N} D^2_{ij} u^i D^2_{ij} u^j D^2_{ii} u^i + \frac1N \sum_{\substack{1\leq j\leq N \\ j \neq i}} D^2_{ij} u^i D^2_{ij} u^j D^2_{ii} u^i + \frac1N \sum_{\substack{1\leq j\leq N \\ j \neq i}} D_j u^i D^3_{iij} u^j D^2_{ii} u^i \\
&\quad\ = \sum_{\substack{1\leq \ell\leq N \\ \ell \neq i}} D^2_{i\ell} f^\ell D^2_{i\ell} u^\ell + \frac1N\, D^2_{ii} f^i D^2_{ii} u^i,
\end{split}
\end{equation}
where according to the fact that the we are considering the Frobenius inner product on $\R^d$, all products between tensors are to be understood in the sense of tensor contraction over the coordinates of repeated derivative indices; that is, for example,
\begin{gather*}
D^2_{j\ell} u^\ell D^2_{ij} u^j D^2_{i\ell} u^\ell = \sum_{1\leq k_1,k_2,k_3 \leq d} D^2_{x^{jk_2}x^{\ell k_3}} u^\ell D^2_{x^{ik_1}x^{jk_2}} u^j D^2_{x^{ik_1}x^{\ell k_3}} u^\ell = \tr(D^2_{\ell j} u^\ell D^2_{ji} u^j D^2_{i\ell} u^\ell), \\
D_j u^\ell D^3_{ij\ell} u^j D^2_{i\ell} u^\ell = \sum_{1\leq k_1,k_2,k_3 \leq d} D_{x^{jk_2}} u^\ell D^3_{x^{ik_1}x^{jk_2}x^{\ell k_3}} u^j D^2_{x^{ik_1}x^{\ell k_3}} u^\ell, \\
\bigg| \sum_{1\leq j\leq N} D^3_{iij} u^i \bigg|^2 = \sum_{1\leq k_1,k_2,k_3 \leq d} \bigg| \sum_{1\leq j\leq N} D^3_{x^{ik_1}x^{ik_2}x^{jk_3}} u^i \bigg|^2.
\end{gather*}
and analogously for all other products.
We now estimate the sums above: 
since $\Dop[u] \geq -M\QA$ is equivalent to $(D^2_{j \ell} h^\ell)_{1\leq j,\ell \leq N} \geq -M\msfb I$,
\begin{equation} \label{onlylowbound}
\begin{split}
\sum_{\substack{1\leq j,\ell\leq N \\ \ell \neq i}} \!\! D^2_{j\ell} u^\ell D^2_{ij} u^j D^2_{i\ell} u^\ell &= \sum_{\substack{1\leq j,\ell\leq N \\ j,\ell \neq i}} \!\! D^2_{j\ell} u^\ell D^2_{ij} u^j D^2_{i\ell} u^\ell + \sum_{\substack{1\leq \ell\leq N \\ \ell \neq i}} D^2_{i\ell} u^\ell D^2_{ii} u^i D^2_{i\ell} u^\ell \\
&\geq -2M\! \sum_{\substack{1\leq \ell\leq N \\ \ell \neq i}} |D^2_{i\ell} u^\ell|^2 \geq -4M w^i;
\end{split}
\end{equation}
by the Cauchy--Schwarz inequality and \Cref{ENS_d2b},
\[
\sum_{\substack{1\leq j,\ell\leq N \\ j \neq \ell \neq i}} \big| D^2_{ij} u^\ell D^2_{j\ell} u^j D^2_{i\ell} u^\ell \big| \leq \sup_{1\leq i \leq N} \sum_{\substack{1\leq \ell\leq N \\\ell \neq i}} |D^2_{i\ell} u^\ell|^2 \bigg( \sum_{\substack{1\leq j,\ell\leq N \\ j \neq \ell \neq i}} |D^2_{ij} u^\ell|^2 \bigg)^{\frac12} \leq C_2^{\frac12} \sup_{1\leq i \leq N} w^i;
\]
by the Cauchy--Schwarz inequality, \Cref{ENS_uijest} and Young's inequality,
\[\begin{split}
\sum_{\substack{1\leq j,\ell\leq N \\ j \neq \ell \neq i}} \big| D_j u^\ell D^3_{ij\ell} u^j D^2_{i\ell} u^\ell \big| &\leq \bigg( \sup_{1\leq \ell \leq N} \sum_{\substack{1\leq j\leq N \\ j \neq \ell }} |D_j u^\ell|^2 \bigg)^{\frac12} \bigg( \sum_{\substack{1\leq j,\ell\leq N \\ j \neq \ell \neq i}} |D^3_{ij\ell} u^j|^2 \bigg)^{\frac12} \bigg( \sum_{\substack{1\leq \ell\leq N \\ \ell \neq i}} |D^2_{i\ell} u^\ell|^2 \bigg)^{\frac12} \\
&\leq \frac{c_1}{\sqrt{N}} \bigg( \sum_{\substack{1\leq j,\ell\leq N \\ \ell \neq i}} |D^3_{ij\ell} u^\ell|^2 + \sum_{1\leq j \leq N} |D^3_{iij} u^i|^2 \bigg)^{\frac12} \bigg( \sum_{\substack{1\leq \ell\leq N \\ \ell \neq i}} |D^2_{i\ell} u^\ell|^2 \bigg)^{\frac12} \\
&\leq \frac{\sigma}{2N} \sum_{\substack{1\leq j,\ell\leq N \\ \ell \neq i}} |D^3_{ij\ell} u^\ell|^2 + \frac{\sigma}{2N} \sum_{1\leq j \leq N} |D^3_{iij} u^i|^2 + \frac{c_1^2}{\sigma}\, w^i;
\end{split}\]
by the Cauchy--Schwarz inequality, Young's inequality and \Cref{ENS_d2b},
\[\begin{split}
\sum_{1\leq j\leq N} \big| D^2_{ij} u^i D^2_{ij} u^j D^2_{ii} u^i \big| &\leq |D^2_{ii} u^i| \bigg( \sum_{\substack{1\leq j\leq N \\ j \neq i}} |D^2_{ij} u^i|^2 \bigg)^{\frac12} \bigg( \sum_{\substack{1\leq j\leq N \\ j \neq i}} |D^2_{ij} u^j|^2 \bigg)^{\frac12} + |D^2_{ii} u^i|^3 \\
&\leq 2C_2^{\frac32} + C_2^{\frac12} w^i;
\end{split}
\]
by the Cauchy--Schwarz inequality, Young's inequality and \Cref{ENS_uijest},
\[\begin{split}
\sum_{\substack{1\leq j\leq N \\ j \neq i}} \big| D_j u^i D^3_{iij} u^j D^2_{ii} u^i \big| &\leq |D^2_{ii} u^i| \bigg( \sum_{\substack{1\leq j\leq N \\ j \neq i}} |D_{j} u^i|^2 \bigg)^{\frac12} \bigg( \sum_{\substack{1\leq j\leq N \\ j \neq i}} |D^3_{iij} u^j|^2 \bigg)^{\frac12} \\
&\leq \frac{c_1^2}{\sigma}\, w^i + \frac\sigma2 \sum_{\substack{1\leq j\leq N \\ j \neq i}} |D^3_{iij} u^j|^2;
\end{split}\]
by Young's inequality,
\[
\sum_{\substack{1\leq \ell\leq N \\ \ell \neq i}} D^2_{i\ell} f^\ell D^2_{i\ell} u^\ell + \frac1N\, D^2_{ii} f^i D^2_{ii} u^i \leq \frac12 \bigg( \sum_{\substack{1\leq\ell\leq N \\ \ell \neq i}} |D^2_{i\ell} f^\ell|^2 + \frac1N\, |D^2_{ii} f^i|^2 \bigg) + w^i.
\]
Therefore, recalling that $C_2$ has the more precise form \eqref{C2form}, we get
\[
\begin{multlined}[.95\displaywidth]
-\de_t w^i - \tr((\sigma\msfb I + \beta \msfb J)D^2 w^i) + \sum_{1\leq j \leq N} D_j u^j \cdot D_j w^i \\
\leq \Bigl(1 + 4M + \frac{2c_1^2}\sigma + e^{c_2(1+\sigma^{-1})/2}\Bigr) w^i + e^{c_2(1+\sigma^{-1})/2}  \sup_{1 \leq \ell \leq N} w^\ell \\ + \frac{2e^{2c_2(1+\sigma^{-1})/3}}{N} + \frac12 \bigg( \sum_{\substack{1\leq\ell\leq N \\ \ell \neq i}} |D^2_{i\ell} f^\ell|^2 + \frac1N\, |D^2_{ii} f^i|^2 \bigg).
\end{multlined}
\]
By {\bfseries(\hyperlink{MF}{MF})}, $\sum_{1\leq\ell\leq N,\,\ell \neq i} |D^2_{i\ell} f^\ell|^2 + \frac1N |D^2_{ii} f^i|^2 \leq {2L_f^2}/{N}$,
thus
\[
-\de_t w^i - \tr((\sigma\msfb I + \beta \msfb J)D^2 w^i) + \sum_{1\leq j \leq N} D_j u^j \cdot D_j w^i \leq e^{\hat c_2(1+\sigma^{-1})}  \Big( \sup_{1 \leq \ell \leq N} w^\ell + \frac1N \Big),
\]
where the constant $\hat c_2$ depends only on $T$, $M$, $L_g$ and $L_f$. 
By the maximum principle we get
\[
\norm{w^i(t,\var)}_\infty \leq \norm{w^i(T,\var)}_\infty + (T-t)  e^{\hat c_2(1+\sigma^{-1})} \Big( \sup_{1 \leq \ell \leq N} \norm{w^\ell}_{\infty;[T-s,T]} + \frac1N \Big).
\]
for any $s \in (0,T-\tau]$ and $t \in [T-s, T]$; here we used the notation $\norm{\,\cdot\,}_{\infty;I}$ for the standard norm of $L^\infty(I \times (\R^d)^N)$.
For $s \leq (2e^{\hat c_2(1+\sigma^{-1})} )^{-1}$ we obtain
\begin{equation} \label{ENS_estpm2}
\sup_{1 \leq i \leq N} \norm{w^i}_{\infty;[T-s,T]} \leq 2 \sup_{1\leq i \leq N}\norm{w^i(T,\var)}_\infty + \frac{1}{N};
\end{equation}
letting $s = (2e^{\hat c_2(1+\sigma^{-1})} )^{-1} \wedge (T-\tau)$ and iterating estimate~\eqref{ENS_estpm2} on the intervals $[T-\ell s, T-(\ell-1)s]$ for all positive integers $\ell \leq \ell^* \defeq \lfloor (T-\tau)/s \rfloor$ and then on $[\tau, T-\ell^* s]$, we get by induction
\[
\sup_{1 \leq i \leq N} \norm{w^i}_{\infty;[\tau,T]} \leq 2(2^{\ell^*+1}-1) \Big(
2 \sup_{1\leq i \leq N} \norm{w^i(T,\var)}_\infty + \frac{1}N \Big) - \frac{\ell^*+1}{N}.
\]
The conclusion follows recalling that $u^i(T,\var) = g^i$, thus by {\bfseries(\hyperlink{MF}{MF})} we have $\sup_{1 \leq i \leq N} \norm{w^i(T,\var)} \leq 2L_g^2/N$. In particular we obtain the desired estimate with
\begin{equation}\label{C3form}
C_3 = e^{e^{c_3(1+\sigma^{-1})}},
\end{equation}
where $c_3$ depends only on $T$, $M$, $L_g$, and $L_f$.
\end{proof}


In order to proceed, we need further control, for fixed $i$, on second-order derivatives $D_{jk} u^i$ of the value function as $j,k$ varies. We will refer to it as a \emph{horizontal} estimate. Differently from the one obtained in \Cref{ENS_d2b}, we sum over $k \neq i$, thus expecting a decay of order $1/N$. To achieve this, we need more precise control on first-order derivatives: in \Cref{ENS_uijest}, a cumulative information on $D u^i$ was stated; here below, we show \emph{pointwise} bounds, in the sense that no sum over the direction $x^j$ is involved.

\begin{prop}[Pointwise estimate on skew first-order derivatives] \label{ENS_uijestp}
Let $\tau \in [0,T)$. Suppose that $\Dop[u] \geq -M\QA$ on $(\tau, T]\times (\R^d)^{2N}$ and assume that {\bfseries(\hyperlink{MF}{MF})} holds. Then
\[
\sup_{\substack{1\leq i,k \leq N \\ k \neq i}} \|D_k u^i(t,\var) \|_\infty \leq \frac{C_1}{N} \quad \forall\, t \in [\tau, T],
\]
where the constant $C_1$ depends only on $\sigma$, $T$, $M$, $L_g$ and $L_f$.
\end{prop}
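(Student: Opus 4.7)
The plan is to derive a linear parabolic equation satisfied by $v \defeq D_k u^i$ (for fixed $i$ and $k \neq i$), estimate its source using the bounds already obtained in Propositions~\ref{ENS_uijest}, \ref{ENS_d2b}, \ref{ENS_d2N}, and conclude via the parabolic maximum principle combined with Gronwall's inequality. Differentiating the $i$-th equation of \eqref{ENS_NS} with respect to $x^k$ and using $D_k D_j u^i = D_j v$, one obtains
\[
-\de_t v - \tr((\sigma \msfb I + \beta \msfb J) D^2 v) + \sum_{1 \le j \le N} D_j u^j \cdot D_j v = F_{ik}
\]
on $[\tau,T] \times (\R^d)^N$, with terminal condition $v(T,\cdot) = D_k g^i$, where
\[
F_{ik} \defeq D_k f^i - \sum_{\substack{1 \leq j \leq N \\ j \neq i}} D^2_{kj} u^j \cdot D_j u^i.
\]

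Next I would estimate $\norm{F_{ik}}_\infty$. By \hyperlink{MF}{{\bfseries{(MF)}}} (with $k \neq i$) one has the pointwise bounds $\norm{D_k f^i}_\infty, \norm{D_k g^i}_\infty \lesssim 1/N$. For the nonlinear contribution I would single out the diagonal index $j=k$:
\[
\sum_{j \neq i} D^2_{kj} u^j \cdot D_j u^i = D^2_{kk} u^k \cdot D_k u^i + \sum_{\substack{j \neq i \\ j \neq k}} D^2_{kj} u^j \cdot D_j u^i.
\]
The off-diagonal sum is controlled by Cauchy--Schwarz together with Proposition~\ref{ENS_d2N} (applied with the index $k$, yielding $\sum_{j \neq k} |D^2_{kj} u^j|^2 \leq C_3/N$) and Proposition~\ref{ENS_uijest} (yielding $\sum_{j \neq i} |D_j u^i|^2 \leq c_1^2/N$), producing a bound of order $1/N$. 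For the diagonal term, Proposition~\ref{ENS_d2b} (taking $i=k$ in the inner sum) gives $\norm{D^2_{kk} u^k}_\infty \leq \sqrt{C_2}$, hence $|D^2_{kk} u^k \cdot D_k u^i| \leq \sqrt{C_2}\,\norm{v}_\infty$.

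The principal obstacle is precisely this last inequality, since it reintroduces $\norm{v}_\infty$ into the source bound; no absolute-constant bound on $D^2_{kk} u^k$ alone can remove the factor. I would overcome it by Gronwall: writing $\phi(t) \defeq \norm{v(t,\cdot)}_\infty$, the parabolic maximum principle (applicable since the drift $(D_j u^j)_j$ is bounded and the diffusion is nondegenerate, so that a stochastic representation for bounded solutions is available) gives, for $t \in [\tau, T]$,
\[
\phi(t) \leq \norm{D_k g^i}_\infty + \int_t^T \norm{F_{ik}(s,\cdot)}_\infty \,\di s \leq \frac{C'}{N} + \sqrt{C_2} \int_t^T \phi(s) \,\di s,
\]
where $C'$ depends only on $L_g$, $L_f$, $c_1$, $C_3$ and $T$. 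Gronwall's inequality then yields $\phi(t) \leq (C'/N)\,e^{\sqrt{C_2}\,(T-t)}$, giving the claimed bound with $C_1 \defeq C' e^{\sqrt{C_2}\,T}$, which depends only on $\sigma$, $T$, $M$, $L_g$, $L_f$ (through $c_1$, $C_2$, $C_3$). The bound is independent of the choice of $i$ and $k\neq i$, so one may take the supremum.
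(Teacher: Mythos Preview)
Your proof is correct and follows essentially the same route as the paper: the same linear equation for $v=D_k u^i$, the same splitting of the source into the off-diagonal sum (controlled via \Cref{ENS_uijest,ENS_d2N}) and the diagonal term $D^2_{kk}u^k\cdot D_k u^i$ (feeding back $\|v\|_\infty$ via \Cref{ENS_d2b}), then the maximum principle. The only cosmetic difference is that the paper absorbs the feedback term by taking a short time interval $s\le(2\sqrt d\,C_2)^{-1}$ and iterating, whereas you invoke Gronwall's inequality directly; the two closures are equivalent.
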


\begin{proof} Let $i, k \in \{ 1, \ldots, N\}$, $i \neq k$, and $\ell \in \{ 1, \ldots, d\}$. Note that $v = v^{k\ell} = D_{x^{k\ell}} u^i$ solves the linear problem
\[
\begin{cases}
-\de_t v- \tr((\sigma \msfb I + \beta \msfb J)D^2 v) + \displaystyle\sum_{1 \leq j \leq N} D_j u^j \cdot D_j v =  D_{x^{k\ell}} f^i
- \displaystyle\sum_{\substack{1\leq j \leq N \\ j \neq i}} D_j (D_{x^{k\ell}} u^j) \cdot D_j u^i \vspace{-10pt} \\
 v|_{t=T} = D_{x^{k\ell}} g^i.
\end{cases}
\]
Recall that by {\bfseries(\hyperlink{MF}{MF})}, $\|D_{x^{k\ell}} f^i\|_{\infty} \leq L_f / N$. For any $s \in (0,T-\tau]$ and $t \in [T-s, T]$, we control the  second term of the right-hand side of the previous equation using \Cref{ENS_uijest}, \Cref{ENS_d2N} and \Cref{ENS_d2b} as follows
\[
\begin{multlined}[.95\displaywidth]
\Biggr| \displaystyle\sum_{\substack{1\leq j \leq N \\ k \neq j \neq i}} D_j (D_{x^{k\ell}} u^j) \cdot D_j u^i + D_k (D_{x^{k\ell}} u^k) \cdot D_k u^i \Biggl| \\[-7pt]
\begin{split}
&\leq
\biggl(\, \sum_{ j \neq k}  |D^2_{jk}u^j| ^2  \biggr)^{\frac12} \biggl(\, \sum_{ j \neq i}  |D_{j}u^i| ^2  \biggr)^{\frac12} + |D^2_{kk} u^k| \, |D_k u^i| \\
&\leq \frac{c_1 C_3}N + C_2 \|D_k u^i\|_{\infty; [T-s, T]}.
\end{split}
\end{multlined}
\]
Therefore, by the maximum principle we have
\[
\|v\|_{\infty; [T-s, T]} \leq s\, \frac{c_1 C_3 + L_f}N + s C_2 \|D_k u^i\|_{\infty; [T-s, T]} + \|D_k g^i\|_\infty.
\]
Since the previous estimate holds for all $\ell \in \{ 1, \ldots, d\}$, for $s \leq (2 \sqrt{d}\, C_2)^{-1}$ we conclude that
\[
\|D_k u^i\|_{\infty; [T-s, T]} \leq \frac{c_1 C_3 + L_f}{C_2 N} +  2\sqrt{d}\, \|D_k g^i\|_\infty.
\]
By iterating the previous inequality as in the previous proof, and using {\bfseries(\hyperlink{MF}{MF})} again to control $\|D_{x^{k}} g^i\|_{\infty} \leq L_g / N$ we obtain the assertion on the time interval $[T-\tau, T]$. In particular, by replacing $C_2$ and $C_3$ with the more precise forms \eqref{C2form} and \eqref{C3form}, one obtains the desired estimate with
\begin{equation}\label{C1form}
C_1 = e^{e^{\hat c_1(1+\sigma^{-1})}},
\end{equation}
where $\hat c_1$ depends only on $T$, $M$, $L_g$, and $L_f$.
\end{proof}

We can now get the horizontal bounds on skew second-order derivatives.

\begin{prop}[Horizontal estimate on skew second-order derivatives] \label{ENS_d2N'}
Let $\tau \in [0,T)$ and assume that {\bfseries(\hyperlink{MF}{MF})} holds. Suppose that $\Dop[u] \geq -M\QA$ on $(\tau, T]\times (\R^d)^{2N}$. Then
\[
\sup_{1\leq i \leq N} \sum_{\substack{1\leq k \leq N \\ k \neq i}}\norm{D(D_k u^i)(t,\var)}_\infty^2 = \sup_{1 \leq i \leq N} \sum_{\substack{1\leq k \leq N \\ k \neq i}} \bigg\lVert \sum_{1\leq j\leq N} \big|D^2_{jk} u^i(t,\var)\big|^2 \bigg\rVert_\infty \leq \frac{C_4}{N} \quad \forall\, t \in [\tau, T],
\]
where the constant $C_4$ depends only on $\sigma$, $T$, $M$, $L_g$ and $L_f$.
\end{prop}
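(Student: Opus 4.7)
I will mimic the proof of \Cref{ENS_d2b}, applying the linear Lipschitz estimate \Cref{ENS_estv} to $v = D_{x^{k\ell}} u^i$ for each fixed $i$ and $k \neq i$ (and $\ell \in \{1,\dots,d\}$). This function solves a linear parabolic equation with drift $b^j = D_j u^j$ (which is one-sided Lipschitz by hypothesis), terminal datum $D_{x^{k\ell}} g^i$, and source
\[
F^{k\ell} = D_{x^{k\ell}} f^i - \sum_{\substack{1\leq j \leq N \\ j \neq i}} D_j(D_{x^{k\ell}} u^j) \cdot D_j u^i.
\]
Summing over $\ell$ and using \Cref{ENS_estv} yields a bound of the shape
\[
\norm{D(D_k u^i)}_\infty^2 \leq C \bigl( \norm{D(D_k g^i)}_\infty^2 + \norm{F_k}_\infty^2 \bigr),
\]
where $F_k = (F^{k\ell})_\ell$ and $C$ depends only on $d,\sigma,M,T$. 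In contrast with \Cref{ENS_d2b}, no absorption (hence no iteration) will be needed, since the nonlinear source $F_k$ can be controlled directly using the already available uniform estimates \Cref{ENS_uijest,ENS_d2b,ENS_d2N,ENS_uijestp}.

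The key task is to show that $\norm{F_k}_\infty^2 = O(N^{-2})$ pointwise in $k \neq i$, so that summation over $k$ contributes at most a factor $N$ and produces the sought $O(N^{-1})$ bound. To achieve this I will split, using that $k \neq i$,
\[
\sum_{j \neq i} D_j(D_k u^j) \cdot D_j u^i = D^2_{kk} u^k \cdot D_k u^i + \sum_{\substack{1\leq j \leq N \\ j \neq i, k}} D_j(D_k u^j) \cdot D_j u^i.
\]
For the diagonal piece, \Cref{ENS_d2b} gives $\norm{D^2_{kk} u^k}_\infty \leq \sqrt{C_2}$, while the pointwise skew estimate \Cref{ENS_uijestp} (which crucially requires $k \neq i$) yields $\norm{D_k u^i}_\infty \leq C_1/N$; this term is therefore $O(1/N)$. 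For the off-diagonal piece, Cauchy--Schwarz combined with \Cref{ENS_uijest}, which bounds $\sum_{j \neq i}|D_j u^i|^2 \leq c_1^2/N$, and the transversal estimate \Cref{ENS_d2N} applied with $k$ playing the role of the fixed index (using the symmetry $D^2_{jk} = D^2_{kj}$) to bound $\sum_{j \neq k}|D^2_{jk} u^j|^2 \leq C_3/N$, produces another $1/N$ factor. The nonlinear contribution is therefore of size $O(1/N^2)$ in supremum, uniformly in $k$.

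Summing over $k \neq i$ then gives $O(1/N)$ for the nonlinear part, and by assumption \hyperlink{MF}{{\bfseries{(MF)}}} the remaining two contributions satisfy $\sum_{k \neq i}\norm{D_k f^i}_\infty^2 \leq N\cdot(L_f^2/N)^2 = L_f^4/N$ and $\sum_{k \neq i}\norm{D(D_k g^i)}_\infty^2 \leq L_g^2/N$, yielding the stated inequality with $C_4$ independent of $N$. The main obstacle is combinatorial: a naive Cauchy--Schwarz applied to the full sum $\sum_{j \neq i}$ without isolating $j = k$ would leave a factor $\norm{\sum_j |D^2_{jk} u^j|^2}_\infty$ which is only $O(1)$ by \Cref{ENS_d2b}, so that summation over $k \neq i$ would deliver $O(1)$ rather than $O(1/N)$. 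It is precisely the separate treatment of the $j = k$ diagonal term, exploiting the sharp pointwise bound from \Cref{ENS_uijestp}, that restores the correct $1/N$ decay.
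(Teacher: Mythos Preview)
Your proposal is correct and follows essentially the same route as the paper: apply \Cref{ENS_estv} to $D_{x^{k\ell}}u^i$, split the source term as $j=k$ versus $j\neq i,k$, control the diagonal piece via \Cref{ENS_d2b} and the pointwise bound \Cref{ENS_uijestp}, and the off-diagonal piece via \Cref{ENS_uijest} together with \Cref{ENS_d2N} (applied with $k$ as the fixed index), then sum over $k\neq i$ using \hyperlink{MF}{{\bfseries{(MF)}}}. The observation that no iteration is needed here, and your identification of why isolating $j=k$ is essential, both match the paper's argument.
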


\begin{proof}
From inequality~\eqref{ENS_1estd2} we have
\begin{equation} \label{ENS_1estd2'}
\begin{multlined}[t][.9\displaywidth]
\norm{D_{x^{k\ell}} Du^i}_\infty
 \leq \frac{C s^{\frac12}e^{2Ms} }{\sigma^{\frac12}}\norm{D_{x^{k\ell}} f^i}_\infty+ \norm{D(D_{x^{k\ell}}g^i)}_\infty e^{Ms}
\\+ \frac{C s^{\frac12}e^{2Ms} }{\sigma^{\frac12}} \bigg\lVert \sum_{\substack{1\leq j\leq N \\ k \neq j \neq i}} D_j(D_{x^{k\ell}} u^j) D_j u^i \bigg\rVert_\infty + \frac{C s^{\frac12}e^{2Ms} }{\sigma^{\frac12}}\norm{D_k(D_{x^{k\ell}}u^k)D_k u^i}_\infty.
\end{multlined}\end{equation}
By the Cauchy--Schwarz inequality, \Cref{ENS_uijest} and \Cref{ENS_d2N},
\[
\bigg\lVert \sum_{\substack{1\leq j\leq N \\ k \neq j \neq i}} D_j(D_{x^{k\ell}} u^j) D_j u^i \bigg\rVert_\infty^2 \leq \frac{c_1^2}{N} \bigg\lVert \sum_{\substack{1\leq j\leq N \\ j \neq k}} \big|D^2_{jk} u^j\big|^2 \bigg\rVert_\infty \leq \frac{c_1^2 C_3}{N^2};
\]
therefore from \eqref{ENS_1estd2'} and Young's inequality we get
\begin{equation} \label{ENS_2estd2'}
\begin{multlined}[t][.9\displaywidth]
\norm{D(D_k u^i)}_\infty^2 \\
\lesssim \frac{C^2 e^{4Ms} s}\sigma \norm{D_k f^i}_\infty^2  + \norm{D(D_kg^i)}_\infty^2 e^{2Ms}
 + \frac{C^2 c^2_1 C_3 e^{4Ms} s }{\sigma N^2} + \frac{C^2 e^{4Ms} s}{\sigma} \norm{D^2_{kk} u^k}_\infty^2 \norm{D_k u^i}_\infty^2,
\end{multlined}
\end{equation}
where the implied constant depends only on $d$. By {\bfseries(\hyperlink{MF}{MF})}, $\sum_{1\leq k\leq N,\, k \neq i} \norm{D_k f^i}_\infty^2 \leq {L_f^2}/{N}$ and $\sum_{1\leq k\leq N,\, k \neq i} \norm{D(D_k g^i)}_\infty^2 \leq {L_g^2}/{N}$;
by \Cref{ENS_d2b} and \Cref{ENS_uijestp}
\begin{equation} \label{ENS_estwithp}
\sum_{\substack{1 \leq k \leq N \\ k \neq i}} \norm{D^2_{kk} u^k}_\infty^2 \norm{D_k u^i}_\infty^2 \leq \frac{C_1C_2}{N}.
\end{equation}
The desired conclusion now follows, summing \eqref{ENS_2estd2'} over $k \neq i$. By the more explicit forms of $C_1$, $C_2$, and $C_3$ from the previous proposition we get in fact the desired estimate with
\begin{equation}\label{C4form}
C_4 = e^{e^{c_4(1+\sigma^{-1})}},
\end{equation}
where $c_4$ depends only on $T$, $M$, $L_g$, and $L_f$.
\end{proof}

\begin{rmk} \label{ENS_d2N'point}
If, in addition, one assumes that
\begin{equation} \label{extraassMF1}
\sup_{\substack{1 \leq i,k \leq N \\ i \neq k}} \norm{D(D_kg^i)}_\infty \le \frac{L_g}{N},
\end{equation}
then 
\[
\sup_{\substack{1 \leq i,k \leq N \\ i \neq k}} \norm{D(D_k u^i)(t,\var)}_\infty^2 = \sup_{\substack{1 \leq i,k \leq N \\ i \neq k}} \bigg\lVert \sum_{1\leq j\leq N} \big|D^2_{jk} u^i(t,\var)\big|^2 \bigg\rVert_\infty \leq \frac{\hat C_4}{N^2} \defeq \frac{e^{e^{\hat c_4(1+\sigma^{-1})}}}{N^2} \qquad \forall\, t \in [\tau, T],
\]
where $\hat c_4$ depends only on $T$, $M$, $L_g$, and $L_f$. Indeed, it suffices to proceed as in the previous proof but without summing over $k \neq i$.
\end{rmk}

\begin{rmk}
Even if \Cref{ENS_uijestp} is clearly stronger than \Cref{ENS_uijest}, we cited them both in the previous proof in order to highlight when the weaker \Cref{ENS_uijest} was sufficient and, consequently, that the stronger \Cref{ENS_uijestp} is only needed to get estimate~\eqref{ENS_estwithp}.
\end{rmk}

\subsection{Third-order derivatives of the value functions}

Finally, we prove the most basic third-order version of the transversal estimate on skew-second order derivatives.

\begin{prop}[Transversal estimate on third-order derivatives] \label{ENS_d3b}
Let $\tau \in [0,T)$ and assume that {\bfseries(\hyperlink{MF}{MF})} holds. Suppose that $\Dop[u] \geq -M\QA$ on $(\tau, T]\times (\R^d)^{2N}$. Then
\[
\sum_{\substack{1\leq i,j \leq N \\ j\neq i}} \norm{D(D_{ij} u^j)(t,\var)}_\infty^2 = \sum_{\substack{1\leq i,j \leq N \\ j\neq i}} \bigg\lVert \sum_{1 \leq k \leq N} \big| D^3_{ijk} u^j (t,\var) \big|^2 \bigg\rVert_\infty \leq C_5 \quad \forall\, t \in [\tau, T],
\]
where the constant $C_5$ depends only on $\sigma$, $T$, $M$, $L_g$ and $L_f$.
\end{prop}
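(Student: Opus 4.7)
The plan is to parallel the strategy of \Cref{ENS_d2N'}: I will apply the linear Lipschitz estimate \Cref{ENS_estv} componentwise to $v = D^2_{x^\alpha x^\beta} u^j$, with $\alpha$ ranging in the $i$-th cluster and $\beta$ in the $j$-th cluster for some fixed $j \neq i$. Such $v$ solves a linear parabolic equation whose drift $\sum_k D_k u^k \cdot D_k$ satisfies the one-sided Lipschitz hypothesis~\eqref{ENS_b} precisely by $\Dop[u] \geq -M\QA$. Differentiating the $j$-th Nash equation twice and transposing the self-transport term $\sum_k D_k u^k \cdot D_k(D^2_{ij} u^j)$ to the left-hand side, the source of the linear equation takes the schematic form
\[
F_{ij} = D^2_{ij} f^j - (D^2_{ij} u^j)(D^2_{jj} u^j) - \sum_{k\neq j}\bigl[ D^3_{ijk} u^k \cdot D_k u^j + D^2_{ik} u^k : D^2_{jk} u^j + D^2_{ik} u^j : D^2_{jk} u^k \bigr],
\]
with $:$ denoting the natural tensor contraction over the $x^k$-cluster index.

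After squaring the bound on $\|Dv\|_\infty$ provided by \Cref{ENS_estv} and summing over internal coordinates and over $(i,j)$ with $j \neq i$, the problem will reduce to controlling $\sum_{ij: j\neq i}\|F_{ij}\|_\infty^2$ by a constant plus a small multiple of $\Sigma_0 \defeq \sum_{ij: j\neq i}\|D(D^2_{ij} u^j)\|_\infty^2$ itself; the terminal contribution is controlled directly by $L_g^2$ via \hyperlink{MF}{{\bfseries{(MF)}}}. For each piece of $F_{ij}$, I plan to invoke the previous propositions: \hyperlink{MF}{{\bfseries{(MF)}}} for the $f$-term, \Cref{ENS_d2b,ENS_d2N} for the $(D^2_{ij} u^j)(D^2_{jj} u^j)$ product, and for the two cross sums over $k \neq j$ a splitting on whether $k \in \{i,j\}$ or $k \notin \{i,j\}$. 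The off-diagonal sub-sums will pick up a double $1/N$ decay by combining \Cref{ENS_d2N} and \Cref{ENS_d2N'}; the surviving pointwise pieces will exploit the fact that $\|D^2_{ii} u^j\|_\infty^2 \lesssim 1/N$ for $j \neq i$, a direct consequence of \Cref{ENS_d2N'} applied at the diagonal index $k = i$. Altogether these contributions will provide $O(1)$ to $\sum_{ij: j \neq i}\|F_{ij}\|^2$.

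The delicate term is the genuinely third-order one, $\sum_{k \neq j} D^3_{ijk} u^k \cdot D_k u^j$: by Cauchy--Schwarz in $k$ combined with \Cref{ENS_uijest}, its squared norm is bounded by $(c_1^2/N)\sum_{k \neq j}|D^3_{ijk} u^k|^2$. Regrouping the triple sum of indices (in the spirit of the computation done in the proof of \Cref{ENS_d2N} for second-order quantities), the total contribution becomes $(c_1^2/N)(\Sigma_0 + \Sigma_{\mathrm{diag}})$, where $\Sigma_{\mathrm{diag}} \defeq \sum_i \|D(D^2_{ii} u^i)\|_\infty^2$ is a diagonal third-order quantity. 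For $N$ sufficiently large and the time step $s$ sufficiently small, the self-referential $(c_1^2/N)\Sigma_0$ piece is absorbed on the left-hand side; to handle $\Sigma_{\mathrm{diag}}$ I plan to run in parallel the same linear strategy on $v = D^2_{ii} u^i$, whose source involves only second-order products (bounded by earlier propositions) together with \hyperlink{MF}{{\bfseries{(MF)}}}-controlled diagonal pieces of $f$ and $g$, yielding $\Sigma_{\mathrm{diag}} \lesssim N$ so that $(c_1^2/N)\Sigma_{\mathrm{diag}} = O(1)$.

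The argument will be closed by iterating the resulting inequality on successive intervals $[T-\ell s, T-(\ell-1)s]$ until $[\tau,T]$ is covered, exactly as in the proof of \Cref{ENS_d2b}. The main technical obstacle will be the meticulous splitting of the cross tensor contractions in $F_{ij}$: unless one extracts the $1/N$ decays of \Cref{ENS_d2N,ENS_d2N'} in precisely the right places (and, crucially, never pairs the large diagonal values $\|D^2_{ii} u^i\| = O(1)$ with another $O(1)$ quantity), several of those sums would scale like $N$ rather than remain uniformly bounded, defeating the desired estimate.
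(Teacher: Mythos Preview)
Your overall strategy---apply \Cref{ENS_estv} to $v=D^2_{ij}u^j$, square, sum over $(i,j)$ with $j\neq i$, absorb the self-referential piece for short time, and iterate---is exactly the paper's. Your treatment of the second-order cross terms via the $k\in\{i,j\}$ versus $k\notin\{i,j\}$ splitting is fine and in fact more explicit than the paper's terse one-line estimate.

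The problem is your handling of the third-order term. After Cauchy--Schwarz and \Cref{ENS_uijest} you obtain $(c_1^2 s/N)\sum_{i,j:\,j\neq i}\bigl\|\sum_{k\neq j}|D^3_{ijk}u^k|^2\bigr\|_\infty$, which you then bound by a sum of pointwise sup-norms $\sum_{i,j,k}\|D^3_{ijk}u^k\|_\infty^2$. Your claimed regrouping of this triple sum into $\Sigma_0+\Sigma_{\mathrm{diag}}$ does not work: $\Sigma_0=\sum_{i',j':\,j'\neq i'}\bigl\|\sum_{k'}|D^3_{i'j'k'}u^{j'}|^2\bigr\|_\infty$ is a sum of sups-of-sums, and one has $\|\sum_{k'}|a_{k'}|^2\|_\infty\le\sum_{k'}\|a_{k'}\|_\infty^2$, not the reverse, so you cannot bound the triple sum above by $\Sigma_0+\Sigma_{\mathrm{diag}}$ this way. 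Moreover, your auxiliary plan for $\Sigma_{\mathrm{diag}}$ has two gaps: the source of the equation for $D^2_{ii}u^i$ does contain the third-order term $\sum_{k\neq i}D^3_{iik}u^k\cdot D_ku^i$ (not ``only second-order products''), and the terminal contribution $\|D(D^2_{ii}g^i)\|_\infty$ is \emph{not} controlled by \hyperlink{MF}{{\bfseries(MF)}}, which only bounds $\|D(D^2_{ki}g^i)\|$ for $k\neq i$.

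The paper avoids all of this with a cruder but sufficient bound: for fixed $(j,k)$ with $k\neq j$, each $\|D^3_{ijk}u^k\|_\infty^2\le\|D(D^2_{jk}u^k)\|_\infty^2$, so summing the free index $i$ gives a factor $N$, i.e.\ the triple sum is $\le N\Sigma_0$. Combined with the $c_1^2 s/N$ prefactor this yields $c_1^2 s\,\Sigma_0$, which is absorbed for $s$ small---no diagonal quantity is needed at all. Replace your regrouping by this one-line estimate and the rest of your argument goes through.
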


\begin{proof}
Proceed as in the proof of \Cref{ENS_d2b}. Let $h,k \in \{1,\dots,N\}$, $\ell,m \in \{1,\dots,d\}$ and $s \in [0,T-\tau]$. Note that $v = D_{x^{k\ell}x^{hm}} u^i$ solves problem~\eqref{ENS_sysv} on $[T-s,T] \times (\R^d)^N$ with $b^j = D_j u^j$,
\[
\begin{split}
F &= D^2_{x^{k\ell}x^{hm}} f^i - \sum_{\substack{1\leq j\leq N \\ j \neq i}} D_j(D^2_{x^{k\ell}x^{hm}} u^j) D_j u^i - \sum_{\substack{1\leq j\leq N \\ j \neq i}} D_j(D_{x^{k\ell}} u^j) D_j(D_{x^{hm}}u^i) \\
&\quad\ - \sum_{\substack{1\leq j\leq N \\ j \neq i}} D_j(D_{x^{hm}}u^j)  D_j(D_{x^{k\ell}} u^i) - D_i(D_{x^{hm}}u^i)  D_i(D_{x^{k\ell}} u^i)
\end{split}
\]
and $G = D^2_{x^{k\ell}x^{hm}} g^i$.
By \Cref{ENS_estv}, there is $C$ independent of $N$ and $\sigma$ such that
\begin{equation} \label{ENS_1estd3}
\begin{multlined}[t][.85\displaywidth]
\norm{D (D^2_{x^{k\ell}x^{hm}} u^i)}_\infty 
\\
\begin{multlined}[t][.85\displaywidth]
\lesssim\, \frac{C}{\sigma^{\frac12}} \biggl(\, \bigg\lVert \sum_{\substack{1\leq j\leq N \\ j \neq i}} D_j(D^2_{x^{k\ell}x^{hm}} u^j) D_j u^i \bigg\rVert_\infty
+ \bigg\lVert \sum_{\substack{1\leq j\leq N \\ j \neq i}} D_j(D_{x^{k\ell}}u^j)  D_j(D_{x^{hm}} u^i) \bigg\rVert_\infty
\\
+ \bigg\lVert \sum_{\substack{1\leq j\leq N \\ h \neq j \neq i}} D_j(D_{x^{hm}}u^j)  D_j(D_{x^{k\ell}} u^i) \bigg\rVert_\infty + \norm{D_h(D_{x^{hm}}u^h)  D_j(D_{x^{k\ell}} u^i)}_\infty \\[-5pt]
 + \norm{D_i(D_{x^{hm}}u^i)  D_i(D_{x^{k\ell}} u^i)}_\infty
 + \norm{D^2_{x^{k\ell}x^{hm}} f^i}_\infty \biggr) s^{\frac12}e^{2Ms} + \norm{D^2_{x^{k\ell}x^{hm}}Dg^i}_\infty e^{Ms}, 
 \end{multlined}
\end{multlined}\end{equation}
where the implied constant is a number and the $L^\infty$-norms are understood to be computed on $[T-s,T] \times (\R^d)^N$ or $(\R^d)^N$. By \Cref{ENS_uijest} and the Cauchy--Schwarz inequality we have
\[
\bigg\lVert \sum_{\substack{1\leq j\leq N \\ j \neq i}} D_j(D_{x^{k\ell}x^{hm}}^2 u^j) D_j u^i \bigg\rVert_\infty^2 \leq \frac{c_1^2}{N} \bigg\lVert \sum_{\substack{1\leq j\leq N \\ j \neq i}} \big| D^3_{hjk} u^j\big|^2 \bigg\rVert_\infty;
\]
therefore, choosing $k=i$ in \eqref{ENS_1estd3}, and applying Young's inequality we get
\begin{equation} \label{ENS_2estd3}
 \begin{multlined}[t][.9\displaywidth]
\norm{D(D_{hi}^2 u^i)}_\infty^2 \lesssim\, \frac{s e^{4Ms}}{\sigma} \biggl(  \norm{D^2_{hi} f^i}_\infty^2 
 + \bigg\lVert \sum_{\substack{1 \leq j \leq N \\ j \neq i}} \big| D^2_{ji} u^j \big|^2  \sum_{\substack{1 \leq j \leq N \\ j \neq i}} \big| D^2_{hj} u^i \big|^2 \bigg\rVert_\infty
 \\
\qquad + \bigg\lVert \sum_{\substack{1 \leq j \leq N \\ j \neq h}} \big| D^2_{hj} u^j \big|^2  \sum_{\substack{1 \leq j \leq N \\ j \neq i}} \big| D^2_{ji} u^i \big|^2 \bigg\rVert_\infty +  \norm{D_{hi} u^i}_\infty^2 \bigl( \norm{D_{ii} u^i}_\infty^2 +  \norm{D_{hh} u^h}_\infty^2 \bigr)
 \vspace{-5pt} 
\\
+ \frac{c_1^2}{N} \sum_{\substack{1\leq j\leq N \\ j \neq i}}\norm{D^3_{hij}u^j}_\infty^2 \biggr) +\norm{D^2_{hi}Dg^i}_\infty^2 e^{2Ms}
\end{multlined}
\end{equation}
where the implied constant depends only on $C$ and $d$. Our goal now is to control the right-hand side of the previous inequality under the sum over $h$ and $i \neq h$.
First, by assumption~{\bfseries(\hyperlink{MF}{MF})}, $\sum_{h,i: i \neq h} \norm{D^2_{hi} f^i}_\infty^2 \leq L_f^2$;
then by \Cref{ENS_d2N,ENS_d2N'},
\[
\sup_{1 \leq i \leq N} \bigg\lVert \sum_{\substack{1\leq j \leq N \\ j \neq i}} \big| D^2_{ij} u^j\big|^2 \bigg\rVert_\infty \sum_{\substack{1 \leq h,i \leq N \\ h \neq i}} \bigg\lVert \sum_{1\leq j \leq N} \big| D^2_{hj} u^i\big|^2 \bigg\rVert_\infty \leq \frac{C_3C_4}{N}
\]
and 
\[
\sum_{1 \leq h \leq N} \bigg\lVert \sum_{\substack{1 \leq j \leq N \\ j \neq h}} \big| D^2_{hj} u^j \big|^2  \bigg\rVert_\infty \sum_{1 \leq i \leq N} \bigg\lVert \sum_{\substack{1 \leq j \leq N \\ j \neq i}} \big| D^2_{ji} u^i \big|^2 \bigg\rVert_\infty \le C_3 C_4;
\]
Finally, by \Cref{ENS_d2b,ENS_d2N},
\[
 \sum_{\substack{1 \leq h,i \leq N \\ i \neq h}} \norm{D_{hi} u^i}_\infty^2 \bigl(\norm{D_{ii} u^i}_\infty^2 + \norm{D_{hh} u^h}_\infty^2\bigr) \leq 2C_2C_3.
\]
Therefore, summing inequality~\eqref{ENS_2estd3} over $h$ and $i \neq h$ yields
\begin{equation}\label{megasommafinale}
\begin{split}
\sum_{\substack{1\leq h,i \leq N \\ i\neq h}} \norm{D(D^2_{hi} u^i)}_\infty^2 &\lesssim \frac{s e^{4Ms}}\sigma\left(1 + C_3C_4 + C_2 C_3 \right) + e^{2Ms} \sum_{\substack{1\leq h,i \leq N \\ i\neq h}} \norm{D(D^2_{hi}g^i)}_\infty^2 \\ 
&\quad + \frac{1}{\sigma N} \sum_{\substack{1\leq h,i,j\leq N \\ j \neq i \neq h}}\norm{D^3_{hij}u^j}_\infty^2  s e^{4Ms},
\end{split}
\end{equation}
where the implied constant (here as well as below) depends only on $T$, $M$, $L_f$ and $L_g$. Exploiting now the fact that
\[
\sum_{\substack{1\leq h,i,j\leq N \\ j \neq i \neq h}}\norm{D^3_{hij}u^j}_\infty^2 \leq N \sum_{\substack{1\leq i,j \leq N \\ j\neq i}} \bigg\lVert \sum_{1 \leq h \leq N} \big| D^3_{hij} u^j \big|^2 \bigg\rVert_\infty = N \sum_{\substack{1\leq i,j \leq N \\ j\neq i}} \norm{D(D_{ij} u^j)}_\infty^2,
\]
we obtain, using the explicit form of $C_2, C_3, C_4$ and $s \lesssim \sigma$,
\[
\sum_{\substack{1\leq h,i \leq N \\ i\neq h}} \norm{D(D^2_{hi} u^i)}_\infty^2 \lesssim e^{e^{\hat c_4(1+\sigma^{-1})}} + \sum_{\substack{1\leq h,i \leq N \\ i\neq h}} \norm{D(D^2_{hi}g^i)}_\infty^2. 
\]
for some $\hat c_4$ depending only on $T$, $M$, $L_g$, and $L_f$.
Using that
\begin{equation} \label{MFG3agg}
\sum_{\substack{1\leq h,i \leq N \\ i\neq h}} \norm{D(D^2_{hi}g^i)}_\infty^2 \leq L_g^2
\end{equation}
due to assumption~{\bfseries(\hyperlink{MF}{MF})}, one concludes by arguing as in the proof of \Cref{ENS_d2b}, with in particular
\begin{equation}\label{C5form}
C_5 = e^{e^{c_5(1+\sigma^{-1})}},
\end{equation}
with $c_5$ depending only on $T$, $M$, $L_g$, and $L_f$.
\end{proof}

%

\begin{rmk} \label{ENS_d3N}
If, in addition, one assumes \eqref{extraassMF1} and
\begin{equation} \label{extraassMF2}
\sup_{1 \leq h \leq N} \sum_{\substack{1 \leq i \leq N \\ i \neq h}} \norm{D(D^2_{hi}g^i)}_\infty^2 \leq \frac{L_g^2}{N},
\end{equation}
then one can obtain a stronger estimate, by proceeding as follows. We do not sum over $h$ in the previous proof, but we only sum over $i$, $i \neq h$, thus getting, instead of \eqref{megasommafinale},
\begin{equation*} \begin{split}
\sum_{\substack{1\leq i \leq N \\ i\neq h}} \norm{D(D^2_{hi} u^i)}_\infty^2 &\lesssim \frac{s e^{4Ms}}{\sigma N}\bigl(1 + C_3C_4 + C_2 \hat C_4 \bigr) + e^{2Ms} \sum_{\substack{1\leq i \leq N \\ i\neq h}} \norm{D(D^2_{hi}g^i)}_\infty^2 \\[-3pt]
&\quad + \frac{1}{\sigma N} \sum_{\substack{1\leq i,j \leq N \\ j \neq i \neq h}}\norm{D^3_{hij}u^j}_\infty^2  s e^{4Ms},
\end{split}
\end{equation*}
by using the additional estimate in \Cref{ENS_d2N'point}.
Now we note that
\[\begin{split}
\frac{1}{N} \sum_{\substack{ij \\ j \neq i \neq h}} \norm{D^3_{hij} u^j}_\infty^2 &\leq \frac1N \sum_{\substack{1 \leq i \leq N \\ i \neq h}} \norm{D^3_{ihh} u^h}_\infty^2 + \sum_{\substack{1 \leq j \leq N \\ j \neq h}} \norm{D(D_{hj}^2 u^j)}_\infty^2 
\\
&\leq \frac{C_5}N + \sum_{\substack{1 \leq i \leq N \\ i \neq h}} \norm{D(D_{hi}^2 u^i)}_\infty^2 ,
\end{split}
\]
where we also used \Cref{ENS_d3b} for the second estimate. Therefore, we can then argue as in the proof of \Cref{ENS_d3b} (using \eqref{extraassMF2} in place of \eqref{MFG3agg}) to obtain
\[
\sup_{1 \leq h \leq N} \sum_{\substack{1 \leq i \leq N \\ i \neq h}} \norm{D(D^2_{hi} u^i)(t,\var)}_\infty^2 \leq \frac{\hat C_5}{N} \defeq \frac{e^{e^{\hat c_5(1+\sigma^{-1})}}}{N} \qquad \forall\, t \in [\tau, T],
\]
where $\hat c_5$ depends only on $T$, $M$, $L_g$, and $L_f$.
\end{rmk}

\section{The \texorpdfstring{\Ds semimonotone}{D-semimonotone} case}\label{sec_Ds}

We prove in this section \Cref{ENS_mainthm} in the \Ds semimonotone case, and for quadratic Hamiltonians $H^i(x^i, p) = \frac12|p|^2$. The following result shows the crucial interplay between the previous estimates and the \Ds semimonotonicity of the value functions. It basically shows that if there exists a left temporal open neighbourhood $(\tau, T]$ of $T$ in which $u$ is $M$-\Ds semimonotone for some suitable $M$, then exploiting the estimates on the skew derivatives one can show that such a semimonotonicity actually holds with constant $\frac12M$ on the whole $[\tau, T] \times (\R^d)^N$, provided that $N$ is large.

This implies that if the matrix $(D_{ij} u^j)_{1 \leq i,j \leq N} + M\msfb I$ is positive definite near $T$, then it cannot degenerate at any time in $[0,T]$. In this sense, we can consider this fact as a (backward) \emph{propagation} of the semimonotonicity. Such an argument will in turn imply that all estimates on the derivatives themselves propagate, thus small-time existence of a \Ds semimonotone solution to the Nash system is sufficient to prove global existence on $[0,T]$ of a solution which satisfies all the above estimates (see \Cref{ENS_mainthmDS} below).

Solutions to the Nash system are as before assumed to be classical, locally $C^1$ in time and $C^4$ in space, with bounded derivatives. Moreover, we require second order derivatives in space to be uniformly continuous.

\begin{prop}[Improvement of \Ds semimonotonicity] \label{ENS_dscprop} Assume {\bfseries(\hyperlink{MF}{MF})}.
Then there exist positive constants $M^*$, $M_f^*$ and $M_g^*$ (with $M_g^* < M^*$) depending on $T$, and a natural number $N_*$ depending on $\sigma$, $T$, $L_g$ and $L_f$ such that for any $\tau \in [0,T)$ one has
\[
\begin{rcases}
\Dop[u] \geq -M^*\QA & \text{on}\ (\tau, T] \times (\R^d)^{2N} \\
\Dop[f] \geq -M_f^*\QA & \text{on} \ (\R^d)^{2N} \\
\Dop[g] \geq -M_g^*\QA & \text{on} \ (\R^d)^{2N} \\
N \geq N_*
\end{rcases}
\quad \implies \quad 
\Dop[u(\tau,\var)] \geq - \frac12M^*\QA \ \ \text{on}\ (\R^d)^{2N}.
\]
\end{prop}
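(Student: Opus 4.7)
Introduce the scalar function
\[
\Phi(t, \bs x, \bs y) \defeq \Dop[u(t,\cdot)](\bs x, \bs y) = \sum_{1 \leq i \leq N} \bigl( D_i u^i(t, \bs x) - D_i u^i(t, \bs y) \bigr) \cdot (x^i - y^i)
\]
on $[\tau,T] \times (\R^d)^{2N}$. The hypothesis is $\Phi \geq -M^*\QA$ on $(\tau,T] \times (\R^d)^{2N}$, with terminal data $\Phi(T,\cdot,\cdot) = \Dop[g] \geq -M_g^*\QA$, and the goal is the improved bound $\Phi(\tau,\cdot,\cdot) \geq -\tfrac12 M^*\QA$. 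The plan is to identify a parabolic operator $\scr P$ on the doubled space for which $\scr P\Phi = \Dop[f] + \scr S + \scr R$, with $\scr S$ structurally controlled and $\scr R$ of order $1/N$, and then apply a maximum principle to $\Phi + c(t)|\bs x-\bs y|^2$ with $c(T) = M_g^*$.

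First, differentiating~\eqref{ENS_NS} in $x^i$ gives, for $\phi^i \defeq D_i u^i$,
\[
-\de_t \phi^i - \tr\!\bigl((\sigma\msfb I + \beta\msfb J) D^2 \phi^i\bigr) + D^2_{ii} u^i\, \phi^i + \sum_{k\neq i} \bigl( D^2_{ik} u^k\, D_k u^i + D^2_{ik} u^i\, D_k u^k \bigr) = D_i f^i.
\]
Evaluating at $\bs x$ and $\bs y$, subtracting, pairing with $(x^i - y^i)$ and summing over $i$ produces the evolution equation for $\Phi$. I interpret this as the action on $\Phi$ of the coupled-optimal-trajectory operator
\[
\scr P w \defeq -\de_t w - \tr\!\bigl(\bs\Sigma\, D^2_{(\bs x,\bs y)} w\bigr) - \sum_k \bigl[ \phi^k(t,\bs x)\cdot D_{x^k} w + \phi^k(t,\bs y)\cdot D_{y^k} w \bigr],
\]
where $\bs\Sigma$ is the $2Nd\times 2Nd$ matrix encoding the idiosyncratic noises on each copy and the common-noise coupling $\beta\msfb J$ between them. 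Rearranging one obtains
\[
\scr P\Phi(t,\bs x,\bs y) = \Dop[f](\bs x,\bs y) + \scr S(t,\bs x,\bs y) + \scr R(t,\bs x,\bs y),
\]
where $\scr S$ gathers the diagonal-in-$i$ contributions stemming from $[D^2_{ii} u^i\, \phi^i]|_{\bs y}^{\bs x}\cdot(x^i - y^i)$ and the transport of $\phi^i$ against itself (which along the optimal drift telescope into a quadratic form controllable by $-c\,M^*|\bs x - \bs y|^2$ for some $c = c(T)$), while $\scr R$ collects every term involving a \emph{skew} derivative $D_k u^i$ or $D^2_{jk} u^i$ with $k \neq i$ (or $j \neq i$, respectively).

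Next, because $\Dop[u] \geq -M^*\QA$ holds on $(\tau,T]$, the estimates of Section~\ref{sec_esti} all apply with $M = M^*$: pointwise skew first derivatives of size $O(1/N)$ (Proposition~\ref{ENS_uijestp}), $L^2$-sums of skew second derivatives of size $O(1/N)$ (Propositions~\ref{ENS_d2N},~\ref{ENS_d2N'}), the transversal full second-derivative $O(1)$ bound (Proposition~\ref{ENS_d2b}), and the transversal third-order bound (Proposition~\ref{ENS_d3b}). Cauchy--Schwarz applied to each summand of $\scr R$ yields $|\scr R(t,\bs x,\bs y)| \leq \varepsilon_N\, |\bs x - \bs y|^2$ with $\varepsilon_N \to 0$ as $N \to \infty$. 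Setting $\Psi \defeq \Phi + c(t)|\bs x - \bs y|^2$ with $c(T) = M_g^*$ and $c$ chosen so that $\scr P(c(t)|\bs x-\bs y|^2)$ dominates $(M_f^* + c\,M^* + \varepsilon_N)|\bs x-\bs y|^2$, one gets $\scr P\Psi \geq 0$ together with $\Psi(T,\cdot,\cdot) \geq 0$. A maximum principle on $[\tau,T]\times(\R^d)^{2N}$ in the spirit of the proof of Lemma~\ref{ENS_estd1}---perturbed by $\varepsilon(e^{K(T-t)}(1+|\bs x|^2 + |\bs y|^2) + (t-\tau)^{-1})$ to produce an interior maximum on the unbounded doubled space---yields $\Psi \geq 0$, hence $\Phi(\tau,\cdot,\cdot) \geq -c(\tau)\QA$. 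Choosing $M_g^* \leq \tfrac14 M^*$ and $M_f^*$ small enough depending on $T$ ensures $c(\tau) \leq \tfrac12 M^*$ for all $N \geq N_*$.

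The main obstacle is precisely the reorganisation $\scr P\Phi = \Dop[f] + \scr S + \scr R$: one must verify that each mixed interaction term---typified by $[D^2_{ik} u^k\, D_k u^i]|_{\bs y}^{\bs x}\cdot(x^i - y^i)$ or $[D^2_{ik} u^i\, D_k u^k]|_{\bs y}^{\bs x}\cdot(x^i - y^i)$ with $k\neq i$---factorises into a piece of size $1/\sqrt{N}$ paired with an $L^\infty$-bounded piece furnished by Section~\ref{sec_esti}, and that the common-noise cross-diffusion $\beta\msfb J$ does not contribute uncontrolled terms. The estimates of Section~\ref{sec_esti} have been engineered precisely so that this decomposition closes up, producing the $O(1/N)$ bound on $\scr R$ that turns the a priori bound $\Phi \geq -M^*\QA$ into the improvement $\Phi(\tau) \geq -\tfrac12 M^*\QA$.
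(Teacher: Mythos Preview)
Your overall strategy is sound and is the PDE/maximum-principle counterpart of what the paper does probabilistically: the paper applies Dynkin's formula to $w^i(t,\bs x,\bs y)=D_iu^i|_{\bs y}^{\bs x}\cdot(x^i-y^i)$ along a synchronous coupling of two copies of the optimal dynamics, then sums over $i$. The paper in fact remarks that ``we could proceed with a PDE approach as before'', so the two routes are interchangeable and lead to the same inequality
\[
\Phi(\tau,\bbs x,\bbs y)\ \geq\ -e^{2M^*T}\Bigl(M_g^*+TM_f^*+\tfrac{\bar C}{\sqrt N}\Bigr)|\bbs x-\bbs y|^2,
\]
after which one fixes $M^*=(2T)^{-1}$ and chooses $M_g^*,M_f^*,N_*$.

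However, your description of the decomposition $\scr P\Phi=\Dop[f]+\scr S+\scr R$ is off in a way that matters. When you carry out the algebra, the terms of type $[D^2_{ik}u^i\,D_ku^k]\big|_{\bs y}^{\bs x}\cdot(x^i-y^i)$ with $k\neq i$ (and the diagonal $[D^2_{ii}u^i\,\phi^i]$ piece) cancel \emph{exactly} against the transport built into $\scr P$; they do not land in $\scr R$. What actually survives is
\[
\scr P\Phi\;=\;\Dop[f]\;+\;\underbrace{\sum_i\bigl|D_iu^i\big|_{\bs y}^{\bs x}\bigr|^2}_{\scr S\,\geq\,0}\;-\;\underbrace{\sum_i\sum_{k\neq i}\bigl[D^2_{ik}u^k\,D_ku^i\bigr]\big|_{\bs y}^{\bs x}\cdot(x^i-y^i)}_{-\scr R}.
\]
So $\scr S$ is nonnegative (not merely ``controllable by $-cM^*|\bs x-\bs y|^2$'') and can simply be dropped. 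More importantly, had the $[D^2_{ik}u^i\,D_ku^k]$ terms genuinely remained in $\scr R$, you could not bound them by $\varepsilon_N|\bs x-\bs y|^2$ with the available estimates: after the mean-value expansion they contain pieces like $\sum_{i,\ell}\sum_{k\neq i}D^2_{ik}u^i\,D^2_{k\ell}u^k\,(x^\ell-y^\ell)(x^i-y^i)$, and because $|D^2_{kk}u^k|$ is only $O(1)$, Cauchy--Schwarz gives a bound that grows with $N$. Recognising the cancellation is therefore essential. Once $\scr R$ is correctly identified, the mean-value argument with \Cref{ENS_uijest,ENS_d2N,ENS_d2N',ENS_d3b} gives $|\scr R|\leq(\bar C/\sqrt N)|\bs x-\bs y|^2$ exactly as in the paper, and your comparison with $c(t)|\bs x-\bs y|^2$ (solving $c'\leq-2M^*c-M_f^*-\bar C/\sqrt N$, $c(T)=M_g^*$) closes the argument.
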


Looking at \eqref{stareq} below, one can in fact choose
\[
M_g^* = (12eT)^{-1}, \quad M_f^* = (12eT^2)^{-1}, \quad \text{and} \quad M^* = (2T)^{-1}.
\]

The proof involves again the method of doubling variables. We could proceed with a PDE approach as before, but we prefer to follow a ``dynamic'' approach; that is, we argue along optimal trajectories. This is probably closer in the spirit to what is usually done in the MFG theory.

More precisely, we use a synchronous coupling between two solutions of \eqref{ENS_NcSDEs}, for agents playing optimally. For the benefit of the reader we briefly recall what one means by such a coupling. Let $\bar t \in [0,T)$ and consider the $(\R^d)^{m}$-valued SDE
\begin{equation} \label{ENS_SDEZ}
\di \bs Z_t = \bs b(t, \bs Z_t) \,\di t + \bs\Sigma(t,\bs Z_t) \,\di \tbs B_t, \quad t \in [\bar t, T],
\end{equation}
where $\bs\Sigma$ is an $md \times \ell d$ matrix and $\tbs B_t$ is an $\ell d$-dimensional Brownian motion. Assume that the drift $\bs b$ and the diffusion coefficient $\bs\Sigma$ are locally Lipschitz continuous and have sublinear growth with respect to the state variable, uniformly with respect to time. If $\bs Z$ solves \eqref{ENS_SDEZ} with $\bs Z_{\bar t} = \bbs z \in (\R^d)^m$, then the following \emph{Dynkin's formula} holds for any $h \in C^{1,2}([\bar t,T] \times (\R^d)^m)$ such that $\de_t^\ell D^\gamma h(t,\var)$ has polynomial growth for all $t \in [\bar t,T]$ and $2\ell + |\gamma| \leq 2$:
\begin{equation} \label{ENS_itow}
\bb E\bigl[h(T,\bs Z_T)\bigr] = h(\bar t,\bbs z) + \int_{\bar t}^T \bb E\bigl[\de_t h(t,\bs Z_t) + Lh(t,\bs Z_t)\bigr]\,\di t,
\end{equation}
where $L = \tfrac12 \tr\!\big(\bs \Sigma \bs \Sigma^\trn\! D^2_{\bs z} \big) + \bs b \cdot D_{\bs z}$
is the differential generator of the process $\bs Z$; see, e.g., \cite[Chapter~9]{baldi}.
We are interested in the particular case where $m = 2N$ and $\bs Z$ is the following coupling of two solution of the $(\R^d)^N$-valued SDE
\begin{equation} \label{ENS_SDEcoup}
\di \bs X_t = \bs \alpha(t, \bs X_t) \,\di t + \bs\sigma(t,\bs X_t) \,\di \tbs B_t, \quad t \in [\bar t, T],
\end{equation}
where $\bs\sigma$ is an $Nd \times \ell d$ matrix and $\tbs B_t$ is a $\ell d$-dimensional Brownian motion. Given $\bs X$ and $\bs Y$ which both solve \eqref{ENS_SDEcoup}, with $\bs X_{\bar t} = \bbs x$ and $\bs Y_{\bar t} = \bbs y$, respectively, we say that $\bs Z = (\bs X, \bs Y)^\trn$ with $\bs Z_{\bar t} \sim \mu$ having marginals $\delta_{\bbs x}$ and $\delta_{\bbs y}$ is a \emph{synchronous coupling} of $\bs X$ and $\bs Y$. It solves equation~\eqref{ENS_SDEZ} with
$\bs b(t,\bs x, \bs y) = (\bs\alpha(t,\bs x), \bs\alpha(t,\bs y))^\trn$ and $\bs\Sigma(t,\bs x, \bs y) = (\bs\sigma(t,\bs x), \bs\sigma(t, \bs y))^\trn$,
thus
\[
\bs b \cdot D_{\bs z} = \bs\alpha(t,\bs x) \cdot D_{\bs x} + \bs\alpha(t,\bs y) \cdot D_{\bs y}, \qquad \bs\Sigma\bs\Sigma^\trn = \bigg(\, \begin{matrix}
\bs\sigma(t,\bs x)\bs\sigma(t,\bs x)^\trn & \bs\sigma(t,\bs x)\bs\sigma(t,\bs y)^\trn \\
\bs\sigma(t,\bs y)\bs\sigma(t,\bs x)^\trn & \bs\sigma(t,\bs y)\bs\sigma(t,\bs y)^\trn
\end{matrix}\, \bigg).
\]
In particular, when \eqref{ENS_SDEcoup} coincides with \eqref{ENS_NcSDEs} in the equilibrium, which happens for $\bs\alpha= \bs\alpha^* \defeq (-D_i u^i)_{1 \leq i \leq N}$,  $\tbs B = (\bs B, W)^\trn$ and $\bs\sigma \equiv \sqrt{2}\, \big(\,
\sqrt\sigma\, \msfb I \mid \sqrt{\beta\,} 1_{N} \otimes \msf I_d \,\big)$,
one has $\bs\Sigma\bs\Sigma^\trn 
= 2(\msf J_2 \otimes (\sigma\msfb I + \beta \msfb J))$
and thus
\begin{equation} \label{ENS_Lsync}
L = \tr\!\big((\msf J_2 \otimes (\sigma\msfb I + \beta \msfb J)) D^2_{(\bs x, \bs y)}\big) + \bs\alpha^*(t,\bs x) \cdot D_{\bs x} + \bs\alpha^*(t,\bs y) \cdot D_{\bs y}.
\end{equation}
Clearly, analogous considerations hold if $\bs Z_{\bar t}$ is any random variable with $\bb E\bigl[|\bs Z_{\bar t}|^k\bigr] < \infty$ for some $k \in \N$; in this case, \eqref{ENS_itow} holds with $h(\bar t, \bbs z)$ replaced by $\bb E\bigl[ h(\bar t, \bs Z_{\bar t}) \bigr]$ and provided that $\de_t^\ell D^\gamma h(t,\var) \lesssim 1 + |\cdot|^k$.

\begin{proof}[Proof of~\Cref{ENS_dscprop}]
Let $w^i(t,\bs x,\bs y) \defeq D_i u^i(t,\var)\big|_{\bs y}^{\bs x}\cdot(x^i-y^i)$. We want to use formula~\eqref{ENS_itow} where $\bar t = \tau$, $h = w^i$ and $\bs Z = (\bs X, \bs Y)$ is a synchronous coupling of two solutions to \eqref{ENS_NcSDEs} at equilibrium; that is, formula~\eqref{ENS_itow} with $L$ given by \eqref{ENS_Lsync}.
We have
\[ \begin{multlined}[.95\displaywidth]
Lw^i(t,\bs x, \bs y) = D_i \tr\!\big((\sigma\msfb I + \beta \msfb J)D^2u^i(t,\var)\big|_{\bs y}^{\bs x}\big) \cdot (x^i - y^i) \\
- \sum_{1 \leq j \leq N} D^2_{ij} u^i(t,\var)D_j u^j(t,\var)\big|_{\bs y}^{\bs x} \cdot (x^i-y^i) - \bigl| D_i u^i(t, \var)\big|_{\bs y}^{\bs x} \bigr|^2,
\end{multlined}\]
and, according to \eqref{ENS_NS},
\[ \begin{multlined}[.95\displaywidth]
\de_t w^i(t,\bs x, \bs y) = - D_i \tr\!\big((\sigma\msfb I + \beta \msfb J) D^2u^i(t,\var)\big|_{\bs y}^{\bs x}\big) \cdot (x^i - y^i) + \sum_{1 \leq j \leq N} D^2_{ij} u^i(t,\var) D_j u^j(t,\var)\big|_{\bs y}^{\bs x} \cdot (x^i - y^i) \\
 + \sum_{\substack{1 \leq j \leq N \\ j \neq i}} D^2_{ij} u^j(t,\var) D_j u^i(t,\var)\big|_{\bs y}^{\bs x} \cdot (x^i - y^i) -  D_i f^i\big|_{\bs y}^{\bs x} \cdot (x^i - y^i).
\end{multlined}\]
Then equality~\eqref{ENS_itow} gives
\[
\bb E[w^i(T,\bs X_T,\bs Y_T)] \leq w^i(\tau,\bbs x, \bbs y) - \int_\tau^T \bb E\Big[ D_i f^i\big|_{\bs Y}^{\bs X} \cdot (X^i - Y^i) \Big] + \int_\tau^T \bb E\, \call E^i_t \,\di t,
\]
where $\call E^i_t \defeq \sum_{1 \leq j \leq N,\, j \neq i}  D^2_{ij} u^j(t,\var) D_j u^i(t,\var)\big|_{\bs Y_t}^{\bs X_t} \cdot (X_t^i - Y_t^i)$.
Letting $w \defeq \sum_{1\leq i \leq N} w^i$, it follows that 
\begin{equation} \label{ENS_estw1}
w(\tau, \bbs x, \bbs y) \geq -M^*_g\, \bb E\big[|\bs X_T - \bs Y_T|^2\big] - M^*_f \int_\tau^T \bb E\big[|\bs X_t - \bs Y_t|^2\big]\di t - \int_\tau^T \bb E \biggl[\,\sum_{1\leq i\leq N} \call E^i \,\biggr].
\end{equation}
By the fundamental theorem of calculus
\begin{equation*}
 D^2_{ij} u^j(t,\var) D_j u^i(t,\var)\big|_{\bs Y_t}^{\bs X_t} = \bigg( \int_0^1 D \big( D^2_{ij} u^j(t, \bs Z_t(s)) D_j u^i(t, \bs Z_t(s)) \big)\,\di s \bigg)  (\bs X_t - \bs Y_t) ,
\end{equation*}
with $\bs Z_t(s) = s\bs X_t+(1-s)\bs Y_t$; then note that
\begin{multline*}
\sum_{1\leq i\leq N} \call E_t^i = \int_0^1 \sum_{1\leq i,k\leq N} \sum_{\substack{1 \leq j \leq N \\ j \neq i}} \big( D^3_{ijk} u^j(t, \bs Z_t(s)) D_j u^i(t, \bs Z_t(s)) \big) (X^k_t - Y^k_t)  \cdot (X_t^i - Y_t^i) \,\di s\\
+\int_0^1 \sum_{1\leq i,k\leq N} \sum_{\substack{1 \leq j \leq N \\ j \neq i}} \big( D^2_{ij} u^j(t, \bs Z_t(s))) D^2_{jk} u^i(t, \bs Z_t(s)) \big) (X^k_t - Y^k_t)  \cdot (X_t^i - Y_t^i) \,\di s,
\end{multline*}
where
\begin{multline*}
\biggl|\, \int_0^1 \sum_{1\leq i,k\leq N} \sum_{\substack{1 \leq j \leq N \\ j \neq i}} \big( D^3_{ijk} u^j(t, \bs Z_t(s)) D_j u^i(t, \bs Z_t(s)) \big) (X^k_t - Y^k_t)  \cdot (X_t^i - Y_t^i) \,\di s \,\biggr| \\[-7pt]
\begin{split}
&\leq \biggl\| \sum_{1\leq i,k\leq N}\, \biggl| \sum_{\substack{1 \leq j \leq N \\ j \neq i}} D^3_{ijk} u^j D_j u^i \biggr|^2 \biggr\|_\infty^{\frac12} |\bs X_t - \bs Y_t|^2 \\
&\leq \biggl( \sum_{1 \leq i \leq N} \bigg\lVert \sum_{\substack{1 \leq j \leq N \\ j \neq i}} \big| D(D^2_{ij} u^j) \big|^2 \bigg\rVert_\infty \bigg\lVert \sum_{\substack{1 \leq j \leq N \\ j \neq i}}\big| D_j u^i \big|^2 \bigg\rVert_\infty \biggr)^{\frac12} |\bs X_t - \bs Y_t|^2
\end{split}
\end{multline*}
and similarly
\begin{multline*}
\biggl|\, \int_0^1 \sum_{1\leq i,k\leq N} \sum_{\substack{1 \leq j \leq N \\ j \neq i}} \big( D^2_{ij} u^j(t, \bs Z_t(s))) D^2_{jk} u^i(t, \bs Z_t(s)) \big) (X^k_t - Y^k_t)  \cdot (X_t^i - Y_t^i) \,\di s \,\biggr| \\[-7pt]
\leq \biggl( \sum_{1 \leq i \leq N} \bigg\lVert \sum_{\substack{1 \leq j \leq N \\ j \neq i}} \big| D^2_{ij} u^j \big|^2 \bigg\rVert_\infty \bigg\lVert \sum_{\substack{1 \leq j \leq N \\ j \neq i}} \big| D(D_j u^i) \big|^2 \bigg\rVert_\infty \biggr)^{\frac12}|\bs X_t - \bs Y_t|^2.
\end{multline*}
\Cref{ENS_uijest,ENS_d3b} ensure that
\[
\sum_{1 \leq i \leq N} \bigg\lVert \sum_{\substack{1 \leq j \leq N \\ j \neq i}} \big| D(D^2_{ij} u^j) \big|^2 \bigg\rVert_\infty \bigg\lVert \sum_{\substack{1 \leq j \leq N \\ j \neq i}}\big| D_j u^i \big|^2 \bigg\rVert_\infty \leq \frac{c_1^2C_5}{N}
\]
and \Cref{ENS_d2N,ENS_d2N'} give
\[
\sum_{1 \leq i \leq N} \bigg\lVert \sum_{\substack{1 \leq j \leq N \\ j \neq i}} \big| D^2_{ij} u^j \big|^2 \bigg\rVert_\infty \bigg\lVert \sum_{\substack{1 \leq j \leq N \\ j \neq i}} \big| D(D_j u^i) \big|^2 \bigg\rVert_\infty \leq \frac{C_3C_4}{N}.
\] 
Therefore we have
\[
\int_\tau^T \bb E \biggl[\, \sum_{1\leq i\leq N} \call E_t^i\, \biggr] \leq \frac{\overline C}{\sqrt{N}}\,\sup_{t \in [\tau,T]} \bb E\big[ |\bs X_t - \bs Y_t|^2 \big]
\]
for 
\begin{equation}\label{ovCform}
\overline C=c_1\sqrt{C_5} + \sqrt{C_3C_4} \le e^{e^{c_6(1+\sigma^{-1})}},
\end{equation}
where $c_6$ depends only $T$, $M^*$, $L_f$ and $L_g$. On the other hand, by the equation of $\bs X - \bs Y$,
\[
\di |\bs X_t - \bs Y_t|^2 = - 2\sum_{1 \leq i \leq N}( D_i u^i(t,\bs X_t) - D_i u^i(t,\bs Y_t) ) \cdot ( X^i_t - Y^i_t ) \,\di t \leq 2M^* |\bs X_t - \bs Y_t|^2 \,\di t,
\]
where the inequality comes from the \Ds semimonotonicity of $u$, so that Gronwall's lemma yields
\begin{equation} \label{ENS_knowest}
\bb E \big[ |\bs X_t - \bs Y_t |^2 \big] \leq | \bbs x - \bbs y|^2 e^{2M^*(t- \tau)} \quad \forall\, t \in [\tau, T].
\end{equation}
Hence, from \eqref{ENS_estw1} we have obtained
\[
w(\tau, \bbs x, \bbs y) \geq -e^{2M^*T}\Bigl( M^*_g + TM^*_f + \frac{\overline C}{\sqrt{N}\,}\Bigr) | \bbs x - \bbs y|^2.
\]
Fix now $M^* = (2T)^{-1}$; choosing $M^*_g$ and $M^*_f$ small enough and $N_*$ large enough so that
\begin{equation}\label{stareq}
M^*_g + TM^*_f + \frac{\overline C}{\sqrt{N_*}\,} \leq \frac{M^*}{2 e^{2M^*T}} = \frac1{4eT}
\end{equation}
we see that $w(\tau,\bbs x, \bbs y) \geq -\frac12M^*| \bbs x - \bbs y|^2$ holds for any 
$N \geq  N_*$. The conclusion follows by the arbitrariness of $\bbs x, \bbs y \in (\R^d)^N$.
\end{proof}

\begin{rmk} \label{rmksusn}
By using the precise form of the constant $\overline C$ in \eqref{ovCform}, we can obtain the same kind of improvement of semimonotonicity also in a setting where $\sigma$ is not fixed, but may depend on $N$, that is $\sigma = \sigma_N$. In particular, taking  $M_g^* = (12eT)^{-1}$, $M_f^* = (12eT^2)^{-1}$ and $M^* = (2T)^{-1}$ as above, the inequality \eqref{stareq} holds provided that
\[
\frac{e^{e^{c_6(1+\sigma_{N_*}^{-1})}}}{\sqrt{N_*}} \leq \frac 1{12e^T}.
\]
As an example, this is satisfied for $N_*$ large enough and $\sigma_N$ satisfying
\[
\sigma_N \wedge 1 \geq \frac1{\delta \log \log N}
\]
for some (small) $\delta$ depending on $T$, $L_g$, and $L_f$. Note in particular that despite the viscosity $\sigma_N$ may vanish as $N\to \infty$, the semimonotonicity bound $M^*$ remains uniform in $N$. For further discussion on a vanishing viscosity limit for $N \to \infty$, see \Cref{sec_vvl}.
\end{rmk}

As anticipated, such a non-degeneration of semimonotonicity of the solution is the key ingredient for the propagation of all derivative estimates, and of semimonotonicity, over the whole time horizon $[0,T]$. This constitutes the first half of \Cref{ENS_mainthm}. 

\begin{thm}[Estimates on the Nash system with \Ds semimonotone data] \label{ENS_mainthmDS}
Assume that $f$ and $g$ satisfy assumptions~{\bfseries(\hyperlink{MF}{MF})} and {\bfseries(\hyperlink{DS}{DS})}. Let $T>0$. There exist positive constants $M_f^*$ and $M_g^*$ depending on $T$ (in such a way that $M_f^*,M_g^* \to 0$ as $T \to +\infty$ and $M_f^*,M_g^* \to +\infty$ as $T \to 0$), and a natural number $N^*$ depending only on $\sigma$, $T$, $L_f$ and $L_g$, such that if $M_g \leq M_g^*$, $M_f \leq M_f^*$ and $N \geq N^*$ then 
any solution $u$ to~\eqref{ENS_NS} on $[0,T] \times (\R^d)^N$ satisfies
\[
\sup_{1 \leq i \leq N} \bigg( \sup_{\substack{1\leq j \leq N \\ j \neq i}} \norm{D_j u^i}_\infty +  \bigg\lVert \sum_{\substack{1\leq j\leq N \\ j \neq i}} \big| D^2_{ij} u^j\big|^2 \bigg\rVert_\infty +  \sum_{\substack{1\leq j \leq N \\ j \neq i}}\norm{D(D_j u^i)}_\infty^2 \bigg) \lesssim \frac1N
\]
and
\[
\sup_{1 \leq i \leq N} \norm{D u^i}_\infty + \sup_{1 \leq i \leq N} \sum_{1\leq j \leq N}\norm{D(D_i u^j)}_\infty^2 + \sum_{\substack{1\leq i,j \leq N \\ j\neq i}} \norm{D(D_{ij} u^j)}_\infty^2 \lesssim 1,
\]
where the implied constants depend only on $\sigma$, $T$, $L_f$ and $L_g$. In addition, there exists $M^* > 0$ (depending only on $T$) such that $u$ is $M^*$-\Ds semimonotone on $[0,T]$.
\end{thm}

\begin{proof}
Let $M_f^*$, $M_g^*$, $M^*$ and $N^*$ be given by \Cref{ENS_dscprop}. By assumption~{\bfseries(\hyperlink{DS}{DS})}, $\Dop[u(T,\var)] + M^*\QA \geq \epsilon \QA$ on $(\R^d)^{2N}$, with $\epsilon = M^* - M^*_g > 0$;  that is, by \Cref{ENS_rmkDMeq}, $(D_{ij}^2 u^i(T,\var))_{1 \leq i,j \leq N} + M^* \msfb I \geq \epsilon \msfb I$ on $(\R^d)^N$. Since $D^2_{ij} u^i$ are assumed to be uniformly continuous, 
there exists $\tau \in [0,T)$ (a priori dependent of $N$) such that $(D_{ij}^2 u^i(t,\var))_{1 \leq i,j \leq N} + M^* \msfb I \geq \frac\epsilon2 \msfb I$ on $(\R^d)^N$ for all $t \in (\tau, T]$. Therefore,
\[
\call T \defeq \big\{ s \in [0,T) :\ \text{$u$ extends to a solution on $(s, T] \times (\R^d)^N$ and $\Dop[u] > -M^* \QA$ thereon} \big\} \neq \emptyset
\]
and $\tau^* \defeq \inf\call T \in [0,\tau]$. Seeking for a contradiction, suppose that $\tau^* > 0$. Then $\Dop[u] \geq - M^*\QA$ on $[\tau^*, T] \times (\R^d)^{2N}$ and by \Cref{ENS_dscprop} we have $\Dop[u(\tau^*,\var)] \geq - \frac12 M^*\QA$ on $(\R^d)^{2N}$. Repeating the same argument as at the beginning of the proof, there exists $\tau' \in \call T$, $\tau' < \tau^*$, thus contradicting the definition of $\tau^*$. This proves the $\mathscr{D}$-semimonotonicity of $u$ on $[0,T]$, hence all estimates on the derivatives are given by \Cref{ENS_d2b,ENS_d2N,ENS_uijestp,ENS_d2N',ENS_d3b}.
\end{proof}

\section{The \texorpdfstring{\LLs semimonotone}{L-semimonotone} case}\label{sec_LLs}

We prove in this section \Cref{ENS_mainthm} in the \LLs semimonotone case, and for quadratic Hamiltonians $H^i(x^i, p) = \frac12|p|^2$. Our observations in \Cref{ENS_rmkLLtoDM} motivate the following result, which comprehends all estimates on derivatives we have made in the previous section, under the assumption of \LLs semimonotonicity and a lower bound on the diagonal of the Jacobian matrix of the vector of optimal controls. As before, solutions to the Nash system are assumed to be classical, locally $C^1$ in time and $C^4$ in space, with bounded derivatives. Moreover, we require second order derivatives in space to be uniformly continuous.

\begin{prop}[Estimates on the derivatives, assuming \LLs semimonotonicity and a diagonal lower bound] \label{ENS_propestLL} 
Let $\tau \in [0,T)$ and assume that {\bfseries(\hyperlink{MF}{MF})} holds. Suppose that $\LLop[u] \geq -\kappa \QA$ on $(\tau, T] \times (\R^d)^{2N}$ and there exists $\gamma > 0$ (independent of $N$), such that 
\begin{equation} \label{ENS_crucialbound}
\bs\triangle_{u(t,\var)} \geq -\gamma \msfb I \ \ \text{on} \ (\R^d)^N \quad \forall\,t \in (\tau,T].
\end{equation}
Then there exist constants $K_i$, $i \in \{1,\dots, 5\}$, depending only on $\sigma$, $T$, $\kappa$, $\gamma$, $L_g$ and $L_f$, such that the following estimates hold for all $t \in [\tau,T]$:
\begingroup
\allowdisplaybreaks
\begin{gather}
\label{ENS_LLd1}
\sup_{\substack{1\leq i,j \leq N \\ j \neq i}} \norm{D_j u^i(t,\var)}_\infty \leq \frac{K_1}{N}, \\
\label{ENS_LLd2}
\sup_{1\leq i \leq N} \sum_{1\leq j \leq N}\norm{D(D_i u^j)(t,\var)}_\infty^2 \leq K_2, \\
\label{ENS_LLd2'}
\sup_{1\leq i \leq N} \bigg\lVert \sum_{\substack{1\leq j\leq N \\ j \neq i}} \big| D^2_{ij} u^j(t,\var)\big|^2 \bigg\rVert_\infty \leq \frac{K_3}{N} , \\
\label{ENS_LLd2''}
\sup_{1\leq i \leq N} \sum_{\substack{1\leq j \leq N \\ j \neq i}}\norm{D(D_j u^i)(t,\var)}_\infty^2 \leq \frac{K_4}{N} , \\
\label{ENS_LLd3}
\sum_{\substack{1\leq i,j \leq N \\ j\neq i}} \norm{D(D_{ij} u^j)(t,\var)}_\infty^2 \leq K_5.
\end{gather}
\endgroup
More precisely, each $K_i$ is given by the constant $C_i$ appearing in \Cref{ENS_d2b,ENS_d2N,ENS_uijestp,ENS_d2N',ENS_d3b} with $M$ replaced by $\kappa + \gamma$.
\end{prop}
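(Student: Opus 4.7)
The plan is to reduce the \LLs semimonotone case to the already-treated \Ds semimonotone one, by using the comparison highlighted in \Cref{ENS_rmkLLtoDM}. Recall that, since $u(t,\var)$ is of class $C^2$ in space, the pointwise characterisations of the two semimonotonicity notions read
\[
\LLop[u(t,\var)] \geq -\kappa \QA \iff (D^2_{ij}u^i(t,\var))_{1\leq i,j \leq N} - \bs\triangle_{u(t,\var)} \geq -\kappa \msfb I,
\]
and similarly for $\Dop$ with the full matrix. Hence, at any $t \in (\tau, T]$, combining the \LLs semimonotonicity of $u(t,\var)$ with the assumed lower bound $\bs\triangle_{u(t,\var)} \geq -\gamma \msfb I$ gives
\[
(D^2_{ij}u^i(t,\var))_{1\leq i,j \leq N} \geq -\kappa \msfb I + \bs\triangle_{u(t,\var)} \geq -(\kappa+\gamma)\msfb I,
\]
which is exactly $\Dop[u(t,\var)] \geq -(\kappa+\gamma) \QA$ on $(\R^d)^{2N}$.

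With this \Ds semimonotonicity in hand on the whole interval $(\tau,T]$ and constant $M = \kappa + \gamma$, the estimates \eqref{ENS_LLd1}--\eqref{ENS_LLd3} are an immediate consequence of the results proved in \Cref{sec_esti}. Specifically, \eqref{ENS_LLd1} follows from \Cref{ENS_uijestp} with $K_1 = C_1$; \eqref{ENS_LLd2} from \Cref{ENS_d2b} with $K_2 = C_2$; \eqref{ENS_LLd2'} from \Cref{ENS_d2N} with $K_3 = C_3$; \eqref{ENS_LLd2''} from \Cref{ENS_d2N'} with $K_4 = C_4$; and \eqref{ENS_LLd3} from \Cref{ENS_d3b} with $K_5 = C_5$. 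Each of the constants $C_i$ depends only on $\sigma$, $T$, $M$, $L_g$ and $L_f$, so substituting $M = \kappa + \gamma$ yields the claimed dependence of $K_i$ on $\sigma$, $T$, $\kappa$, $\gamma$, $L_g$ and $L_f$.

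There is essentially no obstacle in this argument: the only point worth mentioning is that the bound \eqref{ENS_crucialbound} is assumed uniformly in $t \in (\tau, T]$, so the implication $\Dop[u(t,\var)] \geq -(\kappa+\gamma)\QA$ also holds uniformly in $t$, as required to invoke the propositions of \Cref{sec_esti}. The real content of the \LLs semimonotone setting is not located here, but in the task (carried out in the next subsection) of actually \emph{verifying} the diagonal bound \eqref{ENS_crucialbound} and the preservation of \LLs semimonotonicity itself along the flow of the Nash system, analogously to what is done in \Cref{ENS_dscprop} for the \Ds case.
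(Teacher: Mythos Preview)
Your proof is correct and follows essentially the same approach as the paper: invoke \Cref{ENS_rmkLLtoDM} to convert the \LLs semimonotonicity plus the diagonal lower bound into \Ds semimonotonicity with constant $M=\kappa+\gamma$, and then read off the five estimates directly from \Cref{ENS_uijestp,ENS_d2b,ENS_d2N,ENS_d2N',ENS_d3b}. The paper's proof is in fact a two-line version of exactly this argument.
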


\begin{proof}
As noted in \Cref{ENS_rmkLLtoDM}, we have $\Dop[u] \geq -(\kappa+\gamma)\QA$ on $(\tau, T] \times (\R^d)^{2N}$. Then the estimates follows from \Cref{ENS_d2b,ENS_d2N,ENS_uijestp,ENS_d2N',ENS_d3b}.
\end{proof}

With this piece of information one can show that the pair formed by \LLs semimonotonicity and diagonal lower bound~\eqref{ENS_crucialbound} does not degenerate, just like displacement semiconvexity did not (cf.~\Cref{ENS_dscprop}).

\begin{prop}[Improvement of \LLs semimonotonicity and diagonal lower bound] \label{ENS_simLLprop}
Assume that {\bfseries(\hyperlink{MF}{MF})} holds. Then there exist nonnegative constants $\kappa_f^*$, $\kappa_g^*$, $\kappa^*$ and $\gamma^*$ and a natural number $N_*$, all depending only on $\sigma$, $T$, $L_g$ and $L_f$, such that $\kappa_g^* < \kappa^*$, $L_g < \gamma^*$ and, for any $\tau \in [0,T)$, one has
\[
\begin{rcases}
\LLop[u] \geq -\kappa^* \QA & \text{on}\ (\tau, T] \times (\R^d)^{2N} \\
\bs\triangle_u \geq -\gamma^* \msfb I & \text{on}\ (\tau, T] \times (\R^d)^N \\
\LLop[f] \geq -\kappa_f^*\QA & \text{on} \ (\R^d)^{2N} \\
\LLop[g] \geq -\kappa_g^*\QA & \text{on} \ (\R^d)^{2N} \\
N \geq N_*
\end{rcases}
\quad \implies \quad 
\begin{cases}
\LLop[u(\tau,\var)] \geq - \dfrac12\kappa^* \QA & \text{on}\ (\R^d)^{2N} \vspace{5pt}\\
\bs\triangle_u(\tau,\var) \geq  -\dfrac12\gamma^* \msfb I & \text{on}\ (\R^d)^N.
\end{cases}
\]
\end{prop}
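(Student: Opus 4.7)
The strategy extends that of \Cref{ENS_dscprop} by propagating both quantities in parallel, starting from the observation that, by \Cref{ENS_rmkLLtoDM}, the standing hypotheses $\LLop[u] \geq -\kappa^*\QA$ and $\bs\triangle_u \geq -\gamma^* \msfb I$ on $(\tau, T]$ jointly yield $\Dop[u] \geq -(\kappa^* + \gamma^*)\QA$ there. Consequently \Cref{ENS_propestLL} supplies the estimates \eqref{ENS_LLd1}--\eqref{ENS_LLd3} with $M = \kappa^* + \gamma^*$, on which all subsequent error bounds will rest. For the \LLs improvement, I would mimic the synchronous-coupling argument of \Cref{ENS_dscprop} using four processes per index $i$: for each $i$, let $\bs X^{(\ell),i}_t$ ($\ell = 1,2,3,4$) solve the equilibrium SDE \eqref{ENS_NcSDEs} with $\bs\alpha^{*,j} = -D_j u^j$ and common Brownian motions, starting at time $\tau$ from $\bs x$, $(\bs x^{-i}, y^i)$, $(\bs y^{-i}, x^i)$, $\bs y$, respectively. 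Along any equilibrium trajectory, It\^{o}'s formula and the Nash PDE give
\[
d u^i(t, \bs X_t) = -\bigl(f^i(\bs X_t) + \tfrac12 |D_i u^i(t, \bs X_t)|^2\bigr)\,dt + (\mathrm{noise}).
\]
Applying this identity to each of the four processes with alternating signs $+,-,-,+$, summing over $i$, and taking expectation yields
\[
\LLop[u(\tau, \cdot)](\bs x, \bs y) = \mathbb E[G_T] + \int_\tau^T \mathbb E[F_t + \tfrac12 H_t]\,dt,
\]
where $G_T, F_t, H_t$ are the analogous fourfold differences of $g^i, f^i, |D_i u^i|^2$ summed over $i$.

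The key step is to show that $\mathbb E[G_T]$ differs from $\LLop[g]$ evaluated on the rigid $4$-configuration based at $(\bs X^{(1),i}_T, \bs X^{(4),i}_T)$ only by a correction of order $O(N^{-1/2})|\bs x - \bs y|^2$, controlled by Taylor expansion and the skew derivative estimates in \hyperlink{MF}{(MF)} and \Cref{ENS_propestLL}; the analogous control holds for $F_t$ and $H_t$. Coupled with the Gronwall bound $|\bs X^{(1),i}_t - \bs X^{(4),i}_t|^2 \leq e^{2(\kappa^* + \gamma^*)T}|\bs x - \bs y|^2$ coming from the one-sided Lipschitz property of the drift, as in \eqref{ENS_knowest}, this should lead to
\[
\LLop[u(\tau, \cdot)](\bs x, \bs y) \geq -e^{2(\kappa^* + \gamma^*) T}\Bigl(\kappa_g^* + T \kappa_f^* + \frac{\bar C}{\sqrt N}\Bigr)|\bs x - \bs y|^2;
\]
then, fixing $\kappa^* = (2T)^{-1}$ and choosing $\kappa_g^*, \kappa_f^*$ small and $N_*$ large so that the parenthesis is at most $\kappa^*/2$ yields the first improvement.

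For the diagonal bound, I would fix $i$ and a unit vector $\xi \in \R^d$, and introduce three synchronously coupled equilibrium processes $\bs X^+, \bs X, \bs X^-$ starting at $(\bs x^{-i}, x^i + h\xi)$, $\bs x$, $(\bs x^{-i}, x^i - h\xi)$ at time $\tau$. The same HJB identity applied to $u^i(t, \bs X^+_t) + u^i(t, \bs X^-_t) - 2 u^i(t, \bs X_t)$, divided by $h^2$ and passed to the limit $h \to 0$, gives the first-variation representation
\[
D^2_{ii} u^i(\tau, \bs x)\xi \cdot \xi = \mathbb E[D^2 g^i(\bs X_T) \bs y_T \cdot \bs y_T] + \int_\tau^T \mathbb E[D^2 f^i(\bs X_t) \bs y_t \cdot \bs y_t + (\text{quadratic } D_i u^i\text{-terms})]\,dt,
\]
where $\bs y_t$ is the first variation $d\bs y_t = D\bs\alpha^*(\bs X_t) \bs y_t\,dt$ with $\bs y_\tau$ equal to $\xi$ in the $i$-th slot and $0$ elsewhere. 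Splitting each Hessian sum into the diagonal piece $D^2_{ii} h^i y^i_t \cdot y^i_t$, bounded below by $-L_h |y^i_t|^2$ via \hyperlink{MF}{(MF)}, and a skew piece controlled by \hyperlink{MF}{(MF)} and \Cref{ENS_propestLL}, together with the Gronwall-type bound $\sum_{j \neq i} |y^j_t|^2 = O(N^{-1})|\xi|^2$ derived from the ODE for $\bs y_t$ and the transversal estimates on $D^2_{ij} u^j$, one arrives at
\[
D^2_{ii} u^i(\tau, \bs x) \xi \cdot \xi \geq -e^{2(\kappa^* + \gamma^*) T}\bigl(L_g + T L_f + \widetilde C / \sqrt{N}\bigr)|\xi|^2.
\]
Choosing $\gamma^*$ with $2e^{2(\kappa^* + \gamma^*) T}(L_g + T L_f) \leq \gamma^*$ (a fixed-point-type inequality, solvable provided $L_g + TL_f$ and $\kappa^*$ are small enough, so that in particular $L_g < \gamma^*$) and $N_*$ large enough to absorb the $\widetilde C / \sqrt N$ term completes the diagonal improvement.

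The hard part will be the quantitative control of the deviation from the ``rigid'' configuration in both arguments: the four-point pattern for the \LLs bound and the one-dimensional $x^i$-perturbation for the diagonal bound. These deviations produce error terms built from skew and transversal derivatives of $u$, $f$, $g$, which must be shown to carry an $O(N^{-1/2})$ factor via \hyperlink{MF}{(MF)} and the transversal estimates in \Cref{ENS_propestLL}. The compatibility conditions $\kappa_g^* < \kappa^*$ and $L_g < \gamma^*$ appearing in the statement reflect precisely the slack required to absorb the amplification $e^{2(\kappa^* + \gamma^*)T}$ and still deliver the factor-$\tfrac12$ improvement.
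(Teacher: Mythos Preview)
Your overall architecture---split into an \LLs improvement and a diagonal improvement, each fed by the derivative estimates of \Cref{ENS_propestLL}---matches the paper. But the two halves diverge from the paper's arguments in ways that matter, and the diagonal half has a genuine gap.

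\medskip

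\textbf{The \LLs half.} The paper does \emph{not} run four (or $4N$) processes. It runs only the synchronous pair $(\bs X_t,\bs Y_t)$ starting at $(\bs x,\bs y)$ and applies Dynkin's formula to the single function $w(t,\bs x,\bs y)=\LLop[u(t,\cdot)](\bs x,\bs y)$, which already encodes the four evaluation points $(\bs x,(\bs x^{-i},y^i),(\bs y^{-i},x^i),\bs y)$ as deterministic functions of $(\bs x,\bs y)$. A direct computation of $(\partial_t+L)w$ then produces, after cancellations, two nonpositive square terms plus cross terms of the schematic form $\sum_{j\neq i} D_j u^j\big|_{(\bs x^{-i},y^i)}^{\bs x}\cdot D_j u^i(\bs x^{-i},y^i)$; a mean-value expansion and the transversal/horizontal estimates \eqref{ENS_LLd1}--\eqref{ENS_LLd3} show these are $O(N^{-1/2})|\bs x-\bs y|^2$. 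Your $4N$-process route may be salvageable, but the ``deviation from the rigid configuration'' you flag as the hard part is precisely what the paper sidesteps by keeping the four points algebraically tied to $(\bs X_t,\bs Y_t)$.

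\medskip

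\textbf{The diagonal half: a real obstruction.} Your first-variation argument yields
\[
D^2_{ii}u^i(\tau,\bs x)\xi\cdot\xi \;\geq\; -e^{2(\kappa^*+\gamma^*)T}\bigl(L_g+TL_f+\widetilde C/\sqrt N\,\bigr)|\xi|^2,
\]
and you then seek $\gamma^*$ with $2e^{2(\kappa^*+\gamma^*)T}(L_g+TL_f)\le\gamma^*$. With $\kappa^*=(2T)^{-1}$ this reads $2e\,e^{2\gamma^*T}(L_g+TL_f)\le\gamma^*$, which has \emph{no} solution once $L_g+TL_f$ exceeds a threshold of order $(eT)^{-1}$. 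The proposition, however, must hold for arbitrary $L_f,L_g$ (only the semimonotonicity constants are constrained), so this is a gap, not a technicality.

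The paper avoids the exponential amplification entirely by a three-step duality argument against the Fokker--Planck flow $\rho$ (no Gronwall on trajectories): first bound $\|D_iu^i\|_\infty$ via the equation for $D_iu^i$; then test the equation for $\tfrac12|D_iu^i|^2$ to get $\int_\tau^T\!\int|D^2_{ii}u^i|^2\,d\rho\lesssim L_g^2/\sigma+TL_f(L_g+TL_f)/\sigma$; finally test the $\Sym(d)$-valued equation for $D^2_{ii}u^i$ itself and use the previous integral bound to control the $(D^2_{ii}u^i)^2$ term. The resulting lower bound on $D^2_{ii}u^i(\tau,\bs x)$ is linear in the data (no factor $e^{2\gamma^*T}$), so one can simply \emph{set} $\gamma^*=4\bigl(L_g+L_f+\tfrac{L_g^2}{2\sigma}+\tfrac{TL_f}{\sigma}(L_g+TL_f)\bigr)$, which works for all $L_f,L_g$. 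This decoupling---$\kappa^*$ independent of $\gamma^*$ and $\gamma^*$ independent of $\kappa^*$---is also what makes the final assembly of the two halves clean.
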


We are going to split the proof of this result in two propositions. We will prove that the \mbox{\LLs semimonotonicity} improves, provided that one has the diagonal lower bound (see \Cref{ENS_LLmprop}). Then, symmetrically, that such a lower bound improves, provided that one knows that $u$ is \LLs semimonotone (see \Cref{ENS_cruboprop}).


\begin{prop}[Improvement of \LLs semimonotonicity] \label{ENS_LLmprop}
Assume that {\bfseries(\hyperlink{MF}{MF})} holds. Let $\gamma > 0$. Then there exist  a nonnegative constant $\kappa^*$ depending on $T$ (but not on $\gamma$), and constants $\kappa_f^*$, $\kappa_g^*$ (with $\kappa_g^* < \kappa$) and a natural number $N_{*}'$, all depending only on $\sigma$, $T$, $\gamma$, $L_g$ and $L_f$, such that for any $\tau \in [0,T)$ one has
\[
\begin{rcases}
\bs\triangle_{u(t,\cdot)} \geq -\gamma \msfb I & \text{on}\ (\tau, T] \times (\R^d)^{N} \\
\LLop[u] \geq -\kappa^* \QA & \text{on}\ (\tau, T] \times (\R^d)^{2N} \\
\LLop[f] \geq -\kappa_f^*\QA & \text{on} \ (\R^d)^{2N} \\
\LLop[g] \geq -\kappa_g^*\QA & \text{on} \ (\R^d)^{2N} \\
N \geq N_{*}'
\end{rcases}
\quad \implies \quad 
\LLop[u(\tau,\var)] \geq - \frac12\kappa^* \QA \ \ \text{on}\ (\R^d)^{2N}.
\]
\end{prop}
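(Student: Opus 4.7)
The plan is to follow the synchronous-coupling strategy used in the proof of \Cref{ENS_dscprop}, but tested against a quantity adapted to \LLs semimonotonicity. By \Cref{ENS_rmkLLtoDM}, the hypotheses imply $\Dop[u] \geq -(\kappa^*+\gamma)\QA$ on $(\tau,T] \times (\R^d)^{2N}$, so all derivative estimates \eqref{ENS_LLd1}--\eqref{ENS_LLd3} of \Cref{ENS_propestLL} are at our disposal. Moreover, running a synchronous coupling $(\bs X_t,\bs Y_t)$ of two solutions of \eqref{ENS_NcSDEs} under the optimal feedback $\bs\alpha^* = (-D_i u^i)_{i}$, starting from $(\bbs x,\bbs y)$, yields the Gronwall bound $\bb E[|\bs X_t - \bs Y_t|^2] \leq e^{2(\kappa^*+\gamma)(t-\tau)}|\bbs x - \bbs y|^2$, exactly as in \eqref{ENS_knowest}.

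I would then apply Dynkin's formula~\eqref{ENS_itow} to the test quantities
\[
w^i_t \defeq u^i(t,\bs X_t) - u^i(t,\bs X_t^{-i},Y_t^i) - u^i(t,\bs Y_t^{-i},X_t^i) + u^i(t,\bs Y_t), \qquad 1 \leq i \leq N,
\]
so that $\sum_i w^i_\tau = \LLop[u(\tau,\var)](\bbs x,\bbs y)$ and $\sum_i w^i_T = \LLop[g](\bs X_T,\bs Y_T)$. Substituting the Nash equation at each of the four evaluation points, the Hamiltonian pieces $\tfrac12|D_i u^i|^2$ combine (by the elementary identity $-\tfrac12(|p|^2+|q|^2+|r|^2+|s|^2)+s\cdot q+p\cdot r = -\tfrac12(|p-r|^2+|s-q|^2)$) into the nonpositive square
\[
-\tfrac12\bigl(|D_i u^i(t,\bs X_t) - D_i u^i(t,\bs Y_t^{-i},X_t^i)|^2 + |D_i u^i(t,\bs X_t^{-i},Y_t^i) - D_i u^i(t,\bs Y_t)|^2\bigr),
\]
the running-cost contributions sum into $-\LLop[f](\bs X_t,\bs Y_t)$, and a \emph{cross} interaction term
\[
E^i_t = \sum_{j\neq i}\Bigl\{D_j u^i(t,\bs Z^{(i)})\cdot\bigl[D_j u^j(t,\bs Z^{(i)}) - D_j u^j(t,\bs X_t)\bigr] + D_j u^i(t,\tbs Z^{(i)})\cdot\bigl[D_j u^j(t,\tbs Z^{(i)}) - D_j u^j(t,\bs Y_t)\bigr]\Bigr\}
\]
remains, where $\bs Z^{(i)} \defeq (\bs X_t^{-i},Y_t^i)$ and $\tbs Z^{(i)} \defeq (\bs Y_t^{-i},X_t^i)$.

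The main obstacle is proving $|\sum_i E^i_t| \leq CN^{-1/2}|\bs X_t - \bs Y_t|^2$: a direct Cauchy--Schwarz using the pointwise bound \eqref{ENS_LLd1} together with the transversal bound \eqref{ENS_LLd2'} yields only the \emph{linear} estimate $|\sum_i E^i_t|\leq CN^{-1/2}|\bs X_t - \bs Y_t|$, which would leave a residual not controlled by $|\bbs x-\bbs y|^2$ and thus could never match the quadratic form required by \LLs semimonotonicity. The trick is to pair the two halves of each summand and to add and subtract $D_j u^i(\tbs Z^{(i)})$; writing $D_j u^j(\bs Z^{(i)}) - D_j u^j(\bs X_t) = \int_0^1 D^2_{ij}u^j(\bs X_t + s\xi^i e_i)\xi^i\,\di s$ and analogously for the second half (with $\xi^i \defeq Y_t^i - X_t^i$ and $e_i$ the slot-$i$ embedding), one recasts each summand as
\[
\bigl[D_j u^i(\bs Z^{(i)}) - D_j u^i(\tbs Z^{(i)})\bigr]\cdot D^2_{ij}u^j(\bs X_t+\cdots)\xi^i + D_j u^i(\tbs Z^{(i)})\cdot\bigl[D^2_{ij}u^j(\bs X_t+\cdots) - D^2_{ij}u^j(\bs Y_t+\cdots)\bigr]\xi^i.
\]
Since $|\bs Z^{(i)} - \tbs Z^{(i)}| = |\bs X_t - \bs Y_t|$ and the two arguments in the second bracket differ by $O(|\bs X_t - \bs Y_t|)$, each bracket is itself $O(|\bs X_t - \bs Y_t|)$, which combined with the pre-existing factor $\xi^i$ supplies the second power of $|\bs X_t - \bs Y_t|$ missing from the naive bound. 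Summing over $j\neq i$ and then over $i$ via Cauchy--Schwarz, using \eqref{ENS_LLd2'} for $\sum_{j\neq i}\|D^2_{ij}u^j\|^2_\infty$, \eqref{ENS_LLd2''} for $\sum_{j\neq i}\|D(D_j u^i)\|^2_\infty$, \eqref{ENS_LLd1}/\eqref{ENS_LLd2'} for $\sum_{j\neq i}|D_j u^i|^2$, and \eqref{ENS_LLd3} for $\sum_{i,j:j\neq i}\|D(D^2_{ij}u^j)\|^2_\infty$, then produces the desired quadratic bound.

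Assembling the pieces via Dynkin, discarding the nonpositive square term, invoking the \LLs semimonotonicity of $g$ at time $T$ and of $f$ at $t\in (\tau,T]$, and applying the Gronwall bound on $\bb E[|\bs X_t - \bs Y_t|^2]$, I would arrive at
\[
\LLop[u(\tau,\var)](\bbs x,\bbs y) \geq -\Bigl(\kappa_g^* + T\kappa_f^* + \tfrac{C}{\sqrt N}\Bigr) e^{2(\kappa^*+\gamma)T}|\bbs x - \bbs y|^2,
\]
with $C$ depending only on the constants in \Cref{ENS_propestLL}. Fixing $\kappa^* = (2T)^{-1}$ (as in \Cref{ENS_dscprop}, independent of $\gamma$) and choosing $\kappa_g^* < \kappa^*$, $\kappa_f^*$ small, and $N_{*}'\in \N$ large so that the bracketed factor, multiplied by the exponential, does not exceed $\tfrac12\kappa^*$, one concludes $\LLop[u(\tau,\var)] \geq -\tfrac12\kappa^*\QA$ on $(\R^d)^{2N}$, as required.
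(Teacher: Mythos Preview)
Your proposal is correct and follows essentially the same route as the paper: a synchronous coupling, Dynkin's formula applied to $\LLop[u(t,\cdot)]$, the identification of the nonpositive square terms via exactly the identity you wrote, and a mean-value / add--subtract rearrangement of the cross term yielding the quadratic $CN^{-1/2}|\bs X_t-\bs Y_t|^2$ bound through \eqref{ENS_LLd1}, \eqref{ENS_LLd2'}, \eqref{ENS_LLd2''}, \eqref{ENS_LLd3}. The only cosmetic difference is the order of operations on the cross term: the paper first groups $\sum_{j\neq i}\bigl(D_j u^j|_{(\bs x^{-i},y^i)}^{\bs x}+D_j u^j|_{(\bs y^{-i},x^i)}^{\bs y}\bigr)\cdot D_j u^i(\bs x^{-i},y^i)$ plus a remainder and then applies the mean value theorem (producing $D^3_{ijk}u^j\, D_j u^i$ and $D^2_{ij}u^j\, D^2_{jk}u^i$ contributions), whereas you apply the mean value theorem first and then add/subtract $D_j u^i(\tbs Z^{(i)})$; the two decompositions are algebraically equivalent and invoke the same estimates.
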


Looking at the proof below, one can in fact choose $\kappa_g^* = (12T e^{2\gamma T + 1})^{-1}$, $\kappa_f^* = (12 T^2 e^{2\gamma T + 1})^{-1}$ and $\kappa^* = (2T)^{-1}$, therefore $\kappa^*_h$ is actually independent of $L_g$ and $L_f$.

\begin{proof}[Proof of \Cref{ENS_LLmprop}]
Let $w(t, \bs x, \bs y) \defeq \LLop[u(t,\var)](\bs x, \bs y)$. We want to use formula~\eqref{ENS_itow} where $\bar t = \tau$, $h = w$ and $\bs Z = (\bs X, \bs Y)$ is a synchronous coupling of two solutions to \eqref{ENS_NcSDEs} at equilibrium; that is, formula~\eqref{ENS_itow} with $L$ given by \eqref{ENS_Lsync}.
Omitting the dependence on $t$, we have
\[
Lw(\bs x, \bs y) = \tr\!\big((\sigma\msfb I + \beta \msfb J)\LLop[D^2u](\bs x, \bs y)\big) - \sum_{1 \leq j \leq N} \! \big( D_j u^j(\bs x) \cdot D_{x^j} + D_j u^j(\bs y) \cdot D_{y^j} \big)\LLop[u](\bs x, \bs y),
\]
and, according to \eqref{ENS_NS},
\[
\begin{multlined}[.95\displaywidth]
\de_t w(\bs x, \bs y) = - \tr\!\big((\sigma\msfb I + \beta \msfb J) \LLop[D^2u](\bs x, \bs y)\big) + \frac12 \LLop[(|D_i u^i|^2)_{1 \leq i \leq N}](\bs x, \bs y) \\
+ \sum_{\substack{1 \leq j \leq N \\ j \neq i}} \LLop[D_j u^j \cdot D_j u](\bs x, \bs y) 
- \LLop[f](\bs x, \bs y).
\end{multlined}
\]
Note that
\begin{multline*}
\sum_{1 \leq j \leq N} \big( D_j u^j(\bs x) \cdot D_{x^j} + D_j u^j(\bs y) \cdot D_{y^j} \big)\LLop[u](\bs x, \bs y) \\
= \sum_{1 \leq i,j \leq N} D_j u^j(\bs x) D_j u^i(\bs x) - \sum_{\substack{1 \leq i,j \leq N \\ j \neq i}} D_j u^j(\bs x) D_j u^i(\bs x^{-i}, y^i) - \sum_{1 \leq i \leq N} D_i u^i(\bs x) D_i u^i(\bs y^{-i}, x^i) \\
- \sum_{1 \leq i \leq N} D_i u^i(\bs y) D_i u^i(\bs x^{-i}, y^i) - \sum_{\substack{1 \leq i,j \leq N \\ j \neq i}} D_j u^j(\bs y) D_j u^i(\bs y^{-i}, x^i) + \sum_{1 \leq i,j \leq N} D_j u^j(\bs y) D_j u^i(\bs y) 
\end{multline*}
and
\begin{multline*}
\sum_{\substack{1 \leq j \leq N \\ j \neq i}} \LLop[D_j u^j \cdot D_j u](\bs x, \bs y) \\[-6pt]
= \sum_{\substack{1 \leq i,j \leq N \\ j \neq i}} D_j u^j(\bs x) D_j u^i(\bs x) -\sum_{\substack{1 \leq i,j \leq N \\ j \neq i}} D_j u^j(\bs x^{-i}, y^i) D_j u^i(\bs x^{-i}, y^i) \\
\quad\ - \sum_{\substack{1 \leq i,j \leq N \\ j \neq i}} D_j u^j(\bs y^{-i}, x^i) D_j u^i(\bs y^{-i}, x^i) + \sum_{\substack{1 \leq i,j \leq N \\ j \neq i}} D_j u^j(\bs y) D_j u^i(\bs y),
\end{multline*}
where all products between vectors are understood to be Euclidean inner products.
Then easy computations show that
\begin{equation} \label{eqwnnterms}
\begin{multlined}[.9\displaywidth]
(\de_t + L)w(\bs x, \bs y) \vspace{3pt}
= \sum_{\substack{1 \leq i,j \leq N \\ j \neq i}} D_j u^j\big|_{(\bs x^{-i}, y^i)}^{\bs x} \cdot D_j u^i(\bs x^{-i}, y^i) + \sum_{\substack{1 \leq i,j \leq N \\ j \neq i}} D_j u^j\big|_{(\bs y^{-i}, x^i)}^{\bs y} \cdot D_j u^i(\bs y^{-i}, x^i) \\
- \sum_{1 \leq i \leq N} \big| D_i u^i(\bs x) - D_i u^i(\bs y^{-i}, x^i) \big|^2 - \sum_{1 \leq i \leq N} \big| D_i u^i(\bs y) - D_i u^i(\bs x^{-i}, y^i) \big|^2 - \LLop[f](\bs x, \bs y).
\end{multlined}
\end{equation}
Exploiting the fundamental theorem of calculus,
\[ \begin{multlined}[.95\displaywidth]
\sum_{\substack{1 \leq i,j \leq N \\ j \neq i}} D_j u^j\big|_{(\bs x^{-i}, y^i)}^{\bs x} \cdot D_j u^i(\bs x^{-i}, y^i) + \sum_{\substack{1 \leq i,j \leq N \\ j \neq i}} D_j u^j\big|_{(\bs y^{-i}, x^i)}^{\bs y} \cdot D_j u^i(\bs y^{-i}, x^i) \\
\begin{split}
&= \sum_{\substack{1 \leq i,j \leq N \\ j \neq i}} \big( D_j u^j\big|_{(\bs x^{-i}, y^i)}^{\bs x} + D_j u^j\big|_{(\bs y^{-i}, x^i)}^{\bs y} \big) \cdot D_j u^i(\bs x^{-i}, y^i) + \sum_{\substack{1 \leq i,j \leq N \\ j \neq i}} D_j u^j\big|_{(\bs y^{-i}, x^i)}^{\bs y} \cdot D_j u^i\big|_{(\bs x^{-i}, y^i)}^{(\bs y^{-i}, x^i)} \\
&
\begin{multlined}[.95\displaywidth]
= \sum_{\substack{1 \leq i,k \leq N \\ k \neq i}} \int_0^1 \int_0^1 \sum_{\substack{1 \leq j \leq N \\ j \neq i}} D^3_{ijk} u^j(\bs z^{-i}_{s'}, z^i_{s}) D_j u^i(\bs x^{-i}, y^i)(x^i-y^i)(x^k-y^k)\,\di s\,\di s' \\[-3pt]
+ \sum_{1 \leq i,k \leq N} \int_0^1 \int_0^1 \sum_{\substack{1 \leq j \leq N \\ j \neq i}} D^2_{ij} u^j(\bs y^{-i}, z^i_{s}) D^2_{jk} u^i(\bs z^{-i}_s, z^i_{1-s}) (x^i - y^i) (x^k - y^k)\, \di s\,\di s',
\end{multlined}\\[-\baselineskip]
\end{split}
\end{multlined}
\]
where products between tensors are understood in the sense of tensor contraction as discussed in the proof of \Cref{ENS_d2N}. By the Cauchy--Schwarz inequality and assumption~{\bfseries(\hyperlink{LS}{LS})},
\begin{multline}\label{ENS_aboveest}
(\de_t + L)w(\bs x, \bs y) \leq \Bigg( {\kappa_f} + \bigg( \sum_{1 \leq i \leq N} \bigg\lVert \sum_{\substack{1 \leq j,k \leq N \\ j,k \neq i}} \big| D^3_{ijk} u^j \big|^2 \bigg\rVert_\infty \bigg\rVert \sum_{\substack{1 \leq j \leq N \\ j \neq i}} \big| D_j u^i \big|^2 \bigg\rVert_\infty \bigg)^{\frac12} \\ 
+ \bigg( \sum_{1 \leq i \leq N} \bigg\lVert \sum_{\substack{1 \leq j \leq N \\ j \neq i}} \big| D^2_{ij} u^j \big|^2 \bigg\rVert_\infty \bigg\rVert \sum_{\substack{1 \leq j \leq N \\ j \neq i}} \big| D(D_j u^i) \big|^2 \bigg\rVert_\infty \bigg)^{\frac12} \Bigg) | \bs x - \bs y |^2.
\end{multline}
We can now proceed with the proof as we did for the one of \Cref{ENS_dscprop}. By formula~\eqref{ENS_itow}, estimate~\eqref{ENS_aboveest}, \Cref{ENS_propestLL} 
and estimate~\eqref{ENS_knowest}, we obtain
\[
w(\tau, \bbs x, \bbs y) \geq - e^{2(\kappa^*+\gamma)T}\Bigl( \kappa^*_g + T \kappa^*_f + \frac{K}{\sqrt{N}\,}\Bigr) | \bbs x - \bbs y|^2,
\]
where the constant $K$ depend only on $\sigma$, $T$, $\kappa^* +\gamma$, $L_g$ and $L_f$. 
Then we conclude by arguing as in the proof of \Cref{ENS_dscprop}: pick $\kappa^* = (2T)^{-1}$, $\kappa^*_g$ and $\kappa^*_f$ small enough and $N_{*}'$ large enough so that
\[
\kappa^*_g + T \kappa^*_f + \frac{K}{\sqrt{N_{*}'}\,} \leq \frac{\kappa^*}{2 e^{2(\kappa^*+\gamma)T}} = \frac{1}{4T e^{2\gamma T + 1}}. \qedhere
\]
\end{proof}

\begin{prop}[Improvement of the diagonal lower bound] \label{ENS_cruboprop}
Assume that {\bfseries(\hyperlink{MF}{MF})} holds. Let $\kappa > 0$. Then there exists a constant $\gamma^*$ depending only on $\sigma$, $T$, $L_g$ and $L_f$ (and not on $\kappa$) and a natural number $N_{*}''$ depending only on $\sigma$, $T$, $\kappa$, $L_g$ and $L_f$, such that  for any $\tau \in [0,T)$ one has
\[
\begin{rcases}
\LLop[u] \geq -\kappa \QA& \text{on}\ (\tau,T] \times (\R^d)^N \\
\bs\triangle_u \geq -\gamma^* \msfb I & \text{on}\ (\tau, T] \times (\R^d)^N \\
N \geq N_{*}''
\end{rcases}
\quad \implies \quad \bs\triangle_{u(\tau,\var)} \geq -\frac12\gamma^* \msfb I \ \ \text{on} \ (\R^d)^N.
\]
\end{prop}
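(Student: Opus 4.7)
The plan is to parallel the dynamic arguments of \Cref{ENS_dscprop,ENS_LLmprop}, applied now to a scalar quantity isolating the diagonal second derivatives of the value functions. Fix $i\in\{1,\dots,N\}$, a unit vector $\xi\in\R^d$ and $\bbs x\in(\R^d)^N$; it suffices to establish $V(\tau,\bbs x)\geq -\frac12\gamma^*$ for $V(t,\bs x) \defeq \xi^{\trn}D^2_{ii}u^i(t,\bs x)\xi$. Differentiating the Nash equation~\eqref{ENS_NS} for $u^i$ twice in direction $\xi$ along the coordinate $x^i$ and using commutativity of partials yields the linear parabolic equation
\[
-\de_t V - \tr\bigl((\sigma\msfb I + \beta\msfb J)D^2 V\bigr) + \sum_{1\le j\le N} D_j u^j\cdot D_j V = R^{i,\xi},
\]
with
\[
R^{i,\xi} = \xi^{\trn}D^2_{ii}f^i\,\xi - \bigl|D^2_{ii}u^i\,\xi\bigr|^2 - \sum_{j\neq i}\bigl(\xi^{\trn}D^3_{iij}u^j\xi\cdot D_j u^i + 2\,D^2_{ij}u^j\,\xi\cdot D^2_{ij}u^i\,\xi\bigr).
\]
Applying Dynkin's formula~\eqref{ENS_itow} along the synchronous trajectory $\bs X_t$ driven by $\bs\alpha^*=-(D_j u^j)_j$ and starting at $\bs X_\tau=\bbs x$ gives
\[
V(\tau,\bbs x) = \bb E\bigl[V(T,\bs X_T)\bigr] + \int_\tau^T \bb E\bigl[R^{i,\xi}(t,\bs X_t)\bigr]\,\di t.
\]

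Next I would bound each contribution from below. Assumption {\bfseries(\hyperlink{MF}{MF})} delivers $\bb E[V(T,\bs X_T)]\ge -L_g$ and $\xi^{\trn}D^2_{ii}f^i\xi\ge -L_f$. The off-diagonal sums are handled by Cauchy--Schwarz together with the derivative estimates of \Cref{ENS_propestLL}: combining \eqref{ENS_LLd1} with \eqref{ENS_LLd3} gives $\bigl|\sum_{j\neq i}\xi^{\trn}D^3_{iij}u^j\xi\cdot D_j u^i\bigr|\lesssim 1/\sqrt{N}$, while \eqref{ENS_LLd2'} and \eqref{ENS_LLd2''} yield $\sum_{j\neq i}|D^2_{ij}u^j\,\xi\cdot D^2_{ij}u^i\,\xi|\lesssim 1/N$, both with implied constants depending only on $\sigma,T,\kappa,\gamma^*,L_g,L_f$.

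The critical term is the Riccati-type quantity $-|D^2_{ii}u^i\,\xi|^2$, which is a negative source for the lower bound on $V$ and would, for a scalar single-player HJ equation, genuinely prevent the propagation of semi-convexity backwards in time. My plan is to supplement the assumed lower bound $D^2_{ii}u^i\ge -\gamma^*\msf I_d$ with an a priori upper bound (a semi-concavity estimate) extracted from the very same PDE for $V$ by a maximum-principle argument: at any interior maximum of $V$ the equation forces $R^{i,\xi}\ge 0$, so that Cauchy--Schwarz $V^2\le\bigl|D^2_{ii}u^i\,\xi\bigr|^2$ gives $V^2\le L_f + O(N^{-1/2})$ there, and combining with the terminal bound $V(T,\,\cdot\,)\le L_g$ delivers a uniform semi-concavity estimate $D^2_{ii}u^i\le C_+\,\msf I_d$ on $[\tau,T]$ with $C_+=\max(L_g,\sqrt{L_f}+o(1))$ depending only on $L_g,L_f$. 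The two one-sided bounds together then give $|D^2_{ii}u^i\,\xi|^2\le \max(\gamma^*,C_+)^2$, whence the Dynkin representation yields
\[
V(\tau,\bbs x) \ge -L_g - T L_f - T\,\max(\gamma^*,C_+)^2 - \frac{\bar K}{\sqrt N},
\]
for some $\bar K = \bar K(\sigma,T,\kappa,\gamma^*,L_g,L_f)$.

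Finally I would choose $\gamma^*>L_g$ as a function of $\sigma,T,L_g,L_f$ only, so that $\tfrac12\gamma^*\ge L_g + T L_f + T\max(\gamma^*,C_+)^2$ --- a quadratic inequality in $\gamma^*$ that becomes solvable once $\gamma^*$ is arranged to dominate $C_+$ --- and then pick $N_{*}''$ large enough so that the $\bar K/\sqrt N$ correction is absorbed into the slack. The hard part is precisely the Riccati bound: without the complementary semi-concavity (coming from the favourable sign of the same $|\,\cdot\,|^2$ source when viewed from the maximum-principle side) and the $1/\sqrt N$ smallness of the off-diagonal corrections provided by the multi-player structure through \Cref{ENS_propestLL}, the quadratic constraint on $\gamma^*$ would not close, and the scalar single-player analogue genuinely fails to satisfy backward semi-convexity propagation.
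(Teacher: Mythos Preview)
Your Dynkin representation and identification of the off-diagonal terms as $O(N^{-1/2})$ via \Cref{ENS_propestLL} are correct, and so is the observation that the only obstruction is the Riccati source $-|D^2_{ii}u^i\xi|^2$. The gap is in how you close it. Suppose $\gamma^*>C_+$; your final inequality reads $\tfrac12\gamma^*\ge L_g+TL_f+T(\gamma^*)^2$, i.e.\ $T(\gamma^*)^2-\tfrac12\gamma^*+(L_g+TL_f)\le0$. This has a solution only when the discriminant $\tfrac14-4T(L_g+TL_f)$ is nonnegative, which is a \emph{smallness condition on $T$}. The case $\gamma^*\le C_+$ fares no better: you would need $\gamma^*\ge 2(L_g+TL_f+TC_+^2)$, which already exceeds $C_+$. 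So the argument cannot produce a $\gamma^*$ depending only on $\sigma,T,L_g,L_f$ for arbitrary $T$; the quadratic feedback does not close with pointwise two-sided bounds on $D^2_{ii}u^i$.

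The paper avoids this circularity by controlling not the pointwise size of $|D^2_{ii}u^i|^2$ but its \emph{time-integrated} average along the optimal trajectory. Before writing the equation for $D^2_{ii}u^i$, it first records the representation for $D_iu^i$ (giving $\|D_iu^i\|_\infty\le L_g+TL_f+O(N^{-1})$) and then applies the Bernstein identity to $w=\tfrac12|D_iu^i|^2$: the equation for $w$ produces the good term $+\sigma\sum_j|D^2_{ij}u^i|^2$, and duality with the same Fokker--Planck flow $\rho$ yields
\[
\sigma\int_\tau^T\!\!\int |D^2_{ii}u^i|^2\,\di\rho\;\le\;\tfrac12 L_g^2+\tfrac{TL_f}{\sigma}(L_g+TL_f)+O(N^{-1}),
\]
a bound whose leading part is \emph{independent of $\gamma^*$}. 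Plugging this $L^2$-in-time control of the Riccati term into the Dynkin identity for $D^2_{ii}u^i$ then gives $D^2_{ii}u^i(\tau,\cdot)\ge -\big(L_g+L_f+\tfrac{L_g^2}{2\sigma}+\tfrac{TL_f}{\sigma}(L_g+TL_f)\big)\msf I_d - O(N^{-1/2})$, and one simply takes $\gamma^*$ equal to four times that explicit constant. Incidentally, this same Bernstein trick already works in the single-player scalar model, so your closing remark that the scalar analogue ``genuinely fails'' is not quite right; the multi-player structure is needed only to tame the off-diagonal terms, not the Riccati one.
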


The explicit constant $\gamma^*$ can be found below, see \eqref{Lambdastar}.

\begin{proof}[Proof of \Cref{ENS_cruboprop}] We are going to use an argument ``along optimal trajectories'', but without doubling variables/coupling.
Consider $v = D_i u^i$; it satisfies the $\R^d$-valued equation\footnote{We point out that we use the natural notation $\tr((\sigma\msfb I + \beta\msfb J)D^2 v)$ to denote the vector with coordinates
\[
\sum_{1 \leq j,k \leq N,\ 1 \leq a,b \leq d} (\sigma\msfb I + \beta\msfb J)_{dj+a-d,dk+b-d} D^3_{x^{kb}x^{ja}x^{i\ell}} v^i, \quad \ell = 1,\dots,d.
\]
}
\begin{equation} \label{ENS_eqDiui}
-\de_t v - \tr((\sigma\msfb I + \beta\msfb J)D^2 v) + \sum_{1 \leq j \leq N} D_j u^j \cdot D_j v = D_i f^i - \sum_{\substack{ 1 \leq j \leq N \\ j \neq i}} D^2_{ij} u^j D_j u^i.
\end{equation}
Fix $\bs x \in (\R^d)^N$, $s \in [\tau, T)$ and let $\bs X$ solve \eqref{ENS_NcSDEs}, with initial condition $\bs X_s = \bs x$ and optimal control $\alpha^i(t,\bs x) = -D_i u^i(t,\bs x)$. By formula~\eqref{ENS_itow}, 
\[
D_i u^i(s, \bs x) = \bb E \biggl[ D_i g^i(\bs X_T) + \int_s^T D_i f^i(\bs X) - \int_s^T \sum_{\substack{ 1 \leq j \leq N \\ j \neq i}} D^2_{ij} u^j(t,\bs X_t) D_j u^i(t,\bs X_t) \,\di t \biggr],
\]
hence by assumption~{\bfseries(\hyperlink{MF}{MF})}, the Cauchy--Schwarz inequality, estimates~\eqref{ENS_LLd1} and \eqref{ENS_LLd2'} and the arbitrariness of $(s,\bs x) \in [\tau, T) \times (\R^d)^N$,
\begin{equation} \label{ENS_estDiui}
\norm{D_i u^i(t,\cdot)}_\infty \leq L_g + TL_f + \frac KN \quad \forall\, t \in [\tau, T],
\end{equation}
where the constant $K$ depends only on $\sigma$, $T$, $\kappa$, $\gamma^*$ (that will be chosen below), $L_g$ and $L_f$.

Letting now $s = \tau$ and consider $w = \frac12|D_i u^i|^2$, which satisfies
\[ \begin{multlined}[.95\displaywidth]
-\de_t w - \tr((\sigma\msfb I + \beta \msfb J)D^2w) + \sum_{1 \leq j \leq N} D_j u^j \cdot D_j w + \sigma\! \sum_{1 \leq j \leq N} |D_{ij}^2 u^i|^2 + \beta\bigg| \sum_{1 \leq j \leq N} D_{ij}^2 u^i \bigg|^2 \\[2pt]
= D_i u^i \cdot D_i f^i - \sum_{\substack{ 1 \leq j \leq N \\ j \neq i}} D^2_{ij} u^j D_j u^i \cdot D_i u^i;
\end{multlined}\]
using again formula~\eqref{ENS_itow}, we have
\[
\begin{split}
&\frac12 |D_i u^i(\tau, \bs x)|^2 + \sigma \bb E \biggl[\, \int_\tau^T \sum_{1 \leq j \leq N}  |D^2_{ij} u^i(t,\bs X_t)|^2\,\di t \biggr] + \beta \bb E \biggl[\, \int_\tau^T \bigg| \sum_{1 \leq j \leq N} D_{ij}^2 u^i(t,\bs X_t) \bigg|^2 \di t \biggr]\\
&\qquad \begin{multlined}[.95\displaywidth]
= \bb E \biggl[\,  \frac12|D_i g^i(\bs X_T)|^2 + \int_\tau^T D_i u^i(t,\bs X_t) \cdot D_i f^i(\bs X_t)\,\di t 
\\[-5pt]
 - \int_\tau^T \sum_{\substack{ 1 \leq j \leq N \\ j \neq i}} D^2_{ij} u^j(t,\bs X_t) D_j u^i(t,\bs X_t) \cdot D_i u^i(t,\bs X_t)\, \di t \biggr].
\end{multlined}
\end{split}\]
Therefore, using assumption~{\bfseries(\hyperlink{MF}{MF})}, the Cauchy--Schwarz inequality and estimates~\eqref{ENS_LLd1}, \eqref{ENS_LLd2'} and \eqref{ENS_estDiui}, one obtains in particular
\begin{equation} \label{ENS_estdiiui}
\bb E \biggl[ \,\int_\tau^T |D^2_{ii} u^i(t, \bs X_t)|^2\,\di t \biggr] \leq \frac{L_g^2}{2\sigma} + \frac{TL_f}\sigma(L_g + TL_f) + \frac{K'}{N},
\end{equation}
where the constant $K'$ depends only on $\sigma$, $T$, $\kappa$, $\gamma^*$, $L_g$ and $L_f$. \

As a last step, $\msf V = D_{ii}^2u^i$ solves the following $\Sym(d)$-valued equation
\begin{equation*} 
\begin{multlined}[t][.95\displaywidth]
-\de_t \msf V - \tr((\sigma\msfb I + \beta \msfb J)D^2 \msf V) + \sum_{1 \leq j \leq N} D_j u^j D_{j} \msf V + \msf V^2 \\
= D_{ii}^2 f^i - 2 \sum_{\substack{1 \leq j \leq N \\ j \neq i}} D_{ij}^2 u^i D_{ji}^2 u^j - \sum_{\substack{1 \leq j \leq N \\ j \neq i}} D_{iij}^3 u^j D_{j} u^i ;
\end{multlined}
\end{equation*}
thus formula~\eqref{ENS_itow} gives
\begin{equation*} 
\begin{multlined}[t][.95\displaywidth]
D_{ii}^2 u^i(\tau, \bs x) + \bb E \biggl[\, \int_\tau^T (D^2_{ii} u^i(t,\bs X_t))^2 \,\di t -  D_{ii} g^i(\bs X_T) - \int_\tau^T D_{ii}^2 f^i(\bs X) \biggr] \\
= \bb E\biggl[ \, \int_\tau^T \sum_{\substack{1 \leq j \leq N \\ j \neq i}} \!\big( 2 D_{ij}^2 u^i D_{ji}^2 u^j + D_{iij}^3 u^j D_{j} u^i \big)(t,\bs X_t) \,\di t \biggr].
\end{multlined}
\end{equation*}
By assumption~{\bfseries(\hyperlink{MF}{MF})}, the Cauchy--Schwarz inequality, estimates in \Cref{ENS_propestLL} and estimate~\eqref{ENS_estdiiui} we get
\[
D_{ii}^2 u^i(\tau, \bs x) \geq - \Bigl(L_g + L_f + \frac{L_g^2}{2\sigma} + \frac{TL_f}\sigma(L_g + TL_f) + \frac{K'}{N} + \frac{K''}{\sqrt{N}\,}\Bigr) \msf I_d,
\]
where the constant $K''$ depends only on $\sigma$, $T$, $\kappa$, $\gamma^*$, $L_g$ and $L_f$. At this point one sees that if 
\begin{equation}\label{Lambdastar}
\gamma^* = 4 \Bigl(L_g + L_f + \frac{L_g^2}{2\sigma} + \frac{TL_f}\sigma(L_g + TL_f) \Bigr)
\end{equation}
 then for any $N$ large enough so that $K' N^{-1} + {K''}N^{-\frac12} \leq \frac14 \gamma^*$ one has $D_{ii} u^i(\tau, \bs x) \geq -\frac12 \gamma^*\msf I_d$, uniformly with respect to $i \in \{1,\dots,N\}$. The conclusion follows from the arbitrariness of $\bs x  \in (\R^d)^N$.
\end{proof}

For the benefit of the reader, we make explicit how \Cref{ENS_simLLprop} can be now obtained.

\begin{proof}[Proof of \Cref{ENS_simLLprop}] Let $\kappa^*$ be as in \Cref{ENS_LLmprop} and $\gamma^*$ be as in \Cref{ENS_cruboprop} (note that these two do not depend on each other). Then,
\Cref{ENS_LLmprop} applies with $\gamma = \gamma^*$, yielding the improvement of \LLs semimonotonicity for $N \geq  N_{*}'$. On the other hand, \Cref{ENS_cruboprop} applies with $\kappa = \kappa^*$, so that the diagonal lower bound improves for $N \geq  N_{*}''$. Taking $N_* = \max\{ N_*' , N_{*}''\}$ gives the assertion.
\end{proof}

\begin{thm}[Estimates on the Nash system with \LLs semimonotone data] \label{ENS_mainthmLL}
Assume that $f$ and $g$ satisfy assumptions~{\bfseries(\hyperlink{MF}{MF})} and {\bfseries(\hyperlink{LS}{LS})}. Let $T>0$. There exist nonnegative constants $\kappa_f^*$ and $\kappa_g^*$ and a natural number $N^*$, all depending only on $\sigma$, $T$, $L_f$ and $L_g$ (in such a way that $\kappa_f^*,\kappa_g^* \to 0$ as $T \to +\infty$ and $\kappa_f^*,\kappa_g^* \to +\infty$ as $T \to 0$), such that if $\kappa_g \leq \kappa_g^*$, $\kappa_f \leq \kappa_f^*$ and $N \geq N^*$ then 
any solution $u$ to~\eqref{ENS_NS} on $[0,T] \times (\R^d)^N$ satisfies
\[
\sup_{1 \leq i \leq N} \bigg( \sup_{\substack{1 \leq j \leq N \\ j \neq i}} \norm{D_j u^i(t,\cdot)}_\infty +  \bigg\lVert \sum_{\substack{1\leq j\leq N \\ j \neq i}} \big| D^2_{ij} u^j\big|^2 \bigg\rVert_\infty +  \sum_{\substack{1\leq j \leq N \\ j \neq i}}\norm{D(D_j u^i)}_\infty^2 \bigg) \lesssim \frac1N
\]
and
\[
\sup_{1 \leq i \leq N} \norm{D u^i}_\infty^2 + \sup_{1 \leq i \leq N} \sum_{1\leq j \leq N}\norm{D(D_i u^j)}_\infty^2 + \sum_{\substack{1\leq i,j \leq N \\ j\neq i}} \norm{D(D_{ij} u^j)}_\infty^2 \lesssim 1,
\]
where the implied constants depend only on $\sigma$, $T$, $L_f$ and $L_g$. In addition, there exists $\kappa^* > 0$ (depending only on $T$) such that $u$ is $\kappa^*$-\LLs semimonotone on $[0,T]$.
\end{thm}

\begin{proof} The argument is very much analogous to the one of \Cref{ENS_mainthmDS}, so we omit it. \end{proof}

%
%
%
%


\section{Analysis of the \texorpdfstring{$N\to\infty$}{large population} limit} \label{sec_lpl}

This section is devoted to the analysis of the convergence problem for $u^i = u_N^i$ as $N\to\infty$, in the case of quadratic Hamiltonians. Our main goal is to prove \Cref{ENS_thmMFsumm}. We will suppose that the assumption \hyperlink{MF}{{\bfseries{(MF)}}} of Mean-Field-like interactions, and either \Ds semimonotonicity  \hyperlink{DS}{{\bfseries{(DS)}}} or \LLs semimonotonicity \hyperlink{LS}{{\bfseries{(LS)}}} hold in such a way to guarantee the estimates obtained in the first part of the work; for the benefit of the following discussion, we explicitly recall that the estimates on the Nash system we obtained imply 
\begin{subequations}
\begin{gather}
\sup_{1 \leq i \leq N} \norm{D_i u^i}_\infty \lesssim 1, \qquad \sup_{1 \leq i \leq N} \sup_{\substack{1 \leq j \leq N \\ j \neq i}} \norm{D_j u^i}_\infty \lesssim \frac1N, \label{ENS_recapE1}\\
\sup_{1 \leq i \leq N} \norm{D^2_{ii} u^i}_\infty \lesssim 1, \qquad \sup_{1 \leq i \leq N} \,\biggl\lVert \sum_{\substack{1\leq j \leq N \\ j \neq i}} |D_j D_i u^i|^2 \biggr\rVert_\infty \lesssim \frac1N. \label{ENS_recapE2}
\end{gather}
\end{subequations}

To be sure that, for any fixed $N$, solutions to the Nash system exist, we assume that $f^i$ and $g^i$ are of class $C^{2+\alpha}((\R^d)^N)$ and $C^{4+\alpha}((\R^d)^N)$ respectively, for every $i$ and $N$. By \Cref{ENS_thmexbdd}, solutions $u^i$ indeed exists and are unique, and belong to the class $C^{2+\alpha/2, 4+\alpha}$. We have then enough regularity to apply the a priori estimates obtained in first part.

In addition, we require now the symmetry assumption \hyperlink{S}{{\bfseries{(S)}}} on the data $f^i$, $g^i$, that need to be of the form $f_N(\lambda_N^i, \cdots)$, $g_N(\lambda^i_N, \cdots)$, where $f_N$, $g_N$ are Lipschitz continuous with respect to $\lambda \in \Lambda \defeq [0,1]$ (assumption \hyperlink{LP}{{\bfseries{(LP)}}}). In this framework, we may focus our attention on $\frk u_N(\bs\lambda,t,\bs x)= u^1_{\bs\lambda}(t,\bs x)$, namely on the first solution of the Nash system with parameter vector $\bs \lambda= (\lambda^1, \ldots, \lambda^N)$.

We begin by showing that assumption \hyperlink{LP}{{\bfseries(LP)}} reflects in a Lipschitz dependence of $\frk u^N$ and $D_x \frk u^N$ (recall that $D_x \frk u^N = D_{x^1} \frk u^N$ as we are writing $\bs x = (x, \hat{\bs x})$) with respect to $\bs\lambda = (\lambda,\hbs\lambda) \in \Lambda \times \Lambda^{N-1} = \Lambda^N$.

\begin{lem} \label{ENS_contlcontr}
For any $\bs\lambda,\bs\lambda' \in \Lambda^N$, one has
\[
\norm{\frk u_N({\bs\lambda},\var) - \frk u_N({\bs\lambda'},\var)}_{\infty} + \norm{D_x \frk u_N({\bs\lambda},\var) - D_x \frk u_N({\bs\lambda'},\var)}_{\infty} \lesssim |\lambda - \lambda'| + \frac{|\hbs\lambda-\hbs\lambda{}'|}N,
\]
where the implied constant depends only on $T$, $L_g$, $L_f$ and $L_\Lambda$.
\end{lem}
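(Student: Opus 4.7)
The plan is to view the difference $w^i \defeq u^i_{\bs\lambda} - u^i_{\bs\lambda'}$ as a solution of a linear parabolic system and to close a self-referential system of inequalities on $e_i \defeq \norm{w^i}_\infty$ and $g_i \defeq \norm{Dw^i}_\infty$ (taken on $[0,T] \times (\R^d)^N$). Subtracting the two Nash systems, linearising the quadratic term as $\tfrac12|D_i u^i_{\bs\lambda}|^2 - \tfrac12|D_i u^i_{\bs\lambda'}|^2 = \tfrac12(D_i u^i_{\bs\lambda}+D_i u^i_{\bs\lambda'})\cdot D_i w^i$ and the coupling as $D_j u^j_{\bs\lambda}\cdot D_j u^i_{\bs\lambda} - D_j u^j_{\bs\lambda'}\cdot D_j u^i_{\bs\lambda'} = D_j u^j_{\bs\lambda}\cdot D_j w^i + D_j w^j \cdot D_j u^i_{\bs\lambda'}$, I would obtain
\[
-\de_t w^i - \tr((\sigma\msfb I + \beta\msfb J)D^2 w^i) + \sum_{1\leq j \leq N} b^{i,j}\cdot D_j w^i = f^i_{\bs\lambda} - f^i_{\bs\lambda'} - \sum_{\substack{1\leq j \leq N \\ j\neq i}} D_j u^i_{\bs\lambda'}\cdot D_j w^j,
\]
with $b^{i,j} = D_j u^j_{\bs\lambda}$ for $j\neq i$, $b^{i,i} = \tfrac12(D_i u^i_{\bs\lambda}+D_i u^i_{\bs\lambda'})$, no zero-order term, and $w^i(T,\cdot) = g^i_{\bs\lambda}-g^i_{\bs\lambda'}$.

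Three key inputs feed into this equation. By \hyperlink{LP}{{\bfseries(LP)}}, each of $\norm{f^i_{\bs\lambda}-f^i_{\bs\lambda'}}_\infty$, $\norm{g^i_{\bs\lambda}-g^i_{\bs\lambda'}}_\infty$ and $\norm{D(g^i_{\bs\lambda}-g^i_{\bs\lambda'})}_\infty$ is $\leq L_\Lambda|\lambda^i-(\lambda')^i|$. By \Cref{ENS_mainthm}, both $u_{\bs\lambda}$ and $u_{\bs\lambda'}$ are \Ds semimonotone (directly under \hyperlink{DS}{{\bfseries(DS)}}, or through \Cref{ENS_rmkLLtoDM} and \Cref{ENS_propestLL} under \hyperlink{LS}{{\bfseries(LS)}}) with some universal constant $M^*$; a short computation shows
\[
\sum_{1 \leq j \leq N} (b^{i,j}|_{\bs y}^{\bs x})\cdot(x^j-y^j) = \Dop[u_{\bs\lambda}](\bs x,\bs y) - \tfrac12 (D_i w^i|_{\bs y}^{\bs x})\cdot(x^i-y^i) \geq -(M^*+C)|\bs x-\bs y|^2,
\]
the extra $C$ absorbing $\norm{D(D_i u^i)}_\infty \lesssim 1$ from \Cref{ENS_mainthm}. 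So $\bs b^{i,\cdot}$ fulfils the one-sided Lipschitz condition required by \Cref{ENS_estv}. Crucially, \Cref{ENS_uijestp} provides $\norm{D_j u^i_{\bs\lambda'}}_\infty \lesssim 1/N$ whenever $j\neq i$.

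Writing $\mu_k \defeq |\lambda^k-(\lambda')^k|$, the parabolic maximum principle for $w^i$ and \Cref{ENS_estv} for $Dw^i$ then produce the coupled inequalities
\[
e_i \lesssim (1+T)\mu_i + \frac{T}{N}\sum_{j\neq i} g_j, \qquad g_i \leq \alpha\mu_i + \frac{\beta}{N}\sum_{j\neq i} g_j,
\]
with $\alpha,\beta$ independent of $N$. When $\beta<1$, summing the $g$-inequality over $j\neq 1$ yields $(1-\beta)\sum_{j\neq 1} g_j \leq \alpha \sum_{j\neq 1}\mu_j + \beta g_1$; substituting back into the inequality for $g_1$ and then into that for $e_1$ gives $e_1 + g_1 \lesssim \mu_1 + \tfrac1N\sum_{j\neq 1}\mu_j$. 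Since $\frk u_N({\bs\lambda},\cdot) - \frk u_N({\bs\lambda'},\cdot) = w^1$ and $\de_x \frk u_N({\bs\lambda},\cdot) - \de_x \frk u_N({\bs\lambda'},\cdot) = D_1 w^1$ with $\norm{D_1 w^1}_\infty \leq g_1$, the claim follows.

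The main obstacle is the closure of the self-referential system when $\beta \geq 1$, which can occur for large $T$ or small $\sigma$. I would handle this by first closing the estimates on a short subinterval $[T-\tau,T]$ with $\tau$ so small that the effective constant $\beta(\tau)<1$, and then iterating backward in time as in the proof of \Cref{ENS_d2b}: the $L^\infty$-bound on $Dw^i$ at the previous right endpoint plays the role of new ``final data'' in the next application of \Cref{ENS_estv}.
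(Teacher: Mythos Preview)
Your proposal is correct and follows essentially the same route as the paper: derive the linear equation for $w^i = u^i_{\bs\lambda} - u^i_{\bs\lambda'}$, invoke \Cref{ENS_estv} under the one-sided Lipschitz condition on the drift together with the bound $\norm{D_j u^i_{\bs\lambda'}}_\infty \lesssim 1/N$, and close the resulting self-referential inequalities on short time intervals with subsequent backward iteration. The only minor difference is that the paper keeps the original drift $D_ju^j_{\bs\lambda}$ (so that \Ds semimonotonicity applies directly) and places the quadratic remainder $\tfrac12|D_iw^i|^2$ on the source side, controlling it via $|D_iw^i|^2 \lesssim |D_iw^i|$, whereas you linearise the quadratic completely and verify that the modified drift $b^{i,\cdot}$ still satisfies a one-sided Lipschitz bound; either variant closes the argument.
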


\begin{proof}
All implied constants appearing in this proof depend only on $T$, $L_g$ and $L_f$. Let $\bs\lambda, \bs\lambda' \in \Lambda^N$. Consider $w^i \defeq u^i_{\bs\lambda} - u^i_{\bs\lambda'}$ that solves the following equation in $(0,T) \times (\R^d)^N$:
\begin{equation} \label{ENS_eqwlambdadiff}
\begin{split}
-\de_t w^i - \Delta w^i + \sum_{1 \leq j \leq N} D_j u^j_{\bs\lambda} \cdot D_j w^i
 &= f_N\big|_{(\bs\lambda',\var)}^{(\bs\lambda,\var)} - \sum_{\substack{1 \leq j \leq N \\ j \neq i}} D_j w^j \cdot D_j u^i_{\bs\lambda'} + \frac12 | D_iw^i |^2 \\
&\lesssim |\lambda^i - \lambda'^i| + \frac{1}{N} \sum_{\substack{1 \leq j \leq N \\ j \neq i}} |D_jw^j| + |D_iw^i|,
\end{split}
\end{equation}
where the inequality is obtained by exploiting assumption~\hyperlink{LP}{\bfseries(LP)} and estimates~\eqref{ENS_recapE1}. By \Cref{ENS_estv} we deduce that, for any $\tau \in [0,T]$,
\begin{equation} \label{ENS_lemLPeq1}
\sup_{t \in [\tau,T]} \norm{D_i w^i(t,\cdot)}_\infty \lesssim (T-\tau)^{\frac12} \Bigl( |\lambda^i - \lambda'^i| + \frac1N \sum_{\substack{1 \leq j \leq N \\ j \neq i}} \sup_{t \in [\tau,T]} \norm{D_j w^j(t,\cdot)}_\infty + \sup_{t \in [\tau,T]} \norm{D_i w^i(t,\cdot)}_\infty \Bigr).
\end{equation}
Since by summing over $i$ we get
\[
\sum_{1 \leq i \leq N} \sup_{t \in [\tau,T]} \norm{D_i w^i(t,\cdot)}_\infty \lesssim (T-\tau)^{\frac12} \Bigl( |\bs\lambda - \bs\lambda'| +  \sum_{1 \leq i \leq N} \sup_{t \in [\tau,T]} \norm{D_i w^i(t,\cdot)}_\infty \Bigr),
\]
possibly by iterating this estimate a finite number of times we obtain
\[
\sum_{1 \leq i \leq N} \sup_{t \in [0,T]} \norm{D_i w^i(t,\cdot)}_\infty \lesssim |\bs\lambda - \bs\lambda'|;
\]
then plugging this back into \eqref{ENS_lemLPeq1} (and again possibly iterating the estimate) we have
\[
\sup_{t \in [0,T]} \norm{D_i w^i(t,\cdot)}_\infty \lesssim |\lambda^i - \lambda'^i| + \frac{|\bs\lambda - \bs\lambda'|}N,
\]
which is equivalent to $\norm{D_x \frk u_N({\bs\lambda},\var) - D_x \frk u_N({\bbs\lambda},\var)}_{\infty} \lesssim |\lambda - \lambda'| + N^{-1}|\bs\lambda-\hbs\lambda{}'|$. Now by the maximum principle one deduces that $\norm{w^1}_{\infty} \lesssim |\lambda - \lambda'| + N^{-1}|\bs\lambda-\hbs\lambda{}'|$ as well, thus concluding the proof.
\end{proof}

\begin{rmk} \label{ENS_deldiui}
As a consequence of \Cref{ENS_contlcontr}, by Rademacher's theorem $\bs\lambda \mapsto D_x \frk u^N(\bs\lambda,t,\bs x)$ is $\scr L^N$-a.e.\ differentiable on $\Lambda^N$ for each $(t,\bs x) \in [0,T] \times (\R^d)^N$, and we have $\sup_{1 < j \leq N} \norm{\de_{\lambda^j} D_x \frk u}_\infty \lesssim N^{-1}$,
where the implied constant depends only on $T$, $L_g$, $L_f$ and $L_\Lambda$. The same can be said for $\frk u$ (clearly, with differentiability everywhere).
\end{rmk}

The following result will be instrumental in our study of the limit of the (solution to the) Nash system as $N \to \infty$; it adapts \cite[Theorem~2.1]{cardanotes}. 

\begin{lem} \label{ENS_lemcarconv}
Let $(\Gamma,d)$ be a metric space and let $(E,\norm{\var})$ be a finite-dimensional normed vector space. Let $(h_N)_{N \in \N}$ be a sequence of functions such that $h_N \colon \Gamma \times E^N \to \R$ is symmetric on $E^N$ and there exist $C > 0$ and concave moduli of continuity $\omega_\Gamma$, $\omega_p$, all independent of $N$, such that $\norm{h_N}_\infty \leq C$ and, for some $p \geq 1$,
\begin{equation} \label{ENS_lipW1cond}
|h_N(\gamma, \bs v) - h_N(\gamma', \bs v')| \leq \omega_\Gamma(d(\gamma,\gamma'))+ \omega_p(W_p(m_{\bs v}, m_{\bs v'})) \quad \forall\, \gamma,\gamma' \in \Gamma, \ \forall\, \bs v, \bs v' \in E^N.
\end{equation}
Then there exists $h \in C^0(\Gamma \times \Pc_p(E))$ such that, up to a subsequence, for any compact sets $H \subset \Gamma$ and $\call K \subset \call P_p(E)$,
\[
\lim_{N \to \infty} \sup_{\gamma \in H} \sup_{\bs v \in E^N :\, m_{\bs v} \in \call K} |h_N(\gamma,\bs v) - h(\gamma, m_{\bs v})| = 0.
\]
Furthermore, $h$ has the same moduli of continuity with respect to $\gamma \in \Gamma$ and $m \in \call P_p(E)$.
\end{lem}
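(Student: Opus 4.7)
The strategy will be to adapt the inf-convolution argument from \cite[Theorem~2.1]{cardanotes}: I would extend each $h_N$ to a uniformly continuous function $\hat h_N$ defined on the whole space $\Gamma \times \Pc_p(E)$, preserving the moduli of continuity uniformly in $N$, and then apply Arzel\`a--Ascoli together with a diagonal extraction.

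First, I would set, for $(\gamma, m) \in \Gamma \times \Pc_p(E)$,
\[
\hat h_N(\gamma, m) \defeq \inf_{\bs w \in E^N}\bigl\{\, h_N(\gamma, \bs w) + \omega_p(W_p(m_{\bs w}, m))\,\bigr\}.
\]
Picking $\bs w = \bs v$ in the infimum yields the trivial bound $\hat h_N(\gamma, m_{\bs v}) \leq h_N(\gamma, \bs v)$; conversely, the symmetry of $h_N$ on $E^N$ combined with \eqref{ENS_lipW1cond} gives $h_N(\gamma, \bs v) \leq h_N(\gamma, \bs w) + \omega_p(W_p(m_{\bs w}, m_{\bs v}))$ for every $\bs w \in E^N$, so in fact $\hat h_N(\gamma, m_{\bs v}) = h_N(\gamma, \bs v)$. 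Thus $\hat h_N$ genuinely extends $h_N$ from the $N$-point empirical measures to the full Wasserstein space.

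Next, I would verify that the extensions inherit the two moduli uniformly in $N$. Since any concave function vanishing at $0$ is subadditive, $\omega_p(a+b) \leq \omega_p(a) + \omega_p(b)$, and an elementary manipulation of the defining infimum yields
\[
|\hat h_N(\gamma, m_1) - \hat h_N(\gamma', m_2)| \leq \omega_\Gamma(d(\gamma, \gamma')) + \omega_p(W_p(m_1, m_2))
\]
for all $\gamma, \gamma' \in \Gamma$ and $m_1, m_2 \in \Pc_p(E)$. For local uniform boundedness, given a compact $\call K \subset \Pc_p(E)$, the uniform $N$-point quantization error $\varepsilon_N \defeq \sup_{m \in \call K}\inf_{\bs v \in E^N} W_p(m_{\bs v}, m)$ tends to $0$ as $N \to \infty$ (by compactness in $\Pc_p(E)$ and finite-dimensionality of $E$); together with $\|h_N\|_\infty \leq C$, this gives $|\hat h_N| \leq C + \omega_p(\varepsilon_N)$ on $\Gamma \times \call K$, hence a uniform bound on any set of the form $H \times \call K$.

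Having equicontinuity and local uniform boundedness on the (separable, in the applications of interest) metric space $\Gamma \times \Pc_p(E)$, a standard diagonal extraction on a countable dense subset produces a subsequence $\hat h_{N_k}$ converging pointwise to some $h$; equicontinuity upgrades this to uniform convergence on compacts, and $h$ inherits both moduli $\omega_\Gamma, \omega_p$ and the bound $C$. The statement about the original $h_N$ follows at once from the identity $h_{N_k}(\gamma, \bs v) = \hat h_{N_k}(\gamma, m_{\bs v})$, valid on empirical measures. The only subtle point in this scheme is the preservation of the moduli under inf-convolution, which relies crucially on the concavity of $\omega_\Gamma$ and $\omega_p$ hypothesised in the statement; the rest is a routine application of equicontinuity-compactness.
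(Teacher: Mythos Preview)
Your approach is correct and essentially the same as the paper's: define the inf-convolution extension $\hat h_N(\gamma,m)=\inf_{\bs w}\{h_N(\gamma,\bs w)+\omega_p(W_p(m_{\bs w},m))\}$, check it restricts to $h_N$ on empirical measures, use subadditivity of the concave modulus $\omega_p$ to get uniform equicontinuity, and then apply Ascoli--Arzel\`a. The only minor difference is cosmetic: the paper gets equiboundedness directly from $\|h_N\|_\infty\le C$ (since $\hat h_N\ge -C$ and, by the modulus, $\hat h_N(\gamma,m)\le h_N(\gamma,\bs v)+\omega_p(W_p(m_{\bs v},m))$ for any $\bs v$), so your quantization-error detour is unnecessary but harmless.
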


\begin{proof}
Define $\frk h_{N} \colon \Gamma \times \Pc_p(E) \to \R$ by
\[
\frk h_{N}(\gamma,m) \defeq \inf_{\bs v \in E^N} \big\{ h_N(\gamma, \bs v) + \omega_p(W_p(m_{\bs v},m)) \big\}.
\]
This is well-defined since $h_N$ is bounded. By \eqref{ENS_lipW1cond}, $\frk h_N(\gamma, m_{\bs v}) = h_N(\gamma,\bs v)$. Given $\gamma, \gamma' \in \Gamma$, $m,m' \in \Pc_p(E)$ and $\bs v \in E^N$ which is $\epsilon$-optimal in the definition of $\frk h_N(\gamma',m')$,
\[\begin{split}
\frk h_N(\gamma,m) &\leq h_N(\gamma, \bs v) + \omega_p(W_p(m_{\bs v},m)) \\ 
&\leq \frk h_N(\gamma', m') + \omega_\Gamma(d(\gamma, \gamma')) + \omega_p( W_p(m_{\bs v},m)) - \omega_p(W_p(m_{\bs v},m')) + \epsilon \\
&\leq \frk h_N(\gamma',m')  + \omega_\Gamma(d(\gamma, \gamma')) + \omega_p(W_p(m,m')) + \epsilon.
\end{split}
\]
We have used that $\omega_p$ is subadditive because it is concave and $\omega_p(0) = 0$.
Therefore, $(\frk h_N)_{N \in \N}$ is a sequence of equicontinuous and equibounded functions, so a generalisation of the Ascoli--Arzelà theorem guarantees the existence of $h \in C^0(\Gamma \times \Pc_p(E))$ such that, up to a subsequence, $\frk h_N \to h$ in the topology of compact convergence, that is our assertion.
\end{proof}

The importance of this lemma lies in the fact mentioned in \cite[Remark~2.3]{cardanotes} that if $E = \R^n$ and $\sup_{1 \leq i \leq n} \norm{D_i h}_\infty \lesssim N^{-1}$, then $\bs x \mapsto h(\gamma,\bs x)$ is Lipschitz continuous with respect to the $1$-Wasserstein distance of the empirical measures in the sense of \eqref{ENS_lipW1cond}. Hence we will be able to apply this lemma to $f_N$ and $g_N$ thanks to the symmetry \bf(\hyperlink{S}{S})} and assumptions~{\bf(\hyperlink{MF}{MF})}--{\bf(\hyperlink{LP}{LP})} as well as to $\frk u$ and $D_x \frk u$ thanks to \Cref{ENS_deldiui} and our estimates on the derivatives. 

We formalise a generalisation of the above-mentioned fact in the following lemma, where we denote by $|\cdot|_p$ the standard $p$-norm on $(\R^d)^N$.

\begin{lem} \label{ENS_DtoW}
Let $h \colon (\R^d)^N \to \R$ be symmetric and such that $\norm{|Dh|_q}_\infty \leq CN^{-\frac1p}$ with $\frac1p + \frac1q = 1$, for some constants $C>0$ and $q \in (1,\infty]$ independent of $N$.
Then $|h(\bs x) - h(\bs y)| \leq CW_p(m_{\bs x}, m_{\bs y})$ for all $\bs x, \bs y \in (\R^d)^N$.
\end{lem}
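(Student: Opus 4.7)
The plan is very short: combine the optimal-transport representation for empirical measures recalled in Remark \ref{ENS_rmkfdp1} with the symmetry of $h$ and Hölder's inequality.

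More precisely, I would first invoke the theorem cited in Remark \ref{ENS_rmkfdp1} (from \cite{COTFNT}): for any $\bs x,\bs y\in(\R^d)^N$ there exists a permutation $\sigma \in S_N$ such that
\[
W_p(m_{\bs x}, m_{\bs y}) = N^{-\frac1p}\, |\bs x - \bs y^\sigma|_p,
\]
where $\bs y^\sigma = (y^{\sigma(1)},\dots,y^{\sigma(N)})$. Since $h$ is symmetric, $h(\bs y) = h(\bs y^\sigma)$, so it suffices to estimate $h(\bs x) - h(\bs y^\sigma)$.

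Next, by the fundamental theorem of calculus along the segment joining $\bs y^\sigma$ to $\bs x$,
\[
h(\bs x) - h(\bs y^\sigma) = \int_0^1 Dh(\bs z_t) \cdot (\bs x - \bs y^\sigma)\,\di t, \qquad \bs z_t \defeq t\bs x + (1-t)\bs y^\sigma,
\]
and by Hölder's inequality (applied with conjugate exponents $p,q$ on $(\R^d)^N$) together with the hypothesis $\||Dh|_q\|_\infty \leq C N^{-1/p}$,
\[
|Dh(\bs z_t) \cdot (\bs x - \bs y^\sigma)| \leq |Dh(\bs z_t)|_q\, |\bs x - \bs y^\sigma|_p \leq C N^{-\frac1p}\, |\bs x - \bs y^\sigma|_p = C\, W_p(m_{\bs x}, m_{\bs y}).
\]
Integrating in $t \in [0,1]$ yields the desired bound.

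There is essentially no obstacle: the only subtlety is to remember that the Wasserstein distance between empirical measures with equal numbers of atoms is attained through a permutation, which allows the symmetry of $h$ to transform the problem into one where Euclidean gradient bounds directly apply via Hölder. The case $q = \infty$ (so $p=1$) is handled identically, with the $|\cdot|_\infty$/$|\cdot|_1$ duality replacing Hölder.
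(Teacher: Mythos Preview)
Your proof is correct and follows the same approach as the paper: invoke the permutation representation $W_p(m_{\bs x}, m_{\bs y}) = N^{-1/p}|\bs x - \bs y^\sigma|_p$, use the symmetry of $h$ to replace $\bs y$ by $\bs y^\sigma$, and then apply H\"older's inequality to bound $|h(\bs x)-h(\bs y^\sigma)|$ by $\||Dh|_q\|_\infty |\bs x-\bs y^\sigma|_p$. The paper's proof is simply a more compressed version of yours, omitting the explicit mean-value integral.
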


\begin{proof}
Recall that $W_p(m_{\bs x}, m_{\bs y}) = N^{-\frac1p} |\bs x - \bs y^\sigma|_p$ for some permutation $\sigma$ on $\{1,\dots,N\}$.
Then by the symmetry of $h$ and H\"older's inequality, $|h(\bs x) - h(\bs y)| \leq \norm{|Dh|_q}_\infty |\bs x - \bs y^\sigma|_p$. The conclusion follows.
\end{proof}

We are now ready to prove the existence of a limit of $\frk u_N$ as $N \to \infty$.

\begin{thm}\label{ENS_rmkconvdiuil}
Let assumptions \hyperlink{MF}{{\bfseries{(MF)}}}, \hyperlink{S}{{\bfseries{(S)}}} and \hyperlink{LP}{{\bfseries{(LP)}}} be in force. Assume also that one between \hyperlink{DS}{{\bfseries{(DS)}}} and \hyperlink{LS}{{\bfseries{(LS)}}} holds, with the corresponding semimonotonicity constants being such that, given $T>0$, the thesis of \Cref{ENS_mainthm} holds.

Then, there exists a map $U \colon \Lambda \times [0,T] \times \R^d \times \call P_2(\Lambda \times \R^d) \to \R$ with bounded derivative $D_x U$ such that $U$ and $D_x U$ are Lipschitz continuous on $\Lambda \times \R^d \times \Pc_2(\Lambda \times \R^d)$ and $\frac13$-H\"{o}lder continuous on $[0,T]$, and, up to a subsequence,
\begin{equation} \label{ENS_convD1}
\sup | D_x^k \frk u_N({\bs\lambda},t,\bs x) - D_x^k U(\lambda,t,x,m_{(\hbs\lambda:\hbs x)}) | \,\xrightarrow[N \to \infty]{}\, 0, \qquad k \in \{0,1\},
\end{equation}
whenever the supremum is taken over any set of the form
\[
\bigl\{ (\bs\lambda,t, \bs x) \in \Lambda^N \times [0,T] \times (\R^d)^N :\ |x| \leq R,\ m_{\hbs x} \in \call K \bigr\}
\]
for some $R > 0$ and some compact set $\call K \subset \Pc_2(\R^d)$.
\end{thm}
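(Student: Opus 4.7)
The plan is to apply \Cref{ENS_lemcarconv} along a common subsequence to the sequences $\{\frk u_N\}$ and $\{\de_x \frk u_N\}$, viewed as functions on $\Gamma \times (\Lambda \times \R^d)^{N-1}$ with $\Gamma \defeq \Lambda \times [0,T] \times \R^d$: the first coordinates $(\lambda, t, x)$ play the role of the ``parameter'' in the lemma, while the remaining pairs $(\hbs\lambda : \hbs x) = ((\lambda^j, \hat x^j))_{1 < j \leq N}$ are the symmetric argument, the symmetry being the content of \hyperlink{S}{\bfseries (S)}. This reduces the theorem to (i) verifying the equicontinuity hypotheses of the lemma with moduli independent of $N$ and (ii) identifying the limit of $\de_x \frk u_N$ with the spatial derivative of the limit of $\frk u_N$.

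For the regularity in $(\lambda, t, x)$: Lipschitz continuity in $\lambda$ of both $\frk u_N$ and $\de_x \frk u_N$ is immediate from \Cref{ENS_contlcontr} (take $\hbs\lambda = \hbs\lambda'$ there). Lipschitz continuity in $x$ holds for $\frk u_N$ by $\|\de_x \frk u_N\|_\infty = \|D_1 u^1_{\bs\lambda}\|_\infty \lesssim 1$ from \eqref{ENS_recapE1}, and for $\de_x \frk u_N$ by $\|D^2_{11} u^1_{\bs\lambda}\|_\infty \lesssim 1$ from \eqref{ENS_recapE2}. Time regularity for $\frk u_N$ follows from the bound $\|\de_t u^1\|_\infty \lesssim 1$, obtained from the Nash equation (recall $\beta = 0$ in this part) by showing that each term on its right-hand side is bounded uniformly in $N$: the Laplacian $\sigma\sum_j \tr(D^2_{jj} u^1)$ because $\|D^2_{11} u^1\|_\infty \lesssim 1$ and $\sum_{j \neq 1}\|D^2_{jj}u^1\|_\infty \leq \sqrt{N-1}\bigl(\sum_{j\neq 1}\|D(D_j u^1)\|_\infty^2\bigr)^{1/2} \lesssim 1$ by Cauchy--Schwarz and \eqref{ENS_recapE2}; the quadratic $|D_1 u^1|^2$ by \eqref{ENS_recapE1}; and the cross term $\sum_{j\neq 1} D_j u^j \cdot D_j u^1$ similarly by Cauchy--Schwarz, combining $\|D_j u^j\|_\infty \lesssim 1$ with $\sum_{j\neq 1}|D_j u^1|^2 \lesssim 1/N$. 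The $\tfrac13$-H\"older regularity in $t$ of $\de_x \frk u_N$ is obtained via a standard mollification-interpolation argument between the $L^\infty$-bound and the $C^1_x$-bound on $D_1 u^1$, using that the source of the linearised equation satisfied by $D_1 u^1$ is uniformly bounded (the skew products $D^2_{1j} u^j \cdot D_j u^1$ for $j\neq 1$ contribute $\lesssim 1/N$ by Cauchy--Schwarz from \eqref{ENS_recapE1}--\eqref{ENS_recapE2}).

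For the regularity in the bath variables: by \eqref{ENS_recapE1} and \Cref{ENS_deldiui} one has $\sup_{j > 1}\|D_{(\lambda^j, \hat x^j)} \frk u_N\|_\infty \lesssim 1/N$, hence the trivial extension of \Cref{ENS_DtoW} from $(\R^d)^{N-1}$ to $(\Lambda \times \R^d)^{N-1}$ (applied with $p=1$, $q=\infty$) yields uniform Lipschitz continuity of $\frk u_N(\lambda,t,x,\cdot)$ with respect to the $W_1$-distance (hence $W_2$) on empirical measures. For $\de_x \frk u_N$, combining \eqref{ENS_recapE2} with \Cref{ENS_deldiui} gives $\bigl\||D(\de_x \frk u_N)|_2\bigr\|_\infty \lesssim (N-1)^{-1/2}$, so \Cref{ENS_DtoW} with $p=q=2$ provides uniform $W_2$-Lipschitzness. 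Applying now \Cref{ENS_lemcarconv} along a common subsequence produces $U$ and $V$ in $C^0(\Lambda \times [0,T] \times \R^d \times \Pc_2(\Lambda \times \R^d))$ with the announced regularity (inherited from the moduli); the prescribed sets $\{|x|\leq R,\ m_{\hbs x}\in\call K\}$ give rise to $\Gamma$-compact sets and to a $\Pc_2(\Lambda\times\R^d)$-compact family of bath empirical measures because $\Lambda$ is compact and $\call K \ssubset \Pc_2(\R^d)$. To identify $V = \de_x U$, pass to the limit along the subsequence in the identity
\[
\frk u_N(\bs\lambda,t,(x,\hbs x)) - \frk u_N(\bs\lambda,t,(x',\hbs x)) = \int_0^1 \de_x \frk u_N(\bs\lambda,t,(sx+(1-s)x',\hbs x))\cdot(x-x')\,ds,
\]
obtaining the analogue with $U$ and $V$, and invoke continuity of $V$ in $x$. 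The principal technical obstacle is the uniform-in-$N$ H\"older-in-time regularity of $\de_x \frk u_N$: direct differentiation of the Nash equation in $x^1$ produces a Laplacian of $D_1 u^1$ whose individual terms are only controlled in the averaged sense of \Cref{ENS_d3b}, so $\de_t \de_x \frk u_N$ cannot be bounded pointwise and one must resort to interpolation to conclude.
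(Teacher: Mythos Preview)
Your overall strategy coincides with the paper's: apply \Cref{ENS_lemcarconv} with parameter $\gamma=(\lambda,t,x)$ and symmetric argument $(\hbs\lambda:\hbs x)$, verifying equicontinuity via \Cref{ENS_contlcontr}, the estimates \eqref{ENS_recapE1}--\eqref{ENS_recapE2}, and \Cref{ENS_DtoW}. Your direct bound $\|\de_t\frk u_N\|_\infty\lesssim 1$ is correct and is in fact recorded in the remark following the paper's proof; the explicit identification $V=\de_xU$ via the fundamental theorem of calculus in $x$ is a clean addition that the paper leaves implicit.

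The one genuine gap is the $\tfrac13$-H\"older-in-time regularity of $\de_x\frk u_N$. The ``mollification--interpolation between the $L^\infty$- and $C^1_x$-bounds'' you invoke is not, as stated, dimension-free: for a spatial mollifier $\phi_\epsilon$ on $(\R^d)^N$ one has $\int|D\phi_\epsilon|\sim\sqrt{N}\,\epsilon^{-1}$ (already for a Gaussian), so controlling $\Delta(\phi_\epsilon*v)$ through $\|Dv\|_\infty$ introduces a factor $\sqrt N$; heat-kernel smoothing fares no better because $\de_\epsilon G_\epsilon$ carries the factor $Nd/\epsilon$. The paper instead invokes \Cref{lemmaholder}, a Kru\v zkov-type barrier argument: $\de_x\frk u_N$ solves a heat equation with uniformly bounded right-hand side (exactly the linearised equation you wrote down) and is uniformly Lipschitz in the \emph{weighted} norm $\|\cdot\|^1$ by \eqref{ENS_recapE2}, and comparison with an explicit supersolution of the form $c_1\rho+C\rho^{-2}t+c_2t+c_2\rho^{-2}(\|\bs x\|^1)^2$ on weighted cylinders $\{\|\bs x\|^1\le\rho\}$ then yields $|t-s|^{1/3}$-continuity with constants independent of $N$ (optimising $\rho=|t-s|^{1/3}$). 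You correctly single this out as the principal obstacle; what is missing is precisely this weighted-barrier lemma, which is why the paper states it separately.
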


\begin{proof}
For the local uniform convergence of a subsequence $\frk u_N$, we apply \Cref{ENS_lemcarconv} with $\gamma = (t, \lambda, x)$ and $\bs v = (\hbs \lambda:\hbs x)$. Indeed, first note that the $\frk u_N$ are globally bounded, uniformly in $N$, by a standard comparison argument (recall that $f^i$ and $g^i$ are now assumed to be bounded uniformly). The second inequality in \eqref{ENS_recapE1} and the estimate of \Cref{ENS_contlcontr} yield uniform Lipschitz continuity with respect to $(\hbs \lambda:\hbs x)$ in $W_2$ distance, in the sense provided by \Cref{ENS_DtoW}. Lipschitz continuity in $(\lambda, x)$ is also a consequence of \Cref{ENS_contlcontr}, and the first estimate in \eqref{ENS_recapE1}. Finally, since the right-hand side of the  equation for $\frk u_N$
\[
-\de_t \frk u_N - \Delta \frk u_N = - \dfrac12 |D_x \frk u_N|^2 - \displaystyle\sum_{2 \leq j \leq N} D_j u^j \cdot D_j \frk u_N + f_N
\]
is uniformly bounded by \eqref{ENS_recapE1}, and $\frk u_N$ is uniformly Lipschitz  in the $\bs x$-variable with respect to the weighted norm $\|\cdot\|^1$, again by \eqref{ENS_recapE1}, the desired uniform H\"older regularity in $t$ follows by \Cref{lemmaholder}.

The local uniform convergence of $\partial_x \frk u_N$ is analogous. We argue as before, employing estimates in \eqref{ENS_recapE2} instead of \eqref{ENS_recapE1}. Time regularity is obtained from the equation
\[
-\de_t (D_x \frk u_N) - \Delta (D_x \frk u_N) = - \displaystyle\sum_{1 \leq j \leq N} D_j u^j \cdot D_j (D_x \frk u_N) - \displaystyle\sum_{2 \leq j \leq N} (D^2_{j1} u^j) \cdot D_j \frk u_N + D_x f_N,
\]
whose right-hand side can be controlled using \eqref{ENS_recapE1}, \eqref{ENS_recapE2}.
%
\end{proof}

\begin{rmk} Regularity of $U$ is actually a bit better. First, \eqref{ENS_recapE1} yields Lipschitz continuity in the measure variable with respect to the $W_1$ distance. Moreover, looking at the equation for $\frk u_N$ one observes that $\norm{\de_t \frk u_N}_\infty \lesssim 1$, that gives uniform Lipschitz continuity in the variable $t$.
\end{rmk}

\begin{rmk} \label{ENS_convhN}
In \Cref{ENS_exmonL} we discussed the implications of our assumption when taking the data $h^N$ (that is, $f^N$ and $g^N$) as projections over empirical measures of given $\frk h_N : \Lambda \times \R^d \times \Pc_p(\R^d) \to \R$, and in \Cref{ENS_rmkfdp1} we argued that this can be considered a natural choice. 

Furthermore, our standing assumptions force in fact also the convergence of $h_N$ to functions defined on probability measures, at least up to subsequences. Indeed,
assumption~{\bf(\hyperlink{MF}{MF})} guarantees that $\norm{D_j h_N(\lambda,\var)}_\infty \leq L_h N^{-1}$ for all $j > 1$
. Then by \Cref{ENS_DtoW} we can apply \Cref{ENS_lemcarconv} and say that
there exists a map $h \in \Lip( \Lambda \times \R^d \times \Pc_1(\R^d) )$ such that, up to a subsequence,
\begin{equation} \label{ENS_convh}
\sup_{\lambda \in \Lambda,\, |x| \leq R,\, m_{\hbs x} \in \call K} |  D_x^k h_N(\lambda,\bs x) -  D_x^k h(\lambda,x,m_{\hbs x}) | \,\xrightarrow[N \to \infty]{}\, 0, 
\end{equation}
for any $R > 0$ and $\call K \subset \Pc_1(\R^d)$ compact, and for $k$ as in \hyperlink{LP}{{\bfseries{(LP)}}}.
\end{rmk}

\subsection{Characterisation of the limit}

Henceforth, we will work with the three limit functions $f$, $g$ and $U$ defined in \Cref{ENS_convhN} and \Cref{ENS_rmkconvdiuil}. We will adopt the notation $f^\lambda=f(\lambda,\var)$ and analogously for $g^\lambda$ and $U^\lambda$.

Our aim is to give a characterisation of $U$. As a first step and a starter, the most basic result we can prove regards the standard symmetric setting of MFGs; that is, when $\bs\lambda = \lambda 1_N$ for some $\lambda \in \Lambda$, so that all $f^i$ (and all $g^i$) coincide up to a permutation of the coordinates of $(\R^d)^N$.

\begin{prop} \label{ENS_convfacile}
For all $\tau \in [0,T]$, $\frk m,\bar{\frk m} \in \Pc_2(\R^d)$ and $\lambda \in \Lambda$,
\begin{equation} \label{ENS_reprUl} \begin{multlined}[t][.9\displaywidth]
\int_{\R^d} U^\lambda(\tau,\var,\delta_\lambda\!\otimes \frk m) \,\di \bar{ \frk m }\vspace{-7pt}\\
= \int_\tau^T \int_{\R^d} \Big( \,\frac12\big|D_x U^\lambda(s,\var,\delta_\lambda\!\otimes m_s)\big|^2 + f^\lambda(\var, m_s) \Big) \di \bar m_s\di s + \int_{\R^d} g^\lambda(\var,  m_T) \,\di \bar m_T,
\end{multlined}
\end{equation}
where $m$ and $\bar m$ solve, respectively,
\begin{equation} \label{FPeq1}
\begin{cases}
\de_t m - \Delta m - \mathrm{div}(D_x U^\lambda(t,x,\delta_\lambda\!\otimes m)m) = 0 \\
m|_{t=\tau} = \frk m
\end{cases}
\end{equation}
and
\begin{equation} \label{FPeq2}
\begin{cases}
\de_t \bar m - \Delta \bar m - \mathrm{div}(D_x U^\lambda(t,x,\delta_\lambda\!\otimes m)\bar m) = 0  \\
\bar m|_{t=\tau} = \bar{\frk m}
\end{cases}
\end{equation}
for $t \in [\tau,T]$ and $x \in \R^d$.
\end{prop}

 In order to abridge the notation, given $\frk m \in \Pc_2(\R^d)$ we will write 
 \[
 U^\lambda(t,x,\frk m) \defeq U^\lambda(t,x,\delta_\lambda\!\otimes \frk m).
 \]
 Also, throughout the proofs we will imply all constants which depend only on $T$, $L_g$, $L_f$ and $L_\Lambda$. We state the basic propagation of chaos results needed for the proof in the following two lemmata.

\begin{lem} \label{ENS_propfacileL1}
Let $\lambda \in \Lambda$, $\bar m, m_0 \in \Pc_q(\R^d)$, $q > 4$, and let $\bs Z \sim \bar m \otimes m_0^{\otimes(N-1)}$; then consider $\bs X^N$ and $\bs Y^N$ solving, respectively,
\[
\begin{cases}
\di X^{N,i} = - D_x U^\lambda(t,X^{N,i}_t,m_{\bs X^{N}_t})\,\di t + \sqrt2\, \di B^i_t , & i \in \{1,\dots,N\}\\
\bs X_0^N = \bs Z
\end{cases}
\]
and
\[
\begin{cases}
\di Y^{N,i} = - D_x U^\lambda(t,Y^{N,i}_t,\Law_{Y^{N,i}_t})\,\di t + \sqrt2\, \di B^i_t, & i \in \{2,\dots,N\} \\
\di Y^{N,1} = - D_x U^\lambda(t,Y^{N,1}_t,\Law_{Y^{N,2}_t})\,\di t + \sqrt2\, \di B^1_t \\
\bs Y^N_0 = \bs Z.
\end{cases}
\]
Let $m_t \defeq \Law_{Y^{N,i}_t}$ if $i \geq 2$ (note that $\Law_{Y^{N,i}_t}$ is independent of $i \geq 2$ and $N$). Then
\begin{equation} \label{ENS_estT1.3}
\lim_{N \to \infty} \sup_{1 \leq i \leq N} \bb E\Big[ \sup_{t \in [0,T]} \big| X^{N,i}_t - Y^{N,i}_t \big|^2 \Big] = 0
\end{equation}
and
\begin{equation}\label{convservedopo}
\lim_{N \to \infty} \bb E \Bigl[\, \sup_{t \in [0,T]} W_2(m_{\bs X^{N}_t}, m_t) \Bigr] = 0.
\end{equation}
\end{lem}

\begin{proof}
Subtracting the equations of $\bs X^N$ and $\bs Y^N$ and using the Lipschitz continuity of $\partial_xU$, we have
\[
|X^{N,i}_t - Y^{N,i}_t| \lesssim \int_0^t \bigl( |X^{N,i} - Y^{N,i}| + W_2(m_{\bs X^{N}}, m_{\bs Y^{N}}) + W_2(m_{\bs Y^{N}},m) \bigr);
\]
therefore, since due to the boundedness of $D_x U$ we have\footnote{We are using the notation $m_{\bs Y^{N,-1}} = \frac1{N-1} \sum_{1 \leq j \leq N, \, j \neq 1} \delta_{Y^{N,j}}$, which is coherent to that given in Section \ref{sec_nota}; note that in the literature the same empirical measure is often denoted by $m^{N,1}_{\bs Y}$.} $\bb E \bigl[ \sup_{t \in [0,T]} W_2(m_{\bs Y^{N,-1}_t},m_{\bs Y^N_t})^2 \bigr] \lesssim N^{-1}$,
\begin{equation} \label{ENS_pc-1}
|X^{N,i}_t - Y^{N,i}_t|^2 \lesssim \int_0^t \Bigl( |X^{N,i} - Y^{N,i}|^2 + \frac1N \sum_{1 \leq j \leq N} |X^{N,j} - Y^{N,j}|^2 + W_2(m_{\bs Y^{N,-1}},m) \Bigr) + \frac1{N}.
\end{equation}
Averaging over $i$,
\[
\frac1{N} \sum_{1 \leq i \leq N} |X^{N,i}_t - Y^{N,i}_t|^2 \,\lesssim\, \frac1N + \int_0^t \Bigl( \frac1{N} \sum_{1 \leq i \leq N} |X^{N,i} - Y^{N,i}|^2 + W_2(m_{\bs Y^{N,-1}},m) \Bigr),
\]
thus by Gronwall's lemma
\[
\frac1{N} \sum_{1 \leq i \leq N} |X^{N,i}_t - Y^{N,i}_t|^2 \lesssim \frac1N + \int_0^t W_2(m_{\bs Y^{N,-1}},m)
\]
and plugging this back into \eqref{ENS_pc-1} one concludes that
\begin{equation*} 
\sup_{1 \leq i \leq N} \bb E\Big[ \sup_{t \in [0,T]} \big| X^{N,i}_t - Y^{N,i}_t \big|^2 \Big] \lesssim \frac1N + \int_0^T \bb E \big[ W_2(m_{\bs Y^{N,-1}},m)^2 \big],
\end{equation*}
where the right-hand side goes to $0$ as $N \to \infty$ by \cite[Theorem~1]{FG13} (see also \cite[Theorem~5.8]{cardelar}). Then note that by the triangle inequality one deduces \eqref{convservedopo} as well.
\end{proof}

\begin{lem} \label{ENS_propfacileL2}
Let $\bs X^N$ be as in \Cref{ENS_propfacileL1} and consider $\tbs X^N$ solving
\[
\begin{cases}
\di \tilde X^{N,i}_t = - D_x \frk u_N({\lambda1_N},t,\tilde X^{N,i}_t,\tbs X^{N,-i}_t)\,\di t + \sqrt2\, \di B^i_t, & i \in \{1,\dots,N\}\\
\tbs X_0^N = \bs Z.
\end{cases}
\]
Then
\begin{equation} \label{ENS_convtXX}
\lim_{N \to \infty} \sup_{1 \leq i \leq N} \bb E \Big[ \sup_{t \in [0,T]} |\tilde X^{N,i}_t - X^{N,i}_t|^2 \Big] = 0.
\end{equation}
\end{lem}

\begin{proof}
Note that, since $\bb E \bigl[ \sup_{t \in [0,T]} W_2(m_{\tbs X^{N,-i}_t},m_{\tbs X^N_t})^2 \bigr] \lesssim N^{-1}$,
\begin{equation} \label{ENS_esttXX} \begin{split}
|\tilde X^{N,i}_s - X^{N,i}_s|^2 &\lesssim \begin{multlined}[t][.7\displaywidth] \int_0^s \Big( \big| D_x \frk u_N({\lambda1_N},t,\tilde X^{N,i}_t,\tbs X^{N,-i}_t) - D_x \frk u_N({\lambda1_N},t, X^{N,i}_t,\tbs X^{N,-i}_t) \big|^2  \vspace{-7pt} \\
+ \big| D_x \frk u_N(\lambda1_N,t, X^{N,i}_t,\tbs X^{N,-i}_t) - D_x U^\lambda(t,X^{N,i}_t, m_{\tbs X^N_t}) \big|^2 \\
+ \big| D_x U^\lambda(t,X^{N,i}_t, m_{\tbs X^N_t}) - D_x U^\lambda(t,X^{N,i}_t, m_{\bs X^N_t}) \big|^2 \Big)\,\di t \vspace{-5pt}
\end{multlined} \\
& \lesssim \int_0^s \big( |\tilde X^{N,i}_t - X^{N,i}_t|^2 + \delta^{N,i}_t + W_2(m_{\tbs X^N_t}, m_{\bs X^N_t})^2 \big)\,\di t,
\end{split}
\end{equation}
where we have set
\[
\delta^{N,i}_t \defeq \big| D_x \frk u_N({\lambda1_N},t, X^{N,i}_t,\tbs X^{N,-i}_t) - D_x U^\lambda(t,X^{N,i}_t, m_{\tbs X^{N,-i}_t}) \big|^2 +  \frac1N.
\]
Taking the averages,
\[
\frac1N \sum_{1 \leq i \leq N} |\tilde X^{N,i}_s - X^{N,i}_s|^2 \lesssim \frac1N \sum_{1 \leq i \leq N} \int_0^T  \delta^{N,i}_t \,\di t + \int_0^s \frac1N \sum_{1 \leq i \leq N} |\tilde X^{N,i}_t - X^{N,i}_t|^2 \,\di t,
\]
so that Gronwall's lemma yields
\[
\sup_{t \in [0,T]} \frac1N \sum_{1 \leq i \leq N} |\tilde X^{N,i}_t - X^{N,i}_t|^2 \lesssim \frac1N \sum_{1 \leq i \leq N} \int_0^T  \delta^{N,i}_t \,\di t
\]
and, plugging this back into \eqref{ENS_esttXX},
\begin{equation} \label{ENS_convtXX'}
\sup_{1 \leq i \leq N} \bb E \Big[ \sup_{t \in [0,T]} |\tilde X^{N,i}_t - X^{N,i}_t|^2 \Big] \lesssim \frac1N \sum_{1 \leq i \leq N} \int_0^T  \bb E\, \delta^{N,i}_t \,\di t \,\xrightarrow[N \to \infty]{}\, 0;
\end{equation}
in order to justify the convergence to $0$ of the right-hand side of \eqref{ENS_convtXX'}, we proceed as follows. First note that, since $D_x \frk u^N, D_x U$ are bounded uniformly and $\bar m, m_0 \in \Pc_q(\R^d)$, $q > 4$, it is standard that $\bb E |\tilde X^{N,i}_t|^q$ and $\bb E |X^{N,i}_t|^q$ are uniformly bounded (and thus, so is $N^{-1} \bb E \sum_{1 \leq j \leq N} |\tilde X^{N,j}_t|^q$). Then, for $\bs x \in (\R^d)^N$, let $M_q(\bs x) \defeq N^{-1} \sum_{1 \leq j \leq N} |x^j|^q$; since
\[
\int_0^T \bb E\, \delta^{N,i}_t \,\di t= \int_0^T  \bb E \bigl[ \bs 1_{\{M_q(\tbs X^N_t) + |X_t^{N,i}| \le R\}} \delta^{N,i}_t \bigr] \,\di t+ \int_0^T  \bb E \bigl[ \bs 1_{\{M_q(\tbs X^N_t) + |X_t^{N,i}| > R\}} \delta^{N,i}_t \bigr] \,\di t,
\]
the second term can be made arbitrarily small provided that $R$ is large, by the aforementioned bounds on $q$-th moments, while the first can be made small by choosing $N$ large enough ($\delta^{N,i}_t$ is bounded uniformly). Indeed, it is sufficient to apply the uniform convergence \eqref{ENS_convD1} on the set $\call K$ of all probability measures with $q$-th moment bounded by $R$,\footnote{Here we mean that $\call K \defeq \{ \nu \in \Pc_2(\R^d) :\ \int_{\R^d} |\bs x|^q \,\nu(\di\bs x) \leq R\}$.} which is compact in $\call P_2(\R^d)$. Note that the previous limit is uniform in $i$, since $K$ and the bounds on the moments are independent of $i$.
\end{proof}

\begin{proof}[Proof of \Cref{ENS_convfacile}]
We have
\[\begin{multlined}[.95\displaywidth]
\bigg| \big|D_x U^\lambda(t,\var)\big|^2\Big|_{(X^{N,i}_t,m_{\bs X^N_t})}^{(Y^{N,i}_t,m_t)} \bigg| + \bigg| f^\lambda \Big|_{(X^{N,i}_t, m_{\bs X^N_t})}^{(Y^{N,i}_t, m_t)} \bigg| + \bigg| g^\lambda \Big|_{(X^{N,i}_t, m_{\bs X^N_t})}^{(Y^{N,i}_t, m_t)} \bigg| \vspace{3pt}\\
\lesssim \big|X^{N,i}_t - Y^{N,i}_t\big| + W_2(m_{\bs X^N_t},m_t),
\end{multlined}\]
the expectation of the right-hand side going to $0$ as $N \to \infty$ (uniformly in $t \in [0,T]$) by \Cref{ENS_propfacileL1}. Then we deduce that
\begin{equation} \label{ENS_pclim1} \begin{multlined}[t][.9\displaywidth]
 \bb E\bigg[ \int_0^T \Big( \,\frac12\big|D_x U^\lambda(t,X^{N,1}_t,m_{\bs X^N_t})\big|^2 + f^\lambda(X^{N,1}_t, m_{\bs X^N_t}) \Big) \di t + g^\lambda(X^{N,1}_T, m_{\bs X^N_T}) \bigg]
\vspace{-7pt}\\ \xrightarrow[N \to \infty]{}\, \int_0^T \int_{\R^d} \Big( \,\frac12\big|D_x U^\lambda(t,\cdot,m_t)\big|^2 + f^\lambda(\cdot, m_t) \Big) \di \bar m_t\di t + \int_{\R^d} g^\lambda(\cdot, m_T) \,\di \bar m_T,
\end{multlined}
\end{equation}
where $\bar m_t \defeq \Law_{Y^{N,1}_t}$ (which is in fact independent of $N$).
Now, by the representation formula for the value functions solving of the Nash system (see, e.g., \cite[Section~2.1.4]{cardelar}),
\begin{equation} \label{ENS_rfu1pr}
\bb E \, \frk u_N(\lambda 1_N,0,\bs Z) = \bb E\biggl[\, \int_0^T \Big(\,\frac12\big| D_x \frk u_N(\lambda1_N,t,\tbs X^N_t) \big|^2 + f_N(\lambda,\tbs X^N_t)\Big)\di t + g_N(\lambda,\tbs X^N_T) 
\biggr],
\end{equation}
where $\tbs X^N$ is the vector of optimal trajectories for the $N$-player game; that is, $\tbs X^N$ is as in \Cref{ENS_propfacileL2}. Then, combining \eqref{ENS_pclim1}, \eqref{ENS_rfu1pr} and \eqref{ENS_convtXX} one deduces that
\begin{equation} \label{ENS_pclim2}
\bb E \, \frk u_N({\lambda 1_N},0,\bs Z) \,\xrightarrow[N \to \infty]{}\, \int_0^T \int_{\R^d} \Big( \,\frac12\big|D_x U^\lambda(t,\var,m_t)\big|^2 + f^\lambda(\var, m_t) \Big) \di \bar m_t\di t + \int_{\R^d} g^\lambda(\var, m_T) \,\di \bar m_T.
\end{equation}
Finally, since $\bb E\,W_2(m_{\bs Z^{-1}}, m_0) \to 0$ by \cite[Theorem~1]{FG13}, from \eqref{ENS_convD1} and \eqref{ENS_pclim2} we obtain
\[
\int_{\R^d} U^\lambda(0,\var,m_0) \,\di \bar m = \int_0^T \int_{\R^d} \Big( \,\frac12\big|D_x U^\lambda(t,\var,m_t)\big|^2 + f^\lambda(\var, m_t) \Big) \di \bar m_t\di t + \int_{\R^d} g^\lambda(\var, m_T) \,\di \bar m_T.
\]
The fact that $m$ and $\bar m$ solve the Fokker--Planck equations \eqref{FPeq1} and \eqref{FPeq2}, respectively, is a standard application of It\^{o}'s formula. Finally, as $\bar m, m_0$ can be chosen in a dense subset of $\Pc_2(\R^d)$ and the choice of $\tau=0$ as initial time is arbitrary, \eqref{ENS_reprUl} follows by standard stability of the Fokker--Planck equation. 
\end{proof}
%



We finally extend the characterisation \eqref{ENS_reprUl} to any measure $\bs\mu \in \Pc_2(\Lambda \times \R^d)$, with a continuous disintegration with respect to the projection $\pi_\Lambda \colon \Lambda \times \R^d \to \Lambda$. This will be the main result of this section. The extension is rather technical, but the main point is performing the following reduction: the continuity in $\lambda$ (of the disintegration and of the data) allows to approximate the problem with a simpler one, where the first marginal of $\mu$ is supported on a finite subset of $\Lambda$. This is a sort of problem with finitely many populations, where we apply the previous convergence argument; in other words, we are going to perform a sort of multiple ``localised'' propagation of chaos.

\begin{thm} \label{ENS_convfinita}
Let $\bs\mu \in \Pc_2(\Lambda \times \R^d)$ admit a disintegration $\frk m \colon K \to \Pc_2(\R^d)$ with respect to $\pi_\Lambda$ which is continuous on some compact subset $K \supseteq \spt(\pi_\Lambda\mpush\bs\mu)$. Let $\bbs{\frk m} \in \Pc_2(\R^d)^\Lambda$. Then for all $\tau \in [0,T]$ and $\lambda \in K$,
\begin{equation} \label{ENS_reprUlden} 
\begin{multlined}[t][.9\displaywidth]
\int_{\R^d} U^{\lambda}(\tau,\var,\bs\mu) \,\di \bar{\frk m}^\lambda
\vspace{-7pt}\\
= \int_\tau^T \int_{\R^d} \Big( \,\frac12\big|D_x U^{\lambda}(s,\var,\mu_s)\big|^2 + f^{\lambda}(\var, {\pi_{\R^d}}\mpush \mu_s) \Big) \,\di \bar m^{\lambda}_s\di s + \int_{\R^d} g^{\lambda}(\var, {\pi_{\R^d}}\mpush \mu_T) \,\di \bar m^{\lambda}_T,
\end{multlined}
\end{equation}
where $\mu \in C^0([\tau,T]; \Pc_2(\Lambda \times \R^d))$ and $\bar m \in C^0([\tau,T];\Pc_2(\R^d))^\Lambda$ solve, respectively,
\begin{equation} \label{ENS_FPLambdaC}
\begin{cases}
\de_t m^\lambda - \Delta m^\lambda - \mathrm{div}(D_x U^\lambda(t,x, \mu_t) m^\lambda) = 0 \\
\mu_t = \int_\Lambda m_t^\lambda\, \pi_\Lambda\mpush \bs\mu(\di\lambda)\\
m^\lambda_\tau = \frk m^\lambda
\end{cases}
\end{equation}
and
\begin{equation} \label{ENS_FPLambdaC'}
\begin{cases}
\de_t \bar m^\lambda - \Delta \bar m^\lambda - \mathrm{div}(D_x U^\lambda(t,x, \mu_t) \bar m^\lambda) = 0 \\
\bar m^\lambda_\tau = \bar{\frk m}^\lambda
\end{cases}
\end{equation}
for $t \in [\tau, T]$, $x \in \R^d$ and $\lambda \in \Lambda$.
\end{thm}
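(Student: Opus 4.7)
The plan is to mimic the three-step argument of \Cref{ENS_convfacile} but with a \emph{multi-population} propagation of chaos, and then to pass to the limit through an approximation procedure. As a first reduction, I would treat the case in which $\rho \defeq \pi_\Lambda\mpush\bs\mu = \sum_{j=1}^m p_j \delta_{\lambda_j}$ is supported on finitely many labels $\lambda_j \in K$, so that $\bs\mu = \sum_j p_j \delta_{\lambda_j}\!\otimes \frk m^{\lambda_j}$. For a generic $\bs\mu$ with continuous disintegration $\frk m$ over the compact $K$, such a discretization $\bs\mu^k \to \bs\mu$ in $W_2(\Lambda \times \R^d)$ is obtained by partitioning $K$ into Borel pieces of shrinking diameter and using uniform continuity of $\frk m$ on $K$.

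For the finitely supported case, fix $\lambda \in K$ and choose sequences $\bs\lambda^N = (\lambda,\hbs\lambda^N) \in \Lambda\times\Lambda^{N-1}$ by grouping $\{2,\dots,N\}$ into sub-populations of sizes $\simeq p_j(N-1)$ carrying labels $\lambda_j$, so that $m_{\hbs\lambda^N}$ weakly converges to $\rho$. I would introduce on $[\tau,T]$ the actual Nash optimal trajectories $\tbs X^N$ driven by $-D_i u^i_{\bs\lambda^N}$, an intermediate closed-loop McKean--Vlasov system $\bs X^N$ with drifts $-\de_x U^{\lambda^{N,i}}(t, X^{N,i}_t, m_{(\bs\lambda^N:\bs X^N_t)})$, and a decoupled system $\bs Y^N$ with drifts $-\de_x U^{\lambda^{N,i}}(t, Y^{N,i}_t, \mu_t)$, where $\mu$ solves \eqref{ENS_FPLambdaC} started at $\bs\mu$. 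The initial data $\bs Z$ are independent with $Z^1\sim\bar{\frk m}^\lambda$ and $Z^{N,i}\sim\frk m^{\lambda^{N,i}}$ for $i\geq2$, and the same Brownian motions drive all three systems. By construction $\Law(Y^{N,1}_t) = \bar m^\lambda_t$ and $\Law(Y^{N,i}_t) = m^{\lambda^{N,i}}_t$, while the joint empirical measure $m_{(\hbs\lambda^N:\bs Y^{N,-1}_t)}$ converges to $\mu_t$ in $\Pc_2(\Lambda\times\R^d)$ by the within-population Glivenko--Cantelli bound \cite[Theorem~5.8]{cardelar} applied sub-population by sub-population, combined with the weak convergence of the labels. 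I would then exactly replicate Steps~1--2 of \Cref{ENS_convfacile}: a Gronwall argument using the Lipschitz continuity of $\de_x U$ yields
\begin{equation*}
\sup_{1\leq i\leq N}\bb E\Big[\sup_{t\in[\tau,T]}|Y^{N,i}_t-X^{N,i}_t|^2\Big]\to 0,
\end{equation*}
and the uniform convergence \eqref{ENS_convD1} (together with uniform moment bounds on $\tbs X^N$ to handle tails) closes the comparison between $\bs X^N$ and $\tbs X^N$. Plugging these into the standard stochastic representation of $u^1_{\bs\lambda^N}(\tau,\bs Z)$ yields \eqref{ENS_reprUlden} in the finitely supported case.

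The last step is stability in $\bs\mu$. Since $\de_x U^\lambda(t,x,\mu)$ is bounded and Lipschitz continuous jointly in $(x,\mu)$, the nonlinear system \eqref{ENS_FPLambdaC} admits a unique solution through a standard fixed-point argument in $C^0([\tau,T];\Pc_2(\Lambda\times\R^d))$, and a Gronwall-in-$W_2$ estimate shows that $\bs\mu^k\to\bs\mu$ in $W_2$ forces $\sup_t W_2(\mu^k_t,\mu_t)\to 0$. The linear equation \eqref{ENS_FPLambdaC'} is similarly stable, so $\sup_tW_2(\bar m^{k,\lambda}_t,\bar m^\lambda_t)\to 0$ for the given test datum $\bar{\frk m}^\lambda$. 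Continuity of $U$ and $\de_x U$ in $\mu$, and of $f^\lambda,g^\lambda$ in $\pi_{\R^d}\mpush\mu$, then allow to pass to the limit on both sides of \eqref{ENS_reprUlden} written for $\bs\mu^k$ to recover the identity for $\bs\mu$.

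The main technical obstacle is the multi-population propagation of chaos: besides the standard i.i.d.\ convergence inside each sub-population, one must carefully track the $\lambda$-dependence of the drifts and verify joint convergence of the labelled empirical measures in $\Pc_2(\Lambda\times\R^d)$, rather than merely in $\Pc_2(\R^d)$. The comparison between $\tbs X^N$ and $\bs X^N$ also requires that $\de_x\frk u_N(\bs\lambda^N,t,\cdot)$ be uniformly close to $\de_x U(\lambda^{N,i},t,\cdot,m_{(\hbs\lambda^N:\hbs X^N_t)})$ on sets of bounded second moment, which is exactly what \eqref{ENS_convD1} combined with a truncation argument based on uniform $q$-th moment bounds ($q>4$) provides, as in the proof of \Cref{ENS_convfacile}.
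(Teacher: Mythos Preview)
Your proposal is correct and follows the same overall architecture as the paper: reduce to the case where $\rho=\pi_\Lambda\mpush\bs\mu$ is finitely supported, establish the identity there via a multi-population propagation of chaos that compares the Nash trajectories $\tbs X^N$ with McKean--Vlasov particles through an intermediate system, and then pass to the limit using $W_2$-stability of \eqref{ENS_FPLambdaC}--\eqref{ENS_FPLambdaC'} (the paper isolates this stability as a separate lemma, your sketch absorbs it into the last step).

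The one genuine difference is the choice of intermediate system. You couple all populations through the full labelled empirical measure, taking drifts $-\de_x U^{\lambda^{N,i}}(t,X^{N,i}_t,m_{(\bs\lambda^N:\bs X^N_t)})$ and running a single joint Gronwall that compares this to the decoupled McKean--Vlasov particles $\bs Y^N$. The paper instead uses a \emph{hybrid} intermediate system: for population $j$, the drift sees the empirical measure only of population $j$ itself, while the other populations enter through their already-known limit flows $m^{\lambda^k}_t$. This lets the paper reduce the intra-population step literally to the single-population argument of \Cref{ENS_convfacile}, at the price of an extra layer in Step~3 where the full Nash system is compared to these per-population hybrids. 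Your route is more direct but requires controlling the convergence of the labelled empirical measure in $\Pc_2(\Lambda\times\R^d)$ all at once; the paper's route is more modular. Both are valid and lead to the same estimates.
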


\begin{rmk}
Since $D_x U^\lambda$ and ${\rm div} (D_x U^\lambda)$ are globally bounded, $\bar m^\lambda$ is locally continuous as a function with values in $L^1({\R^d})$, it solves the Fokker-Planck equation in the strong sense, and it belongs locally to $W^{2,1}_p$ for every $p$; see for example \cite{BKR}. Regularity extends up to time $\tau$ provided that $\bar{\frk m}^\lambda$ is smooth enough.
\end{rmk}

Problem~\eqref{ENS_FPLambdaC'} is simply an uncoupled system of (possibly uncountably many) Fokker--Planck equations. On the other hand, before proceeding with the proof of this theorem we show that problem~\eqref{ENS_FPLambdaC} is well-posed, as a result of the following lemma. Also note that it can be written as
\begin{equation} \label{ENS_FPLambdaC1FP}
\begin{cases}
\de_t \mu - \Delta_x \mu - \mathrm{div}_x(D_x U^\lambda(t,x, \mu_t) \mu) = 0 & \text{on} \ [\tau,T] \times \Lambda \times \R^d \\
\mu_\tau = \bs \mu,
\end{cases}
\end{equation}
so that $\mu$ solves an equation of Fokker--Planck type with no derivatives with respect to $\lambda$.

\begin{lem} \label{ENS_lemFPL}
Let $B \colon \Lambda \times [0,T] \times \R^d \times \Pc_2(\Lambda \times \R^d) \to \R^d$ be bounded Lipschitz continuous on $\Lambda \times \R^d \times \Pc_2(\Lambda \times \R^d)$ uniformly on $[0,T]$. Let $\bs\mu \in \Pc_2(\Lambda \times \R^d)$ and let $\rho \defeq {\pi_\Lambda}\mpush \bs\mu \in \Pc(\Lambda)$ be its first marginal. Then problem\footnote{With an abuse of notation we write $\int_\Lambda m^\lambda_t \rho(\di\lambda)$ instead of $\int_\Lambda \hat m^\lambda_t \rho(\di\lambda)$ with $\hat m^\lambda_t(E) \defeq m^\lambda(\pi_{\R^d}(E \cap \pi_\Lambda^{-1}(\{\lambda\})))$ for all Borel sets $E \subseteq \Lambda \times \R^d$. We will continue to identify $\hat m^\lambda$ with $m^\lambda$.\label{ENS_fndis}}
\begin{equation} \label{ENS_FPLambda}
\begin{cases}
\de_t m^\lambda - \Delta m^\lambda + \mathrm{div}(B^\lambda(t,x, \mu_t) m^\lambda) = 0 \quad \text{on}\ [0,T] \times \R^d, \ \ \forall\, \lambda \in \Lambda\\
\mu_t = \int_\Lambda m_t^\lambda \rho(\di\lambda) \quad \forall \, t \in [0,T]\\
\mu_0 = \bs\mu
\end{cases}
\end{equation}
has a unique solution $\mu \in C^{\frac12}([0,T]; \Pc_2(\Lambda \times \R^d))$, whose disintegration $m$ with respect to $\rho$ is unique as well if one fixes the disintegration of $\bs\mu$.

Furthermore, if $\mu$ and $\tilde \mu$ are the solutions relative to data $\bs\mu$ and $\tbs\mu$ with respective disintegrations given by $\frk m$ and $\tilde{\frk m}$, respectively, one has
\begin{gather} \label{ENS_gronmll}
\sup_{t \in [0,T]}W_2(\mu_t, \tilde\mu_t) \,\lesssim\, W_2(\bs\mu,\tbs\mu),
\\ \label{ENS_estmutmu}
\sup_{t \in [0,T]} W_2(m^\lambda_t,\tilde m^{\lambda'}_t) \,\lesssim\, W_2(\frk m^\lambda,\tilde{\frk m}^{\lambda'}) + W_2(\bs\mu,\tbs\mu) \quad \forall\,\lambda,\lambda' \in \Lambda,
\end{gather}
where the implied constants depend only on $T$ and $\norm{B}_{\Lip}$.
\end{lem}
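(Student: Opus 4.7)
My plan is to recast the coupled PDE system as a single McKean--Vlasov SDE with a static random parameter, and then run a standard Banach contraction driven by synchronous couplings. On a sufficiently rich probability space I will fix a random vector $(\Lambda_0,X_0)\sim\bs\mu$ independent of a $d$-dimensional Brownian motion $W$, and look for a continuous adapted process $X$ with $X|_{t=0}=X_0$ solving
\[
\di X_t = B(\Lambda_0,t,X_t,\mu_t)\,\di t + \sqrt{2}\,\di W_t,\qquad \mu_t := \Law(\Lambda_0,X_t).
\]
The desired disintegration will then be $m^\lambda_t := \Law(X_t\mid\Lambda_0=\lambda)$: since $\Lambda_0$ is constant along trajectories, conditioning It\^o's formula on $\{\Lambda_0=\lambda\}$ produces, $\rho$-a.e., exactly the Fokker--Planck equation in \eqref{ENS_FPLambda} with initial datum $\frk m^\lambda$ (the prescribed disintegration of $\bs\mu$).

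The existence/uniqueness step will be a Banach fixed point on $C^0([0,\tau];\Pc_2(\Lambda\times\R^d))$ for $\tau$ depending only on $T$ and $\norm{B}_{\Lip}$. Given $\nu$ in that space, the SDE obtained by replacing $\mu_\cdot$ with $\nu_\cdot$ admits a unique strong solution $X^\nu$ by the standing Lipschitz/boundedness hypotheses on $B$; I would then define $\Phi(\nu)_t := \Law(\Lambda_0,X^\nu_t)$. Synchronously coupling (same $(\Lambda_0,X_0,W)$) two candidates $\nu,\tilde\nu$, the Lipschitz bound and Cauchy--Schwarz yield
\[
\bb E|X^\nu_t - X^{\tilde\nu}_t|^2 \leq 2T\norm{B}_{\Lip}^2 \int_0^t\!\bigl(\bb E|X^\nu_s-X^{\tilde\nu}_s|^2 + W_2(\nu_s,\tilde\nu_s)^2\bigr)\,\di s,
\]
and Gronwall combined with the trivial bound $W_2(\Phi(\nu)_t,\Phi(\tilde\nu)_t)^2 \leq \bb E|X^\nu_t - X^{\tilde\nu}_t|^2$ (since the $\Lambda$-marginals coincide) will force $\Phi$ to be a contraction on $[0,\tau]$ for $\tau$ small. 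Iterating finitely many times covers $[0,T]$; uniqueness of the fixed point transfers to uniqueness of the disintegration once $\{\frk m^\lambda\}$ is fixed.

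The $C^{1/2}$ temporal regularity is then immediate from the SDE: the It\^o isometry together with boundedness of $B$ gives $\bb E|X_t - X_s|^2 \leq C(t-s)$, whence $W_2(\mu_t,\mu_s)\leq C(t-s)^{\frac12}$. For the stability bound \eqref{ENS_gronmll}, I would pick an optimal $W_2$-coupling of $\bs\mu$ and $\tbs\mu$ to realise $((\Lambda_0,X_0),(\tilde\Lambda_0,\tilde X_0))$, drive both SDEs with the same Brownian motion, and apply the same Gronwall argument; passing to marginals produces $W_2(\mu_t,\tilde\mu_t)\lesssim W_2(\bs\mu,\tbs\mu)$. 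The refined estimate \eqref{ENS_estmutmu} is obtained analogously, freezing the parameter to $\lambda$ and $\lambda'$ and driving both conditional SDEs with a common Brownian motion started from a $W_2$-optimal coupling of $\frk m^\lambda,\tilde{\frk m}^{\lambda'}$; the drift mismatch is then controlled by $W_2(\mu_s,\tilde\mu_s)$, which is already controlled by $W_2(\bs\mu,\tbs\mu)$ thanks to \eqref{ENS_gronmll}.

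I expect the only genuinely conceptual step to be the identification $m^\lambda_t=\Law(X_t\mid\Lambda_0=\lambda)$ and the corresponding derivation of a pointwise-in-$\lambda$ Fokker--Planck equation from the joint SDE --- in particular, verifying that this family is a bona fide Borel-measurable disintegration matching $\frk m^\lambda$ at $t=0$ (by existence of regular conditional probabilities on the Polish space $\R^d$) and that the conditioning commutes with the weak form of the equation. Once that static-parameter trick is accepted, the rest is textbook McKean--Vlasov material.
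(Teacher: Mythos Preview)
Your proposal is correct and takes a genuinely different route from the paper. The paper works purely at the PDE level: it defines the fixed-point map $\Phi$ by solving, for each frozen $\mu$, the family of linear Fokker--Planck equations for $m^\lambda$, and obtains contraction and stability by writing down a degenerate parabolic \emph{coupling PDE} for a measure $\nu_t$ on $(\Lambda\times\R^d)^2$ whose marginals are $\Phi(\mu)_t$ and $\Phi(\tilde\mu)_t$, then testing against $|\lambda-\lambda'|^2+|x-y|^2$ and applying Gronwall. You instead lift the problem to a McKean--Vlasov SDE with static random parameter $\Lambda_0$ and argue by synchronous coupling of trajectories. The two approaches are exact duals: the paper's coupling PDE is precisely the forward equation of your coupled pair $((\Lambda_0,X_t),(\tilde\Lambda_0,\tilde X_t))$, and its energy identity is the Kolmogorov version of your pathwise Gronwall. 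Your route is closer to textbook McKean--Vlasov theory and makes the $C^{1/2}$ time regularity immediate from It\^o isometry; the paper's stays within the analytic language of the rest of the work and avoids the conditional-law bookkeeping you correctly flag as the one delicate step. One small addendum: your conditional construction yields $m^\lambda$ only for $\rho$-a.e.\ $\lambda$, whereas the statement asks for the Fokker--Planck equation and the disintegration for \emph{every} $\lambda\in\Lambda$ with the prescribed $\frk m^\lambda$; once $\mu$ is fixed, you should therefore solve the now linear, decoupled equation with drift $B^\lambda(\cdot,\mu_\cdot)$ and initial datum $\frk m^\lambda$ for each $\lambda$ individually---this is exactly how the paper proceeds and does not interfere with your fixed-point argument.
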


\begin{proof} For some $A > 0$ to be determined, let $\frk C$ be the set of all $\mu \in C^0([0,T];\Pc_2(\Lambda \times \R^d))$ such that $[\mu]_{0,\frac12} \defeq \sup_{s,t \in [0,T],\, s \neq t} |s-t|^{-\frac12}W_2(\mu_s,\mu_t) \leq A$.
Note that $\frk C$ a closed convex subset $C^0([0,T];\Pc_2(\Lambda \times \R^d))$. Given $\mu \in \frk C$ and $\lambda \in \Lambda$, let $m^\lambda \in C^0([0,T];\Pc_2(\R^d))$ be the unique solution to $\de_t m^\lambda - \Delta m^\lambda + \mathrm{div}(B^\lambda(t,x,\mu_t)m^\lambda) = 0$ with $m^\lambda_0 = \frk m^\lambda$, where $(\frk m^\lambda)_{\lambda \in \Lambda}$ is a fixed disintegration of $\bs\mu$ with respect to $\rho$. Let $\Phi(\mu) \defeq \int_\Lambda m^\lambda \rho(\di\lambda)$. It is standard (cf.~\cite[Lemma~1.6]{CPnotes}) to show that $[m^\lambda]_{0,\frac12}$ is bounded by a constant which increases with $\norm{B^\lambda}_\infty$. In addition, if $\pi_{s,t}^\lambda$ is an optimal coupling of $(m^\lambda_s,m^\lambda_t)$, then $\textstyle{\int_{\Lambda \times \Lambda}} \pi^\lambda_{s,t} (( \mathrm{id}_\Lambda, \mathrm{id}_\Lambda )\mpush \rho)(\di \lambda,\di \lambda')$
is a coupling of $(\tilde\mu_s,\tilde\mu_t)$, showing that $W_2(\Phi(\mu)_s, \Phi(\mu)_t)^2 \leq \int_\Lambda W_2(m^\lambda_s,m^\lambda_t)^2 \rho(\di\lambda)$. Therefore, since $\sup_{\lambda \in \Lambda} \norm{B^\lambda}_\infty$ is finite, if $A$ is sufficiently large then the map $\Phi \colon \frk C \to \frk C$ is well-defined.

The following estimates will prove that $\Phi$ is a contraction for $T$ small, and thus (by iteration) the existence of a unique solution to \eqref{ENS_FPLambda} with arbitrary $T$. Furthermore, a posteriori, they will also yield the desired estimates~\eqref{ENS_gronmll} and \eqref{ENS_estmutmu}.

Let $\mu,\tilde \mu \in \frk C$ and let two initial data $\bs\mu,\tbs\mu \in \Pc(\Lambda \times \R^d)$ be given; let $\rho = {\pi_\Lambda}\mpush \bs\mu$, $\tilde\rho = {\pi_\Lambda}\mpush \tbs\mu$ and $(\frk m^\lambda)_{\lambda \in \Lambda}$ (resp.\ $(\tilde{\frk m}^\lambda)_{\lambda \in \Lambda})$ be a disintegration of $\bs\mu$ (resp.\ $\tbs \mu$) with respect to $\rho$ (resp.~$\tilde\rho$). Let $\nu \in C^0([0,T]; \Pc_2((\Lambda \times \R^d)^2)$ satisfy
\[
\de_t \nu - \Delta_x \nu - \Delta_y \nu + 2\sum_{1 \leq j \leq d} \de^2_{x^j y^j} \nu + \mathrm{div}_x(B^\lambda(t,x,\mu_t)\nu) + \mathrm{div}_y(B^{\lambda'}(t,y,\tilde\mu_t)\nu) = 0,
\]
so that if $\nu_0 \in \Pi(\bs\mu,\tbs\mu)$, then $\nu_t$ is a coupling of $(\Phi(\mu)_t,\Phi(\tilde\mu)_t)$. Testing by $|\lambda-\lambda'|^2 + |x-y|^2$ and assuming that $\nu_0$ is optimal for the $2$-Wasserstein distance, we obtain
\[\begin{multlined}[.95\displaywidth]
\int_{(\Lambda \times \R^d)^2} (|\lambda-\lambda'|^2 + |x-y|^2)\, \nu_t(\di\lambda,\di x,\di\lambda',\di y) \vspace{-5pt}\\
= W_2(\bs\mu,\tbs\mu)^2 + 2 \int_0^t \int_{(\Lambda \times \R^d)^2} \big( B^\lambda(t,x,\mu_t) - B^{\lambda'}(t,y,\tilde\mu_t) \big) \cdot (x-y) \, \di \nu_s(\di\lambda,\di x, \di\lambda',\di y)\, \di s.
\end{multlined}\]
By the Cauchy--Schwarz and Young's inequalities,
\[\begin{multlined}[.95\displaywidth]
\int_{(\Lambda \times \R^d)^2} (|\lambda-\lambda'|^2 + |x-y|^2)\, \nu_t(\di\lambda,\di x,\di\lambda',\di y) \vspace{-10pt} \\
\leq W_2(\bs\mu,\tbs\mu)^2 + C\int_0^t W_2(\mu_s,\tilde\mu_s)^2\,\di s + C\int_0^t \int_{(\Lambda \times \R^d)^2} (|\lambda-\lambda'|^2 + |x-y|^2)\, \nu_s(\di\lambda,\di x,\di\lambda',\di y) \,\di s,
\end{multlined}\]
where $C$ depends only on $\norm{B}_{\Lip}$. Apply Gronwall's lemma to deduce
\begin{equation} \label{ENS_proofFPG1}
W_2(\Phi(\mu)_t, \Phi(\tilde\mu)_t)^2 \leq \biggl( W_2(\bs\mu,\tbs\mu)^2 + C\int_0^t W_2(\mu_s,\tilde\mu_s)^2\,\di s \biggr) e^{Ct},
\end{equation}
which implies that $\Phi$ is a contraction on $\frk C \subset C^0([0,T];\Pc_2(\Lambda \times \R^d))$ if $CTe^{CT} < 1$. Since this smallness condition on $T$ depends only on $\norm{B}_{\Lip}$, we can iterate finitely many times the short-time uniqueness provided by the contraction theorem in order to obtain uniqueness on an arbitrary horizon.

Now $\Phi(\mu) = \mu$ and $\Phi(\tilde\mu) = \tilde\mu$ in \eqref{ENS_proofFPG1} above, then we can apply Gronwall's lemma once again to get \eqref{ENS_gronmll}.
With a similar argument (which considers a coupling of $(m^\lambda_t,\tilde m^{\lambda'}_t)$, where $m^\lambda$ is a solution to \eqref{ENS_FPLambda} given $m^\lambda_0 = \frk m^\lambda$ and analogously for $\tilde m^{\lambda'}$) one can also deduce that
\[
W_2(m^\lambda_t,\tilde m^{\lambda'}_t)^2 \,\lesssim\, W_2(\frk m^\lambda,\tilde{\frk m}^{\lambda'})^2 + \int_0^t W_2(\mu_s,\tilde\mu_s)^2\,\di s.
\]
Then, using \eqref{ENS_gronmll}, one has \eqref{ENS_estmutmu}.
\end{proof}

For the benefit of the reader, we also collect the crucial steps of the proof of \Cref{ENS_convfinita} in some lemmata, in order to isolate the propagation of chaos and the approximation arguments. We begin with the latter.

\begin{lem} \label{ENS_convfinitaapprox}
Suppose that \Cref{ENS_convfinita} holds for a sequence $(\bs\mu_k)_{k\in\N} \subset \Pc_2(\Lambda \times \R^d)$ such that $\pi_\Lambda\mpush\bs\mu_k$ is finitely supported and weakly converges to $\rho \in \Pc(\Lambda)$. Then \Cref{ENS_convfinita} holds for any $\bs\mu$ as in its statement such that $\pi_\Lambda\mpush\bs\mu = \rho$.
\end{lem}



\begin{proof}
Given $\bs\mu \in \Pc_2(\Lambda \times \R^d)$ as in the statement of \Cref{ENS_convfinita} and $\rho \defeq {\pi_\Lambda}\mpush\bs\mu$, let $(\rho_k)_{k \in \N} \subset \Pc(K)$ be a sequence of finitely-supported probability measures such that $\rho_k \to \rho$ weakly. Since $W_2$ metrizes the weak convergence of probability measures on $K$ (see, e.g., \cite[Corollary~6.13]{villaniOTON}), the existence of such a sequence comes from the Banach--Alaoglu and the Krein--Milman theorems. Moreover, note that one can in particular choose $\rho_k$ supported only on rational numbers.

Consider the continuous disintegration $\frk m \colon K \to \Pc_2(\R^d)$ of $\bs\mu$ with respect to $\pi_\Lambda$ and define $\bs\mu_k \defeq \int_\Lambda \frk m^\lambda \rho_k(\di\lambda)$; then, let $m_k$ be the corresponding solution to \eqref{ENS_FPLambdaC} (note that $\pi_\Lambda\mpush (\mu_k)_t =\rho_k$ for all $t$). Thanks to the continuity of the disintegration $\frk m$, the map $\lambda \mapsto \int_{\R^d} F(\lambda,x)\frk m^\lambda(\di x)$ is continuous, so, for any $F \in C(\Lambda \times \R^d)$ with sub-quadratic growth with respect to $x \in \R^d$, by the weak convergence $\rho_k \to \rho$ we have
\[
\int_{\Lambda \times \R^d} F(\lambda,x)  \frk m^\lambda(\di x) \rho_k(\di \lambda) \to \int_{\Lambda \times \R^d} F(\lambda,x)  \frk m^\lambda(\di x) \rho(\di\lambda);
\]
that is, $\bs\mu_k \to \bs\mu$ in $\Pc_2(\Lambda \times \R^d)$. Therefore, by \eqref{ENS_gronmll}, $\mu_k \to \mu$ in $C^0([\tau,T];\Pc_2(\Lambda \times \R^d))$, which implies that $\pi_{\R^d}\mpush \mu_k \to \pi_{\R^d}\mpush \mu$ in $C^0([\tau,T];\Pc_2(\R^d))$ as well.

Let now $\lambda \in K$ and assume that $\lambda \in \bigcap_{k \in \N} \spt(\rho_k)$.
Standard stability of the Fokker--Planck equation gives the convergence $\bar m^\lambda_k \to \bar m^\lambda$ in $C^0([\tau,T];\Pc(\R^d))$, so we have all the ingredients to pass the identity \eqref{ENS_reprUlden} (that we are assuming to hold when $\pi_\Lambda\mpush \bs \mu$ is finitely supported) to the limit $k \to \infty$. 
On the other hand, if $\lambda \notin \bigcap_{k \in \N} \spt(\rho_k)$ one can consider, in place of $\rho_k$, measures $\tilde\rho_k \defeq \frac{N-1}{N}\rho_k + \frac1N \delta_{\lambda}$, so that still $\tilde\rho_k \to \rho$ weakly and the above argument can be still performed.
\end{proof}

\begin{lem} \label{ENS_convfinitaproplem}
Let $(\theta_j)_{1 \leq j \leq \ell} \subset \bb R_+$ be such that $\sum_{1 \leq j \leq \ell} \theta_j = 1$, and define
\begin{equation} \label{ENS_notUlvm}
U^\lambda(t,x,\bs{\frk m}) \defeq U^\lambda\Bigl(t,x, \sum_{1 \leq j \leq \ell} \theta_j\delta_{\lambda^{j}} \!\otimes \frk m^j \Bigr).
\end{equation}
Let $\bs m_0 \in \Pc_q(\R^d)^\ell$, $q > 4$, and consider $\bs m^{\bs\lambda}$ solving
\begin{equation} \label{ENS_fokplfinit}
\begin{cases}
\de_t m^{\lambda^j} - \Delta m^{\lambda^j} - \mathrm{div}(D_x U^{\lambda^j}(t,x,\bs m^{\bs\lambda}_t)m^{\lambda^j}) = 0, &  j \in \{1,\dots,\ell\} \\
 \bs m^{\bs \lambda}|_{t=\tau} = \bs m_0,
 \end{cases}
\end{equation}
where $\bs m^{\bs\lambda} \defeq (m^{\lambda^j})_{1 \leq j \leq \ell}$. 
Fix $\bbs{\frk m} \in \Pc_2(\R^d)^\ell$. Let $(I_j)_{1 \leq j \leq \ell}$ be a partition of $\N$ and let $\bs Z$ be an $(\R^d)^N$-valued random variable such that
\[
Z^i \sim \begin{cases}
\bar{\frk m}^j & \text{if} \ i = \min I_j^N \ \text{(for some $j \in \{1,\dots,\ell\}$)} \\
m_0^j & \text{if} \ i \in I_j^N \setminus \bigcup_{1 \leq k \leq \ell} \{ \min I_k^N \},
\end{cases}
\]
where $I^N_j \defeq I_j \cap \{1,\dots,N\}$.
Let $\bs X^{I_\cdot^N}$ and $\tbs X^N$ be the solutions of
\[
\di X^{I_j^N,i} = - D_x U^{\lambda^j}\bigl(t,X^{I_j^N,i},(\bs m_t^{\bs\lambda^{-j}},m_{\bs X^{I^N_j}_t})\bigr)\,\di t + \sqrt2\,\di B_t^i,
\qquad i \in I^N_j, \ \ j \in \{1,\dots,\ell\},\footnotemark
\]
\footnotetext{For the sake of clarity, we remark that, according to the notation introduced in \Cref{sec_nota}, the vector $(\bs m^{\bs \lambda^{-j}},m)$ is that with all coordinates given by $m^{\lambda^i}$ but the $j$-th one, which is $m$, hence (recall also notation~\eqref{ENS_notUlvm})
\[
D_x U^{\lambda^j}(t,x,(\bs m_t^{\bs\lambda^{-j}},m)) = D_x U^{\lambda^j}\Bigl(t,x, \theta_j \delta_{\lambda^j} \! \otimes m + \sum_{\substack{1 \leq k \leq \ell \\ k \neq j}} \theta_k \delta_{\lambda^k}\!\otimes m^{\lambda^k}_t \Bigr). \vspace{-15pt}
\]}and
\[
\di \tilde X^{N,i} = - D_x \frk u_N(\lambda_N^i,\bs\lambda_N^{-i},t,\tilde X^{N,i}_t,\tbs X^{N,-i}_t)\,\di t + \sqrt2\,\di B_t^i, \qquad i \in \{1,\dots,N\},
\]
respectively, with $\bs X^{I_\cdot^N}_0 = \bs Z = \tbs X^N_0$. If
\[
\lim_{N \to \infty} \frac{\#I^N_j}{N} = \theta_j \quad \forall\, j \in \{1,\dots,\ell\},
\]
then 
\begin{equation} \label{ENS_PCfinit}
\lim_{N \to \infty} \sup_{1 \leq j \leq \ell} \sup_{i \in I_j^N} \bb E \Bigl[\, \sup_{t \in [0,T]} \big|\tilde X^{N,i}_t - X^{I^N_j,i}_t \big|^2 \Bigr] = 0.
\end{equation}
\end{lem}

\begin{proof}
We first note that, since $\bs m_t^{\bs\lambda}$ is a given flow of measures, each agent $i \in I^N_j$ is driven by the same drift \[
V^j\bigl(t,X^{I_j^N,i},m_{\bs X^{I^N_j}_t}) \defeq - D_x U^{\lambda^j}\bigl(t,X^{I_j^N,i},(\bs m_t^{\bs\lambda^{-j}},m_{\bs X^{I^N_j}_t})\bigr),
\]
we can invoke \Cref{ENS_propfacileL1} to have, in particular,
\begin{equation} \label{ENS_alsohavefromstep1}
\lim_{N \to \infty} \bb E \Bigl[\, \sup_{t \in [0,T]} W_2(m_{\bs X^{I_j^N}_t}, m^{\lambda^j}_t) \Bigr] = 0;
\end{equation}
this will be needed afterwards.

Now, if $i \in I^N_j$ we have
\begin{equation} \label{ENS_XNXINj}
\begin{multlined}[t][.9\displaywidth] 
\big|\tilde X^{N,i}_s - X^{I^N_j,i}_s\big|^2 \\
\leq 2 \int_0^s \Big( \big| D_x \frk u^N(\lambda^j,\bs\lambda_N^{-i},t,\tilde X^{N,i}_t,\tbs X^{N,-i}_t) - D_x \frk u^{N}(\lambda^j,\bs\lambda_N^{-i},t, X^{I^N_j,i}_t,\tbs X^{N,-i}_t) \big|^2  \vspace{-7pt} \\
\hspace{45.5pt} + \big| D_x \frk u^{N}(\lambda^j,\bs\lambda_N^{-i},t, X^{I^N_j,i}_t,\tbs X^{N,-i}_t) - D_x U(\lambda^j,t,X^{I^N_j,i}_t, m_{(\bs\lambda_N:\tbs X^N_t)}) \big|^2 \\
+ \big| D_x U(\lambda^j,t,X^{I^N_j,i}_t, m_{(\bs\lambda_N:\tbs X^N_t)}) - D_x U^{\lambda^j}\bigl(t,X^{I^N_j,i}_t,(\bs m_t^{\bs\lambda^{-j}},m_{\bs X^{I^N_j}_t})\bigr) \big|^2 \Big)\,\di t;
\end{multlined}
\end{equation}
note that, according to the notation introduced in \Cref{sec_nota},
\[
m_{(\bs\lambda_N:\tbs X^N_t)} = \frac1N \sum_{1 \leq k \leq N} \delta_{\lambda^k_N}\!\otimes \delta_{\tilde X^{N,k}_t}.
\]
By the Lipschitz continuity of $D_x U$ (throughout the proof we will imply constants if they depend only on $T$, $L_g$, $L_f$ and $L_\Lambda$),
\begin{equation} \label{ENS_estBfinit} \begin{multlined}[t][.9\displaywidth]
\big|D_x U(\lambda^j,t,X^{I^N_j,i}_t, m_{(\bs\lambda_N:\tbs X^N_t)}) - D_x U^{\lambda^j}\bigl(t,X^{I^N_j,i}_t,(\bs m_t^{\bs\lambda^{-j}},m_{\bs X^{I^N_j}_t})\bigr) \big|^2 \\
\lesssim W_2\Big(m_{(\bs\lambda_N:\tbs X^N_t)}, \ \theta_j \delta_{\lambda^j} \!\otimes m_{\bs X^{I^N_j}_t} + \sum_{\substack{1 \leq k \leq \ell \\ k \neq j}} \theta_k \delta_{\lambda^k} \!\otimes m_t^{\lambda^k} \Big)^2;
\end{multlined}
\end{equation}
on the other hand, recalling the definition of $\bs\lambda_N$ and then controlling the Wasserstein distance with the total variation (cf., e.g., \cite[Theorem~6.15]{villaniOTON}), one easily gets
\[
W_2\Big(m_{(\bs\lambda_N:\tbs X^N_t)},\, \sum_{1 \leq k \leq \ell} \theta_k \delta_{\lambda^k} \!\otimes \frac1{\#I^N_k} \sum_{i \in I_k^N} \delta_{\tilde X^{N,i}_t} \Big)^2 \leq 2 \sup_{1 \leq i \leq N} | \tilde X^{N,i}_t |^2  \sum_{1 \leq k \leq \ell} \Big\lvert \frac{\#I^N_k}{N} - \theta_k \Big\rvert \eqdef \gamma^N_t,
\]
where $\bb E\bigl[ \sup_{[0,T]} \gamma^N \bigr] \to 0$ as $N \to \infty$ thanks to the uniform boundedness of $D_x \frk u_N$.
Then applying the triangle inequality to the right-hand side of \eqref{ENS_estBfinit} we see that it is controlled by
\begin{equation} \label{ENS_prfinitabove}
W_2\Big(\sum_{1 \leq k \leq \ell} \theta_k\delta_{\lambda^k} \!\otimes  \frac1{\#I^N_k} \sum_{i \in I_k^N} \delta_{\tilde X^{N,i}_t}, \ \theta_j \delta_{\lambda^j} \!\otimes m_{\bs X^{I^N_j}_t} + \sum_{\substack{1 \leq k \leq \ell \\ k \neq j}} \theta_k \delta_{\lambda^k} \!\otimes m_t^{\lambda^k} \Big)^2 + \gamma^N_t.
\end{equation} 
Now, by the convexity of $W_2^2$ (cf., e.g., \cite[Theorem~4.8]{villaniOTON}), 
the first term in \eqref{ENS_prfinitabove} is in turn controlled by
\[ \begin{multlined}[.95\displaywidth]
\theta_j W_2\Big( \frac1{\#I^N_j} \sum_{i \in I_j^N} \delta_{\tilde X^{N,i}_t}, \ m_{\bs X^{I^N_j}_t} \Big)^2 + \sum_{\substack{1 \leq k \leq \ell \\ k \neq j}} \theta_k W_2 \Big(\, \frac1{\#I^N_k} \sum_{i \in I_k^N} \delta_{\tilde X^{N,i}_t},\, m^{\lambda^k}_t \Big)^2
\\
\leq 2 \sum_{1 \leq k \leq \ell} \frac{1}{\#I_k^N} \sum_{i \in I_k^N} \big|\tilde X^{N,i}_t - X^{I^N_k,i}_t\big|^2 + 2 \sum_{1 \leq k \leq \ell} W_2(m_{\bs X^{I_k^N}_t}, m^{\lambda^k}_t)^2
\end{multlined}\]
Therefore, from \eqref{ENS_XNXINj} we have
\begin{equation} \label{ENS_firstpin}
\big|\tilde X^{N,i}_s - X^{I^N_j,i}_s\big|^2
\lesssim \int_0^s \bigg( \sum_{1 \leq k \leq \ell} \frac{1}{\#I_k^N} \sum_{i \in I_k^N} \big|\tilde X^{N,i}_t - X^{I^N_k,i}_t\big|^2 + \delta^{N,j}_t \bigg)\, \di t,
\end{equation}
where
\[ \begin{multlined}[.95\displaywidth]
\delta^{N,j}_t \defeq \big| D_x \frk u_N(\lambda^j,{\bs\lambda_N^{-i}},t, X^{I^N_j,i}_t,\tbs X^{N,-i}_t) - D_x U(\lambda^j,t,X^{I^N_j,i}_t, m_{(\bs\lambda^N,\tbs X^N_t)}) \big|^2 + \gamma^N_t \\
+ \sum_{1 \leq k \leq \ell} W_2(m_{\bs X^{I_k^N}_t}, m^{\lambda^k}_t)^2
\end{multlined} \]
is such that $\bb E \big[ \sup_{[0,T]} \delta^{N,j}_t \big] \to 0$ as $N \to \infty$ (cf.~\Cref{ENS_rmkconvdiuil}, property~\eqref{ENS_alsohavefromstep1} and see the last argument in the proof \Cref{ENS_propfacileL1}).
Averaging \eqref{ENS_firstpin} over $i \in I_j$ and summing over $j \in \{1,\dots,\ell\}$ we have
\[
\sum_{1 \leq j \leq \ell} \frac{1}{\#I_j^N} \sum_{i \in I_j^N} \big|\tilde X^{N,i}_s - X^{I^N_j,i}_s\big|^2
\lesssim \int_0^s \bigg( \ell \sum_{1 \leq j \leq \ell} \frac{1}{\#I_j^N} \sum_{i \in I_k^N} \big|\tilde X^{N,i}_t - X^{I^N_j,i}_t\big|^2 + \sum_{1 \leq j \leq \ell} \delta^{N,j}_t \bigg)\, \di t,
\]
whence
\[
\sum_{1 \leq j \leq \ell} \frac{1}{\#I_j^N} \sum_{i \in I_j^N} \bb E \Bigl[\, \sup_{t \in [0,T]} \big|\tilde X^{N,i}_t - X^{I^N_j,i}_t \big|^2 \Bigr] \to 0 \quad \text{as} \ N \to \infty.
\]
by Gronwall's lemma; then, plugging this back into \eqref{ENS_firstpin}, we obtain \eqref{ENS_PCfinit}.
\end{proof}

\begin{proof}[Proof of \Cref{ENS_convfinita}]
By \Cref{ENS_convfinitaapprox} it suffices to prove the theorem by assuming that the first marginal is of the form $\rho = \sum_{1 \leq j \leq \ell} \theta_j \delta_{\lambda^j} \in \Pc(\Lambda)$ for some (fixed) $\ell \in \N$, $\theta_j \in \bb Q_+$ and $\lambda^j \in \Lambda$.

We now want to construct $N$-player games, where each player is suitably labelled with $\lambda^j$, $j \in \{1,\dots,\ell\}$. The distribution of labels in the limit $N \to \infty$ must be $\rho$. It is elementary to construct a partition $(I_j)_{1 \leq j \leq \ell}$ of $\N$ such that $\frk d(I_j) = \theta_j$, where $\frk d$ denotes the natural density;\footnote{Recall that the natural density of $S \subset \N$ is defined as $\frk d(S) \defeq \lim_{N \to \infty} N^{-1} \#(S \cap \{1,\dots,N\})$, if the limit exists.} indeed, it suffices to consider the minimum natural number $D$ such that $D \theta_j \in \N$ for all $j \in \{1,\dots,\ell\}$, take a partition $\{\Theta_j\}_{1 \leq j \leq \ell}$ of $\{1,\dots,D\}$ such that $\#\Theta_j = D\theta_j$ and set $I_j \defeq \Theta_j + D \N$. So we can define $\bbs \lambda \in \ell^\infty(\N;\Lambda)$ by letting $\bar\lambda^i \defeq \lambda^j$ if $i \in I_j$ and then $\bs\lambda_{N} \in \Lambda^N$ for any $N \in \N$ by taking the first $N$ coordinates of $\bbs\lambda$.

With the notation of \Cref{ENS_convfinitaproplem}, we want argue as in the proof of \Cref{ENS_convfacile}, with the replacements $\{1,\dots,N\} \rightsquigarrow I^N_j $ and $D_x U^\lambda(t,x,m) \rightsquigarrow D_x U^{\lambda^j}(t,x,(\bs m_t^{\bs\lambda^{-j}},m))$. This crucial intermediate step is an ``intra-population'' propagation of chaos. As noted at the beginning of the proof of \Cref{ENS_convfinitaproplem}, we can exploit \Cref{ENS_propfacileL1} and argue exactly as in the proof of \Cref{ENS_convfacile} (note that $\#I_j^N \to \infty$ as $N \to \infty$) to obtain, for $i = \min I^N_j$,
\begin{equation} \label{ENS_pclim1finit} \begin{multlined}[t][.9\displaywidth]  \begin{multlined}
 \bb E\bigg[ \int_0^T \Big( \,\frac12\big|D_x U^{\lambda^j}\big(t,X^{I_j^N,i}_t,(\bs m_t^{\bs\lambda^{-j}},m_{\bs X^{I_j^N}_t})\big)\big|^2 + f^{\lambda^j}\big( X^{I_j^N,i}_t,(\bs m_t^{\bs\lambda^{-j}},m_{\bs X^{I_j^N}_t})\big) \Big) \di t
 \vspace{-7pt}\\
 + g^{\lambda^j}\big(X^{I_j^N,i}_T,(\bs m_T^{\bs\lambda^{-j}},m_{\bs X^{I_j^N}_T})\big) \bigg]
\end{multlined}
\\ \xrightarrow[N \to \infty]{}\, \int_0^T \int_{\R^d} \Big( \,\frac12\big|D_x U^{\lambda^j}(t,\cdot,\bs m_t^{\bs\lambda})\big|^2 + f^{\lambda^j}(\cdot,\bs m_t^{\bs\lambda}) \Big) \di \bar m_t^{\lambda^j}\di t + \int_{\R^d} g^{\lambda^j}(\cdot, \bs m_T^{\bs\lambda}) \,\di \bar m^{\lambda^j}_T,
\end{multlined}
\end{equation}
where $\bs m^{\bs\lambda}$ and $\bbs m^{\bs\lambda}$ solve, respectively, \eqref{ENS_fokplfinit} and
\begin{equation*} 
\begin{cases}
\de_t \bar m^{\lambda^j} - \Delta \bar m^{\lambda^j} - \mathrm{div}(D_x U^{\lambda^j}(t,x,\bs m^{\bs\lambda}_t) \bar m^{\lambda^j}) = 0, &  j \in \{1,\dots,\ell\} \\
\bbs m^{\bs\lambda}|_{t=\tau} = \bbs{\frk m},
 \end{cases}
\end{equation*}
both with initial time $\tau = 0$.

At this point consider the representation formula for the value functions solving the Nash system constructed above,\begin{equation} \label{ENS_preprffinit}
\bb E \, \frk u_N({\bs\lambda_N},0,\bs Z) = \bb E\biggl[\, \int_0^T \Big(\,\frac12\big| D_x \frk u_N({\bs\lambda_N},t,\tbs X^N_t) \big|^2 + f_N(\bar\lambda^1,\tbs X^N_t)\Big)\di t + g_N(\bar\lambda^1,\tbs X^N_T) \biggr],
\end{equation}
where $\tbs X^N$ is as in \Cref{ENS_convfinitaproplem}; in order to pass to the limit this identity, we need to compare $\tbs X^N$ and $\bs X^{I^N_\cdot}$, which we have thus already done in \Cref{ENS_convfinitaproplem}.
Having fixed $j \in \{1,\dots,\ell\}$, note that we can assume without loss of generality that $1 \in I_j$ (otherwise, just take a different partition $\{\Theta_j\}_{1 \leq j \leq \ell}$ at the beginning of Step 2); therefore, $\bar\lambda^1 = \lambda^j$, and one deduces \eqref{ENS_reprUlden} from \eqref{ENS_pclim1finit}, passing \eqref{ENS_preprffinit} to the limit. In order to motivate this last claim, for the sake of completeness, we prove that, for $h \in \{f,g\}$, one has
\[
\lim_{N \to \infty} \bb E \Big[ \sup_{t \in [0,T]} \big| h^{\lambda^j}\big( X^{I_j^N,i}_t, (\bs m_t^{\bs\lambda^{-j}},m_{\bs X^{I_j^N}_t})\big) - h_N(\lambda^j,\tbs X^N_t) \big| \Big] = 0
\]
(the other analogous convergence that we will need, involving $D_x \frk u_N$ and $U$, can be proved in a similar manner). Under our latest assumption, $X^{I^N_j,i} = X^{I^N_j,1}$, so, by the Lipschitz continuity of $h$ and the triangle inequality, 
\[ \begin{multlined}[.95\displaywidth]
 \big| h^{\lambda^j}\big( X^{I_j^N,i}_t,(\bs m_t^{\bs\lambda^{-j}},m_{\bs X^{I_j^N}_t})\big) - h_N(\lambda^j,\tbs X^N_t) \big|^2 \\
 \begin{multlined}[.95\displaywidth]
\lesssim \bigl|X^{I^N_j,1}_t - \tilde X^{N,1}_t\bigr|^2 + W_2\Bigl(\theta_j m_{\bs X^{I_j^N}_t} + \sum_{\substack{1 \leq k \leq \ell \\ k \neq j}} \theta_k m^{\lambda^k}_t, \, m_{\tbs X^{N}_t}\Bigr)^2 \\
+ W_2(m_{\tbs X^{N}_t}, m_{\tbs X^{N,-1}_t})^2 + \bigl| h^{\lambda^j}(\tilde X^{N,1}_t, m_{\tbs X^{N,-1}_t}) - h_N(\lambda^j,\tbs X^N_t) \bigr|^2;
\end{multlined}
\end{multlined}\]
it is straightforward to show that, after taking the supremum over $t \in [0,T]$ and then the expectation, the third term in the left-hand side vanished as $N \to \infty$, while the first one and the last one go to $0$ as well thanks to \eqref{ENS_PCfinit} and \Cref{ENS_convhN}, respectively. As for the second term, by the triangle inequality and convexity of $W_2^2$,
\[ \begin{multlined}[.95\displaywidth]
W_1\Bigl(\theta_j m_{\bs X^{I_j^N}_t} + \sum_{\substack{1 \leq k \leq \ell \\ k \neq j}} \theta_k m^{\lambda^k}_t, \, m_{\tbs X^{N}_t}\Bigr) \\
\begin{multlined}[.95\displaywidth]
\leq \theta_j W_2\Bigl(m_{\bs X^{I_j^N}_t}, \frac1{\#I^N_j} \sum_{i \in I^N_j} \delta_{\tilde X^{N,i}_t} \Bigr)^2 + \sum_{\substack{1 \leq k \leq \ell \\ k \neq j}} \theta_k W_2\Bigl( m^{\lambda^k}_t, \, \frac1{\#I^N_k} \sum_{i \in I^N_k} \delta_{\tilde X^{N,i}_t} \Bigr)^2 \\[-3pt]
+ W_2\Bigl( \sum_{1 \leq k \leq \ell} \frac{\theta_k}{\#I^N_k} \sum_{i \in I^N_k} \delta_{\tilde X^{N,i}_t}, \, m_{\tbs X^{N}_t}\Bigr)^2;
\end{multlined}
\end{multlined}
\]
by using the estimates obtained in the previous lemmata, the expectation of the supremum over $t \in [0,T]$ of the right-hand side can be shown to vanish as $N \to \infty$.
\end{proof}

\subsection{A continuum of MFG systems}

Some considerations about the characterisation provided by \eqref{ENS_reprUlden}--\eqref{ENS_FPLambdaC}--\eqref{ENS_FPLambdaC'} are now in order. 

\begin{rmk}
If $U^\lambda$ is sufficiently regular on $[0,T] \times \R^d \times \Pc_2(\Lambda \times \R^d)$ (e.g., according to Definition~2.4.1 in \cite{CDLL} of a classical solution to the first-order Master Equation, so, in particular it is differentiable with respect to $\mu$), one can differentiate $t \mapsto \int_{\R^d} U^\lambda(t,\var,\mu_t)\,\di m^\lambda_t$ with $m_\tau^\lambda = \delta_x$ and use \eqref{ENS_reprUlden}, \eqref{ENS_FPLambdaC}, \eqref{ENS_FPLambdaC'} to get, after standard computations,
\[ \begin{multlined}[.95\displaywidth]
-\de_t U^\lambda(t,x,\bs\mu) - \Delta_x U^\lambda(t,x,\bs\mu) - \int_{\Lambda \times \R^d} \Delta_y \frac{\delta}{\delta \bs\mu} U^\lambda(t,x,\bs\mu,\lambda',y)\,\bs\mu(\di\lambda',\di y) \\
+ \frac12\bigl| D_x U^\lambda(t,x,\bs\mu)\bigr|^2 
+ \int_{\Lambda \times \R^d} D_y \frac{\delta}{\delta \bs\mu} U^\lambda(t,x,\bs\mu,\lambda',y) \cdot \de_y U^{\lambda'}(t,y,\bs\mu) \, \bs\mu(\di \lambda',\di y) = f^\lambda(x,\pi_{\R^d}\mpush\bs\mu).
\end{multlined}\]
This is exactly the form one would expect a Master Equation associated to our non-symmetric game to exhibit. In this sense, the limit game \eqref{ENS_reprUlden}--\eqref{ENS_FPLambdaC}--\eqref{ENS_FPLambdaC'} can be considered as a \emph{weak formulation of the Master Equation for a nonsymmetric game} parametrised by $\lambda \in \Lambda$. Since $U^\lambda$ is Lipschitz continuous with respect to the measure variable, this formulation is very close to the one proposed in \cite{bertuccio}. Moreover, it brings naturally a continuum of standard MFG systems, as described below.
\end{rmk}

\begin{thm} \label{ENS_MFGthm}
With the notation of \Cref{ENS_convfinita}, define $u^\lambda(t,x) \defeq U^\lambda(t,x,\mu_t)$. Then $u$ solves the following (generalised) Mean Field system on $(0,T) \times \Lambda \times \R^d$:
\begin{equation} \label{ENS_MFS}
\begin{dcases}
- \de_t u^\lambda - \Delta_x u^\lambda + \frac12 |D_xu^\lambda|^2 = f^\lambda(x,\pi_{\R^d}\mpush\mu_t) \\
\de_t \mu - \Delta_x \mu - \mathrm{div}_x (Du^\lambda \mu) = 0 \\
u^\lambda(T,\cdot) = g^\lambda(\cdot,\pi_{\R^d}\mpush\mu_T), \quad \mu_0 = \bs\mu,
\end{dcases}
\end{equation}
where both the Hamilton--Jacobi and the Fokker--Planck equations are satisfied in the classical sense.
\end{thm}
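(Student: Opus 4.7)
The plan is a dynamic programming argument built on top of the representation formula \eqref{ENS_reprUlden}. First, I would obtain the terminal condition by setting $\tau = T$ in \eqref{ENS_reprUlden}: since $\bar m^\lambda_T = \bar{\frk m}^\lambda$ and $\mu_T = \bs\mu$, the identity collapses to $\int U^\lambda(T,\cdot,\bs\mu)\,\di\bar{\frk m}^\lambda = \int g^\lambda(\cdot,\pi_{\R^d}\mpush\bs\mu)\,\di\bar{\frk m}^\lambda$ for every $\bar{\frk m}^\lambda\in\Pc_2(\R^d)$, and arbitrariness of $\bar{\frk m}^\lambda$ yields $U^\lambda(T,x,\bs\mu) = g^\lambda(x,\pi_{\R^d}\mpush\bs\mu)$, hence $u^\lambda(T,\cdot) = g^\lambda(\cdot,\pi_{\R^d}\mpush\mu_T)$.

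Next, fix $\tau \in [0,T)$, $\bar x \in \R^d$ and $\lambda \in K$. By uniqueness in \Cref{ENS_lemFPL}, the restriction of $\mu$ to $[\tau,T]$ is the unique solution of \eqref{ENS_FPLambdaC1FP} with initial datum $\mu_\tau$ at time $\tau$. The stability estimate \eqref{ENS_estmutmu}, applied with $\bs\mu=\tbs\mu$ and matching disintegrations, gives $W_2(m^\lambda_\tau,m^{\lambda'}_\tau) \lesssim W_2(\frk m^\lambda,\frk m^{\lambda'})$, so continuity of $\lambda\mapsto\frk m^\lambda$ on $K$ propagates to continuity of $\lambda\mapsto m^\lambda_\tau$ on $K$; hence $\mu_\tau$ satisfies the hypotheses of \Cref{ENS_convfinita}. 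Applying \eqref{ENS_reprUlden} at initial time $\tau$ with $\bar{\frk m}^\lambda = \delta_{\bar x}$ identifies $\bar m^\lambda_s$ as the law of $Y_s$, where $\di Y_s = -\de_x U^\lambda(s,Y_s,\mu_s)\,\di s + \sqrt{2}\,\di B_s$, $Y_\tau = \bar x$, and gives
\[
u^\lambda(\tau,\bar x) = \bb E\biggl[\int_\tau^T\Bigl(\tfrac12|\de_x U^\lambda(s,Y_s,\mu_s)|^2+f^\lambda(Y_s,\pi_{\R^d}\mpush\mu_s)\Bigr)\di s + g^\lambda(Y_T,\pi_{\R^d}\mpush\mu_T)\biggr].
\]

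To pass from this Feynman--Kac representation to a classical PDE, I would set $b(s,y) \defeq -\de_x U^\lambda(s,y,\mu_s)$ and $\psi(s,y) \defeq \tfrac12|\de_x U^\lambda(s,y,\mu_s)|^2 + f^\lambda(y,\pi_{\R^d}\mpush\mu_s)$. These are bounded, Lipschitz in $y$ uniformly in $s$, and $\tfrac13$-H\"older in $s$ thanks to the regularity provided by \Cref{ENS_rmkconvdiuil} and the continuity of $t\mapsto\mu_t$. Standard parabolic Schauder theory then yields a unique classical solution $v \in C^{1+\alpha/2,2+\alpha}_{\loc}$ of the backward Cauchy problem $-\de_t v - \Delta v - b\cdot Dv = \psi$, $v(T,\cdot) = g^\lambda(\cdot,\pi_{\R^d}\mpush\mu_T)$, and Dynkin's formula identifies $v(\tau,\bar x)$ with the expectation above, so $u^\lambda \equiv v$ is classical. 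Finally, differentiating $u^\lambda(t,x) = U^\lambda(t,x,\mu_t)$ in $x$ (with the measure argument frozen pointwise in $t$) gives $Du^\lambda(t,x) = \de_xU^\lambda(t,x,\mu_t)$, hence $b = -Du^\lambda$; substitution turns the equation for $v$ into the desired HJB, while the FP equation for $\mu$ is \eqref{ENS_FPLambdaC1FP} under the same identification, each slice $m^\lambda_t$ being a classical solution for $t>0$ by standard parabolic regularity for Fokker--Planck equations with Lipschitz drift and nondegenerate Laplacian.

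The \emph{main obstacle} is the dynamic programming step: ensuring that $\mu_\tau$ admits a continuous disintegration so that \Cref{ENS_convfinita} may be re-invoked at the intermediate initial time $\tau$. Once this is settled via \Cref{ENS_lemFPL} combined with the Lipschitz dependence of $\de_x U^\lambda$ on $\lambda$ inherited from \Cref{ENS_contlcontr}, the remainder reduces to the Feynman--Kac/linear parabolic correspondence and the chain rule in $x$.
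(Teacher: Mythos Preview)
Your proof is correct and follows essentially the same strategy as the paper: both introduce the classical solution $v$ of the linear backward equation with drift $-\de_x U^\lambda(t,\cdot,\mu_t)$, identify $v$ with $u^\lambda$ via the representation formula \eqref{ENS_reprUlden} (you phrase this as Feynman--Kac with $\bar{\frk m}^\lambda = \delta_{\bar x}$, the paper as duality against the Fokker--Planck flow $\bar m^\lambda$ for general $\bar{\frk m}^\lambda$), and then substitute $D_x u^\lambda = \de_x U^\lambda(\cdot,\cdot,\mu_\cdot)$ to recover the HJB. You are in fact more explicit than the paper about the step of re-invoking \Cref{ENS_convfinita} at an intermediate time $\tau$, which requires checking that $\mu_\tau$ inherits a continuous disintegration from $\bs\mu$.
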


\begin{proof}
Fix $\lambda \in \Lambda$. Note that, by \eqref{ENS_reprUlden}, for every $\tau$ and smooth $\bar{\frk m}^\lambda$,
\begin{equation}\label{ufrkmeq}
\int_{\R^d} u^\lambda(\tau,\cdot)  \,\di \bar{\frk m}^\lambda
= \int_\tau^T \int_{\R^d} \Big( \,\frac12\big|D_x u^\lambda(s,\cdot)\big|^2 + f^{\lambda}(\var, {\pi_{\R^d}}\mpush \mu_s) \Big) \,\di \bar m^{\lambda}_s\di s + \int_{\R^d} g^{\lambda}(\var, {\pi_{\R^d}}\mpush \mu_T) \,\di \bar m^{\lambda}_T.
\end{equation}
Let $v$ be solution of the linear backward Cauchy problem
\[
\begin{dcases}
- \de_t v - \Delta_x v + D_xu^\lambda \cdot D_x v = f^\lambda(x,\pi_{\R^d}\mpush\mu_t) + \frac12\big|D_x u^\lambda \big|^2\\
v(T,\cdot) = g^\lambda(\cdot,\pi_{\R^d}\mpush\mu_T).
\end{dcases}
\]
By standard linear theory \cite{LSU}, $v$ solves the equation a.e., and since coefficients are in $L^\infty$, it is globally bounded and locally in $W^{2,1}_p$ for every $p \in (1,\infty)$. By duality between the equation for $v$ and the equation for $\bar m^\lambda$, for every $\tau \in [0,T]$ we have
\[
\int_{\R^d} v(\tau,\cdot)  \,\di \bar{\frk m}^\lambda
= \int_\tau^T \int_{\R^d} \Big( \,\frac12\big|D_x u^\lambda(s,\cdot)\big|^2 + f^{\lambda}(\var, {\pi_{\R^d}}\mpush \mu_s) \Big) \,\di \bar m^{\lambda}_s\di s + \int_{\R^d} g^{\lambda}(\var, {\pi_{\R^d}}\mpush \mu_T) \,\di \bar m^{\lambda}_T.
\]
Since the previous equation has the same right-hand side of \eqref{ufrkmeq}, by the arbitrariness of $\bar{\frk m}^\lambda$ and $\tau$, we obtain $v \equiv u^\lambda$. Plugging back $D_x v  \equiv D_x u^\lambda$ into the equation for $v$ we obtain the assertion. That $u^\lambda$ is a classical solution follows by standard parabolic regularity.
\end{proof}

\begin{rmk}
If $f$ and $g$ are constant in $\lambda$, then \eqref{ENS_MFS} is the classic Mean Field system (this case corresponds to the situation in \Cref{ENS_convfacile}). We also mention that, as a byproduct, under assumptions of monotonicity, one can then deduce uniqueness of the limit function $U$ by classic uniqueness results for the Mean Field system.
\end{rmk}

\section{A joint vanishing viscosity and large population limit} \label{sec_vvl}

As a last application of our a priori estimates, we discuss, in the \Ds semimonotone case, the problem of describing the limit when $N \to \infty$ and, \emph{simultaneously}, the idiosyncratic diffusion $\sigma = \sigma_N$ vanishes (while $\beta=0$, as in the previous section).

For the sake of simplicity, and brevity, we are going to discuss such a problem in the illustrative case of purely quadratic Hamiltonians, that is in the setting of symmetric (classical) Mean Field interactions of \Cref{rmkonmfg}; nevertheless, our results can be extended to the more general ``multi-population'' case discussed in the previous section, or to more general Hamiltonians in light of the results presented in \Cref{sec_genH} below.

The first important point to note to deal with vanishing viscosity comes from the proof of \Cref{ENS_dscprop}, where one can see that we still have propagation of the semimonotonicity constant $M^*$ (which is in fact independent of $\sigma$) provided that, in equation \eqref{stareq}, the term
\[
\frac{\bar C}{\sqrt N} = \frac{\bar C(M^*,\sigma)}{\sqrt N} 
\]
vanishes as $N \to \infty$. In \Cref{sec_esti,sec_Ds}, we carefully keep track of the dependence on $\sigma$ of all the relevant constants contributing to $\bar C$, showing that
\[
\bar C =e^{e^{c(1+\sigma^{-1})}},
\]
with $c$ independent of $\sigma$, so this gives us a threshold on how fast we can allow $\sigma$ to deteriorate in the large population limit; namely, we require that
\begin{equation} \label{sNthreshold}
 \liminf_{N \to \infty} \sigma_N \log\log N \geq \theta_* > 0
\end{equation}
for some $\theta_* = \theta_*(L_g,L_f,T)$ sufficiently large (cf.~\Cref{rmksusn}).

Consequently, in such a regime, we have the important estimates \eqref{ENS_recapE1}--\eqref{ENS_recapE2} recalled at the beginning of \Cref{sec_lpl}. The second important point to note is that the implied constant in the first estimate of \eqref{ENS_recapE1} does not in fact depend on $\sigma$. On the other hand, the other bounds in \eqref{ENS_recapE1}--\eqref{ENS_recapE2} may a priori deteriorate as $\sigma$ vanishes (as they are obtained using heavily \Cref{ENS_estv}).
 
To recover the uniform regularity in $\sigma$ a posteriori that guarantees the full  Lipschitz continuity of the limit $(x,m) \mapsto U_0(t,x,m), D_x  U_0(t,x,m)$, we need in particular to apply the weighted Lipschitz estimate of \Cref{ENS_estd1} to $D_i u^i$. To perform this step, we will use not only the fact that the standing assumptions on $F$ and $G$ guarantee that the {\bfseries(\hyperlink{MF}{MF})} assumption holds, but also that the following properties hold true:
%
%
for $h \in \{f,g\}$, there exists $L_h > 0$, independent of $N$, such that
\begin{gather} \label{extraassMF0}
\sup_{1 \leq i \leq N} \sum_{\substack{1 \leq k \leq N \\ k \neq i}} \norm{D^2_{ki} f^i}_\infty^2 \leq \frac{L_f^2}{N},
\\ \label{extraassMF0'}
 \sup_{\substack{1 \leq i,k \leq N \\ k \neq i}} \norm{D_k(Dg^i)}_\infty \leq \frac{L_g}{N}, \qquad \sup_{1 \leq i \leq N} \sum_{\substack{1 \leq k \leq N \\ k \neq i}} \norm{D(D^2_{ki} g^i)}_\infty^2 \leq \frac{L_g^2}{N}.
\end{gather}

We can now obtain the estimates we need to pass to the limit.

\begin{prop} \label{prop_stabestwrts}
Let the semimonotonicity constants of $f$ and $g$ be small enough, and $N$ large as in Proposition \ref{ENS_dscprop}. Then, there exists $\theta_*>0$ (depending only on $L_g$, $L_f$, and $T$) such that, if condition~\eqref{sNthreshold} holds, then estimates \eqref{ENS_recapE1}--\eqref{ENS_recapE2} hold with implied constants independent of $\sigma = \sigma_N$.
\end{prop}

\begin{proof}
We have already discussed the fact that, due to \Cref{rmksusn}, the condition \eqref{sNthreshold} (with $\theta_*$ large enough) allows us to control the semimonotonicity of $u$ uniformly in $\sigma$, and that the first implied constant in \eqref{ENS_recapE1} is also independent of $\sigma$ (By \Cref{ENS_estd1}). Hence we only need to prove that the implied constants in the second estimate of \eqref{ENS_recapE1} and in those of \eqref{ENS_recapE2} do not depend on $\sigma$ as well. To this aim, it is sufficient to show that we have a weighted Lipschitz bound of the form
\begin{equation} \label{lipboundondiui}
|D_iu^i(t,\bs x) - D_iu^i(t,\bs y)| \leq \bar c_1 \norm{\bs x - \bs y}^i,
\end{equation}
with $\bar c_1$ independent of $\sigma$ and $N$. Indeed, if \eqref{lipboundondiui} holds, then, with implied constants independent of $\sigma$, we immediately obtain 
\[
\norm{D^2_{ii}u^i}_\infty + N\, \biggl\lVert\, \sum_{j \neq i} |D^2_{ji} u^i|^2 \biggr\rVert_\infty \lesssim 1
\]
(by arguing as in the proof of \Cref{ENS_uijest}), as well as
\[
\sup_{j \neq i} \, \norm{D_j u^i}_\infty \lesssim \frac1N
\] 
by following the proof of \Cref{ENS_uijestp}.

On the other hand, \eqref{lipboundondiui} directly follows from \Cref{ENS_estd1} applied to $D_i u^i$ (component-wise) in place of $u^i$; therefore, it remains to show that the hypotheses of \Cref{ENS_estd1} holds for the equation for $D_i u^i$. Among them, the only one to be checked is that \eqref{fgtildeass} holds for 
\[
h^i \defeq D_i f^i - \sum_{j \neq i} D^2_{ij} u^j D_j u^i;
\]
so, in light of the assumption on $F$ (which guarantees that \eqref{extraassMF0} holds) we only need to prove that
\begin{equation} \label{toproveforinds}
\biggl\lvert \, \sum_{j \neq i} \bigl( D^2_{ij} u^j D_j u^i \bigr)\Bigr|^{\bs x}_{\bs y} \biggr\rvert \lesssim \norm{\bs x - \bs y}^i,
\end{equation}
uniformly in $N$ (with $\sigma = \sigma_N$). By the fundamental theorem of calculus and the Cauchy--Schwarz inequality,
\begin{equation} \label{rhsabove--}
\begin{multlined}[t][.9\displaywidth]
\biggl\lvert \, \sum_{j \neq i} \bigl( D^2_{ij} u^j D_j u^i \bigr)\Bigr|^{\bs x}_{\bs y} \biggr\rvert^2 
\\
 \begin{split}
&\leq \biggl\lVert \sum_{j \neq i} |D_j u^i|^2 \biggr\rVert_\infty \biggl(\, \biggl\lVert \sum_{j \neq i} |D^3_{iij} u^j|^2 \biggr\rVert_\infty |x^i - y^i|^2 + \biggl\lVert \sum_{\substack{jk \\ j \neq i \neq k}} |D^3_{ikj} u^j|^2 \biggr\rVert_\infty \sum_{k \neq j} |x^k - y^k|^2 \biggr)
\\
&\quad + \biggl\lVert \sum_{j \neq i} |D^2_{ij} u^j|^2 \biggr\rVert_\infty \biggl(\, \biggl\lVert \sum_{j \neq i}\, |D^2_{ji} u^i|^2 \biggr\rVert_\infty |x^i - y^i|^2 + \biggl\lVert \sum_{\substack{jk \\ j \neq i \neq k}} |D^2_{jk} u^i|^2 \biggr\rVert_\infty \sum_{k \neq j} |x^k - y^k|^2 \biggr).
\end{split} 
\end{multlined}
\end{equation}
By \Cref{ENS_d2N,ENS_d2N'} the last line above is controlled by 
\[
\frac{C(\sigma)}{N} \bigl( \norm{\bs x - \bs y}^i \bigr)^2, \qquad \text{with} \ \ \log \log C(\sigma) \lesssim 1 + \sigma^{-1},
\]
so \eqref{sNthreshold} ensures that $\limsup_{N \to \infty} C(\sigma_N)/N < \infty$. Therefore, in order to have \eqref{toproveforinds}, it remains to show that the first part of the right-hand side of \eqref{rhsabove--} is bounded in a similar fashion; also, in light of the estimate of \Cref{ENS_uijest}, this boils down to proving that
\begin{equation} \label{lastdesttoprove}
\sup_h \sum_{i \neq h}\, \norm{D(D^2_{hi} u^i)}_\infty^2 \leq \frac{\hat C(\sigma)}{N}, \qquad \text{with} \ \ \log \log \hat C(\sigma) \lesssim 1+\sigma^{-1}.
\end{equation}
It is now sufficient to recall that \eqref{lastdesttoprove} has been proven to hold under the assumptions in \eqref{extraassMF0'} in \Cref{ENS_d3N}.
\end{proof}

We can now proceed with the proof of \Cref{ENS_rmkconvdiuilVV}.
\begin{proof}[Proof of \Cref{ENS_rmkconvdiuilVV}] The proof of the first part, on the existence of $U_0$, is exactly the same as the one of \Cref{ENS_rmkconvdiuil}, in light of the estimates of \Cref{prop_stabestwrts}. 

The proof of the second part, on the characterization of $U_0$, is very similar to the one of \Cref{ENS_convfacile}; one has just to be a bit careful with the comparison of optimal trajectories in a situation where $\sigma$ vanishes. With $\bs Z$ as in \Cref{ENS_propfacileL1}, let $\bs{\hat Y}^N$ and $\bs Y^N$ solve, respectively,
\[
\begin{cases}
\di \hat Y^{N,i} = - D_x U_0(t,\hat Y^{N,i}_t,\Law_{\hat Y^{N,i}_t})\,\di t + \sqrt{2\sigma_N}\, \di B^i_t, & i \in \{2,\dots,N\} \\
\di \hat Y^{N,1} = - D_x U_0(t,\hat Y^{N,1}_t,\Law_{\hat Y^{N,2}_t})\,\di t + \sqrt{2\sigma_N}\, \di B^1_t \\
\bs{\hat Y}^N_0 = \bs Z
\end{cases}
\]
and
\[
\begin{cases}
\di Y^{N,i} = - D_x U_0(t,Y^{N,i}_t,\Law_{Y^{N,i}_t})\,\di t , & i \in \{2,\dots,N\} \\
\di Y^{N,1} = - D_x U_0(t,Y^{N,1}_t,\Law_{Y^{N,2}_t})\,\di t \\
\bs Y^N_0 = \bs Z.
\end{cases}
\]
For $i \geq 2$, let $\hat m_t \defeq \Law_{\hat Y^{N,i}_t}$ and $m_t \defeq \Law_{Y^{N,i}_t}$ (note that they are in fact the same for all $i \geq 2$ and $N$). We only need to prove that
\begin{equation} \label{ENS_estT1.3VV}
\lim_{N \to \infty} \sup_{1 \leq i \leq N} \bb E\Big[ \sup_{t \in [0,T]} \big| \hat Y^{N,i}_t -  Y^{N,i}_t \big|^2 \Big] = 0,
\end{equation}
which also yields, in particular,
\begin{equation*}
\lim_{N \to \infty} \bb E \Bigl[\, \sup_{t \in [0,T]} W_2(\hat m_t,  m_t) \Bigr] = 0.
\end{equation*}
Indeed, we can estimate
\[\begin{multlined}[.95\displaywidth]
\bigg| \big|D_x U_0(t,\var)\big|^2\Big|_{(X^{N,i}_t,m_{\bs X^N_t})}^{( Y^{N,i}_t, m_t)} \bigg| + \bigg| F \Big|_{(X^{N,i}_t, m_{\bs X^N_t})}^{( Y^{N,i}_t,  m_t)} \bigg| + \bigg| G \Big|_{(X^{N,i}_t, m_{\bs X^N_t})}^{( Y^{N,i}_t, \hat m_t)} \bigg| \vspace{3pt}\\
\lesssim \big|X^{N,i}_t - \hat Y^{N,i}_t\big| + \big|\hat Y^{N,i}_t -  Y^{N,i}_t\big| + W_2(m_{\bs X^N_t},\hat m_t) + W_2(\hat m_t, m_t),
\end{multlined}\]
and proceed as in the proof of \Cref{ENS_convfacile}, with $\bar m \defeq \Law_{Y^{N,1}}$, also noting that \cite[Theorem~1]{FG13} provides a vanishing rate that is uniform for $\sigma_N \to 0$.

To prove \eqref{ENS_estT1.3VV}, note that by H\"{o}lder's, Young's, and Doob's inequalities, and the Lipschitz continuity of $D_x U_0$, one has (with implied constants independent of $N$), for any $\tau \in (0,T]$,
\[\begin{split}
\bb E \Bigl[ \,\sup_{t \in [0,\tau]} \bigl| Y^{N,i}_t - \hat Y^{N,i}_t \bigr|^2 \Bigr] &\lesssim  \bb E \biggl[\, \int_0^\tau \bigl|Y^{N,i}_s - \hat Y^{N,i}_s \bigr|^2 \,\di s \biggr] + \int_0^\tau W_2(\Law_{Y^{N,i}_s}, \Law_{\hat Y^{N,i}_s})^2 \,\di s + \sigma_N
\\
&\lesssim \int_0^\tau  \bb E \Bigl[\, \sup_{t \in [0,s]} \bigl|Y^{N,i}_t - \hat Y^{N,i}_t \bigr|^2 \Bigr] \,\di s + \sigma_N,
\end{split}
\]
so Gronwall's lemma, along with the fact that $\sigma_N \to 0$, gives \eqref{ENS_estT1.3VV}.
\end{proof}






\section{Estimates in the case of more general Hamiltonians} \label{sec_genH}

We finally discuss how to extend the estimates obtained in the previous sections to Hamiltonians of the form $H^i(x^i, p)$, and provide a proof of Theorem \ref{ENS_mainthm} under the general standing assumptions on $H^i$. We will focus on the estimates only, though their application to the large population limit developed in Sections \ref{sec_lpl} and \ref{sec_vvl} follows similar lines.

We start with the extension of the a priori estimates of \Cref{sec_esti}, for which only assumption {\bfseries(\hyperlink{H0}{H0})} is required. As in the previous sections, the solution $u$ to the Nash system is assumed to be classical, $C^1$ in time and $C^4$ in space, with bounded derivatives. Moreover, we require second order derivatives in space to be uniformly continuous.

\begin{prop}[Estimates on the derivatives, assuming one-sided Lipschitz bound on the drift] \label{generealAPesti} 
Assume that {\bfseries(\hyperlink{MF}{MF})} and {\bfseries(\hyperlink{H0}{H0})} hold.
Let $\tau \in [0,T)$ and assume that \eqref{generalDmon} holds for $\bs x,\bs y \in (\R^d)^N$ and $t \in (\tau, T]$, with $\tilde M$ independent of $N$. Then the following estimates hold for all $t \in [\tau,T]$, with implied constants depending on $\tilde M$ but independent of $N$:
\begingroup
\allowdisplaybreaks
\begin{gather}
\label{generald1}
\sup_{\substack{1\leq i,j \leq N \\ j \neq i}} \norm{D_j u^i(t,\var)}_\infty \lesssim \frac{1}{N}, \\
\label{generald2}
\sup_{1\leq i \leq N} \sum_{1\leq j \leq N}\norm{D(D_i u^j)(t,\var)}_\infty^2 \lesssim 1, \\
\label{generald2'}
\sup_{1\leq i \leq N} \bigg\lVert \sum_{\substack{1\leq j\leq N \\ j \neq i}} \big| D^2_{ij} u^j(t,\var)\big|^2 \bigg\rVert_\infty \lesssim \frac{1}{N} , \\
\label{generald2''}
\sup_{1\leq i \leq N} \sum_{\substack{1\leq j \leq N \\ j \neq i}}\norm{D(D_j u^i)(t,\var)}_\infty^2 \lesssim \frac{1}{N} ,\\
\label{generald3}
\sum_{\substack{1\leq i,j \leq N \\ j\neq i}} \norm{D(D_{ij} u^j)(t,\var)}_\infty^2 \lesssim 1.
\end{gather}
\endgroup
\end{prop}

\begin{proof}
We need to prove that the estimates contained in \Cref{ENS_estd1,ENS_d2b,ENS_d2N,ENS_uijestp,ENS_d2N',ENS_d3b} all hold with a more general $H^i$ satisfying {\bfseries(\hyperlink{H0}{H0})}. The corresponding proofs proceed essentially in the same way as those for the purely quadratic Hamiltonian (with the resulting constants also depending on $C_H$), so we only underline the main differences and explaining how to handle them. 

We start by noticing that the main difference that is common to all the proofs will be that all sums involving the drift terms $D_j u^j$ now present the drift $D_p H^j(x^j,D_j u^j)$ instead; nevertheless, such terms are dealt with in the same way as before, just by invoking the assumption \eqref{generalDmon} instead of the \Ds semimonotonicity of $u$. We now go through all the proofs and explain the remaining changes that need to be performed.

\emph{Extending \Cref{ENS_estd1}.}
In the proof, replace $\frac12|D_iu^i(\bar t,\var)|^2 \bigr|^{\bs x}_{\bs y}$ with $H^i(\var^i,D_iu^i(\bar t,\var))\bigr|^{\bs x}_{\bs y}$; using \eqref{ENS_Dx0}--\eqref{ENS_Dy0} and {\bfseries(\hyperlink{H0}{H0})} one has
\[
\Bigl| H^i(\var^i,D_iu^i(\bar t,\var))\bigr|^{\bs x}_{\bs y} \Bigr| \lesssim |x^i-y^i| + \epsilon(1 + \psi(\bar t) + \epsilon) \phi(\bar t,\bar{\bs x},\bar{\bs y}),
\]
with implied constant independent of $N$. The rest of the proof is the same. Note that this also gives the extension to more general Hamiltonians of \Cref{ENS_uijest}, as that is an immediate consequence of \Cref{ENS_estd1}. 

\emph{Extending \Cref{ENS_d2b}.}
Use \Cref{ENS_estv} with
\[
F = D_{x^{k\ell}}f^i - \delta_{ki} D_{x^\ell} H^i(x^i,D_i u^i) - \sum_{j \neq i} D_{pp}^2 H^j(x^j,D_ju^j) D_j(D_{x^{k\ell}} u^j) \cdot D_j u^i
\]
and note that, using {\bfseries(\hyperlink{H0}{H0})} and \Cref{ENS_uijest} (just extended),
\[
\norm{F}_\infty^2 \lesssim \norm{D_k f^i}_\infty^2 + \delta_{ki} C_H^2 + \frac{C_H^2c_1^2}{N} \biggl\lVert\, \sum_{j \neq i} |D^2_{kj} u^j|^2 \biggr\rVert,
\]
with implied constant independent of $N$. The rest of the proof is the same, so we have \eqref{generald2}.

\emph{Extending \Cref{ENS_d2N}.}
We can identify three types of terms in the counterpart of the equation \eqref{eqforwi} for $w^i$. Terms of the first type are the same as in \eqref{eqforwi}, and correspond to the first two lines and the last one of the equation (after the aforementioned general replacement of $D_ju^j$ with $D_p H^j(x^j,D_j u^j)$). Terms of the second type correspond to the remaining (third and fourth) lines of \eqref{eqforwi} has they have the same structure, with the only difference being an additional term in each product, of the form $D^2_{pp} H^\ell(x^\ell, D_\ell u^\ell)$ (in the third line) or $D^2_{pp} H^i(x^i,D_iu^i)$ (in the fourth line). Terms of the third types are new ones, coming from the derivatives of the Hamiltonians with respect to $x$ and/or of order $3$, and they read as follows:
\begin{equation} \label{extratermsB}
\begin{gathered}
\frac1N D^2_{xx} H^i D^2_{ii} u^i \cdot D^2_{ii} u^i + \sum_{\ell \neq i} (D^2_{px} H^\ell + D^2_{px} H^i)  D^2_{i\ell} u^\ell \cdot D^2_{i\ell} u^\ell 
\\
{}+ \frac2N  D^2_{px} H^i D^2_{ii}u^i \cdot D^2_{ii}u^i + \sum_{\ell \neq i} D_{ppx}^3 H^i D^2_{\ell i} u^i D_i u^\ell \cdot D^2_{i\ell} u^\ell 
\\
{}+ \sum_{\ell \neq i} \sum_{j \neq \ell} D^3_{ppp} H^j D^2_{ij} u^j D^2_{\ell j} u^j D_j u^\ell \cdot D^2_{i\ell} u^\ell + \frac1N \sum_{j \neq i} D^3_{ppp} H^j D^2_{ij} u^j D^2_{i j} u^j D_j u^i \cdot D^2_{ii} u^i,
\end{gathered}
\end{equation} 
where all the derivatives of $H^j$ are understood to be computed at $(x^j,D_ju^j)$.

We deal with the terms of second type just as in the proof of \Cref{ENS_d2N}, with only one exception, when it comes to control the only one for which we need to prove a lower bound provided by the semimonotonicity assumption (cf.~estimate \eqref{onlylowbound}). In this case, using that \eqref{generalDmon} and {\bfseries(\hyperlink{H0}{H0})} yield (with $\call H_0$ and $\call H_1$ defined as in \eqref{defnH01})
\[
\call H_0 (D^2_{ji} u^i)_{1 \leq i,j \leq N} \geq - \tilde M \msfb I - \call H_1 \geq -(\tilde M + dC_H) \msfb I,
\]
we estimate
\[
\sum_{\ell \neq i} \sum_j D^2_{i\ell} u^\ell D^2_{pp} H^j D^2_{ij} u^j D_{j\ell} u^\ell \geq -4(\tilde M + dC_H) w^i.
\]
Finally, we claim that the terms of the third type are bounded by $\tilde C(w^i+\frac1N)$, with $\tilde C$ depending only on $c_1$, $C_2$, and $C_H$. Using \Cref{ENS_uijest,ENS_d2b} (which we have just extended to general $H^i$) and {\bfseries(\hyperlink{H0}{H0})}, it is easy to prove that the claim is true for all the terms in \eqref{extratermsB} but the penultimate one, so we only show how to bound that more delicate term: we have, exploiting the Cauchy--Schwarz and Young's inequalities,
\[
\begin{multlined}[.9\displaywidth]
\biggl| \sum_{\ell \neq i} \sum_{j \neq \ell} D^3_{ppp} H^j D^2_{ij} u^j D^2_{\ell j} u^j D_j u^\ell \cdot D^2_{i\ell} u^\ell \biggr| \\
\begin{split}
&\leq C_H\biggl( 2\biggl(\, \sum_{\substack{j,\ell \\ j \neq \ell}} |D^2_{\ell j} u^j D_j u^\ell|^2 \biggr)^\frac12 + \norm{D^2_{ii} u^i}_\infty^2 \biggr) w^i + \frac{C_H}2 \sum_{\ell \neq i} |D^2_{\ell i}u^i D_i u^\ell|^2 
\\[-3pt]
&\leq C_H \biggl( 2\,\sup_{\substack{j,\ell \\ j \neq \ell}} |D_j u^\ell| \biggl( N \sup_{j} \sum_{\ell \neq j} |D^2_{\ell j} u^j|^2 \biggr)^{\frac12} + C_2 \biggr) w^i + \frac{C_HC_2}{2N}
\\
&\leq C_H\Bigl( \bigl( 2c_1 C_2^{\frac12} + C_2 \bigr) w^i + \frac{C_2}{2N} \Bigr).
\end{split}
\end{multlined}
\]
The rest of the proof proceed as in that of \Cref{ENS_d2N}, hence we have \eqref{generald2'}.

\emph{Extending \Cref{ENS_uijestp,ENS_d2N',ENS_d3b}.} The arguments used in the proofs of these propositions are very similar to those used to prove \Cref{ENS_d2b}. In fact, to prove the corresponding extensions to general Hamiltonians, it suffices to adapt the proofs as we have already discussed for that of \Cref{ENS_d2b}. We conclude that the remaining estimates \eqref{generald1}, \eqref{generald2''}, and \eqref{generald3} hold as well.
\end{proof}

We now proceed with the second step of our road map, by showing that we indeed have propagation of a Lipschitz bound for $(D_iu^i)_{1 \leq i \leq N}$. In the proof, we are also going to extend (also using assumption {\bfseries(\hyperlink{H1}{H1})} or {\bfseries(\hyperlink{H1'}{H1$\bs{'}$})}) the arguments employed to prove the improvement of semimonotonicity, as an intermediate step to obtain the desired Lipschitz control.

Actually, the following result is stated in terms of property \eqref{generalDmon} rather than the aforementioned Lipschitz bound, since the former is our ultimate goal to reach; nevertheless, in light of the equivalent formulation \eqref{genoslequiv} of \eqref{generalDmon}, the two propagations are in fact equivalent. 

\begin{prop}[Propagation of the one-sided Lipschitz bound on the drift] \label{ENS_lipbprop} Assume {\bfseries(\hyperlink{MF}{MF})} and {\bfseries(\hyperlink{H0}{H0})}. Also, assume either {\bfseries(\hyperlink{DS}{DS})} and {\bfseries(\hyperlink{H1}{H1})}, or {\bfseries(\hyperlink{LS}{LS})} and {\bfseries(\hyperlink{H1'}{H1$\bs'$})}. Then, if the semimonotonicity constants of $f$ and $g$ are sufficiently small with respect to $T$ (or vice versa), and $N$ is sufficiently large, then there exists a constant $\tilde M$ (independent of $N$) such that, for any $\tau \in [0,T)$, if
\begin{equation} \label{genoslipconthm}
\sum_{1 \leq i \leq N} D_p H^i(\var^i,D_iu^i(t,\var))\bigr|^{\bs x}_{\bs y} \cdot (x^i-y^i) \geq -\tilde M |\bs x - \bs y|^2 \qquad \forall\, t \in (\tau,T], \ \bs x,\bs y \in (\R^d)^N
\end{equation}
then
\[
\sum_{1 \leq i \leq N} D_p H^i(\var^i,D_iu^i(\tau,\var))\bigr|^{\bs x}_{\bs y} \cdot (x^i-y^i) \geq -\frac12 \tilde M |\bs x - \bs y|^2  \qquad \forall\, \bs x,\bs y \in (\R^d)^{N}.
\]
\end{prop}

\begin{rmk} \label{rmk.vicevgen}
Like in the case of propagation of semimonotonicity with purely quadratic Hamiltonians, throughout the proof we are going to obtain explicit relationships between the semimonotonicity constants of the data and the horizon. In this more general case as well, the quantities needed to be small will be $TM_g + T^2M_f$ (in the \Ds semimonotone case) and $(T\kappa_g + T^2\kappa_f)e^{\gamma T}$ (in the \LLs semimonotone case). This also explains the meaning of the \emph{vice versa} part in the statement above.
\end{rmk}

\begin{proof}[Proof of \Cref{ENS_lipbprop}]
First of all, we claim that it suffices to prove that, if the semimonotonicity constants of $f$ and $g$ are sufficiently small with respect to $T$ (or vice versa, in the sense discussed in \Cref{rmk.vicevgen}), then there exists a constant $L$, independent of $N$ and $\tilde M$ such that, if \eqref{genoslipconthm} holds, then
\begin{equation} \label{genproptoprove}
\sum_{1 \leq i \leq N} |D_i u^i(\tau,\bs x) - D_i u^i(\tau,\bs y)|^2 \leq L^2 |\bs x - \bs y|^2 \qquad \forall\, \bs x, \bs y \in (\R^d)^N.
\end{equation}
Indeed, this is equivalent to $-L\msfb I \leq (D_{ji} u^i)_{1 \leq i,j \leq N} \leq L\msfb I$, and thus, recalling that \eqref{generalDmon} is equivalent to \eqref{genoslequiv}, it implies that
\[
\sum_{1 \leq i \leq N} D_p H^i(\var^i,D_iu^i(t,\var))\bigr|^{\bs x}_{\bs y} \cdot (x^i-y^i) \geq -C_H(d+L) |\bs x - \bs y|^2.
\]
Therefore, if \eqref{genproptoprove} holds, then we have the thesis proved upon taking $\tilde M \geq 2C_H(d+L)$.

With the goal of proving \eqref{genproptoprove}, we start by studying the evolution along optimal trajectories of $z^i \defeq \frac12 \bigl|D_iu^i \bigr|^{\bs x}_{\bs y} \bigr|^2$. Applying Dynkin's formula \eqref{ENS_itow} with the synchronous coupling \eqref{ENS_Lsync} we obtain
\[
\begin{multlined}[.9\displaywidth]
\bb E\bigl[ z^i(T,\bs X_T, \bs Y_t) \bigr] 
\\
\begin{split}
&\geq z^i(\tau,\bar{\bs x},\bar{\bs y}) +  \bb E \biggl[\,\int_\tau^T D_x H^i(\var^i,D_i u^i) \bigr|^{\bs X_t}_{\bs Y_t} \cdot D_i u^i(t,\var) \bigr|^{\bs X_t}_{\bs Y_t}\,\di t \biggr] -  \bb E \biggl[\,\int_\tau^T D_i f^i \bigr|^{\bs X_t}_{\bs Y_t} \cdot D_i u^i(t,\var) \bigr|^{\bs X_t}_{\bs Y_t}\,\di t \biggr]
\\
&\quad + \sum_{j \neq i} \, \bb E \biggl[\, \int_\tau^T \bigl( D_{pp}^2 H^j(\var^j,D_j u^j) D^2_{ij} u^j(t,\var) D_j u^i(t,\var)\bigr)\bigr|^{\bs X_t}_{\bs Y_t} \cdot D_i u^i(t,\var) \bigr|^{\bs X_t}_{\bs Y_t}\,\di t \biggr].
\end{split}
\end{multlined}
\]
Let us denote the last sum by $E^i$; using the fundamental theorem of calculus and then the Cauchy--Schwarz inequality several times (as well as assumption {\bfseries(\hyperlink{H0}{H0})}) show that $\bigl|\sum_i E^i\bigr|$ is bounded by $C_H$ times
\[
\begin{split}
&\sqrt{N} \Biggl( \sup_{\substack{ij \\ j \neq i}}\, \norm{D_j u^i}_\infty \biggl( \biggl(\, \sum_{\substack{ij \\ j\neq i}} \norm{D(D_{ij} u^j)}_\infty^2 \biggr)^{\frac12} + \sqrt{N}  \sup_i \bigg\lVert \sum_{j \neq i} \big| D^2_{ij} u^j\big|^2 \bigg\rVert_\infty^{\frac12} \biggr)
\\
&\ \ + \Bigl( 1+ \sup_{\substack{ij \\ j \neq i}} \, \norm{D_j u^i}_\infty  \Bigr) \biggl(\,\sup_i \bigg\lVert \sum_{j \neq i} \big| D^2_{ij} u^j\big|^2  \bigg\rVert_\infty \sup_i\sum_{\substack{1\leq j \leq N \\ j \neq i}}\norm{D(D_j u^i)}_\infty^2 \biggr)^{\frac12} \Biggr) \sup_{t \in [\tau,T]} \bb E\bigl[ \bigl| \bs X_t - \bs Y_t \bigr|^2 \bigr],
\end{split}
\]
where the $L^\infty$ norms are understood with respect to the domain $[\tau,T] \times (\R^d)^N$. Therefore, using the estimates in \Cref{generealAPesti},
\[
\Bigl|\, \sum_i E^i \Bigr| \leq \frac{C(\tilde M)}{\sqrt N} \sup_{t \in [\tau,T]} \bb E\bigl[ \bigl| \bs X_t - \bs Y_t \bigr|^2 \bigr],
\]
where the constant $C(\tilde M)$ depends in particular on $\tilde M$ but not on $N$, so that, letting $z \defeq \sum_i z^i$, we can deduce that, for $N$ large enough,
\begin{equation} \label{estonz1}
z(\tau,\bbs x, \bbs y) \leq \bigl( C_g^2 + T(C_f^2 + C_H^2) \bigr) \!\sup_{t \in [\tau,T]} \!\bb E\bigl[ \bigl| \bs X_t - \bs Y_t \bigr|^2 \bigr] + 2(1+C_H) \int_\tau^T \bb E\bigl[ z(\var,\bs X, \bs Y) \bigr],
\end{equation}
where
\[
C_g^2 \defeq \sup_{\bs x \neq \bs y} \frac{\sum_i \bigl| D_i g^i \bigr|^{\bs x}_{\bs y} \bigr|^2}{|\bs x - \bs y|^2}
\]
and $C_f$ is defined likewise; note that we have the bounds $C_g^2 \leq 2L_g^2$ and $C_f \leq 2L_f^2$, which are independent of $N$, because, using {\bfseries(\hyperlink{MF}{MF})},
\[
\sum_i \bigl| D_i g^i \bigr|^{\bs x}_{\bs y} \bigr|^2 \leq \sum_{ij}\, \norm{D^2_{ji} g^i}^2_\infty |\bs x - \bs y|^2 \leq 2L_g^2 |\bs x - \bs y|^2,
\]
and likewise for $f$.

We now proceed in two slightly different ways, depending on the kind of semimonotonicity that holds. In both case, we are going to use the (generalizations of the) estimates developed for the improvement of semimonotonicity in order to obtain a useful bound on $\sup_{[\tau, T]} \bb E \bigl[ |\bs X - \bs Y|^2 \bigr]$.

\emph{\Ds semimonotone case.}
Follow the proof of \Cref{ENS_dscprop} to obtain, with the notation used therein,
\[
\begin{multlined}[.95\displaywidth]
\bb E\bigl[w^i(T,\bs X_T,\bs Y_T)\bigr] 
\\
\begin{split}
&\leq w^i(\tau,\bbs x, \bbs y) - \int_\tau^T \bb E\Big[ D_i f^i\big|_{\bs Y}^{\bs X} \cdot (X^i - Y^i) \Big] + \int_\tau^T \bb E\, \tildee{\call E}^i_t \,\di t
\\
&\quad + \int_\tau^T \bb E \biggl[ D_x H^i\Bigr|^{(X^i_t,D_iu^i(t,\bs X_t))}_{(Y^i_t,D_iu^i(t,\bs Y_t))} \cdot (X^i_t - Y^i_t) - D_p H^i\Bigr|^{(X^i_t,D_iu^i(t,\bs X_t))}_{(Y^i_t,D_iu^i(t,\bs Y_t))} \cdot D_i u^i(t,\var)\bigr|^{\bs X_t}_{\bs Y_t} \biggr]\,\di t,
\end{split}
\end{multlined}
\]
where $\tildee{\call E^i}$ is an error term for which, arguing as done to control $\sum_i \call E^i$ and using \Cref{generealAPesti}, we have the following bound:
\[
 \int_\tau^T \bb E \biggl[\, \sum_{i} \tildee{ \call E}_t^i\, \biggr]  \leq \frac{\overline C(\tilde M)}{\sqrt{N}}\,\sup_{t \in [\tau,T]} \bb E\big[ |\bs X_t - \bs Y_t|^2 \big],
\]
with $\overline C(\tilde M)$ independent of $N$. Therefore, using assumptions {\bfseries(\hyperlink{H1}{H1})} and {\bfseries(\hyperlink{DS}{DS})}, and the fact that, by the respective definitions and Young's inequality, $w(t,\bs x,\bs y) \leq \epsilon z(t,\bs x,\bs y) + \frac1{2\epsilon} |\bs x- \bs y|^2$ for any $\epsilon > 0$, we have
\[
\begin{multlined}[.9\displaywidth]
\lambda_H \sum_i \int_\tau^T \bb E \Bigl[ \Bigl| D_p H^i\bigr|^{(X^i_t,D_iu^i(t,\bs X_t))}_{(Y^i_t,D_iu^i(t,\bs Y_t))} \Bigr|^2 \Bigr] \,\di t 
\\
\leq \Bigl(M_g + TM_f + \frac{\overline C(\tilde M)}{\sqrt{N}}\Bigr) \sup_{t \in [\tau,T]} \bb E\big[ |\bs X_t - \bs Y_t|^2 \big] + \epsilon z(\tau,\bbs x,\bbs y) + \frac1{2\epsilon} |\bbs x- \bbs y|^2.
\end{multlined}
\]

We now estimate, using the equation of $\bs X-\bs Y$,
\begin{equation} \label{estonXY2}
|\bs X_s - \bs Y_s|^2 \leq 2|\bbs x - \bbs y|^2 + 2T \sum_i \int_\tau^s \Bigl| D_p H^i\bigr|^{(X^i_t,D_iu^i(t,\bs X_t))}_{(Y^i_t,D_iu^i(t,\bs Y_t))} \Bigr|^2\,\di t \qquad \forall s \in [\tau,T],
\end{equation}
whence we obtain
\[
\begin{multlined}[.9\displaywidth]
\lambda_H \sum_i \int_\tau^T \bb E \Bigl[ \Bigl| D_p H^i\bigr|^{(X^i_t,D_iu^i(t,\bs X_t))}_{(Y^i_t,D_iu^i(t,\bs Y_t))} \Bigr|^2 \Bigr] \,\di t 
\\
\begin{split}
&\leq 2\Bigl(TM_g + T^2M_f + \frac{T\overline C(\tilde M)}{\sqrt{N}}\Bigr)\sum_i \int_\tau^T \bb E \Bigl[ \Bigl| D_p H^i\bigr|^{(X^i_t,D_iu^i(t,\bs X_t))}_{(Y^i_t,D_iu^i(t,\bs Y_t))} \Bigr|^2 \Bigr] \,\di t 
\\
&\quad + \epsilon z(\tau,\bbs x,\bbs y) + \Bigl(2M_g + 2TM_f + \frac{2\overline C(\tilde M)}{\sqrt{N}} + \frac1{2\epsilon} \Bigr) |\bbs x- \bbs y|^2.
\end{split}
\end{multlined}
\]
If $TM_g$ and $T^2M_f$ are small enough, and $N$ is large enough, one can then deduce
\[
\sum_i \int_\tau^T \bb E \Bigl[ \Bigl| D_p H^i\bigr|^{(X^i_t,D_iu^i(t,\bs X_t))}_{(Y^i_t,D_iu^i(t,\bs Y_t))} \Bigr|^2 \Bigr] \,\di t 
\leq \frac{2\epsilon}{\lambda_H} z(\tau,\bbs x,\bbs y) + \Bigl( \frac1T + \frac1{\epsilon\lambda_H} \Bigr) |\bbs x- \bbs y|^2
\]
which, plugged back in to \eqref{estonXY2}, gives
\[
\sup_{t \in [\tau, T]} \bb E \bigl[ |\bs X_t - \bs Y_t|^2 \bigr] \leq 2 \Bigl( 1 + \frac{T}{\epsilon\lambda_H} \Bigr)|\bbs x - \bbs y|^2 + \frac{4\epsilon T}{\lambda_H} z(\tau,\bbs x,\bbs y).
\]

Using this bound in the right-hand side of \eqref{estonz1} (with $\epsilon$ small enough with respect to $T$, $C_g$, $C_f$, and $C_H$), we obtain
\[
z(\tau,\bbs x, \bbs y) \leq C |\bbs x - \bbs y|^2 + \frac12 z(\tau, \bbs x, \bbs y) + 2(1+C_H) \int_\tau^T \bb E\bigl[ z(\var,\bs X, \bs Y) \bigr],
\]
with $C$ independent of $N$ (and $\tilde M$). The desired control \eqref{genproptoprove} (with $L^2 = 2Ce^{2(1+C_H)T}$) now follows by invoking Gronwall's inequality.

\emph{\LLs semimonotone case.} First we note that the arguments in the proof of \Cref{ENS_cruboprop} also provide an upper bound for $\msf V \defeq D_{ii}^2 u^i$. For general Hamiltonian, the first part of the proof goes along the same lines, with only minor changes; in particular, one obtains, for $N$ large enough,
\begin{equation} \label{estonVgen}
\bb E \biggl[ \,\int_\tau^T |\msf V(t, \bs X_t)|^2\,\di t \biggr] \leq K,
\end{equation}
for some constant $K$ independent of $\tilde M$ and $N$.
We are only going to discuss the main differences in the last part of the proof. The equation of $\msf V$ is
\[
\begin{multlined}[t][.95\displaywidth]
-\de_t \msf V - \tr((\sigma\msfb I + \beta \msfb J)D^2 \msf V) + \sum_{1 \leq j \leq N} \! D_j u^j D_{j} \msf V + D^2_{xx} H^i + D_{xp}^2 H^i \msf V + \msf V D_{px}^2 H^i + \msf V D_{pp}^2 H^i \msf V \\
= D_{ii}^2 f^i - 2 \sum_{\substack{1 \leq j \leq N \\ j \neq i}}\! D^2_{pp} H^j D_{ij}^2 u^i D_{ji}^2 u^j - \sum_{\substack{1 \leq j \leq N \\ j \neq i}}\! D_{pp}^2 H^j D_{iij}^3 u^j D_{j} u^i - \sum_{\substack{1 \leq j \leq N \\ j \neq i}}\! D_{ppp}^3 H^j (D_{ij}^2 u^j)^2 D_{j} u^i ,
\end{multlined}
\]
where the derivatives of $H^i$ are understood to be computed at $(x^i, D_i u^i)$. The first two sums on the right-hand side can be estimated as done in the case of purely quadratic Hamiltonian, while the third one is easily seen to be bounded by $K(\tilde M)/N$ (using {\bfseries(\hyperlink{H0}{H0})}, \eqref{generald1} and \eqref{generald2'}), with $K(\tilde M)$ independent of $N$. On the other hand, using {\bfseries(\hyperlink{H0}{H0})} and {\bfseries(\hyperlink{H1'}{H1$\bs'$})},
\[
-\bigl(C_H(1 + C_H) + |\msf V|^2\bigr) \msf I_d \leq D^2_{xx} H^i + D_{xp}^2 H^i \msf V + \msf V D_{px}^2 H^i + \msf V D_{pp}^2 H^i \msf V \leq -C_H\bigl((1 + C_H) + |\msf V|^2\bigr)\msf I_d.
\]
Therefore, arguing as in the proof of \Cref{ENS_cruboprop} and using \eqref{estonVgen}, we conclude that, for $N$ large enough, there exists $\bar K > 0$ (independent of $\tilde M$ and $N$) such that
\begin{equation} \label{D2bddgen}
-\bar K\msf I_d \leq D_{ii}^2 u^i \leq \bar K \msf I_d.
\end{equation}

Follow now the proof of \Cref{ENS_LLmprop} and observe that the crucial difference consists in the non-negative term
\[
\sum_{i} \big| D_i u^i(\bs x) - D_i u^i(\bs y^{-i}, x^i) \big|^2
\]
appearing in \eqref{eqwnnterms} being replaced by
\[
\sum_i \Bigl( H^i(\var^i,D_i u^i)\bigr|^{\bs x}_{(\bs y^{-i}, x^i)} - D_p H^i(x^i,D_iu^i(\bs y^{-i}, x^i)) \cdot D_i u^i \bigr|^{\bs x}_{(\bs y^{-i}, x^i)} \Bigr),
\]
and the same for the other term obtained by swapping $\bs x$ and $\bs y$. By assumption {\bfseries(\hyperlink{H1'}{H1$\bs{'}$})}, such a term is bounded from below by
\[
\Lambda_H \sum_i \Bigl|D_i u^i \bigr|^{\bs x}_{(\bs y^{-i}, x^i)} \Bigr|^2 ;
\]
therefore, by Dynkin's formula with synchronous coupling, we obtain the estimate
\begin{equation} \label{estLLdisgen}
\Lambda_H \sum_i \int_\tau^T \bb E \Bigl[ \Bigl|D_i u^i \bigr|^{\bs X}_{(\bs Y^{-i}, X^i)} \Bigr|^2 \Bigr] \leq \Bigl(\kappa_g + T\kappa_f + \frac{\overline C'(\tilde M)}{\sqrt{N}}\Bigr) \sup_{t \in [\tau,T]} \bb E\big[ |\bs X_t - \bs Y_t|^2 \big] + \LLop[u(\tau,\var)](\bbs x, \bbs y).
\end{equation}

Use now \eqref{estonXY2} and \eqref{D2bddgen} to deduce that, for all $s \in [\tau, T]$, 
\[
|\bs X_s - \bs Y_s|^2 \leq 2|\bbs x - \bbs y|^2 + 6TC_H^2 \int_\tau^s \Bigl( \bigl(1+\bar K^2\bigr) |\bs X_t - \bs Y_t|^2 + \sum_i \Bigl|D_i u^i \bigr|^{\bs X}_{(\bs Y^{-i}, X^i)} \Bigr|^2 \Bigr),
\]
so that Gronwall's inequality gives
\[
\sup_{t \in [\tau, T]}\bb E\bigl[ |\bs X_t - \bs Y_t|^2 \bigr] \leq 2|\bbs x - \bbs y|^2 + 6TC_H^2e^{6TC_H^2(1+\bar K^2)} \sum_i \int_\tau^T \bb E \Bigl[ \Bigl|D_i u^i \bigr|^{\bs X}_{(\bs Y^{-i}, X^i)} \Bigr|^2 \Bigr].
\]
Combine this with \eqref{estLLdisgen} to have, for $(T\kappa_g + T^2\kappa_f)e^{6TC_H^2(1+\bar K)}$ small and $N$ large,
\[
\sup_{t \in [\tau,T]} \bb E\bigl[ |\bs X_t - \bs Y_t|^2 \bigr] \leq C'|\bbs x - \bbs y|^2 + C''\LLop[u(\tau,\var)](\bbs x, \bbs y),
\]
with $C'$ and $C''$ independent of $N$ and $\tilde M$.

At this point we go back to \eqref{estonz1} and use Gronwall's inequality to obtain
\begin{equation} \label{estforzLL}
z(\tau,\bbs x, \bbs y) \leq \tilde C'|\bbs x - \bbs y|^2 + \tilde C''\LLop[u(\tau,\var)](\bbs x, \bbs y),
\end{equation}
for some $\tilde C'$ and $\tilde C''$ independent of $N$ and $\tilde M$. By using (twice) the fundamental theorem of calculus, then the control \eqref{D2bddgen}, and finally Young's inequality, we have
\begin{equation*} 
\begin{split}
\LLop[u(\tau,\var)](\bbs x, \bbs y) &= \sum_{i} \int_0^1 D_{i} u^i(\tau,\var)\bigr|^{(\bbs x^{-i},sx^i + (1-s)y^i)}_{(\bbs y^{-i},sx^i + (1-s)y^i)} \cdot (\bar x^i - \bar y^i)\,\di s  
\\
&\leq \sum_{i} \int_0^1 D_{i} u^i(\tau,\var)\bigr|^{\bbs x}_{\bbs y} \cdot (\bar x^i - \bar y^i)\,\di s + 2\bar K |\bbs x - \bbs y|^2
\\
&\leq \frac1{2\tilde C''}\, z(\tau, \bbs x, \bbs y) + (\tilde C'' + 2\bar K) |\bbs x - \bbs y|^2;
\end{split}
\end{equation*}
therefore, coming back to \eqref{estforzLL} we obtain
\[
z(\tau,\bbs x, \bbs y) \leq 2(\tilde C' + \tilde C'' + 2\bar K) |\bbs x - \bbs y|^2,
\]
which is \eqref{genproptoprove} with $L^2 = 2(\tilde C' + \tilde C'' + 2\bar K)$ by the arbitrariness of $\bbs x, \bbs y \in (\R^d)^N$.
\end{proof}

At this stage, that is with the estimates on the derivatives assuming one-sided Lipschitz bound on the drift, and the propagation of the one-sided Lipschitz bound on the drift, we can conclude the proof of Theorem \ref{ENS_mainthm} in the case of general $H^i$, arguing as in the proof of Theorem \ref{ENS_mainthmDS}.

\appendix

\section{An existence and uniqueness theorem for the Nash system}

We present an existence theorem for \eqref{ENS_NS}. As discussed in the Introduction, existence of solutions to the Nash system is rather delicate if the domain is unbounded and the growth with respect to the gradient is quadratic, as in our case. We will make use of the a priori estimates provided by \Cref{ENS_mainthm}, together with a (standard) continuation argument.

\begin{thm} \label{ENS_thmexbdd}
Let $f^i \in C^{\gamma+\alpha}((\R^d)^N)$ and $g^i \in C^{2+\gamma+\alpha}((\R^d)^N)$, for some integer $\gamma > 1$ and $\alpha \in (0,1)$. Let $H^i \in C^{1+\gamma+\alpha}(\R^d \times \Omega)$ for each $\Omega \ssubset \R^d$; suppose that $H^i$ are bounded from below and satisfy {\bfseries(\hyperlink{H0}{H0})}. Assume that condition {\bf(\hyperlink{MF}{MF})} also holds, as well as one of the following two pairs of assumptions: either {\bf(\hyperlink{DS}{DS})}--{\bf(\hyperlink{H1}{H1})} or {\bf(\hyperlink{LS}{LS})}--{\bf(\hyperlink{H1'}{H1$\bs'$})}. Denote by $c_f$ and $c_g$ the corresponding semimonotonicity constants of $f$ and $g$, respectively.

For any $T>0$, there are positive constants $c_f^*$ and $c_g^*$, and $N_* \in \N$ such that, if $c_f \leq c_f^*$, $c_g \leq c_g^*$ and $N \geq N_*$, then there exists a unique (bounded) solution to the Nash system~\eqref{ENS_NS} which is of class $C^{1+\frac\gamma2+\frac\alpha2, 2+\gamma+\alpha}$.
\end{thm}

\begin{proof} \emph{Step 1.}
For $\tau \in [0,T)$, let $\call X \defeq \call X_\tau = C([\tau,T]; C_b^1 ((\R^d)^N) \, )^N$, with norm $\norm{v}_{\call X} \defeq \sup_{1 \leq i \leq N} ( \norm{v^i}_\infty + \norm{Dv^i}_\infty )$. For $v \in \call X$, let $u^i = \call S(v)^i$ be the solution of the parabolic equation
\[
\begin{cases}
-\de_t u^i - \tr((\sigma\msfb I + \beta \msfb J)D^2 u^i)  = f^i(\bs x) - w^i(t,\bs x) \\
 u^i|_{t=T} = g^i,
\end{cases}\]
with
\[
w^i(t, \bs x) =  H^i(x^i,D_i v^i(t,\bs x)) + \sum_{\substack{1 \leq j \leq N \\ j \neq i}} D_pH^j(x^j,D_j v^j(t,\bs x)) \cdot D_j v^i(t,\bs x).
\]
By \cite[Theorem 5.1.2]{Lunardi}, $\call S(v) \in \call X$ (solutions actually enjoy further H\"older regularity). We have the following standard representation formula: let $\Phi$ be the fundamental solution to the diffusion equation $\de_t u - \tr((\sigma\msfb I + \beta \msfb J)D^2u) = 0$ on $[0,T] \times (\R^d)^N$; that is,
\[
\Phi(t,\bs x) \defeq \frac1{\sqrt{\det(4\pi(\sigma\msfb I + \beta \msfb J)t)}\,} \,e^{-\tfrac{(\sigma\msfb I + \beta \msfb J)^{-1} \bs x \cdot \bs x}{4t}} = \frac1{\sqrt{1+Nd\beta/\sigma}\,} \frac{1}{(4\pi \sigma t)^{\frac{Nd}2}} e^{-\tfrac{\left(\msfb I - \frac{\beta}{\sigma+Nd\beta} \msfb J\right) \bs x \cdot \bs x}{4\sigma t}} .
\]
Then for $v \in \call X$ and $i \in \{1,\dots,N\}$, we have 
\[
\call S(v)^i(t,\bs x) \defeq (\Phi(T-t,\var) \star g^i)(\bs x) + \int_t^T (\Phi(T-s,\var) \star (f^i-w^i(s-t,\var))(\bs x) \,\di s,
\]
where $\star$ denotes the convolution operator over $(\R^d)^N$. Then
\[
\norm{\call S(v)^i}_\infty \lesssim \norm{g}_{\call X} + (T-\tau)(\norm{f}_{\call X} + \norm{v}_{\call X}^2),
\]
where the implied constant depends only on $N$, $C_H$ and $d$. Also, for $j \in \{1,\dots,N\}$ and $k \in \{1,\dots,d\}$
\[\begin{split}
D_{x^{jk}} \call S(v)^i(t,\bs x) &= (\Phi(T-t,\var) \star D_{x^{jk}}g^i)(\bs x) + \int_t^T (\Phi(T-s,\var) \star D_{x^{jk}} f^i)(\bs x) \,\di s \\[-3pt]
&\quad - \int_t^T (D_{x^{jk}}\Phi(T-s,\var) \star w^i(s-t,\var))(\bs x) \,\di s;
\end{split}\]
then, using that $\int_{(\R^d)^N} \lvert D_{x^{jk}} \Phi(t,\bs y)\vert\,\di \bs y \leq Nd/{\sqrt{\pi \sigma t}\,}$, one has
\[
\norm{D\call S(v)^i}_\infty \lesssim \norm{g}_{\call X} + (T-\tau)\norm{f}_{\call X} + \sqrt {T-\tau}\, \norm{v}^2_{\call X},
\]
the implied constant depending only on $\sigma$, $N$, $C_H$ and $d$. 
Furthermore, given $v,\bar v \in \call X$ similar computations show that
\[
\norm{\call S(v) - \call S(\bar v)}_{\call X} \lesssim (1+\sqrt{T-\tau}\,)\sqrt{T-\tau}\max\{\norm{v}_{\call X},\norm{\bar v}_{\call X}\} \norm{v-\bar v}_{\call X},
\]
with implied constant depending only on $C_H, \sigma$, $N$ and $d$. Letting $\call B_c \defeq \{ v \in \call X :\ \norm{v}_{\call X} \leq c \}$, we deduce that $\call S \colon \call B_c \to \call B_c$ is a contraction for any $c = c(\norm{g}_{\call X},\sigma, C_H, N,d)$ sufficiently large and $\tau = \tau(c,\sigma,N,d, \beta, C_H, \norm{f}_{\call X})$ such that $T-\tau$ is sufficiently small. It follows by the Banach--Caccioppoli theorem that $\call S$ has a unique fixed point $u \in \call B_c$, which by the definition of $\call S$ solves system \eqref{ENS_NS} in the mild sense. Then note that by standard parabolic Schauder estimates we can bootstrap up to the desired regularity.

\emph{Step 2.} In order to conclude, it is sufficient to suppose that $u$ be a solution on $[\bar\tau, T] \times (\R^d)^N$ (with arbitrary $\bar\tau \in [0,T)$) and show that
\begin{equation} \label{todostep2}
\norm{u}_{\call X_{\bar\tau}} \leq C
\end{equation}
for some constant $C$ independent of $\bar\tau$. Indeed, in such a case, one proceeds as follows. Define
\[
\tau_* \defeq \inf\{ \tau \in [0,T] :\ \text{\eqref{ENS_NS} has a unique solution belonging to $\call X_\tau$} \}
\]
and notice that we proved that $\tau^* < T$; seeking for a contradiction, suppose that $\tau_* > 0$. Let $\epsilon \in (0,T-\tau_*)$ to be announced and consider a solution $u \in \call X_{\tau_* + \epsilon}$. Knowing that $\norm{u}_{\call X_{\tau_* + \epsilon}} \leq C$, one can redo the fixed point argument in Step $1$ with $u(\tau_*+\epsilon,\cdot)$ in lieu of $g$ and conclude that $u$ extends to the horizon $[\tau',T]$, provided that $\tau_* + \epsilon - \tau' \leq \delta$ for some $\delta > 0$ which is independent of $\tau_*$ and $\epsilon$. So one can fix $\delta_0 = \frac{\tau_*}2 \wedge \delta$, $\epsilon = \frac12({\delta_0} \wedge (T-\tau_*))$ and $\tau' = \tau_* - \frac{\delta_0}2 \in (0, \tau_*)$, thus contradicting the definition of $\tau_*$.

The uniform control on $|Du^i|$ required by \eqref{todostep2}, directly comes from \Cref{ENS_mainthm}, provided that we choose $c_f^*$ and $c_g^*$ small enough and $N_*$ large enough. Thus, we only need to bound $|u^i|$; to do so, recall the representation formula for the solution of the Nash system: for any $(s,\bs z) \in [\bar\tau,T] \times (\R^d)^N$,
\begin{equation} \label{ENS_reprAP}
\begin{split}
u^i(s,\bs z) &= \int_s^T \int_{(\R^d)^N} \Bigl(\, H^i(x^i,D_i u^i(t,\bs x)) - D_p H^i(x^i,D_i u^i(t,\bs x)) \cdot D_i u^i(t,\bs x) + f^i(\bs x) \Bigr) \rho(t,\di \bs x)\di t 
\\
&\quad + \int_{(\R^d)^N} g^i(\bs x)\,\rho(T,\di\bs x)\,,
\end{split}
\end{equation}
where $\rho = \rho^{s, \bs z}$ solves
\[
\begin{dcases}
\de_t \rho - \tr((\sigma\msfb I + \beta \msfb J)D^2 \rho) + \sum_{1 \leq j \leq N} \mathrm{div}_{x^j} (D_pH^j(x^j,D_j u^j) \rho) = 0 & \text{on} \ \ [s,T] \times (\R^d)^N \\[-3pt]
\rho|_{t=s} = \delta_{\bs z}.
\end{dcases}
\]
This is easily obtained by testing the equation of $\rho$ by $u^i$. In particular, by the convexity of $p \mapsto H^i(x,p)$,
\[
u^i(s,\bs z) \geq \int_s^T \int_{(\R^d)^N} \bigl( H^i(x^i,0) + f^i(\bs x) \bigr) \, \rho(t,\di \bs x)\di t + \int_{(\R^d)^N} g^i(\bs x)\,\rho(T,\di\bs x).
\]
On the other hand, 
if $\hat\rho^i$ solves instead
\[
\begin{dcases}
\de_t \hat\rho^i - \tr((\sigma\msfb I + \beta \msfb J)D^2 \hat\rho^i) + \sum_{\substack{1 \leq j \leq N \\ j \neq i}} \mathrm{div}_{x^j} (D_jH^j(x^j,D_j u^j) \hat\rho^i) = 0 & \text{on} \ \ [s,T] \times (\R^d)^N \\[-7pt]
\hat\rho^i|_{t=s} = \delta_{\bs z},
\end{dcases}
\]
then
\[
u^i(s,\bs z) \leq \int_s^T \int_{(\R^d)^N} \bigl( f^i(\bs x) - \inf_{p \in \R^d} H^i(x^i,p) \bigr) \, \hat\rho^i(t,\di \bs x)\di t + \int_{(\R^d)^N} g^i(\bs x)\,\hat\rho^i(T,\di\bs x).
\]
This proves that $\sup_{t \in [\bar\tau,T]} \norm{u^i(t,\cdot)}_\infty \leq (T+1)\max\{\norm{f^i}_\infty + (\inf H^i)_-, \norm{g^i}_\infty\}$, thus concluding the proof.
\end{proof}

\section{An estimate on the space-time regularity for the heat equation}

The Lemma below shows how to transfer weighted Lipschitz regularity in space of solution to heat equations to H\"older regularity in time, following a standard idea proposed in \cite{Kr69}. We state it for forward equations for simplicity. Note again that H\"{o}lder continuity is independent of $N$.

\begin{lem}\label{lemmaholder} Let $f: [0,T] \times (\R^d)^N \to \R$, $\sigma > 0$, $\beta \geq 0$, and $u$ be a classical solution of
\[
\de_t u - \tr( (\sigma \msfb I + \beta \msfb J) D^2 u) = f \qquad \text{in $(0,T) \times (\R^d)^N$ }.
\]
Assume also that 
\[
|u(t,\bs x) - u(t,\bs y)| \leq c_1 \norm{\bs x - \bs y}^i \qquad \forall\, t \in [0, T], \ \bs x, \bs y \in (\R^d)^N,
\]
and $\|f\|_\infty, \|u\|_\infty \le c_2$
for some $c_1, c_2 > 0$ and $i = 1, \ldots, N$. Then,
\[
|u(\tau,\bs z) - u(s,\bs z)| \leq (c_1 + 4(\sigma+\beta) d c_2) |\tau-s|^{\frac13} + c_2  |\tau-s| \qquad \forall\, \tau, s \in [0, T], \ \bs z \in (\R^d)^N.
\]
\end{lem}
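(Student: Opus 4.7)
My plan is to represent $u$ via the heat semigroup $\{P_t\}_{t\ge 0}$ generated by $L \defeq \tr((\sigma \msfb I + \beta \msfb J) D^2 \cdot)$, and to estimate the two resulting pieces directly. The crucial structural observation is that $L$ is the generator of the Gaussian process on $(\R^d)^N$ admitting the decomposition $\bs W^i_t = \sqrt{2\sigma}\, B^i_t + \sqrt{2\beta}\, W_t$ with $d$-dimensional independent standard Brownian motions $(B^i)_{1\le i\le N}$ and $W$, so that each block $\bs W^i_t$ has covariance $2(\sigma+\beta)\,t\,\msf I_d$, \emph{independently of $N$}. Writing $P_t\varphi(\bs z) \defeq \bb E[\varphi(\bs z + \bs W_t)]$, I would verify the classical Duhamel representation
\[
u(\tau,\bs z) - u(s,\bs z) = \bb E\bigl[u(s, \bs z + \bs W_{\tau-s}) - u(s,\bs z)\bigr] + \int_s^\tau P_{\tau-r} f(r,\cdot)(\bs z)\,\di r,
\]
for $0 \le s \le \tau$, by differentiating $\Psi(r) \defeq \bb E[u(r,\bs z + \bs W_{\tau-r})]$ to obtain $\Psi'(r) = \bb E[(\partial_t u - Lu)(r,\bs z + \bs W_{\tau-r})] = P_{\tau-r}f(r,\cdot)(\bs z)$, and using $\Psi(\tau)=u(\tau,\bs z)$, $\Psi(s)=P_{\tau-s}u(s,\cdot)(\bs z)$.

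The Duhamel integral is trivially bounded by $c_2(\tau-s)$. For the first term, the weighted Lipschitz assumption together with Jensen's inequality gives
\[
|\bb E[u(s,\bs z + \bs W_{\tau-s}) - u(s,\bs z)]| \le c_1 \,\bb E\norm{\bs W_{\tau-s}}^i \le c_1\sqrt{\bb E(\norm{\bs W_{\tau-s}}^i)^2},
\]
and the per-block covariance computation yields
\[
\bb E(\norm{\bs W_{\tau-s}}^i)^2 = \bb E|\bs W^i_{\tau-s}|^2 + \frac1N\sum_{j\neq i}\bb E|\bs W^j_{\tau-s}|^2 \le 4(\sigma+\beta)d(\tau-s),
\]
the $\tfrac1N$-weighting in $\norm{\cdot}^i$ exactly absorbing the $N-1$ off-diagonal contributions. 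All in all this produces the $N$-independent estimate $|u(\tau,\bs z) - u(s,\bs z)| \le 2c_1\sqrt{(\sigma+\beta)d(\tau-s)} + c_2(\tau-s)$, which is the sharp $\tfrac12$-H\"older regularity in time. The stated $\tfrac13$-H\"older bound then follows by interpolating the $\tfrac12$-rate against the trivial bound $2c_2$ coming from $\norm{u}_\infty \le c_2$: since $\min(a,b) \le a^{2/3}b^{1/3}$, weighted AM--GM ($c_1^{2/3}[(\sigma+\beta)dc_2]^{1/3} \le \tfrac{2}{3}c_1 + \tfrac13(\sigma+\beta)dc_2$) converts the resulting geometric-mean expression into one of the form $(c_1 + C(\sigma+\beta)dc_2)(\tau-s)^{1/3}$.

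The main conceptual obstacle is identifying the correct representation: any attempt at a purely spatial mollification $u_r = u\ast \Phi_r$ followed by bounding $\partial_t u_r$ pointwise from the equation produces $N$-dependent constants, because $\msfb J$ has operator norm of order $N$ and the integrals of second derivatives of a product mollifier accumulate across the $N$ blocks; one does not see how the $\tfrac1N$-weight from the hypothesis can absorb these factors at the PDE level. The Gaussian semigroup neatly sidesteps this issue by treating the common noise intrinsically as a single $d$-dimensional Brownian motion whose per-player contribution has the same magnitude as the idiosyncratic one, and the combinatorial $N$ then cancels the $\tfrac1N$-weight exactly once, keeping the final estimate genuinely scale-free in $N$.
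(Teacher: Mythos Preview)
Your proof is correct and takes a genuinely different route from the paper. The paper follows Kru\v{z}kov's barrier method: after normalising to $s=0$, $\bs z=0$, $u(0,0)=0$, it builds the explicit supersolution
\[
v(t,\bs x) = c_1 \rho + \frac{4(\sigma+\beta) d c_2}{\rho^2}\, t + c_2 t + \frac{c_2}{\rho^2} (\|\bs x\|^i)^2
\]
on the weighted cylinder $\{\|\bs x\|^i \le \rho\}$, applies the comparison principle, and then optimises $\rho = \tau^{1/3}$ to obtain exactly the stated constants. Your semigroup/Duhamel argument instead exploits the explicit Gaussian structure: because each block $\bs W^j_t$ has covariance $2(\sigma+\beta)t\,\msf I_d$ independently of $N$, the $1/N$-weight in $\|\cdot\|^i$ cancels the $N-1$ off-diagonal contributions and you get the sharper $\tfrac12$-H\"older rate directly, which you then interpolate down to $\tfrac13$.

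Two minor remarks. First, your interpolation yields constants of the form $\bigl(\tfrac{4}{3}c_1 + \tfrac{2}{3}(\sigma+\beta)d c_2\bigr)$ rather than the exact $(c_1 + 4(\sigma+\beta)dc_2)$ of the statement; this is cosmetic, since the lemma is used only qualitatively in the paper (to feed Ascoli--Arzel\`a in \Cref{ENS_rmkconvdiuil}). Second, your approach is shorter and gives a stronger conclusion, but the barrier method has the advantage of not requiring an explicit fundamental solution, so it generalises more readily to equations with variable coefficients or first-order terms---which is likely why the paper, whose main equations carry drifts $\sum_j D_j u^j \cdot D_j$, presents the argument in that form.
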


\begin{proof} Assume first that $s = 0$, $\bs z = 0$ and $u(0,0) = 0$. Let $\rho > 0$ and
\[
v(t,\bs x) = c_1 \rho + \frac{4(\sigma+\beta) d c_2}{\rho^2}\, t + c_2 t + \frac{c_2}{\rho^2} (\|x\|^i)^2.
\]
Since
\[
\de_t v - \tr(  (\sigma \msfb I + \beta \msfb J) D^2 v) - f = \frac{4(\sigma+\beta) d c_2}{\rho^2}  + c_2 - \frac{(\sigma+\beta) d c_2}{\rho^2} \Bigl(4 - \frac2N\Bigr) - f \ge 0
\]
everywhere, and
\[
\begin{split}
& v(0, \bs x) \ge c_1 \rho  \ge c_1 \|\bs x\|^i \ge u(0, \bs x) \qquad \text{on $\{ \|\bs x\|^i \le \rho\}$}, \\
& v(t,\bs x) \ge \frac{c_2}{\rho^2} (\|x\|^i)^2 \ge u(t, \bs x) \qquad \text{on $[0,T] \times \{ \|\bs x\|^i = \rho\}$},
\end{split}
\]
by the comparison principle we get $u(t,\bs x) \le v(t, \bs x)$ on $[0,T] \times \{ \|\bs x\|^i \le \rho\}$. For $\tau > 0$, pick then $\rho = \tau^{\frac13}$, so that the previous inequality yields
\[
u(\tau,0)\le c_1  \tau^{\frac13} + 4(\sigma+\beta) d c_2  \tau^{\frac13} + c_2 \tau.
\]
Using $-v$ as a subsolution yields the analogous bound from below. It is now straightforward to conclude the estimate for general $s, \bs z$.
\end{proof}

\bibliography{ENSbib}
\bibliographystyle{abbrv}

\end{document}